\newtheorem{theorem}{Theorem}
\newtheorem*{mtheorem}{Main Theorem}
\newtheorem*{existenceThrm}{Existence Theorem of \cite{EGKM}}
\newtheorem*{definitionKD}{Definition of A Kinetic Distance}
\newtheorem{definition}{Definition}
\newtheorem{lemma}{Lemma}
\newtheorem{proposition}[theorem]{Proposition}
\newtheorem{remark}{Remark}
\let\a=\alpha
\let\e=\varepsilon
\let\p=\partial
\let\O=\Omega
\let\o=\omega
\numberwithin{equation}{section}
\let\hide\iffalse
\let\unhide\fi
\DeclareMathAlphabet{\mathpzc}{OT1}{pzc}{m}{it}
\newcommand{\R}{\mathbb{R}}
\renewcommand{\S}{\mathbb{S}}
\newcommand{\be}{\begin{equation}}
\newcommand{\bm}{\begin{multline}}
\newcommand{\ee}{\end{equation}}
\newcommand{\dd}{\mathrm{d}}
\newcommand{\xb}{x_{\mathbf{b}}}
\newcommand{\tb}{t_{\mathbf{b}}}
\newcommand{\X}{\mathbf{x}}
\newcommand{\V}{\mathbf{v}}
\newcommand{\Bes}{\begin{eqnarray*}}
\newcommand{\Ees}{\end{eqnarray*}}
\newcommand{\Be}{\begin{equation} }
\newcommand{\Ee}{\end{equation}}
\def\p{\partial}
\def\O{\Omega}
\def\R{\mathbb{R}}
\def\B{\begin{equation}}
\def\E{\end{equation}}
\def\BN{\begin{eqnarray*}}
\def\EN{\end{eqnarray*}}
\begin{document}

\title{Regularity of Stationary Boltzmann equation in Convex Domains}

 \author{Hongxu Chen}  \address{Department of Mathematics, The University of Wisconsin-Madison, Email: hchen463@wisc.edu}

  \author{Chanwoo Kim}
 \address{Department of Mathematics, The University of Wisconsin-Madison, Email: chanwoo.kim@wisc.edu}

\maketitle

\begin{abstract} Higher regularity estimate has been a challenging question for the Boltzmann equation in bounded domains. Indeed, it is well-known to have ``\textit{the non-existence of a second order derivative at the boundary}'' in \cite{GKTT} even for symmetric convex domains such as a disk or sphere. In this paper we answer this question in the affirmative by constructing the $C^{1,\beta}$ solutions away from the grazing boundary, for any $\beta<1$, to the stationary Boltzmann equation with the non-isothermal diffuse boundary condition in strictly convex domains, as long as a smooth wall temperature has small fluctuation pointwisely.
\end{abstract}

\tableofcontents

\section{Introduction} An interesting physical application of the kinetic theory is its mesoscopic description of the heat transfer of rarefied gas. The \textit{quantitative description of the stationary state} and a derivation of macroscopic models (as the Knudsen number $\mathpzc{Kn}\rightarrow \infty$) can be achieved through the famous steady Boltzmann equation:
\begin{equation}\label{BE_F}
v\cdot \nabla_x F=  \frac{1}{\mathpzc{Kn}}Q(F,F), \ \   (x,v) \in \O \times \R^3,
\end{equation}
where the hard sphere collision operator $Q(F,F)$ takes the form:
\begin{equation}\label{collision operator}
\begin{split}
   Q(F_1,F_2) & := Q_{\text{gain}}(F_1,F_2)-Q_{\text{loss}}(F_1,F_2) \\
     & =\int_{\mathbb{R}^3}\int_{\mathbb{S}^2}|(v-u) \cdot \o|%^{\mathcal{K}}q_0(\theta)
     \big[F_1(u')F_2(v')-F_1(u)F_2(v) \big]\dd \omega \dd u,
\end{split}
\end{equation}
with $u'=u+[(v-u)\cdot \omega]\omega$, $v'=v-[(v-u)\cdot \omega]\omega$ with $\omega \in \mathbb{S}^2$.% and we consider the physical  model $0\leq \mathcal{K}\leq 1$ and $0\leq q_0(\theta)\leq C|\cos \theta|$ with $\cos \theta = \frac{v-u}{|v-u|}\cdot \omega$, $\omega \in \mathbb{S}^2$. We In particular it includes the hard sphere model: $|v-u|^{\mathcal{K}}q_0(\theta)= |(v-u) \cdot \o|$.

In particular when the gas interacts with a non-isothermal boundary it is well-known that the non-equilibrium steady states can be constructed by the Boltzmann equation (\ref{BE_F}). The kinetic description of the boundary interaction with the gas particles has been extensively investigated in various aspects (see \cite{Maxwell,CIP,Sone} and the references therein). In this paper we are interested in one of the basic and physical conditions, the so-called \textit{diffuse reflection boundary condition}, which takes into account an instantaneous thermal equilibrating with the non-constant wall temperature of a reflecting gas particle:
%
%
%We consider the so-called diffuse reflection boundary condition of the non-constant wall temperature as
\begin{equation}\label{diffuseBC_F}
F(x,v)|_{n(x) \cdot v<0} = \ M_W(x,v)\int_{n(x)\cdot u>0} F(x,u)\{n(x)\cdot u\}\dd u
, \ \ x \in \p\O ,
\end{equation}
where the outward normal at the boundary $\p\O$ is denoted by $n(x)$. Here, we define the wall Maxwellian associated with the described wall temperature $T_W(x)$ at $x\in \partial\Omega$:
\begin{equation}\label{Wall Maxwellian}
M_W(x,v) =\sqrt{\frac{2\pi}{T_W}}M_{1,0,T_W}
:=\frac{1}{2\pi [T_W(x)]^2}e^{-\frac{|v|^2}{2T_W(x)}}.
\end{equation}

Recently a unique stationary solution of (\ref{BE_F}) with (\ref{diffuseBC_F}) in general bounded domains has been constructed in an $L^\infty$-space when the non-constant wall temperature is a small fluctuation around any constant temperature $T_0$ in \cite{EGKM} (See \cite{G} for the construction in convex domains).
%We note that in the case of non-isothermal boundary any Boltzmann solutions cannot be a local Maxwellian and therefore they are non-equilibrium steady states.
Moreover, the authors prove that such non-equilibrium solutions are dynamically and asymptotically stable. We also refer relevant literatures \cite{EM} and the references therein for the PDE aspects of non-equilibrium steady states. As an important application of such construction the authors further derive the Fourier law (Navier-Stokes-Fourier system, more precisely) rigorously as $\mathpzc{Kn} \rightarrow \infty$ in \cite{EGKM A}. On the other hand, for each fixed finite Knudsen number $\mathpzc{Kn}$, they formulate a criterion of the Fourier law in mesoscopic level in \cite{EGKM}. Utilizing the available numeric results, they illustrate the violation of such a criterion, which demonstrates a deviation from the Fourier law for each fixed finite Knudsen number $\mathpzc{Kn}$.

\textit{Qualitatively} the kinetic and macroscopic descriptions of heat transfer are remarkably different in the presence of boundaries in particular. In the absence of fluid velocity flow, a macroscopic description via the Fourier law is given by the Laplace equation with suitable boundary condition, which enjoys analytic smoothness of the solutions. On the other hand, the kinetic description from the Boltzmann equation (\ref{BE_F}) possesses a boundary singularity intrinsically (\cite{Kim}). Such a drastic discrepancy comes from the convection effect $\mathpzc{Kn} v\cdot \nabla_x F$, which has small factor but non-zero for any finite Knudsen number $\mathpzc{Kn}>0$. Indeed, it is very interesting to study the \textit{quantitative} effect of such a convection term $\mathpzc{Kn} v\cdot \nabla_x F$ in the interaction of the boundary and collision process in the limiting process $\mathpzc{Kn}\rightarrow \infty$. Our work in this paper originates from this motivation.

As the first step toward this goal, in this paper we are looking for the smoothness of the stationary Boltzmann equation for fixed $\mathpzc{Kn} \sim 1$, comparable to the regularity of the corresponding (in a sense of $\mathpzc{Kn}\rightarrow \infty$) elliptic equation, for which \textit{the Schauder estimates} are available. More precisely the main purpose of this paper is to develop a robust and unified \textit{higher regularity} estimate in $C^{1,\beta}_x$ with the aid of weights for the stationary Boltzmann solutions to (\ref{BE_F}) with the diffuse reflection boundary condition (\ref{diffuseBC_F}) in the \textit{convex} domains. For this purpose we focus on the convex domain as a discontinuous singularity appears in the non-convex domain in general \cite{Kim,GKTTBV}.

In general convex domains, regularity estimates at most up to the first derivatives away from the so-called \textit{grazing set}
\Be\label{grazing}
\gamma_0:= \{(x,v) \in \p\O \times \R^3: n(x) \cdot v=0\}
\Ee
has been established in \cite{GKTT, yunbai1, yunbai2} for the nonlinear dynamical Boltzmann equation. The key idea of the approach is based on the so-called kinetic distance which is almost invariant along the characteristics. With the aid of such weight a generic singularity $\frac{1}{n(x) \cdot v}$ of the first order derivatives can be controlled. We refer \cite{Ikun} for the regularity of the stationary linear equation up to the first derivatives. \textit{However,} any higher regularity beyond the first order derivatives away from the boundary has been a challenging question. Apparently any second order derivatives estimate seems impossible due to the well-known \textit{``non-existence of second order spatial normal derivative at the boundary''} in \cite{GKTT} even in the convex domain, or even in symmetric domains. We note that the mechanism of such phenomenon is against the conventional effect of the collision in some sense, which will be described in Section 1.2.  Throughout this paper we will use the following notations: $f\lesssim  g \Leftrightarrow$ there exists $0<C<\infty$ such that $0 \leq f\leq Cg$; $f\thicksim g   \Leftrightarrow$ there exists $0<C<\infty$ such that $0 \leq \frac{f}{C}\leq g\leq Cf$; $f \ll g\Leftrightarrow$ there exists a small constant $c>0$ such that $0 \leq  f \leq  cg$;
$f = O(g) \Leftrightarrow$ $|f| \lesssim g$; 
%there exists $0<C<\infty$ such that $f = Cg$.
$f = o(g) \Leftrightarrow$ $|f|\ll g.$
%there exists $c \ll 1$ such that $f = cg$. 

\subsection{Main Theorem} Throughout this paper we assume the domain is defined as $\O = \{x \in \R^3: \xi(x) <0\}$ via a $C^3$ function $\xi : \R^3 \rightarrow \R$. Equivalently we assume that for all $q\in \partial \Omega$, there exists a $C^3$ function $\eta_p$ and $0<\delta_1 \ll 1$, such that 
\Be\label{O_p}
\eta_q:
B_+(0; \delta_1)
\ni \mathbf{x}_q := (\mathbf{x}_{q,1},\mathbf{x}_{q,2},\mathbf{x}_{q,3})
 \rightarrow
\mathcal{O}_q:= \eta_q(
B_+(0; \delta_1))
 \ \textit{ is one-to-one and onto},
\Ee
and $\eta_q(\mathbf{x}_q)\in \partial \Omega$ if and only if $\mathbf{x}_{q,3}=0$ within the range of $\eta_q$. We refer to \cite{EGKM A} for the construction of such $\xi$ and $\eta_q$. We further assume that the domain is strictly convex in the following sense:
 \Be\label{convex}
\sum_{i,j=1}^3\zeta_i \zeta_j\p_i\p_j \xi(x) \gtrsim |\zeta|^2  \  \text{ for   all }   x \in \bar{\O}  \text{ and }  \zeta \in \R^3.
 \Ee
Without loss of generality we may assume that $\nabla \xi \neq 0$ near $\p\O$.

In order to control the generic singularity at the boundary we adopt the following weight of \cite{GKTT}:
 \begin{definition} For sufficiently small $0<\e\ll  \Vert \xi\Vert_{C^2}$, we define a \text{kinetic distance}:
\Be\label{kinetic_distance}
\begin{split}
\alpha(x,v)   : = \chi_\e ( \tilde{\alpha}(x,v)  ) ,  \ \
\tilde{\alpha}(x,v)   : = \sqrt{ |v \cdot \nabla_x \xi(x)|^2 - 2 \xi(x) (v \cdot \nabla_x^2 \xi(x) \cdot v)}, \ \ (x,v) \in \bar{\O} \times \R^3,
\end{split}
\Ee
where $\chi_a: [0,\infty) \rightarrow [0,\infty)$ stands for a non-decreasing smooth function such that
\Be\label{chi}
\chi_\a (s) = s \ for \ s \in [0, a ],  \ \chi_a (s) = 2a \ for \ s \in [ 4 a, \infty ], \ and \  | \chi^\prime_a(s) | \leq 1 \ for  \  \tau  \in [0,\infty).
\Ee
\end{definition}
We note that $\alpha\equiv 0$ on the grazing set $\gamma_0$. From a computation, we have $|v\cdot \nabla_x \alpha(x,v)| \leq    |v| \alpha(x,v)$, together with $\tau \chi_\e^\prime(\tau) \leq \chi_\e (\tau)$, this implies
\begin{align}
e^{-|v|s} \alpha(x-sv,v) \leq   \alpha(x,v) \leq e^{|v|s} \alpha(x-sv,v)
\ \text{as long as } x-sv \in \bar{\O}
. \label{Velocity_lemma}
\end{align}
 The definition of $\alpha, \tilde{\alpha}$ in~\eqref{kinetic_distance} implies 
\begin{equation}\label{n geq alpha}
%  |n(\xb(x,v))\cdot v| \gtrsim
  \tilde{\alpha}(x,v)\gtrsim\alpha(x,v).
\end{equation}

%Since $\tb(x,v) |v|$ is bounded above by a diameter of the domain, $\alpha(x,v)$ is equivalent to $\alpha(\xb(x,v),v)$ as $(C_\O)^{-1} \alpha (x-sv,v)\leq \alpha(x,v)\leq C_\O \alpha (x-sv,v)$ for $C_\O\gg1 $. In particular this implies that $\alpha(x,v)$ is equivalent to $|n(\xb(x,v)) \cdot v|$ which appears in (\ref{simpleF_n}).

We extend the outward normal in the domain:
  \Be\label{normal}
n(x) := \chi^\prime _{\e/2}( \text{dist} (x, \p\O))  {\nabla \xi(x)}/{|\nabla \xi(x)|} \ \ \text{for all }
x \in \bar{\O}.
\Ee
In particular, we note that $n(x) \equiv 0$ when $\text{dist} (x, \p\O)\geq 2 \e$. In order to explore the ``better'' behavior of the tangential derivative versus the normal derivative we define a $G$-derivative (which is a matrix):
\begin{equation}\label{tang deri}
 \nabla_{\parallel}f(x)=G(x)\nabla_x f(x),
\end{equation}
where
\begin{equation}\label{G}
G(x):=\big(I-n(x)\otimes n(x) \big).
\end{equation}
Note that near the boundary, from~\eqref{normal} we have
\begin{equation}\label{tang times normal}
G(x)n(x)=0  \text{ for } \text{dist}(x,\partial \Omega)\leq \e/2. 
\end{equation}
From the definition of $n$ in (\ref{normal}), the $G$-derivative is actually a full derivative away from the boundary: if $\text{dist} (x, \p\O)\geq 2\e$, then $G(x) \nabla_x= \nabla_x$.

\begin{mtheorem}\label{main_theorem} Fix $\mathpzc{Kn}>0$. Assume the domain is convex (\ref{convex}) and the boundary is $C^3$. Suppose $\sup_{x \in \p\O}|T_W(x)-T_0|\ll  1$ for some constant $T_0>0$ and $T_W(x)  \in C^1(\p\O)$. %For $\mathfrak{m}>0$ and $0<\varrho< {1}/{4}$, if $\sup_{x \in \p\O}|T_w(x)-T_0|\ll  1$ then there exists a unique solution $F(x,v)=\mathfrak{m}M_{1,0,T_0}(v)+ \sqrt{M_{1,0,T_0}(v)} f(x,v) \geq0$ with $\iint_{\O\times \R^3} f \sqrt{\mu}  =0$ to (\ref{BE_F}) and (\ref{diffuseBC_F}) such that  %\begin{equation}\label{eqn: weighted Linfty bound}
%$\Vert e^{\varrho|v|^2}f\Vert_\infty   \lesssim \| T_w-T_0\|_{L^\infty(\partial\Omega)}$.
%\end{equation}
%Moreover
For given $\mathfrak{m}>0$ we construct a unique solution
\Be\label{F}
F(x,v)=\mathfrak{m}M_{1,0,T_0}(v)+ \sqrt{M_{1,0,T_0}(v)} f(x,v) \geq0,
\Ee
 to the stationary Boltzmann equation (\ref{BE_F}) and the diffuse reflection boundary condition (\ref{diffuseBC_F}) such that $\iint_{\O\times \R^3} f \sqrt{M_{1,0,T_0}(v)}   =0$%\cancel{ to (\ref{BE_F}) and (\ref{diffuseBC_F})}
, and %\begin{equation}\label{eqn: weighted Linfty bound}
 \Be\label{infty_bound}
 \Vert w f\Vert_\infty   \lesssim \| T_W-T_0\|_{L^\infty(\partial\Omega)} , \ \ \ w(v):=e^{\varrho|v|^2} \ \text{for some}  \ 0<\varrho< {1}/{4}.%, \quad x\in \partial \Omega.
 \Ee
Moreover, $f$ (and $F$) belongs to $C^1(\bar{\O} \times \R^3\backslash \gamma_0)$ locally and satisfies \begin{align}
 \| w_{\tilde{\theta}}(v) \alpha(x,v)  \nabla_x f (x,v)\|_{L^\infty(\O\times \R^3)}
& \lesssim \| T_W-T_0\|_{C^1(\p\O)},
\label{estF_n}
\\
 \|  w_{\tilde{\theta}/2}(v) |v|   \nabla_\parallel f (x,v)\|_{L^\infty(\O\times \R^3)}& \lesssim \| T_W-T_0\|_{C^1(\p\O)},
 \label{estF_tau}
 \\
  \|w_{\tilde{\theta}}(v)  |v|^2  \nabla_v f (x,v)\|  _{L^\infty(\O\times \R^3)}& \lesssim \| T_W-T_0\|_{C^1(\p\O)}, \label{estF_v}
\end{align}
where $w_{\tilde{\theta}}(v)=e^{\tilde{\theta}|v|^2}$ with $0<\tilde{\theta}\ll \varrho$.

If we further assume $T_W(x)\in C^2(\partial \Omega)$, then for any $0\leq \beta<1$, the solution $F(x,v)$ belongs to $C^{1,\beta}(\bar{\Omega}\times \mathbb{R}^3\backslash \gamma_0)$ locally. Moreover,
\begin{equation}\label{estF_C1beta}
 \sup_{x,y \in \Omega}\Big\Vert w_{\tilde{\theta}}(v)|v|^2\min\Big\{\frac{\alpha(x,v)}{|v|},\frac{\alpha(y,v)}{|v|}\Big\}^{2+\beta} \frac{\nabla_x f(x,v)-\nabla_x f(y,v)}{|x-y|^\beta}\Big\Vert_{L^\infty(\R^3_v)}   \lesssim \Vert T_W-T_0\Vert_{C^2(\p\O)},
\end{equation}
\begin{equation}\label{estF_C1betatang}
 \sup_{x,y \in \Omega}\Big\Vert w_{\tilde{\theta}/2}(v) |v|^2 \min\Big\{\frac{\alpha(x,v)}{|v|},\frac{\alpha(y,v)}{|v|}\Big\}^{1+\beta}\frac{|\nabla_\parallel f(x,v)-\nabla_\parallel f(y,v)|}{|x-y|^\beta}\Big\Vert_
{L^\infty(\R^3_v)} 
\lesssim \Vert T_W-T_0\Vert_{C^2(\p\O)}.
\end{equation} 
\begin{equation}\label{estF C1v betax}
 \sup_{x,y \in \Omega}\Big\Vert w_{\tilde{\theta}}(v)  |v| \min\Big\{\frac{\alpha(x,v)}{|v|},\frac{\alpha(y,v)}{|v|}\Big\}^{1+\beta} \frac{\nabla_v f(x,v)-\nabla_v f(y,v)}{|x-y|^\beta}\Big\Vert_{L^\infty(\R^3_v)}  \lesssim \Vert T_W-T_0\Vert_{C^2(\p\O)}.
\end{equation}
\end{mtheorem}
 \begin{remark}The unique solvability and the pointwise estimate has been established in \cite{EGKM}. We record the statement of the theorem in Section 2 for the sake of readers' convenience.\end{remark}

  \begin{remark} %For any $\tau \in\R^3$ with $\tau  \cdot n(x)=0$, we have $\tau ^t  (Id_{3 \times 3}-  n(x)\otimes  n(x) ) \nabla_x f= \tau^t  - \tau^t  n(x) n(x)^t \nabla_x f= \tau^t \nabla_x f = \tau \cdot \nabla_x f$. Therefore
 The second estimate (\ref{estF_tau}) implies that any tangential spatial derivatives of $f(x,v)$ does not blow up near the grazing set. Also comparing the $C^{1,\beta}$ estimates~\eqref{estF_C1beta} and \eqref{estF_C1betatang}, the weight in the semi-norm of the tangential spatial derivative has a lower power in terms of $\alpha$ than the one for the normal derivative.
\end{remark}

  \begin{remark} Estimating differential quotient with respect to $v$ has some subtle (probably technical) issue, since the trajectory is not stable at $v=0$ near the boundary. Since our motivation of the paper is investigating the regularity in space we omit to discuss them. This issue (instability of the trajectory at $v=0$ in the H\"older norm estimate) will be discussed in the forthcoming paper \cite{KL_holder}.
\end{remark}

\subsection{Major Difficulties}In this section we illustrate the major difficulties, and in the next sections we will explain the key ideas and analytical development to overcome such obstacles. A generic feature of the boundary problem of the Boltzmann equation is a singularity of solutions, which originates mainly from 1) characteristics feature of the phase boundary $\p\O \times \R^3$ with respect to the transport operator (i.e. the phase boundary is always characteristic but not uniformly characteristic at the grazing set $\gamma_0$ of (\ref{grazing})), and 2) the mixing effect of the collision operator.

The effect of characteristics phase boundary can appear in several ways. Depending on the shape of the domain, the generic boundary singularity at $\gamma_0$ can propagate inside the domain and affect the global dynamics. Indeed, it has been proved in \cite{Kim} that any general non-convex domains admit smooth initial datum will produce the discontinuity for the Boltzmann solution in a stable manner, which propagate along the trajectory. Although such discontinuity-type singularities may stay near the boundary for the convex domains, its derivatives blow up near the grazing set. Actually it is not merely the effect of characteristics phase boundary but also the mixing effect of the collision operator: the mixing immediately produces a singular source term for the normal derivative at the boundary. In \cite{GKTT}, the authors quantify the rate of the blow-up with respect to the kinetic distance of (\ref{kinetic_distance}) and study the mixing effect by the collision operator in terms of the kinetic distance. As a result they establish the first order derivatives estimate for the dynamical Boltzmann equation. On the other hand, the kinetic distance produces \textit{a loss of moment} and they utilize a fast decay weight $e^{-C \langle v\rangle t}$ to recover such a loss. In other words, the success of the approach in \cite{GKTT} to the dynamic problem can be achieved in the space losing its exponential moment quickly (exponentially). Evidently utilizing such functional spaces is not possible in the stationary problem, which is one of the major difficulties to establish the main theorem.

The effect of the nonlinear collision operator is complex, in particular, within the interaction of the transport operator, which eventually restricts our regularity strictly below two derivatives in any $L^p$-space: the boundary singularity of Boltzmann solutions appears as $\frac{\p F}{\p n} \sim \frac{Q(F,F)}{n(x) \cdot v} \notin L^1_{loc}$, while the leading order term of any second order derivatives $\nabla_{x,v} \p_n$ contains %the Duhamel formula reveals that $ \nabla_x \int^0_{\tb(x,v) } Q(\nabla _x F, F)(x-sv,v) \dd s \sim \nabla_x \tb(x,v)
 a factor of $Q(\nabla _x F, F)(\xb(x,v),v)$ at a backward exit position $\xb(x,v):= x-  \tb(x,v) v$ which is defined through a backward exit time $\tb$:
\Be\label{BET}
 \tb(x,v) :=   \sup \{ s>0:x-sv \in \O\}.
\Ee
Due to a lack of symmetry of $\frac{\p F}{\p n}$, in particular for the diffuse reflection boundary condition, any possibility of cancellation in the integration formula $Q(\frac{\p F}{\p n}, F)$ can be expelled generically in \cite{GKTT}. Then it follows that $|\frac{\p^2 F}{\p n^2} (x,v)| = \infty$ for all $v$ for some $x\in \p\O$. This singularity likely appears at all boundary points with all velocities then propagates along the trajectory inside the domain, and masses up all directional derivatives. Even strictly below the second derivatives estimate, at first glance it is not obvious that the similar failure is avoidable in our weighted $C^{1,\beta}$. Moreover, we encounter similar type of, but much more geometrically involved, terms associated with the diffuse reflection boundary condition intertwined with the transport operator. Such non-integrable singularities could barge in the higher order estimates, which is the other major difficulty of the proof.

\subsection{Regularizing via the mixing of the binary collision, transport, and diffuse reflection} To overcome such difficulties described in Section 1.2., we establish a novel and robust \textit{quantitative} estimate of regularization effect (in space and velocity) of the \textit{velocity mixing} via the diffuse reflection boundary condition (\ref{diffuseBC_F}) or/and the binary collision (\ref{BE_F}) intertwined with the \textit{transport operator}.

We demonstrate the scheme first for $\nabla_x F$, of which the most singular term comes from the boundary contribution such as
%
%
%Then we discuss the boundary condition~\eqref{fx_x_3}. Let $g$ be the diffuse BC in~\eqref{diffuse_f intro}, by a proper change of variable we can bring the derivative inside the integration:
\Be\label{intro_boundary1}
  \nabla_x \xb(x,v)
\int_{n(\xb)\cdot v^1>0} \nabla_{\xb} F(\xb(x,v),v^1)  % \sqrt{\mu(v^1)}
|n(\xb(x,v))\cdot v^1|\dd v^1 .  \Ee
Upon using the transport operator once again, the contribution of the collision operator (ignoring the singularity of $Q$ for simplicity) can be viewed as
\Be\label{intro_boundary1_int}
 \nabla_x \xb \int_{n(\xb)\cdot v^1>0} \int^{0}_{\tb(\xb, v^1)} \int_{\mathbb{R}^3}  \nabla_x F(\xb
 -s v^1,u)
 |n(\xb)\cdot v^1|
  \dd u   \dd s
 \dd v^1 .
 \Ee
A key observation is that the $x$-derivative has a natural relation with the $v^1$-derivative as
\Be\label{x_to_v}
\nabla_x F(\xb-(t^1-s)v^1,u)= \frac{\nabla_{v^1} [F(\xb-(t^1-s)v^1,u) ]}{-(t^1-s)}.
\Ee
When $t^1-s$ has a positive lower bound, thanks to the $v^1$-integral from the diffuse reflection boundary condition, we are able to remove such a $v^1$-derivative completely from $F$. As a result of the integration by parts, the singularity of $\nabla_{v^1} \tb(\xb, v^1)$ occurs, which will be compensated by the boundary measure and thus we obtain a bound like $ \nabla_x \xb(x,v) \times  \| F \|_\infty$. When $t^1-s$ is small we use the so-called the nonlocal-to-local estimate and derive $O(|t^1-s|) \alpha(x,v)^{-1} \| \alpha \nabla_xF \|_\infty$. We will describe the nonlocal-to-local estimate and its application in detail at the next subsection.

On the other hand, the boundary contribution of (\ref{intro_boundary1}) upon applying the transport operator appears as
\Be\label{intro_boundary1_bdry}
 \nabla_x \xb(x,v)
\int_{n(\xb)\cdot v^1>0} \nabla_{\xb} F(\xb(x^1,v^1),v^1)  % \sqrt{\mu(v^1)}
|n(x^1)\cdot v^1|\dd v^1 ,
\Ee
where $x^1= \xb(x,v)$. The key idea is to convert $v^1$-integration to the integration in $(x^2,  \tb(x^1,v^1) )=(\xb(x^1,v^1), \tb(x^1,v^1) )$, while the change of variables produces a factor of the Jacobian as $\frac{|n(x^2)\cdot v^1|}{\tb(x^1,v^1)^3}$. Then we are able to move $\nabla_{\xb}$-derivative from $F$ via the integration by parts, while the derivative to the geometric components arise. Using the convexity and boundary measure crucially we are able to bound this amount by $ \nabla_x \xb(x,v) \times  \| F \|_\infty$.

\subsection{Higher regularity}For the higher regularity estimate in the weighted $C^{1,\beta}$-space, we 1) adopt the idea of Section 1.3 with stronger weight in $\alpha$, 2) crucially establish a ``better'' estimate for the tangential derivatives, 3) use the full range of the nonlocal-to-local estimate, and 4) carefully study the possibly harmful (which has been explained in the last paragraph of Section 1.2.) term $\frac{1}{|x-y|^\beta} \int^{\tb(x,v)}_{\tb(y,v)}Q(\nabla_x F,F) (x-sv,v) \dd s$.

By expressing $\frac{\nabla_x F(x,v)-\nabla_x F(y,v)}{|x-y|^\beta}$ along the trajectories (see~\eqref{nablatbx-nablatby}-\eqref{partial e-e} for the details), we notice that the difference is singular at least as
\Be\label{nabla xb H}
\frac{\nabla_x\xb(x,v)-\nabla_x\xb(y,v)}{|x-y|^\beta}
\int_{n(\xb)\cdot v^1>0} \nabla_{\xb} F(\xb ,v^1 )  % \sqrt{\mu(v^1)}
|n(\xb )\cdot v^1|\dd v^1
,
\Ee
 where the integration is bounded using the weighted $C^1$-estimate. By the mean value type estimate and the computation of $\nabla_x^2 \xb$, for $x(\tau)=\tau x+(1-\tau)y$, we derive that the difference quotient of $\nabla_x \xb$ is bounded by
\begin{equation}\label{difference of nabla xb}
\begin{split}
   %&\frac{\nabla_x\xb(x,v)-\nabla_x\xb(y,v)}{|x-y|^\beta} =\frac{\int_0^1 \frac{d}{d\tau}[\nabla_x\xb(x(\tau),v)] \dd \tau}{|x-y|^\beta} \\
 % & =\frac{\int_0^1 \nabla_x \nabla_x\xb(x(\tau),v) \frac{d  x(\tau) }{d\tau}\dd \tau}{|x-y|^\beta}=
  %  |x-y|^{1-\beta}\int_0^1 \frac{x-y}{|x-y|}\cdot \nabla_x \nabla_x\xb(x(\tau),v) \dd \tau
   % \text{, which behaves like }
    |x-y|^{1-\beta} \int^1_0\frac{|v|^3 }{\alpha^3(x(\tau),v)}\dd \tau.
\end{split}
\end{equation}
We prove that $\alpha(x(\tau),v)\gtrsim \min\{\alpha(x,v),\alpha(y,v)\}$ for $|x-y|<\min\{\frac{\alpha(x,v)}{|v|},\frac{\alpha(y,v)}{|v|}\}$ in the convex domains, for which we use the weight of $ \min\{\frac{\alpha(x,v)}{|v|},\frac{\alpha(y,v)}{|v|}\}^{2+ \beta}$ for $\frac{\nabla_x F(x,v)-\nabla_x F(y,v)}{|x-y|^\beta}$. The convexity of the domain is crucial since any similar type of the bound is false for the non-convex domains in general.

Unfortunately this estimate with the weight of the power $2+\beta$ is too singular! In particular the difference quotient of $\nabla_x F$ contains
\Be\notag
\nabla_x \xb(x,v) \int
\frac{\nabla_{\xb} F(\xb(x,v),v^1)
-\nabla_{\xb} F(\xb(y,v),v^1)
}{|x-y|^\beta}
% \sqrt{\mu(v^1)}
|n(\xb)\cdot v^1|\dd v^1,
\Ee
in which the control of the possible singularity of $|n(\xb) \cdot v^1|^{-(1+ \beta)}$ would be non-integrable. To overcome it, realizing that $\nabla_{\xb} F $ is $\nabla_\parallel F$, we establish an estimate of the difference quotient for the tangential derivatives $\frac{\nabla_\parallel F(x,v)-\nabla_\parallel F(y,v)}{|x-y|^\beta}$ with the weight $\min\{\frac{\alpha(x,v)}{|v|},\frac{\alpha(y,v)}{|v|}\}$ for a lower power than $2+\beta$. The optimal power is examined through (\ref{nabla xb H}), which turns out to be $1+\beta$.

To estimate the difference quotient with different weights, we first employ delicate splitting for the boundary integral and the time integral depending on how the trajectories from two different points hit the boundary. Then we adopt the idea of the scheme of Section 1.3 when $t^1-s$ has a positive lower bound. On the other hand, when $t^1-s$ is small we use the weight and derive
\Be\label{NLL_intro}
\Big\| \min\Big\{\frac{\alpha(x,v)}{|v|},\frac{\alpha(y,v)}{|v|}\Big\}^{\beta}
\nabla F \Big\|_\infty
\int _{\text{small interval}}\int  \frac{1}{ \min\Big\{\frac{\alpha(x-sv,u)}{|u|},\frac{\alpha(y-sv,u)}{|u|}\Big\}^{\beta}}\dd u  \dd s.
\Ee
The second author and collaborators studied a similar estimate of (\ref{NLL_intro}) in \cite{GKTT}. In this paper we elaborate the so-called nonlocal-to-local estimate, which consists of analytical and geometrical arguments: first we study the $u$-integration of the integrand and derive a gain of power such as, for $1<\beta<3$
\Be\label{gain_power_xi}
\frac{1}{\min \{ \xi(x-sv,u) , \xi(y-sv,u)  \}^{\frac{\beta-1}{2}}},
\Ee
where $\xi(x)$ can be understood as the distance from $x$ to the boundary. Second we employ $s \mapsto \xi(x-sv,u)$ with the Jacobian $\dd s = \frac{1}{|u \cdot \nabla \xi|}\dd \xi(x-sv,u)$ and recover a power of $\alpha$ as in the bound of $\xi$ through the geometric velocity lemma. We crucially utilize such a gain of $\alpha$ to extract a smallness in (\ref{NLL_intro}).

Lastly we discuss the possible harmful term $\frac{1}{|x-y|^\beta} \int^{\tb(x,v)}_{\tb(y,v)}Q(\nabla_x F,F) (x-sv,v) \dd s$. First we apply the $\alpha$-weighted bound for $\nabla_x F$ and then establish $Q(\nabla_x F,F) (x-sv,v) \sim \ln |\xi(x-sv)|$. Upon the time integration on $[\tb(y,v), \tb(x,v)]$ we derive a bound $\min\{\frac{\alpha(x,v)}{|v|},\frac{\alpha(y,v)}{|v|}\} \ln \big( \min\{\frac{\alpha(x,v)}{|v|},\frac{\alpha(y,v)}{|v|}\}\big)$. For $|x-y|<\min\{\frac{\alpha(x,v)}{|v|},\frac{\alpha(y,v)}{|v|}\}$, we realize the difference quotient is bounded. Of course such bound blows up if $\beta=1$.

%Upon applying the transport operator twice we identify the major term of $\nabla_x F$ in the bulk as $$F\mapsto \int_{\cdots} ^t Q (\int^s_{\cdots}  Q(\nabla_x F^{\#} ) )^{\#},$$ where $F^{\#}(x,v) = F(x-\tau v)$ for some $\tau$. The leading order contribution in $f$ (ignoring the singularity of $Q$ for simplicity) can be viewed as
%
%This term has a multiple fold integral as
%\begin{equation}\label{double duhamel}
%\int^t_{\cdots} \dd s\int_{|u| \lesssim 1} \dd u\int^s_{\cdots}  \dd s'\int_{
%|u'| \lesssim 1
%} \nabla_x f(x-(t-s)v-(s-s')u,u')\dd u'.
%\end{equation}

%In particular, in the expansion of $\frac{|\nabla_x f(x,v) - \nabla_x f(y,v)|}{|x-y|^\beta}$, the singular term appears as $\frac{1}{|x-y|^{\beta}} | \int^0_{\tb(x,v)} Q(\nabla_x F, F) (x-sv,v) \dd s -  \int^0_{\tb(y,v)} Q(\nabla_x F, F) (y-sv,v) \dd s|$, in which we are not allowed to use handy chain rule type estimate.

%In \cite{GKTT} the authors construct Boltzmann solutions in a disk such that  in \cite{GKTT}. Presumably it is a generic phenomenon for the hard sphere Boltzmann model

 \hide
 One of the generic feature of boundary problem of the Boltzmann equation is a singularity of derivatives of solutions. For easy illustration let us consider a transport equation $v\cdot \nabla_x F=0$ in $(x,v) \in \O \times \R^3$ with an inflow boundary condition $F(x,v)|_{n(x) \cdot v<0}= G(x,v)$ at $x \in \p\O$. A solution is given by an explicit form as $F(x,v)= G(\xb(x,v), v)$, where a backward exit position $\xb:= x-  \tb(x,v) v$ is defined through a backward exit time $\tb$:
\Be\label{BET}
 \tb(x,v) :=   \sup \{ s>0:x-sv \in \O\}.
\Ee
A derivative $\nabla_{x,v} F(x,v)$ certainly inherits a singularity from $\nabla_x \tb(x,v) =  {n(\xb(x,v))}/{(n(\xb(x,v)) \cdot v)}$ and $\nabla_v \tb(x,v) =   -\tb (x,v) n(\xb(x,v))/{(n(\xb(x,v)) \cdot v)}$. For $\tb(x,v) \lesssim |n(\xb(x,v)) \cdot v|/|v|^2$ in a convex domain we expect to have (see (\ref{nabla_tbxb}))
\Be \label{simpleF_n}
n(x)\cdot \nabla_x F (x,v)  \sim |v|  |{n(\xb(x,v)) \cdot v}|^{-1} , \ \
  \nabla_v F(x,v) \sim  |v|^{-2}.
\Ee
 On the other hand, for any spatial derivative perpendicular to $n(x)$, we define
\begin{equation}\label{G}
G(x):=\big(I-n(x)\otimes n(x) \big),
\end{equation}
\begin{equation}\label{tang deri}
 \nabla_{\parallel}f(x)=G(x)\nabla_x f(x).
\end{equation}
Note that we always have
\begin{equation}\label{tang times normal}
G(x)n(x)=0.
\end{equation}
Such derivative is expected to behave as
\Be \label{simpleF_tau}
  \nabla_{\parallel} F (x,v) \sim {|n(x)- n(\xb(x,v))|} / |{n(\xb(x,v)) \cdot v}|  \sim  |v| ^{-1}.
\Ee
These computation clearly indicates a singularity of $n  \cdot \nabla_x F $ near the \textit{grazing set }
\Be\label{grazing}
\gamma_0:= \{(x,v) \in \p\O \times \R^3: n(x) \cdot v=0\}.
\Ee

A major goal of this paper % is weighted $C^1$-estimates of solutions to (\ref{BE_F}) and (\ref{diffuseBC_F}). Indeed we are able
is to achieve \textit{optimal} estimates of $\nabla_{x,v} F$, which are predicted by the derivatives of an explicit form of the inflow solution as (\ref{simpleF_n}) and (\ref{simpleF_tau}). To measure the singularity as in (\ref{simpleF_n}) we introduce the following definition.
\begin{definitionKD}
We assume that the domain is convex and the boundary is $C^3$ as in (\ref{convex}). For sufficiently small $0<\e\ll_\O 1$,
\Be\label{kinetic_distance}
\begin{split}
\alpha(x,v)   : = \chi_\e ( \tilde{\alpha}(x,v)  ) ,  \ \
\tilde{\alpha}(x,v)   : = \sqrt{ |v \cdot \nabla_x \xi(x)|^2 - 2 \xi(x) (v \cdot \nabla_x^2 \xi(x) \cdot v)}, \ \ (x,v) \in \bar{\O} \times \R^3.
\end{split}
\Ee
Here $\chi_a: [0,\infty) \rightarrow [0,\infty)$ stands for a non-decreasing smooth function such that
\Be\label{chi}
\chi_\a (s) = s \ for \ s \in [0, a ],  \ \chi_a (s) = 2a \ for \ s \in [ 4 a, \infty ], \ and \  | \chi^\prime_a(s) | \leq 1 \ for  \  s   \in [0,\infty).
\Ee
\end{definitionKD}

We note that $\alpha\equiv 0$ on the grazing set $\gamma_0$. From a computation, we have $|v\cdot \nabla_x \alpha(x,v)| \leq    |v| \alpha(x,v)$. This, together with $\tau \chi_\e^\prime(\tau) \leq \chi_\e (\tau)$, implies
\begin{align}
e^{-|v|s} \alpha(x-sv,v) \leq   \alpha(x,v) \leq e^{|v|s} \alpha(x-sv,v), \ \ 0\leq s \leq \tb(x,v). \label{Velocity_lemma}
\end{align}
\cancel{and thus} From the definition of $\alpha, \tilde{\alpha}$ in~ and $$
\begin{equation}\label{n geq alpha}
  |n(\xb(x,v))\cdot v| \gtrsim \tilde{\alpha}(x,v)\gtrsim\alpha(x,v).
\end{equation}

Since $\tb(x,v) |v|$ is bounded above by a diameter of the domain, $\alpha(x,v)$ is equivalent to $\alpha(\xb(x,v),v)$ as $(C_\O)^{-1} \alpha (x-sv,v)\leq \alpha(x,v)\leq C_\O \alpha (x-sv,v)$ for $C_\O\gg1 $. In particular this implies that $\alpha(x,v)$ is equivalent to $|n(\xb(x,v)) \cdot v|$ which appears in (\ref{simpleF_n}).

 %For precise statement of the main theorem we list some requisite notations and definitions

\begin{mtheorem}\label{main_theorem} Assume all the assumptions of \textbf{Existence Theorem of \cite{EGKM}}. We further assume that the domain is convex in (\ref{convex}) and the boundary is $C^3$ as in (\ref{O_p}). Suppose $T_W(x)  \in C^1(\p\O)$. %For $\mathfrak{m}>0$ and $0<\varrho< {1}/{4}$, if $\sup_{x \in \p\O}|T_w(x)-T_0|\ll  1$ then there exists a unique solution $F(x,v)=\mathfrak{m}M_{1,0,T_0}(v)+ \sqrt{M_{1,0,T_0}(v)} f(x,v) \geq0$ with $\iint_{\O\times \R^3} f \sqrt{\mu}  =0$ to (\ref{BE_F}) and (\ref{diffuseBC_F}) such that  %\begin{equation}\label{eqn: weighted Linfty bound}
%$\Vert e^{\varrho|v|^2}f\Vert_\infty   \lesssim \| T_w-T_0\|_{L^\infty(\partial\Omega)}$.
%\end{equation}
%Moreover
Then the solution $F(x,v)$ of (\ref{F}) to (\ref{BE_F}) and (\ref{diffuseBC_F}) belongs to $C^1(\bar{\O} \times \R^3\backslash \gamma_0)$. Moreover
\begin{align}
 \| w_{\tilde{\theta}}(v) \alpha(x,v)  \nabla_x f (x,v)\|_{L^\infty(\O\times \R^3)}
& \lesssim \| T_W-T_0\|_{C^1},
\label{estF_n}
\\
 \| w_{\tilde{\theta}/2}(v)|v| \big(I-n(x)\otimes n(x)\big) \nabla_x f (x,v)\|_{L^\infty(\O\times \R^3)}& \lesssim \| T_W-T_0\|_{C^1},
 \label{estF_tau}
 \\
  \| |v|^2  \nabla_v f (x,v)\|  _{L^\infty(\O\times \R^3)}& \lesssim \| T_W-T_0\|_{C^1},\label{estF_v}
\end{align}
where $w_{\tilde{\theta}}(v)=e^{\tilde{\theta}|v|^2}$ with $\tilde{\theta}\ll 1$.

If we further assume $T_W(x)\in C^2(\partial \Omega)$, then for any $0\leq \beta<1$, the solution $F(x,v)$ belongs to $C^{1,\beta}(\bar{\Omega}\times \mathbb{R}^3\backslash \gamma_0)$:
\begin{equation}\label{estF_C1beta}
\Big\Vert w_{\tilde{\theta}}(v)|v|^2\min\{\frac{\alpha(x,v)}{|v|},\frac{\alpha(y,v)}{|v|}\}^{2+\beta} \frac{\nabla_x f(x,v)-\nabla_x f(y,v)}{|x-y|^\beta}\Big\Vert_\infty \lesssim \Vert T_W-T_0\Vert_{C^2},
\end{equation}
\begin{equation}\label{estF_C1betatang}
\Big\Vert w_{\tilde{\theta}/2}(v) |v|^2 \min\{\frac{\alpha(x,v)}{|v|},\frac{\alpha(y,v)}{|v|}\}^{1+\beta}\frac{|\nabla_\parallel f(x,v)-\nabla_\parallel f(y,v)|}{|x-y|^\beta}\Big\Vert_\infty\lesssim \Vert T_W-T_0\Vert_{C^2}.
\end{equation}

Moreover,
\begin{equation}\label{estF C1v betax}
\Big\Vert |v|\min\{\frac{\alpha(x,v)}{|v|},\frac{\alpha(y,v)}{|v|}\}^{1+\beta} \frac{\nabla_v f(x,v)-\nabla_v f(y,v)}{|x-y|^\beta}\Big\Vert_\infty \lesssim \Vert T_W-T_0\Vert_{C^2}.
\end{equation}

\end{mtheorem}

\begin{remark} For any $\tau \in\R^3$ with $\tau  \cdot n(x)=0$, we have $\tau ^t  (Id_{3 \times 3}-  n(x)\otimes  n(x) ) \nabla_x f= \tau^t  - \tau^t  n(x) n(x)^t \nabla_x f= \tau^t \nabla_x f = \tau \cdot \nabla_x f$. Therefore the second estimate (\ref{estF_tau}) implies that any tangential spatial derivatives of $f(x,v)$ is bounded in $L^\infty$ without any $\alpha$-weight. Also comparing the $C^{1,\beta}$ estimates~\eqref{estF_C1beta},\eqref{estF_C1betatang}, the weight in the tangential spatial derivative has a lower power in terms of $\alpha$.
\end{remark}

 An analog question for the dynamic problems has been recently answered by the second author and his collaborators in \cite{GKTT}. Namely, they construct a solution of the dynamic Boltzmann equation which is $C^1$ away from the grazing set (\ref{grazing}). Although one perhaps see a likelihood of analog result in the stationary problem, there are substantial difficulties which will be described behind the statement of the main theorem. Conceptually in the stationary setting one cannot rely on Gronwall-type estimate as the regularity estimate of dynamic problem in \cite{GKTT}. Moreover the result of \cite{GKTT} is essentially a local-in-time result. For the stationary regularity recently the authors obtained a weighted $C^1$ result for the linear Boltzmann equation in \cite{Ikun}.

Now we illustrate the main ideas of the proof of \textbf{Main Theorem}.

The main difficulty is to estimate the contribution of the collision operator in~\eqref{fx_x_4} and the boundary in~\eqref{fx_x_3}.

The collision operator is $h=K(f)+\Gamma(f,f)$, which is defined in~\eqref{linear operator}. To estimate $K(f)$, we use the definition of $K(f)$ in Lemma~\ref{Lemma: K,Gamma}. In~\eqref{fx_x_4} the contribution of $K(f)$ is approximately
\[\int^t_0 \int_{\mathbb{R}^3} \mathbf{k}(v,u)\underbrace{\nabla_x f(x-(t-s)v,u)}_{(*)}\dd u \dd s.\]
We utilize the $u$-integration and trace back along the trajectory again. To be more specific, we express (*) by~\eqref{fx_x_1}-\eqref{fx_x_5}. The key term to be estimated is the contribution of the double collision operator~\eqref{fx_x_4}, where the $K(f)$ is involved again. This term has a multiple fold integral as
\begin{equation}\label{double duhamel}
\int^t_0 \dd s\int_{\mathbb{R}^3}\mathbf{k}(v,u)\dd u\int^s_0 \dd s'\int_{\mathbb{R}^3}\mathbf{k}(u,u')\nabla_x f(x-(t-s)v-(s-s')u,u')\dd u'.
\end{equation}
A key observation is we can exchange the $x$-derivative into $u$-derivative:
\[\nabla_x f(x-(t-s)v-(s-s')u,u')=-\frac{\nabla_u [f(x-(t-s)v-(s-s')u,u')]}{s-s'}.\]
When $s-s'\geq \e$ has a lower bound, thanks to the $u$-integral from $K(f)$, we can perform an integration by parts for $\dd u$ and thus remove the derivative. Note that $f$ has an $L^\infty$ control from the \textbf{Existence Theorem} and thus we control \eqref{double duhamel} with $s-s'\geq \e$ by $O(\e^{-1})\Vert wf\Vert_\infty$. On the other hand, if $s-s'\leq \e$ does not have a lower bound, the $\dd s'$ corresponds to a small time integral and we can bound \eqref{double duhamel} by a small number $\e$ with the weighted $C^1$ norm $\Vert \alpha\nabla_x f\Vert_\infty$. To be more specific, we include the $\alpha$ weight to have
\[\eqref{double duhamel}\mathbf{1}_{s-s'\leq \e}\leq \Vert \alpha\nabla_x f\Vert_\infty\int^t_0 \dd s \int_{\mathbb{R}^3}\dd u\mathbf{k}(v,u)\underbrace{\int^s_{s-\e}\dd s' \int_{\mathbb{R}^3}\dd u' \frac{\mathbf{k}(u,u')}{\alpha(x-(t-s)v,u)} }_{(**)}.\]
Then we need to analyze the nonlocal to local estimate (**). This type of estimate has been studied in~\cite{GKTT} with a different power $\alpha^{p}$ with $3>p>1$. Here we are able generalize the result to power $p=1$ and extract a small number from the small time integral, for detail one can see Lemma \ref{Lemma: NLN}. Thus we bound~\eqref{double duhamel} with $s-s'\leq \e$ by $\frac{O(\e)\Vert \alpha\nabla_x f\Vert_\infty}{\alpha(x,v)}$.

Then we discuss the boundary condition~\eqref{fx_x_3}. Let $g$ be the diffuse BC in~\eqref{diffuse_f intro}, by a proper change of variable we can bring the derivative inside the integration:
\[\int_{n(\xb)\cdot v^1>0}\underbrace{\nabla_x f(\xb,v^1)}_{(***)}   \sqrt{\mu(v^1)}|n(\xb)\cdot v^1|\dd v^1 .  \]
Here we utilize the $v^1$ integral. We trace back along the trajectory by expressing (***) by~\eqref{fx_x_1}-\eqref{fx_x_5} . The contribution of the collision operator~\eqref{fx_x_4} can be handled in a similar way as~\eqref{double duhamel}. To be more specific, the contribution of the~\eqref{fx_x_4} reads
\[\int_{n(\xb)\cdot v^1>0} \int^{t^1}_0 \dd s\int_{\mathbb{R}^3} \mathbf{k}(v^1,u)\nabla_x f(\xb-(t^1-s)v^1,u)  \dd u \sqrt{\mu(v^1)}|n(\xb)\cdot v^1|\dd v^1 .\]
Since we have already had a $v^1$-integration from the boundary condition, we can exchange the $x$-derivative to $v^1$-derivative and perform the integration by parts for $\dd v^1$.

Then the main term to be estimated is the contribution of the boundary~\eqref{fx_x_4}, which means the trajectory hit the boundary for the second time. We use the notation $x^2$(defined in~\eqref{xi}) to denote such backward exit position. This term reads
\begin{equation}\label{double bdr}
 \int_{n(x^1)\cdot v^1>0} \nabla_{x^2} f(x^2,v^1)  \sqrt{\mu(v^1)}|n(x^1)\cdot v^1|\dd v^1 .
\end{equation}

Our idea is still integrating by part to remove the derivative $\nabla_{x^2}$ . To accomplish this goal, we observe that starting from a fixed position $x^1$, the map from $v^1$ to $(x^2,\tb^2)$ is one to one: $x^2=x^1-\tb^2 v^1$, thus we can perform the change of variable to change the $\dd v^1$ integral to $\dd x^2 \dd \tb^2$ integral. The change of variable and corresponding Jacobian is presented in Lemma \ref{Lemma: change of variable}. Then
\[\eqref{double bdr}=\int_{S_{x^2}}\int  \nabla_{x^2} f(x^2,v^1)  \sqrt{\mu(v^1)}|n(x^1)\cdot v^1|\times  \text{Jacobian } \dd \tb^2 \dd x^2 ,\]
and we integrate by part for $\dd x^2$ to remove the derivative $\nabla_{x^2}$. Thus with the control of $f$,~\eqref{double bdr} is bounded by $\Vert wf\Vert_\infty.$

For the tangential derivative~\eqref{estF_tau} and velocity derivative~\eqref{estF_v}, we have a better estimate for the derivative to $\xb$ as stated in~\eqref{simpleF_n}( for detail see Lemma \ref{Lemma: nabla tbxb} ). Thus we can use the Duhamel's formula with the integration by parts technique to obtain a better estimate, where the weight contains less singularity as in~\eqref{estF_tau} and~\eqref{estF_v}.

Now we illustrate the main idea of the weighted $C^{1,\beta}$ estimate. We use similar idea as~\eqref{double bdr}. More specifically, we express the difference $\frac{\nabla_x f(x,v)-\nabla_x f(y,v)}{|x-y|^\beta}$ using the difference of~\eqref{fx_x_1}-\eqref{fx_x_5}( see~\eqref{nablatbx-nablatby}-\eqref{partial e-e} for detail ). Among all the difference terms, the difference of the derivative of the backward exit position(or time) will generate higher singularity:
\begin{equation}\label{difference of nabla xb}
\begin{split}
    &\frac{\nabla_x\xb(x,v)-\nabla_x\xb(y,v)}{|x-y|^\beta} =\frac{\int_0^1 \nabla_\tau [\nabla_x\xb(x(\tau),v)] \dd \tau}{|x-y|^\beta} \\
     & =\frac{\int_0^1 \nabla_x \nabla_x\xb(x(\tau),v) \nabla_\tau x(\tau) \dd \tau}{|x-y|^\beta}=|x-y|^{1-\beta}\int_0^1 \nabla_x \nabla_x\xb(x(\tau),v) \nabla_\tau x(\tau) \dd \tau,
\end{split}
\end{equation}
where we denote $x(\tau)=\tau x+(1-\tau)y.$ As $\nabla_x \xb$ already contains singularity, \eqref{difference of nabla xb} is expected to have a higher singularity. This requires us to use a weight with higher power in terms of $\alpha$. Here the key factor to be estimated is the second derivative of $\xb(x(\tau),v)$ for $x(\tau)$ lying between $x$ and $y$. Since $\nabla_x \xb$ can be explicitly computed as in Lemma~\ref{Lemma: nabla tbxb}, we can directly take one more derivative to compute $\nabla_x \nabla_x \xb(x(\tau),v)$, approximately the computation reads
\begin{equation}\label{alpha cubic}
\nabla_x \frac{1}{n(\xb(x(\tau)),v)\cdot v}=O(1)\frac{\nabla_x \xb(x(\tau),v)}{|n(\xb(x(\tau)),v)\cdot v|^2}=\frac{O(1)}{\alpha^3(x(\tau),v)}.
\end{equation}
When $|x-y|$ does not have lower bound, we need to derive an estimate for $\alpha(x(\tau),v)$. Here we need a proper cutoff for $|x-y|$. We set the cutoff to be $\min\{\alpha(x,v),\alpha(y,v)\}$. When $|x-y|>\min\{\alpha(x,v),\alpha(y,v)\}$, combining with $\nabla_x \xb(x,v)$ the singularity in~\eqref{difference of nabla xb} is $\frac{1}{\min\{\alpha(x,v),\alpha(y,v)\}^{1+\beta}}$. Such singularity is already covered in the weight in~\eqref{estF_C1beta}. When $|x-y|<\min\{\alpha(x,v),\alpha(y,v)\}$, we can derive a lower bound for $\alpha(x(\tau),v)$ as:
\[\alpha(x(\tau),v)\gtrsim \min\{\alpha(x,v),\alpha(y,v)\}.\]
For detail we refer readers to Lemma \ref{Lemma: x-y}. Then counting the singularity using~\eqref{difference of nabla xb} and~\eqref{alpha cubic}, we need a weight with power $2+\beta$ as in~\eqref{estF_C1beta}.

Another difficulty occurs when we estimate the boundary. The term corresponds to the difference of the boundary condition is $\frac{\nabla_x f(\xb(x,v),v)-\nabla_x f(\xb(y,v),v)}{|\xb(x,v)-\xb(y,v)|^\beta}$. We express this term using the boundary condition~\eqref{diffuse_f intro}, approximately we have
\[\int_{n(\xb)\cdot v^1>0}\underbrace{\frac{\nabla_x f(\xb(x,v),v^1)-\nabla_x f(\xb(y,v),v^1)}{|\xb(x,v)-\xb(y,v)|^\beta}}_{(****)}|n(\xb)\cdot u|\sqrt{\mu(u)}\dd u.  \]
Then we express (****) along the trajectory again using the difference of~\eqref{fx_x_1}-\eqref{fx_x_5}. Similarly to~\eqref{double duhamel}, we get one term corresponds to the case that the trajectory hit the boundary twice. Roughly this term reads
\begin{equation}\label{bdr-bdr intro}
\int_{n(\xb(x,v))\cdot v^1>0}   \underbrace{\frac{\nabla_{\xb^2(x)} f(\xb^2(x),v^1)-\nabla_{\xb^2(y)} f(\xb^2(y),v^1)}{|\xb^2(x)-\xb^2(y)|^\beta}}_{\eqref{bdr-bdr intro}_*}|n(\xb)\cdot u|\sqrt{\mu(u)}\dd u.
\end{equation}
Here $\xb^2(x)$ and $\xb^2(y)$ are defined in~\eqref{second backward}, which denote the second backward exit position for the trajectory starting from $x$ and $y$ respectively. \eqref{bdr-bdr intro} looks similar to~\eqref{double bdr}, however, after removing the derivative using the same integration by parts technique, we have one more term that will generate singularity:
\[\frac{f(\xb^2(x),v^1)-f(\xb^2(y),v^1)}{|\xb^2(x)-\xb^2(y)|^\beta}.\]
We use $\Vert \alpha\nabla_x f\Vert_\infty$ to control this term and due to the weight $\alpha$ there is an extra singularity $\frac{1}{\alpha^\beta}$. Indeed due to this extra singularity,~\eqref{bdr-bdr intro} is no longer integrable after the integration by parts. To deal with this difficulty, we split this singularity into two cases. When $|n(\xb)\cdot v^1|>\e$ has a lower bound, clearly we can integrate by part to remove the derivative and do not need to concern the singularity.

When $|n(\xb)\cdot v^1|<\e$, we are not going to apply the integration by parts technique. Instead we aim to extract a small number from this integration. We note that $\xb^2(x),\xb^2(y)\in \partial\Omega$ are on the boundary, which corresponds to a surface with two dimension after transformation( see~\eqref{O_p}~\eqref{orthogonal} ), thus $\nabla_{\xb^2(x)} f(\xb^2(x),v^1)-\nabla_{\xb^2(y)} f(\xb^2(y),v^1)$ indeed corresponds to the tangential derivative. Due to~\eqref{simpleF_tau} and~\eqref{nabla_tbxb} the estimate for the tangential derivative is expected to have less singularity. Thus$~\eqref{bdr-bdr intro}_*$, which represents the tangential derivative, is expected to have a less singularity with power $1+\beta$ as in~\eqref{estF_C1betatang}. Combining with an extra term $|n(\xb)\cdot v^1|$ in the integrand, the singularity in~\eqref{bdr-bdr intro} becomes $\frac{1}{\min\{\alpha(\xb(x,v),v^1),\alpha(\xb(y,v),v^1)\}^{\beta}}$, which is integrable since $\beta<1$. Moreover, under the condition $|n(\xb)\cdot v^1|<\e$, this integral generates a small number $O(\e)$ with the norm~\eqref{estF_C1betatang}.

Hence in order to obtain the weighted $C^{1,\beta}$ estimate~\eqref{estF_C1beta}, we will estimate the weighted $C^{1,\beta}$ estimate for the tangential derivative~\eqref{estF_C1betatang} at the same time. The estimate for~\eqref{estF_C1betatang} will be similar as we will take the difference of $\nabla_\parallel f(x,v)$ and $\nabla_\parallel f(y,v)$ using the Duhamel's formula, and such term are expected to have a weighted with power $1+\beta$. For detail we refer readers to Proposition \ref{Prop: C1beta}.\unhide

Below we state the outline for our paper. In section 2 we prove several lemmas which serve as preliminary. Section 3 and Section 4 are devoted to establish the ideas in Section 1.3 as well as the nonlocal-to-local estimate and (\ref{estF_n}).  In Section 5 and Section 6 we establish the rest of weighted $C^1$ estimates. Finally, in Section 7 we prove the weighted $C^{1,\beta}$ estimate.

\section{Preliminary}

\subsection{Basic Notions}

\hide
\begin{equation}\label{lesssim}
\textcolor{red}{f \lesssim  g \Leftrightarrow \text{there exists $0<C<\infty$ such that } f\leq Cg.}
\end{equation}
\begin{equation}\label{sim}
\textcolor{red}{f\thicksim g   \Leftrightarrow \text{there exists $0<C<\infty$ such that }
\frac{f}{C}\leq g\leq Cf.}
\end{equation}
\begin{equation}\label{big O}
\textcolor{red}{f = O(g) \Leftrightarrow \text{there exists $0<C<\infty$ such that } f = Cg.}
\end{equation}
\begin{equation}\label{little o}
\textcolor{red}{f = o(g) \Leftrightarrow \text{there exists $c \ll 1$ such that } f = cg.}
\end{equation}
\unhide

We record the unique existence theorem of \cite{EGKM}:
 %Recently in \cite{EGKM}, the second author and his collaborators constructed a unique solution in $L^\infty$ when a wall temperature is a small fluctuation around any constant temperature $T_0$. We note that in case of non-isothermal boundary any solutions cannot be a local Maxwellian and therefore they are non-equilibrium steady states. The unique solution is given by the following theorem.
\begin{existenceThrm}\label{theorem_EGKM} Assume the domain is open bounded and the boundary is smooth. For $\mathfrak{m}>0$ and $0<\varrho< {1}/{4}$, if $\sup_{x \in \p\O}|T_W(x)-T_0|\ll  1$, then there exists a unique mild solution
\Be\label{F}
F(x,v)=\mathfrak{m}M_{1,0,T_0}(v)+ \sqrt{M_{1,0,T_0}(v)} f(x,v) \geq0,
\Ee
 with $\iint_{\O\times \R^3} f \sqrt{M_{1,0,T_0}(v)}  =0$ to (\ref{BE_F}) and (\ref{diffuseBC_F}) such that  %\begin{equation}\label{eqn: weighted Linfty bound}
 \Be\label{infty_bound}
 \Vert w f\Vert_\infty   \lesssim \| T_W-T_0\|_{L^\infty(\partial\Omega)} , \ \ \ w(v):=e^{\varrho|v|^2} \ \text{with}  \ 0<\varrho< {1}/{4}.%, \quad x\in \partial \Omega.
 \Ee
\end{existenceThrm}

Without loss of generality, we assume $\mathfrak{m}=1, T_0=1$ in~\eqref{F}. Then we define the reference global Maxwellian and its perturbation:
\[\mu:=M_{1,0,1}, \ \ F(x,v)=\mu(v)+\sqrt{\mu(v)}f(x,v).\]
Plugging~\eqref{F} into~\eqref{BE_F} and~\eqref{diffuseBC_F} we obtain the equation and boundary condition for $f$:
 \begin{equation}\label{f}
 v\cdot \nabla_x f+\nu(v)f=K(f)+\Gamma(f,f),
 \end{equation}
 \begin{equation}\label{diffuse_f intro}
 f(x,v)|_{n(x)\cdot v<0}=\frac{M_W(x,v)}{\sqrt{\mu(v)}}\int_{n(x)\cdot u>0}f(x,u) \sqrt{\mu(u)} \{n(x)\cdot u\}\dd u+r(x,v).
 \end{equation}
 Here $\nu(v),K(f),\Gamma(f,f)$ are the linear Boltzmann operator(see~\cite{R}) given by
 \begin{equation}\label{linear operator}
   \nu(v)f=-\frac{Q(\mu,\sqrt{\mu}f)}{\sqrt{\mu}},\quad K(f)=\frac{Q(\sqrt{\mu}f,\mu)}{\sqrt{\mu}},\quad \Gamma(f,f)=\frac{Q(\sqrt{\mu}f,\sqrt{\mu}f)}{\sqrt{\mu}}.
 \end{equation}
The $r(x,v)$ is the remainder term. By $\sqrt{2\pi}\int_{n(x)\cdot u>0}%f(x,u)
\sqrt{\mu(u)} \{n(x)\cdot u\}\dd u= 1$, this term is given by
\begin{equation}\label{remainder term}
r(x,v):=\frac{M_W(x,v)/\sqrt{2\pi}-\mu(v)}{\sqrt{\mu(v)}}.
\end{equation}

Consider a linear transport equation with the inflow boundary condition
\begin{align}
v\cdot \nabla_x f + \nu(v) f &= h(x,v), \ \ (x,v) \in \O \times \R^3,
\label{inhomo_transport}
\\
f( x,v)&= g(x,v), \ \  (x,v )\in\gamma_-. \label{inflowBC}
\end{align}
As we can not rely on the Gronwall-type estimate, we will use the Duhamel's formula to express the equation along the trajectory:
\begin{equation}\label{trajectory}
\begin{split}
  f(x,v) & = \mathbf{1}_{t\geq \tb}e^{-\nu(v)\tb(x,v)} f(\xb(x,v),v) \\
   & +\mathbf{1}_{t<\tb} e^{-\nu(v)t}f(x-tv,v) \\
   & +\int^t_{\max\{0,t-\tb\}} e^{-\nu(v)(t-s)}h(x-(t-s)v,v) \dd s.
\end{split}
\end{equation}
Here we fix $t\gg 1$.

In order to obtain $C^1$ estimate we take the spatial derivative to~\eqref{trajectory} to get
\begin{align}
\p_{x_j}   f(x,v)&=  \mathbf{1}_{t\geq \tb } e^{-\nu(v)\tb(x,v)}\p_{x_j}   [f(\xb(x,v),v) ]
 \label{fx_x_1}
\\
    & -  \mathbf{1}_{t\geq \tb } \nu(v) \p_{x_j} \tb(x,v)  e^{-\nu(v)\tb(x,v)} f(\xb(x,v),v)
 \label{fx_x_2}
    \\
    &
    +\mathbf{1}_{t<\tb }e^{-\nu(v)t} \p_{x_j} [ f(x-tv,v)]
     \label{fx_x_3}
    \\
    & +\int_{\max\{0,t-\tb \}}^t e^{-\nu(v)(t-s)} \p_{x_j}  [h (   x-(t-s) v,v) ]\dd s
     \label{fx_x_4}\\
    &- \mathbf{1}_{t\geq \tb}\partial_{x_j} \tb e^{-\nu(v)\tb}h(x-\tb v,v) ,\label{fx_x_5}
\end{align}
where $\xb(x,v)$ and $\tb(x,v)$ represent the backward exit position and time which are defined in~\eqref{BET}. The derivative of $\tb(x,v)$ and $\xb(x,v)$ has singular behavior as stated in~\eqref{nabla_tbxb}, such singularity will be cancelled by our weight $\alpha$ defined in~\eqref{kinetic_distance}. With a compatibility condition it is standard to check the piecewise formula (\ref{fx_x_1})-(\ref{fx_x_5}) is actually a weak derivative of $f$ and continuous across $\{t=\tb(x,v)\}$ (see \cite{GKTTBV}) for the details.

\begin{definition}\label{definition: sto cycle}

 Recall the backward exit position $\xb$ and backward exit time $\tb$ in~\eqref{BET}, we define a stochastic cycles as $(x^0,v^0)= (x,v) \in \bar{\O} \times \R^3$ and inductively
\begin{align}
&x^1:= \xb(x,v), \   v^1 \in \{v^1\in \mathbb{R}^3:n(x^1)\cdot v^1>0\} , \label{vi}\\
&  v^{k}\in \{v^{k}\in \mathbb{R}^3:n(x^k)\cdot v^k>0\}, \ \ \text{for} \  k \geq 1, 
\label{xi}
\\
 &x^{k+1} := \xb(x^k, v^k) , \ \tb^{k}:= \tb(x^k,v^k) \ \ \text{for} \  n(x^k) \cdot v^k\geq 0  . \label{tbi}
\end{align}
Choose $t\geq0.$ We define $t^0=t$ and
\Be\label{ti}
t^{k} =  t-  \{ \tb + \tb^1 + \cdots + \tb^{k-1}\},  \ \ \text{for} \  k \geq 1.
\Ee

\end{definition}

\begin{remark}
 Here $x^{k+1}$ depends on $(x,v,x^1,v^1,\cdots, x^k,v^k)$, while $v^k$ is a free parameter whose domain~\eqref{xi} only depends on $x^k$. 
\end{remark}

 Recall \eqref{O_p}. Since the boundary is compact and $C^3$, for fixed $0<\delta_1 \ll 1$ we may choose a finite number of $p \in \tilde{\mathcal{P}} \subset\p\O$ and $0<\delta_2\ll 1$ such that $\mathcal{O}_p=\eta_p(
B_+(0; \delta_1)) \subset B(p;\delta_2) \cap \bar{\O}$ and $\{\mathcal{O}_p \}$ forms a finite covering of $\partial \Omega$. We further choose an interior covering $\mathcal{O}_0 \subset \O$ such that $\{ \mathcal{O}_p\}_{p \in \mathcal{P}}$ with $\mathcal{P} = \tilde{\mathcal{P}}\cup \{0\}$ forms an open covering of $\bar\O$. 
We define a partition of unity 
\Be\label{iota} 
\mathbf{1}_{\bar\O} (x)= 
\sum_{p \in \mathcal{P}} \iota_p(x) 
 \text{ such that }0 \leq \iota_p(x) \leq 1, \ \ 
  \iota_p(x) \equiv 0 \ \text{for} \ x  \notin \mathcal{O}_p.
 \Ee
 Without loss of generality (see \cite{KL}) we can always reparametrize $\eta_p$ such that $\partial_{\mathbf{x}_{p,i}} \eta_p \neq 0$ for $i=1,2,3$ at $\mathbf{x}_{p,3}=0$, and an \textit{orthogonality} holds as
%\[\eta(y_1,y_2,0)\in \partial \Omega, \text{ and } \]
\Be\label{orthogonal}
  \partial_{\mathbf{x}_{p,i}}\eta_p \cdot \partial_{\mathbf{x}_{p,j}}\eta_p =0 \ \ \text{at} \ \ \mathbf{x}_{p,3}=0 \text{  for  } i\neq j \text{ and } i,j\in \{1,2,3\}. 
\Ee
 For simplicity, we denote 
\begin{equation}\label{partial_i eta}
 \partial_i \eta_p(\mathbf{x}_p): = \partial_{\mathbf{x}_{p,i}} \eta_p. 
\end{equation}

\begin{definition} For $x \in \bar{\O}$, we choose $p \in\mathcal{P}$ as in (\ref{O_p}). We define
\begin{align}
    T_{\mathbf{x}_p}&
    =\left(
                               \begin{array}{ccc}
           \frac{\p_1 \eta_p(\mathbf{x}_p)}{\sqrt{g_{p,11}(\mathbf{x}_p) }}
           &      \frac{\p_2 \eta_p(\mathbf{x}_p)}{\sqrt{g_{p,22}(\mathbf{x}_p) }}
           &     \frac{\p_3 \eta_p(\mathbf{x}_p)}{\sqrt{g_{p,33}(\mathbf{x}_p) }}
            \\
                               \end{array}
                             \right)^t,\label{T}
\end{align}
with $g_{p,ij}(\mathbf{x}_p)    =\langle \partial_i \eta_p(\mathbf{x}_p),\partial_j \eta_p(\mathbf{x}_p)\rangle$ for $i,j\in \{1,2,3\}$. 
Here $A^t$ stands the transpose of a matrix $A$. Note that when $\mathbf{x}_{p,3}=0$, $T_{\mathbf{x}_p}       \frac{\p_i \eta_p(\mathbf{x}_p)}{\sqrt{g_{p,ii}(\mathbf{x}_p) }}
  = e_i$ for $i=1,2,3$ where $\{e_i\}$ is a standard basis of $\R^3$.

We define
\Be\label{bar_v}
\mathbf{v}_j(\mathbf{x}_p) = \frac{\p_j \eta_p(\mathbf{x}_p)}{\sqrt{g_{p,jj}(\mathbf{x}_p) }} \cdot  v.
\Ee
\end{definition}

We note that from (\ref{orthogonal}), the map $T_{\mathbf{x}_p}$ is an orthonormal matrix when $\mathbf{x}_{p,3}=0$. Therefore both maps $v \rightarrow \mathbf{v} (\mathbf{x}_p )$ and $\mathbf{v} (\mathbf{x}_p ) \rightarrow v$ have a unit Jacobian. This fact induces a new representation of boundary integration of diffuse boundary condition in (\ref{diffuse_f intro}): For $x \in \p\O$ and $p \in\mathcal{P}$ as in (\ref{O_p}),
%\begin{lemma}
\begin{equation}
\begin{split}
\label{eqn: diffuse for f}
 \int_{n(x)\cdot v>0}f(x,v)\sqrt{\mu(v)}\{n(x)\cdot v\}\dd v
 = \int_{\mathbf{v} _{p ,3}>0}f( \eta_{p } (\mathbf{x}_{p }    ), T^t_{\mathbf{x} _{p }} \mathbf{v} ( \mathbf{x}_p) )\sqrt{\mu(\mathbf{v} (\mathbf{x}_p))}\mathbf{v} _{ 3}(\mathbf{x}_p) \dd\mathbf{v}  (\mathbf{x}_p).
\end{split}
\end{equation}
%\end{lemma}
%\begin{proof}The proof is simply followed by a change of variables $v \to \mathbf{v}(\mathbf{x}_p)$ with a unit Jacobian.
We have used the fact of $\mu(v )=\mu(|v |)=\mu(|T^t_{\mathbf{x}_{p } }\mathbf{v} (\mathbf{x}_p) |)=\mu(|\mathbf{v} (\mathbf{x}_p)  |)=\mu(\mathbf{v} (\mathbf{x}_p) ) $ and $\mathbf{x}_{p,3}=0$. %\end{proof}

 Now we reparametrize the stochastic cycle using the local chart defined in Definition \ref{definition: sto cycle}. 

\begin{definition}\label{definition: chart}
Recall the stochastic cycles (\ref{xi}). For each cycle $x^k$ let us choose $p^k \in \mathcal{P}$ in (\ref{O_p}). Then we denote
\Be\begin{split}\label{xkvk}
\mathbf{x}^k_{p^k}&:= (\mathbf{x}^k_{p^k,1}, \mathbf{x}^k_{p^k,2},0)   \text{ such that }
\eta_{p^k} (\mathbf{x}^k_{p^k}) = x^k, \ \ \text{for} \  k \geq 1,
\\
\mathbf{v}^k_{p^k,i}&:=   \frac{\p_j \eta_{p^k}(\mathbf{x}_{p^k}^k)}{\sqrt{g_{p^k,jj}(\mathbf{x}_{p^k}^k) }} \cdot  v^k ,  \ \ \text{for} \  k \geq 1.
\end{split}
\Ee
Conventionally we denote
\Be\label{x0v0}
\mathbf{x}^0_{p^0}:= x^0=x, \  \ \mathbf{v}^0_{p^0}:= v^0=v.
\Ee

We define
 \Be
\p_{\mathbf{x}^{k}_{p^{k},i}}[
a( \eta_{p^{k}} ( \mathbf{x}_{p^{k} }^{k}  ),    {v}^{k} ) ]
 :=
\frac{\p \eta_{p^{k}}(\mathbf{x}^{k}_{p^{k},i})}{\p \mathbf{x}^{k}_{p^{k},i}}
\cdot \nabla_x a ( \eta_{p^{k}} ( \mathbf{x}_{p^{k} }^{k}  ), v^{k}) , \ \ i=1,2.
\label{fBD_x1}
\Ee
Conventionally we denote $\underline{\nabla}_{x^k} a(x^k, v^k)
=\big( \p_{\mathbf{x}^{k}_{p^{k},1}}[
a( \eta_{p^{k}} ( \mathbf{x}_{p^{k} }^{k}  ),    {v}^{k} ) ], \p_{\mathbf{x}^{k}_{p^{k},2}}[
a( \eta_{p^{k}} ( \mathbf{x}_{p^{k} }^{k}  ),    {v}^{k} ) ]
\big).
$
\end{definition}

%We need several lemmas as preparation for our analysis. \textcolor{red}{We summarize them in the rest subsections.}
\subsection{Properties of stochastic cycle}
 
In this subsection we list useful properties of the stochastic cycle defined in Definition \ref{definition: sto cycle} and Definition \ref{definition: chart}. 

\begin{lemma}\label{Lemma: nabla tbxb}
For the $\tb$ and $\xb$ defined in~\eqref{xi} and~\eqref{tbi}, the derivative reads
\begin{equation}\label{xi deri tb}
\frac{\p \tb^{k+1}}{\p x_j^{k+1}} =
	\frac{1}{ \V_{ 3 }(\mathbf{x}_{p^{k+2}})  }
	\frac{  \p_{3} \eta_{{p}^{k+2}}(x^{k+2})}{\sqrt{g_{{p}^{k+2},33}(x^{k+2})}} \cdot
	e_j, 
\end{equation}

\begin{equation}\label{vi deri tb}
\frac{\partial \tb^{k+1}}{\partial v_j^{k+1}}=-\frac{\tb^{k+1}e_j}{\mathbf{v}^{k+1}_{p^{k+2},3}}\cdot \frac{\partial_3 \eta_{p^{k+2}}}{\sqrt{g_{p^{k+2},33}}}\Big|_{x^{k+2}}.
\end{equation}
And thus
\Be
\begin{split}\label{nabla_tbxb}
\nabla_x \tb = \frac{n(\xb)}{n(\xb) \cdot v},\ \
\nabla_v \tb = - \frac{\tb n(\xb)}{n(\xb) \cdot v},\\
\nabla_x \xb = Id_{3\times 3} - \frac{n(\xb) \otimes v}{n(\xb) \cdot v},\ \
\nabla_v \xb = - \tb Id + \frac{ \tb n(\xb) \otimes v}{n(\xb) \cdot v}.
\end{split}\Ee

For $i=1,2$,
\begin{equation}\label{xi deri xbp}
\frac{\partial \mathbf{x}_{p^{k+2},i}^{k+2}}{\partial x^{k+1}_j}=\frac{1}{\sqrt{g_{p^{k+2}, ii} (\mathbf{x}^{k+2}_{p^{k+2} } )}}
\left[
\frac{\p_{i} \eta_{p^{k+2}} (\mathbf{x}^{k+2}_{p^{k+2} } ) }{\sqrt{g_{p^{k+2},ii}(\mathbf{x}^{k+2}_{p^{k+2} } ) }}
- \frac{\mathbf{v}_{p^{k+2}, i}}{\mathbf{v}_{p^{k+2}, 3}}
\frac{\p_{3} \eta_{p^{k+2}} (\mathbf{x}^{k+2}_{p^{k+2} } )  }{\sqrt{g_{p^{k+2},33}(\mathbf{x}^{k+2}_{p^{k+2} } )}}
\right] \cdot e_j,
\end{equation}

\begin{equation}\label{xip deri xbp}
\frac{\p \mathbf{x}^{k +2}_{p^{k+2},i}}{\p{\mathbf{x}^{k+1  }_{p^{k+1 },j}}} = \frac{1}{\sqrt{g_{p^{k+2}, ii} (\mathbf{x}^{k +2}_{p^{k+2} } )}}
\left[
\frac{\p_{i} \eta_{p^{k+2}} (\mathbf{x}^{k +2}_{p^{k+2} } ) }{\sqrt{g_{p^{k+2},ii}(\mathbf{x}^{k +2}_{p^{k+2} } ) }}
- \frac{\mathbf{v}^{k+2}_{p^{k+2}, i}}{\mathbf{v}^{k+2}_{p^{k+2}, 3}}
\frac{\p_{3} \eta_{p^{k+2}} (\mathbf{x}^{k +2}_{p^{k+2} } )  }{\sqrt{g_{p^{k+2},33}(\mathbf{x}^{k +2}_{p^{k+2} } )}}
\right] \cdot \p_j \eta_{p^{k+1}}(\mathbf{x}^{k +1}_{p^{k+1} } ) ,
\end{equation}

\begin{equation}\label{vi deri xbp}
\frac{\partial \mathbf{x}^{k+2}_{p^{k+2},i}}{\partial v_j^{k+1}}=-\tb^{k+1}e_j \cdot \frac{1}{\sqrt{g_{p^{k+2},ii}(\mathbf{x}_{p^{k+2}}^{k+2}))}} \Big[\frac{\partial_i \eta_{p^{k+2}}}{\sqrt{g_{p^{k+2},ii}}}\Big|_{x^{k+2}}- \frac{\mathbf{v}_{p^{k+1},i}^{k+1}}{\mathbf{v}_{p^{k+2},3}^{k+1}}\frac{\partial_3 \eta_{p^{k+2}}}{\sqrt{g_{p^{k+2},33}}}\Big|_{x^{k+2}} \Big].
\end{equation}

\end{lemma}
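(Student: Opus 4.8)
## Proof proposal for Lemma~\ref{Lemma: nabla tbxb}

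The plan is to derive all the formulas from a single implicit relation: $\xb^{k+1} = \eta_{p^{k+2}}(\mathbf{x}^{k+2}_{p^{k+2}})$ lies on $\partial\O$, and by definition of the backward exit time $\xb^{k+1} = x^{k+1} - \tb^{k+1} v^{k+1}$. I would start from the scalar constraint $\xi(x^{k+1} - \tb^{k+1} v^{k+1}) = 0$ (equivalently, the third local coordinate $\mathbf{x}^{k+2}_{p^{k+2},3} = 0$), which implicitly determines $\tb^{k+1}$ as a function of $(x^{k+1}, v^{k+1})$. Differentiating this identity in $x^{k+1}_j$ and in $v^{k+1}_j$ and solving for $\p_{x^{k+1}_j}\tb^{k+1}$, $\p_{v^{k+1}_j}\tb^{k+1}$ is the heart of the computation; the only subtlety is that we must differentiate through the chart $\eta_{p^{k+2}}$ rather than $\xi$ directly, so the transversality factor that appears in the denominator is $\p_3\eta_{p^{k+2}}(\mathbf{x}^{k+2})\cdot v^{k+1}/\sqrt{g_{p^{k+2},33}} = \V_3(\mathbf{x}_{p^{k+2}})$ rather than $\nabla\xi\cdot v$. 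This gives \eqref{xi deri tb} and \eqref{vi deri tb} directly; the sign and the factor $\tb^{k+1}$ in \eqref{vi deri tb} come from the homogeneity $x^{k+1} - \tb^{k+1}v^{k+1}$ being linear in the combination $\tb^{k+1}v^{k+1}$.

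Next I would convert \eqref{xi deri tb} into the coordinate-free formulas \eqref{nabla_tbxb}. Since at $\mathbf{x}^{k+2}_{p^{k+2},3}=0$ the vector $\p_3\eta_{p^{k+2}}/\sqrt{g_{p^{k+2},33}}$ is the unit inward/outward normal (up to the sign fixed by the reparametrization convention in~\eqref{orthogonal}), the right-hand side of \eqref{xi deri tb} is exactly $n(\xb)_j/(n(\xb)\cdot v)$; summing the components gives $\nabla_x\tb = n(\xb)/(n(\xb)\cdot v)$, and the $v$-version follows identically with the extra $-\tb$ factor. Then $\nabla_x\xb$ and $\nabla_v\xb$ come from the chain rule applied to $\xb = x - \tb(x,v)v$: $\nabla_x\xb = \mathrm{Id} - v\otimes\nabla_x\tb = \mathrm{Id} - \frac{v\otimes n(\xb)}{n(\xb)\cdot v}$ — here I would be careful about which slot the normal sits in, writing it as $n(\xb)\otimes v$ to match the stated display — and $\nabla_v\xb = -\tb\,\mathrm{Id} - v\otimes\nabla_v\tb = -\tb\,\mathrm{Id} + \frac{\tb\, n(\xb)\otimes v}{n(\xb)\cdot v}$.

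For \eqref{xi deri xbp}, \eqref{xip deri xbp}, \eqref{vi deri xbp} I would differentiate the relation $\mathbf{x}^{k+2}_{p^{k+2},i}$, $i=1,2$, which is obtained by projecting $\eta_{p^{k+2}}^{-1}(x^{k+1}-\tb^{k+1}v^{k+1})$ onto its first two components. Using the inverse of the Gram-normalized frame $\{\p_i\eta_{p^{k+2}}/\sqrt{g_{p^{k+2},ii}}\}$ (which at $\mathbf{x}_{p^{k+2},3}=0$ is orthonormal by \eqref{orthogonal}), one has $\p_{x^{k+1}_j}\mathbf{x}^{k+2}_{p^{k+2},i} = \frac{1}{\sqrt{g_{p^{k+2},ii}}}\frac{\p_i\eta_{p^{k+2}}}{\sqrt{g_{p^{k+2},ii}}}\cdot(e_j - v^{k+1}_j\,\p_{x^{k+1}_j}\tb^{k+1}\text{-contribution})$, and substituting the already-derived $\p\tb^{k+1}$ produces the bracketed expression with the $\mathbf{v}_{p^{k+2},i}/\mathbf{v}_{p^{k+2},3}$ correction term. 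The chain-rule variants \eqref{xip deri xbp} (differentiating in the chart coordinate $\mathbf{x}^{k+1}_{p^{k+1},j}$ rather than in $x^{k+1}_j$) and \eqref{vi deri xbp} (differentiating in $v^{k+1}_j$, which brings the $-\tb^{k+1}$ prefactor exactly as in \eqref{vi deri tb}) follow by the same manipulation with $e_j$ replaced by $\p_j\eta_{p^{k+1}}(\mathbf{x}^{k+1}_{p^{k+1}})$ and by $-\tb^{k+1}e_j$ respectively.

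The main obstacle is bookkeeping rather than conceptual: keeping the chart indices $p^{k}, p^{k+1}, p^{k+2}$ straight, remembering that $v^{k+1}$ is a \emph{free} parameter (so $\p_{x^{k+1}}$ does not act on it) while $\tb^{k+1}$ and $x^{k+2}$ are \emph{dependent}, and correctly using the orthogonality relation~\eqref{orthogonal} which holds only \emph{on} the boundary $\mathbf{x}_{p,3}=0$ — precisely where all the points $x^k$ live, so this is exactly what makes the Gram-normalized frame orthonormal and lets us invert it cleanly. One should also check that the denominators $\V_3(\mathbf{x}_{p^{k+2}})$, $\mathbf{v}^{k+1}_{p^{k+2},3}$ are nonzero, which is guaranteed away from the grazing set $\gamma_0$ by $n(x^{k+1})\cdot v^{k+1}>0$ in~\eqref{xi}.
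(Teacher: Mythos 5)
Your proposal is correct and takes essentially the same route as the paper: the paper also starts from the implicit identity $\eta_{p^{k+2}}(\mathbf{x}^{k+2}_{p^{k+2}})=x^{k+1}-\tb^{k+1}v^{k+1}$, differentiates it, and then isolates $\p\tb^{k+1}$ and $\p\mathbf{x}^{k+2}_{p^{k+2},i}$ by taking inner products with the (at $\mathbf{x}_{p,3}=0$, orthonormal) frame vectors $\p_3\eta_{p^{k+2}}/\sqrt{g_{33}}$ and $\p_i\eta_{p^{k+2}}/g_{ii}$ — which is precisely the "inversion of the Gram-normalized frame" you describe, just phrased as projection of the vector identity rather than differentiation of a scalar constraint involving $\eta^{-1}$. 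Your explicit attention to the transpose slot in $\nabla_x\xb$ and to the chain-rule replacement $e_j\mapsto\p_j\eta_{p^{k+1}}$ vs.\ $e_j\mapsto -\tb^{k+1}e_j$ for the chart/velocity variants matches the paper's bookkeeping, so no gap.
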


\begin{proof}
First of all we have
\begin{equation}\label{xk+2 xk+1}
\begin{split}
x^{k+2}=\eta_{p^{k+2}}(\mathbf{x}_{p^{k+2}}^{k+2})&=x^{k+1}-\tb^{k+1}v^{k+1} \\
&=\eta_{p^{k+1}}(\mathbf{x}_{p^{k+1}}^{k+1})-\tb^{k+1}v^{k+1}.
\end{split}
\end{equation}

\textit{Proof of~\eqref{xi deri tb}.} We take $\frac{\partial}{\partial x_j^{k+1}}$ to~\eqref{xk+2 xk+1} to get
\begin{equation}\label{x:partial x_k}
 \sum_{l=1,2}\frac{\partial \mathbf{x}^{k+2}_{p^{k+2},l}}{\partial x_j^{k+1}}\frac{\partial \eta_{p^{k+2}}}{\partial \mathbf{x}^{k+2}_{p^{k+2},l}}\Big|_{x^{k+2}}=-\tb^{k+1}\frac{\partial v^{k+1}}{\partial x_j^{k+1}}-\frac{\partial \tb^{k+1}}{\partial x_j^{k+1}}v^{k+1}+e_j=-\frac{\p \tb^{k+1}}{\p x_j^{k+1}}v^{k+1}+e_j.
\end{equation}

Then we take an inner product with $\frac{\p_{{3}} \eta_{{p}^{k+2}}}{\sqrt{g_{{p}^{k+2},33}}}\Big|_{x^{k+2}}$ to~\eqref{x:partial x_k} to have
	\begin{equation} \label{x:diff pos iden dot 3}
	\begin{split}
 & \sum_{l=1,2} \frac{\p \X^{k+2}_{{p}^{k+2},l}}{\p x_j^{k+1}} \frac{\p\eta_{{p}^{k+2}}}{\p \X^{k+2}_{{p}^{k+2},l}}\Big\vert_{x^{k+2}}  \cdot
		\frac{\p_{{3}} \eta_{{p}^{k+2}} }{\sqrt{g_{{p}^{k+2},33}}}   \Big\vert_{x^{k+2}}   =-   \frac{\p \tb^{k+1}}{\p x_j^{k+1}} v^{k+1} 	
		\cdot \frac{ \p_{3} \eta_{{p}^{k+2}}}{\sqrt{g_{{p}^{k+2},33}}}\Big|_{ x^{k+2}} +
		e_j  \cdot \frac{ \p_{3} \eta_{{p}^{k+2}}}{
			\sqrt{g_{p^{k+2},33}}
		}\Big|_{x^{k+2}} .
	\end{split}
	\end{equation}
Due to~\eqref{orthogonal} the LHS equals zero. Now we consider the RHS. From~\eqref{bar_v}
	\[ v^{k+1} \cdot \frac{ \p_{3} \eta_{{p}^{k+2}}}{\sqrt{g_{{p}^{k+2},33}}}\big|_{ x^{k+2}} = \V_{3}(\mathbf{x}_{p^{k+2}}). \]
	From (\ref{x:diff pos iden dot 3}), we conclude~\eqref{xi deri tb}.

\textit{Proof of~\eqref{vi deri tb}.} We apply $\partial v^{k+1}_j$ to~\eqref{xk+2 xk+1} and take $\cdot \frac{\partial_3 \eta_{p^{k+2}}}{\sqrt{g_{p^{k+2},33}}}\Big|_{x^{k+2}}$ to have
\begin{align*}
  \frac{\partial x^{k+2}}{\partial v^{k+1}_j}\cdot \frac{ \partial_3 \eta_{p^{k+2}}} {\sqrt{g_{p^{k+2},33}}}\Big|_{x^{k+2}} = & \sum_{l=1}^2   \frac{\partial \mathbf{x}^{k+2}_{p^{k+2},l}}{\partial v_j^{k+1}} \frac{\partial \eta_{p^{k+2}}(\mathbf{x}_{p^{k+2}}^{k+2})}{\partial \mathbf{x}^{k+2}_{p^{k+2},l}}\cdot \frac{ \partial_3 \eta_{p^{k+2}} }{\sqrt{g_{p^{k+2},33}}}\Big|_{x^{k+2}} \\
  = & -\Big\{\tb^{k+1}e_j+ v^{k+1}\frac{\partial \tb^{k+1}}{\partial v_j^{k+1}} \Big\}\cdot \frac{\partial_3 \eta_{p^{k+2}}}{\sqrt{g_{p^{k+2},33}}}\Big|_{x^{k+2}}.
\end{align*}
Thus we apply~\eqref{orthogonal} and~\eqref{xkvk} and use~\eqref{bar_v} to obtain~\eqref{vi deri tb}.

\textit{Proof of~\eqref{nabla_tbxb}.} The first line of~\eqref{nabla_tbxb} follows directly from~\eqref{xi deri tb} and~\eqref{vi deri tb}. For the second line we take $\partial x_j^{k+1}$ and $\partial v_j^{k+1}$ to~\eqref{xk+2 xk+1}. Again using~\eqref{xi deri tb} and~\eqref{vi deri tb} we conclude~\eqref{nabla_tbxb}.

\textit{Proof of~\eqref{xi deri xbp}.} We take inner product with $\frac{\partial_i \eta_{p^{k+2}}}{g_{p^{k+2},ii}}\Big|_{x^{k+2}}$ to~\eqref{x:partial x_k} to have
\begin{equation*}
  \begin{split}
     & \sum_{l=1,2}\frac{\partial \mathbf{x}^{k+2}_{p^{k+2},l}}{\p x_j ^{k+1}}\frac{\partial \eta_{p^{k+2}}}{\partial \mathbf{x}^{k+2}_{p^{k+2},l}}\Big|_{x^{k+2}}\cdot \frac{\partial_i \eta_{p^{k+2}}}{g_{p^{k+2},ii}}\Big|_{x^{k+2}}=\frac{\partial \mathbf{x}_{p^{k+2},i}^{k+2}}{\partial x_j ^{k+1}} = -\frac{\partial \tb^{k+1}}{\partial x_j^{k+1}}  v^{k+1} \cdot \frac{\partial_i \eta_{p^{k+2}}}{g_{p^{k+2},ii}}\Big|_{x^{k+2}}+e_j\cdot \frac{\partial_i \eta_{p^{k+2}}}{g_{p^{k+2},ii}}\Big|_{x^{k+2}}.
  \end{split}
\end{equation*}

By~\eqref{bar_v},
\[v^{k+1}\cdot \frac{\partial_i \eta_{p^{k+2}}}{g_{p^{k+2},ii}}\Big|_{x^{k+2}}=\frac{\mathbf{v}_i(\mathbf{x}_{p^{k+2}})}{\sqrt{g_{p^{k+2},ii}}}.\]
Then from ~\eqref{xi deri tb} we conclude~\eqref{xi deri xbp}.

\textit{Proof of~\eqref{xip deri xbp}.} Since
\[\frac{\partial \mathbf{x}^{k+2}_{p^{k+2},i}}{\partial \mathbf{x}_{p^{k+1},j}^{k+1}}=\nabla_{x^{k+1}} \mathbf{x}_{p^{k+2},i}^{k+2}\cdot \partial_{\mathbf{x}_{p^{k+1},j}^{k+1}}\eta_{p^{k+1}}(\mathbf{x}_{p^{k+1}}^{k+1}),\]
by~\eqref{xi deri xbp} we conclude~\eqref{xip deri xbp}.

\textit{Proof of~\eqref{vi deri xbp}.} For $i=1,2$, $j=1,2,3$, we apply $\partial v^{k+1}_j$ to~\eqref{xk+2 xk+1} and take $\cdot \frac{\partial_i \eta_{p^{k+2}}}{\sqrt{g_{p^{k+2},ii}}}\Big|_{x^{k+2}}$ to obtain
\begin{align*}
  \frac{\partial x^{k+2}}{\partial v^{k+1}_j}\cdot \frac{\partial_i \eta_{p^{k+2}}}{\sqrt{g_{p^{k+2},ii}}}\Big|_{x^{k+2}}= & \sum_{l=1}^2   \frac{\partial \mathbf{x}^{k+2}_{p^{k+2},l}}{\partial v_j^{k+1}} \frac{\partial \eta_{p^{k+2}}(\mathbf{x}_{p^{k+2}}^{k+2})}{\partial \mathbf{x}^{k+2}_{p^{k+2},l}}\cdot \frac{\partial_i \eta_{p^{k+2}}}{\sqrt{g_{p^{k+2},ii}}}\Big|_{x^{k+2}} \\
 =  & \frac{\partial \mathbf{x}^{k+2}_{p^{k+2},i}}{\partial v_j^{k+1}} \sqrt{g_{p^{k+2},ii}(\mathbf{x}_{p^{k+2}}^{k+2}))}\\
  = &-\Big\{\frac{\partial \tb^{k+1}}{\partial v_j^{k+1}}v^{k+1}+\tb^{k+1}\frac{\partial v^{k+1}}{\partial v^{k+1}_j} \Big\} \cdot \frac{\partial_i \eta_{p^{k+2}}}{\sqrt{g_{p^{k+2},ii}}}\Big|_{x^{k+2}}\\
  = &-\Big\{\tb^{k+1}e_j-\frac{\tb^{k+1}e_j}{\mathbf{v}_{p^{k+2},3}^{k+1}}\cdot \frac{\partial_3 \eta_{p^{k+2}}}{\sqrt{g_{p^{k+2},33}}}\Big|_{x^{k+2}} v^{k+1}\Big\}\cdot \frac{\partial_i \eta_{p^{k+2}}}{\sqrt{g_{p^{k+2},ii}}}\Big|_{x^{k+2}}.
\end{align*}
Then we apply~\eqref{vi deri tb} obtain~\eqref{vi deri xbp}. \end{proof}

The following two lemmas are immediate consequences of Lemma \ref{Lemma: nabla tbxb}.

\begin{lemma}\label{Lemma: nabla tbxb bounded}
\begin{equation}\label{tb bounded}
\tb(x,v)\lesssim \frac{|n(\xb(x,v))\cdot v|}{|v|^2},
\end{equation}
and thus
\begin{equation}\label{nablav tb xb}
|\nabla_v \tb| \lesssim \frac{1}{|v|^2}, \quad |\nabla_v \xb| \lesssim \frac{1}{|v|}.
\end{equation}
\begin{equation}\label{v deri of T}
|\nabla_v T^t_{\mathbf{x}_p^1}|\lesssim \frac{\Vert \eta\Vert_{C^2}}{|v|}.
\end{equation}

\end{lemma}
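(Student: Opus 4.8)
\textbf{Proof plan for Lemma \ref{Lemma: nabla tbxb bounded}.} The plan is to deduce everything from the explicit formulas in Lemma \ref{Lemma: nabla tbxb} together with the geometry of the strictly convex domain, in the special case $k+1 = 0$ (so that $x^{k+1} = x$, $x^{k+2} = \xb(x,v)$, $v^{k+1} = v$). First I would establish \eqref{tb bounded}. Writing $\xb = x - \tb v$ and expanding $\xi(\xb)$ in a Taylor series along the segment from $x$ to $\xb$, one has, by the fundamental theorem of calculus applied twice to $s \mapsto \xi(x - sv)$,
\begin{equation}\label{taylor xi xb}
0 = \xi(\xb) = \xi(x) - \tb \, v\cdot \nabla_x \xi(x) + \int_0^{\tb} (\tb - s)\, v\cdot \nabla_x^2 \xi(x - sv) \cdot v \, \dd s.
\end{equation}
By strict convexity \eqref{convex}, the integral term is $\gtrsim \tb^2 |v|^2$, while on the other hand $-\xi(x) \geq 0$ (since $x \in \bar\O$) and $v\cdot\nabla_x\xi(x)$ is comparable to $v\cdot \nabla_x\xi(\xb) = -\,n(\xb)\cdot v\,|\nabla\xi(\xb)|$ up to a controlled error of order $\tb|v|^2$ coming from $\nabla_x\xi(x) - \nabla_x\xi(\xb)$. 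Rearranging \eqref{taylor xi xb} and absorbing that error (which is legitimate because, again by convexity, $\tb|v|$ is small relative to the domain size or $n(\xb)\cdot v$ dominates) yields $\tb^2|v|^2 \lesssim \tb\,|n(\xb)\cdot v| + \tb|v|\cdot\tb|v|^2$-type terms, from which \eqref{tb bounded} follows after dividing by $\tb|v|^2$. This is essentially the classical ``bounce-back estimate'' for convex domains and I expect it to be the main technical point; the delicate part is handling the case $n(\xb)\cdot v$ very small, where one must use that $-\xi(x)$ itself is then comparable to $\tilde\alpha(x,v)^2/|v|^2$ via the definition \eqref{kinetic_distance} and \eqref{n geq alpha}.

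Next I would derive \eqref{nablav tb xb} directly from the formulas in the second line of \eqref{nabla_tbxb}: since $\nabla_v \tb = -\tb\, n(\xb)/(n(\xb)\cdot v)$, we get $|\nabla_v \tb| \leq \tb / |n(\xb)\cdot v|$, and now plugging in \eqref{tb bounded} gives $|\nabla_v\tb| \lesssim |n(\xb)\cdot v| / (|v|^2 |n(\xb)\cdot v|) = 1/|v|^2$. Similarly $\nabla_v\xb = -\tb\, \mathrm{Id} + \tb\, n(\xb)\otimes v/(n(\xb)\cdot v)$, so $|\nabla_v\xb| \lesssim \tb + \tb |v|/|n(\xb)\cdot v| \lesssim \tb + 1/|v| \lesssim 1/|v|$, using \eqref{tb bounded} once more and the fact that $\tb|v|$ is bounded by the diameter of $\O$ (hence $\tb \lesssim 1/|v|$ as well for $|v|$ bounded below, while for $|v|$ large \eqref{tb bounded} already gives $\tb \lesssim 1/|v|$).

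Finally, for \eqref{v deri of T}, I would recall the definition \eqref{T} of $T_{\mathbf{x}_p^1}$ in terms of $\partial_i\eta_p$ and $g_{p,ij}$ evaluated at $\mathbf{x}_{p^1}^1$, where $\mathbf{x}_{p^1}^1$ is the chart coordinate of $x^1 = \xb(x,v) \in \p\O$. Since $\eta_p \in C^2$ (as the boundary is $C^3$ and the chart is built from it), the entries of $T_{\mathbf{x}_p^1}$ are $C^1$ functions of $\mathbf{x}_{p^1}^1$ with derivatives controlled by $\|\eta\|_{C^2}$ (note $g_{p,ii}$ is bounded below away from zero by the nondegeneracy $\partial_i\eta_p \neq 0$). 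Then by the chain rule $\nabla_v T^t_{\mathbf{x}_p^1} = \big(\partial_{\mathbf{x}_{p^1}^1} T^t\big)\, \nabla_v \mathbf{x}_{p^1}^1$, and $|\nabla_v \mathbf{x}_{p^1}^1| \lesssim |\nabla_v \xb| \lesssim 1/|v|$ by \eqref{nablav tb xb} (composing with the fixed $C^1$ chart inverse $\eta_p^{-1}$). Combining gives $|\nabla_v T^t_{\mathbf{x}_p^1}| \lesssim \|\eta\|_{C^2}/|v|$, which is \eqref{v deri of T}. Throughout, the only genuinely nontrivial input is the convexity-based estimate \eqref{tb bounded}; the rest is bookkeeping with the explicit derivative formulas and the smoothness of the boundary charts.
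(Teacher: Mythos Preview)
Your proposal is correct and follows the same route as the paper: convexity for \eqref{tb bounded}, then the explicit formulas \eqref{nabla_tbxb} for \eqref{nablav tb xb}, then the chain rule through the chart for \eqref{v deri of T}. The only comment is that you have made \eqref{tb bounded} harder than necessary by expanding around $x$. If instead you Taylor-expand $\xi(x)=\xi(\xb+\tb v)$ around $\xb$, you get directly
\[
\xi(x)=\tb\,v\cdot\nabla\xi(\xb)+\tfrac{1}{2}\tb^2\,v\cdot\nabla^2\xi(\cdot)\cdot v,
\]
and since $\xi(x)\le 0$ while the Hessian term is $\gtrsim \tb^2|v|^2$ by strict convexity \eqref{convex}, it follows in one line that $\tb|v|^2\lesssim |v\cdot\nabla\xi(\xb)|\sim |n(\xb)\cdot v|$. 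This is exactly what the paper records as ``$\frac{n(\xb)\cdot v}{|v|}\gtrsim |x-\xb|=\tb|v|$''; no case analysis on the size of $n(\xb)\cdot v$ and no appeal to the kinetic distance $\tilde\alpha$ are needed. Your treatments of \eqref{nablav tb xb} and \eqref{v deri of T} match the paper (the paper quotes the chart-coordinate derivative \eqref{vi deri xbp} rather than $\nabla_v\xb$ composed with $\eta_p^{-1}$, but these are equivalent).
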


\begin{proof} Clearly, \eqref{tb bounded} follows from $\frac{n(\xb)\cdot v}{|v|}\gtrsim x-\xb=\tb |v|$.

By~\eqref{nabla_tbxb} and~\eqref{tb bounded} we have
\[|\nabla_v \tb|\lesssim \frac{|n(\xb)\cdot v|}{|n(\xb)\cdot v|}\frac{1}{|v|^2}\lesssim \frac{1}{|v|^2},\]
\[|\nabla_v \xb|\lesssim \frac{|n(\xb)\cdot v|}{|v|^2}+\frac{|n(\xb)\cdot v|  |v|}{|n(\xb)\cdot v||v|^2}\lesssim \frac{1}{|v|}.\]

For~\eqref{v deri of T} by the definition of $T_{\mathbf{x}_p}$ in~\eqref{T}, and using~\eqref{vi deri xbp}, we have
\begin{align*}
  |\nabla_v T^t_{\mathbf{x}_p^1}| & \lesssim \Vert \eta\Vert_{C^2}\times  |\nabla_v [\mathbf{x}^1_{p^1,1}+\mathbf{x}^1_{p^2,1}]|  \lesssim \Vert \eta\Vert_{C^2}  \frac{|v|\tb}{|n(\eta_{p^1}(\mathbf{x}_{p^1}^1))\cdot v|}\lesssim \frac{\Vert \eta\Vert_{C^2}}{|v|}  ,
\end{align*}
where we have used~\eqref{tb bounded} in the last inequality.

Then the lemma follows.

\end{proof}

\begin{lemma}\label{Lemma: change of variable} The following map is one-to-one
\Be\label{map_v_to_xbtb}
v^{k+1} \in   \{ n(x^{k+1}) \cdot v^{k+1} >0: \xb(x^{k+1},v^{k+1}) \in B(p^{k+2}, \delta_2)\} \mapsto
(\mathbf{x}^{k+2}_{p^{k+2},1}, \mathbf{x}^{k+2}_{p^{k+2},2}, \tb^{k+1}),
\Ee
with
\Be\label{jac_v_to_xbtb}
\det\left(\frac{\p (\mathbf{x}^{k+2}_{p^{k+2},1}, \mathbf{x}^{k+2}_{p^{k+2},2}, \tb^{k+1})}{\p v^{k+1}}\right)= \frac{1}{\sqrt{ g_{p^{k+2},11}(\mathbf{x}^{k+2}_{p^{k+2}})  g_{p^{k+2},22}(\mathbf{x}_{p^{k+2}}^{k+2}) }}
\frac{|\tb^{k+1}|^3}{  |n(x^{k+2}) \cdot v^{k+1}| }.
\Ee

\end{lemma}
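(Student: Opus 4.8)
The plan is to establish Lemma~\ref{Lemma: change of variable} as a direct consequence of the explicit derivative formulas in Lemma~\ref{Lemma: nabla tbxb}, namely \eqref{xi deri tb} and \eqref{xi deri xbp} for the matrix of partials of $(\mathbf{x}^{k+2}_{p^{k+2},1}, \mathbf{x}^{k+2}_{p^{k+2},2}, \tb^{k+1})$ with respect to $v^{k+1}$. Actually one subtlety: the derivatives in \eqref{xi deri tb}, \eqref{xi deri xbp} are taken with respect to $x^{k+1}_j$, so I first need their $v^{k+1}$-analogues. But the relation \eqref{xk+2 xk+1}, $x^{k+2} = x^{k+1} - \tb^{k+1} v^{k+1}$, shows that $\partial_{v^{k+1}_j} = -\tb^{k+1} \partial_{x^{k+1}_j}$ acting on any function of $x^{k+2}$ alone \emph{after} one also accounts for the explicit $v^{k+1}$ occurrence; concretely \eqref{vi deri tb} and \eqref{vi deri xbp} already record exactly the needed $v^{k+1}$-derivatives. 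So the computation is: assemble the $3\times 3$ Jacobian matrix whose rows are $\nabla_{v^{k+1}} \mathbf{x}^{k+2}_{p^{k+2},1}$, $\nabla_{v^{k+1}} \mathbf{x}^{k+2}_{p^{k+2},2}$ (from \eqref{vi deri xbp}) and $\nabla_{v^{k+1}} \tb^{k+1}$ (from \eqref{vi deri tb}).

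First I would write each row in the orthonormal frame at $x^{k+2}$, i.e.\ in the basis $\{e_1^*, e_2^*, e_3^*\}$ with $e_i^* := \frac{\partial_i \eta_{p^{k+2}}}{\sqrt{g_{p^{k+2},ii}}}\big|_{x^{k+2}}$, which is orthonormal by \eqref{orthogonal} since $\mathbf{x}^{k+2}_{p^{k+2},3}=0$. From \eqref{vi deri tb}, $\nabla_{v^{k+1}} \tb^{k+1} = -\frac{\tb^{k+1}}{\mathbf{v}^{k+1}_{p^{k+2},3}} e_3^*$. From \eqref{vi deri xbp}, $\nabla_{v^{k+1}} \mathbf{x}^{k+2}_{p^{k+2},i} = -\frac{\tb^{k+1}}{\sqrt{g_{p^{k+2},ii}}}\big(e_i^* - \frac{\mathbf{v}_{p^{k+1},i}^{k+1}}{\mathbf{v}^{k+1}_{p^{k+2},3}} e_3^*\big)$ for $i=1,2$. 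Since changing to the orthonormal frame $\{e_i^*\}$ is an orthogonal change of coordinates on the velocity side (unit Jacobian), the determinant in \eqref{jac_v_to_xbtb} equals the determinant of the matrix with these three rows expressed in the $\{e_i^*\}$ components. That matrix is lower-triangular-like: rows $1,2$ have diagonal entries $-\tb^{k+1}/\sqrt{g_{p^{k+2},ii}}$ and off-diagonal contributions only in the third column, and row $3$ is $(0,0,-\tb^{k+1}/\mathbf{v}^{k+1}_{p^{k+2},3})$. Expanding the determinant (the $2\times 2$ upper-left block is diagonal, and the only other nonzero entries sit in the last column) gives
\[
\det = \Big(-\frac{\tb^{k+1}}{\sqrt{g_{p^{k+2},11}}}\Big)\Big(-\frac{\tb^{k+1}}{\sqrt{g_{p^{k+2},22}}}\Big)\Big(-\frac{\tb^{k+1}}{\mathbf{v}^{k+1}_{p^{k+2},3}}\Big) = -\frac{|\tb^{k+1}|^3}{\sqrt{g_{p^{k+2},11}g_{p^{k+2},22}}\ \mathbf{v}^{k+1}_{p^{k+2},3}}.
\]
Noting $\mathbf{v}^{k+1}_{p^{k+2},3} = v^{k+1}\cdot \frac{\partial_3\eta_{p^{k+2}}}{\sqrt{g_{p^{k+2},33}}}\big|_{x^{k+2}}$ is (up to the unit normalization of $\partial_3\eta/\sqrt{g_{33}}$) precisely $n(x^{k+2})\cdot v^{k+1}$, and taking absolute values, yields \eqref{jac_v_to_xbtb}.

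For injectivity of the map \eqref{map_v_to_xbtb}: given the data $(\mathbf{x}^{k+2}_{p^{k+2},1}, \mathbf{x}^{k+2}_{p^{k+2},2}, \tb^{k+1})$ we recover $x^{k+2} = \eta_{p^{k+2}}(\mathbf{x}^{k+2}_{p^{k+2},1}, \mathbf{x}^{k+2}_{p^{k+2},2}, 0) \in \partial\Omega$ uniquely (since $\eta_{p^{k+2}}$ is one-to-one by \eqref{O_p}), and then \eqref{xk+2 xk+1} gives $v^{k+1} = (x^{k+1} - x^{k+2})/\tb^{k+1}$ uniquely because $x^{k+1}$ is fixed and $\tb^{k+1}>0$ on the stated domain. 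Hence the map is one-to-one. The nonvanishing of the Jacobian on this domain (so that the change of variables is legitimate in integrals) is automatic from the formula since $\tb^{k+1}>0$ and $n(x^{k+2})\cdot v^{k+1}\neq 0$ away from the grazing set.

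The main obstacle is purely bookkeeping: correctly deriving the $v^{k+1}$-derivative formulas \eqref{vi deri tb}, \eqref{vi deri xbp} are already done in Lemma~\ref{Lemma: nabla tbxb}, so here it is just assembling the $3\times 3$ matrix in the right frame and making sure the orthonormal change of frame on the $v$-side contributes unit Jacobian (this uses \eqref{orthogonal} at $\mathbf{x}_{p^{k+2},3}=0$, exactly as in the discussion around \eqref{eqn: diffuse for f}), plus identifying $\mathbf{v}^{k+1}_{p^{k+2},3}$ with $n(x^{k+2})\cdot v^{k+1}$ up to the scalar $1/\sqrt{g_{p^{k+2},33}}$ which is absorbed into the normalization. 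No genuine analytic difficulty arises; the lemma is, as the text says, an immediate consequence of Lemma~\ref{Lemma: nabla tbxb}.
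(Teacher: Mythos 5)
Your proof is correct. For the Jacobian formula you take essentially the same route as the paper — assemble the three rows from \eqref{vi deri tb} and \eqref{vi deri xbp} — but you organize the computation by expressing the rows in the orthonormal frame $\{e_i^*\}$ at $x^{k+2}$, which makes the coefficient matrix visibly upper-triangular and gives the determinant by inspection. The paper instead expands the $3\times 3$ determinant directly via the cross product of the two $\mathbf{x}^{k+2}$-rows dotted against the $\tb^{k+1}$-row; the two computations are equivalent but your framing is cleaner.

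Your injectivity argument is genuinely different from the paper's and is in fact simpler. You observe that the image data $(\mathbf{x}^{k+2}_{p^{k+2},1}, \mathbf{x}^{k+2}_{p^{k+2},2}, \tb^{k+1})$ determine $x^{k+2}$ via $\eta_{p^{k+2}}$ and then $v^{k+1} = (x^{k+1}-x^{k+2})/\tb^{k+1}$ explicitly, so the map has a manifest global inverse on its domain (using only $\tb^{k+1}>0$). The paper instead argues by contradiction: if two velocities $v,\tilde v$ gave the same image, then a mean-value argument would produce a $\bar v$ on the segment $\overline{\tilde v\,v}$ where the Jacobian matrix has a nontrivial kernel, forcing $\tb^{k+1}(\bar v)=0$ by \eqref{jac_v_to_xbtb}, which contradicts $n(x^{k+1})\cdot \bar v>0$. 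Your route avoids the (mildly delicate) application of a mean value theorem to a vector-valued map and the need to check that the segment and the Jacobian formula remain valid along it; what the paper's argument buys is a uniform way of tying nonvanishing of the Jacobian to global injectivity, but for this specific linear structure your explicit inversion is the right tool.

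One small caveat: as written, the signed determinant you obtain is $-\,(\tb^{k+1})^3/\big(\sqrt{g_{p^{k+2},11}g_{p^{k+2},22}}\,\mathbf v^{k+1}_{p^{k+2},3}\big)$, and passing to \eqref{jac_v_to_xbtb} requires taking absolute values; the paper is equally informal on this point, since its displayed chain of equalities also drops a sign en route to the stated unsigned formula. You handle it the same way (``taking absolute values''), so this is not a gap relative to the paper — just worth flagging that \eqref{jac_v_to_xbtb} should be read as a statement about $|\det(\cdot)|$.
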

\begin{proof}
Combining~\eqref{vi deri tb} and~\eqref{vi deri xbp} we conclude
\begin{equation*}
  \begin{split}
  &\det\left(\frac{\p (\mathbf{x}^{k+2}_{p^{k+2},1}, \mathbf{x}^{k+2}_{p^{k+2},2}, \tb^{k+1})}{\p v^{k+1}}\right)\\
      &= |\tb^{k+1}|^3\det\left(
                                     \begin{array}{c}
                                       -\frac{1}{\mathbf{v}^{k+1}_{p^{k+2},3}} \frac{\partial_3 \eta_{p^{k+2}}}{\sqrt{g_{p^{k+2},33}}}\Big|_{x^{k+2}} \\
                                       \frac{1}{\sqrt{g_{p^{k+2},11}(\mathbf{x}_{p^{k+2}}^{k+2})}} \Big[\frac{\partial_1 \eta_{p^{k+2}}}{\sqrt{g_{p^{k+2},11}}}\Big|_{x^{k+2}}- \frac{\mathbf{v}_{p^{k+1},1}^{k+1}}{\mathbf{v}_{p^{k+2},3}^{k+1}}\frac{\partial_3 \eta_{p^{k+2}}}{\sqrt{g_{p^{k+2},33}}}\Big|_{x^{k+2}} \Big] \\
                                                                              \frac{1}{\sqrt{g_{p^{k+2},22}(\mathbf{x}_{p^{k+2}}^{k+2})}} \Big[\frac{\partial_2 \eta_{p^{k+2}}}{\sqrt{g_{p^{k+2},22}}}\Big|_{x^{k+2}}- \frac{\mathbf{v}_{p^{k+1},2}^{k+1}}{\mathbf{v}_{p^{k+2},3}^{k+1}}\frac{\partial_3 \eta_{p^{k+2}}}{\sqrt{g_{p^{k+2},33}}}\Big|_{x^{k+2}} \Big] \\
                                     \end{array}
                                   \right) \\
       &=  -|\tb^{k+1}|^3\frac{1}{\mathbf{v}^{k+1}_{p^{k+2},3}}    \frac{1}{\sqrt{g_{p^{k+2},11}(\mathbf{x}_{p^{k+2}}^{k+2})g_{p^{k+2},22}(\mathbf{x}_{p^{k+2}}^{k+2})}} \frac{\partial_3 \eta_{p^{k+2}}}{\sqrt{g_{p^{k+2},33}}}\Big|_{x^{k+2}}   \\
        &\quad \cdot \bigg(\Big[\frac{\partial_1 \eta_{p^{k+2}}}{\sqrt{g_{p^{k+2},11}}}\Big|_{x^{k+2}}- \frac{\mathbf{v}_{p^{k+1},1}^{k+1}}{\mathbf{v}_{p^{k+2},3}^{k+1}}\frac{\partial_3 \eta_{p^{k+2}}}{\sqrt{g_{p^{k+2},33}}}\Big|_{x^{k+2}} \Big]\times \Big[\frac{\partial_2 \eta_{p^{k+2}}}{\sqrt{g_{p^{k+2},22}}}\Big|_{x^{k+2}}- \frac{\mathbf{v}_{p^{k+1},2}^{k+1}}{\mathbf{v}_{p^{k+2},3}^{k+1}}\frac{\partial_3 \eta_{p^{k+2}}}{\sqrt{g_{p^{k+2},33}}}\Big|_{x^{k+2}} \Big] \bigg)\\
      & =\frac{1}{\sqrt{g_{p^{k+2},11}(\mathbf{x}_{p^{k+2}}^{k+2})g_{p^{k+2},22}(\mathbf{x}_{p^{k+2}}^{k+2})}}\frac{|\tb^{k+1}|^3}{\mathbf{v}^{k+1}_{p^{k+2},3}}=~\eqref{jac_v_to_xbtb},
   \end{split}
\end{equation*}
where we have used ~\eqref{orthogonal}.

Now we prove the map~\eqref{map_v_to_xbtb} is one to one. Assume that there exists $v$ and $\tilde{v}$ satisfy $\xb(x^{k+1},v)=\xb(x^{k+1},\tilde{v})$ and $\tb(x^{k+1},v)=\tb(x^{k+1},\tilde{v})$. We choose $p\in \partial \Omega$ near $\xb(x^{k+1},v)$ and use the same parametrization. Then by an expansion, for some $\bar{v}\in \overline{\tilde{v}v},$
\[0=\left(
      \begin{array}{c}
        \nabla_v \mathbf{x}_{p,1}(x^{k+1},\tilde{v})    \\
         \nabla_v \mathbf{x}_{p,2}(x^{k+1},\tilde{v})\\
        \nabla_v \tb(x^{k+1},\tilde{v})  \\
      \end{array}
    \right)-\left(
              \begin{array}{c}
              \nabla_v \mathbf{x}_{p,1}(x^{k+1},v)    \\
               \nabla_v \mathbf{x}_{p,2}(x^{k+1},v)   \\
               \nabla_v \tb(x^{k+1},v)    \\
              \end{array}
            \right)=\left(
                      \begin{array}{c}
                        \nabla_v \mathbf{x}_{p,1}(x,\bar{v}) \\
                      \nabla_v \mathbf{x}_{p,2}(x,\bar{v})   \\
                  \nabla_v \tb(x,\bar{v})       \\
                      \end{array}
                    \right)(\tilde{v}-v).
\]
This equality can be true only if the determinant of the Jacobian matrix equals zero. Then~\eqref{jac_v_to_xbtb} implies that $\tb(x^{k+1},\bar{v})=0$. But this implies $x^{k+1}=\xb(x^{k+1},\bar{v})$ and hence $n(x^{k+1})\cdot \bar{v}=0$ which is out of our domain.

\end{proof}

The next lemma describe the properties of a convex domain.
\begin{lemma}\label{Lemma: nv<v2}
Given a $C^2$ convex domain defined in~\eqref{convex},
\Be\label{nv<v2}
\begin{split}
|n_{p^{k+j}} (\mathbf{x}_{p^{k+j}} ^{k+j}) \cdot  (x^{k+1} -
 \eta_{p^{k+2}} (\mathbf{x}_{p^{k+2}} ^{k+2})
 )| \sim  |x^{k+1} -
 \eta_{p^{k+2}} (\mathbf{x}_{p^{k+2}} ^{k+2})
 |^2,
 \ \  &j=1,2,
 \\
% |n_{p^{k+j}} (\mathbf{x}_{p^{k+j}} ^{k+j}) \cdot  (x^{k+1} -
% \eta_{p^{k+2}} (\mathbf{x}_{p^{k+2}} ^{k+2})
% )| &\sim  |x^{k+1} -
% \eta_{p^{k+2}} (\mathbf{x}_{p^{k+2}} ^{k+2})
% |^2,\\
  {|\mathbf{v}^{k+1}_{p^{k+1}, 3}|  }  / {|\mathbf{v}^{k+1}_{p^{k+1}  }|}  \sim |x^{k+1} -
 \eta_{p^{k+2}} (\mathbf{x}_{p^{k+2}} ^{k+2})
 |.
 \end{split}
\Ee
For $j'=1,2$,
\Be\label{bound_vb_x}
%\begin{split}
%& \bigg| \frac{\p  v^{k+1}}{\p {\mathbf{x}_{p^{k+2},j^\prime}^{k+2}}}  \bigg|
%\leq \frac{\| \eta \|_{C^2}}{ \tb^{k+1}} , \\
  \bigg| \frac{\p[ n_{p^{k+j}} (\mathbf{x}_{p^{k+j}} ^{k+j}) \cdot  (x^{k+1} -
 \eta_{p^{k+2}} (\mathbf{x}_{p^{k+2}} ^{k+2})
 )]}{\p {\mathbf{x}_{p^{k+2},j^\prime}^{k+2}}}  \bigg| \lesssim \| \eta \|_{C^2}
 |x^{k+1} -
 \eta_{p^{k+2}} (\mathbf{x}_{p^{k+2}} ^{k+2})|
 ,
  \ \ j=1,2.
\Ee
\end{lemma}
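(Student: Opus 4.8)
The statement to prove is Lemma~\ref{Lemma: nv<v2}, which records two geometric facts about a strictly convex domain: the ``quadratic vanishing'' of the normal component $n(x^{k+j})\cdot(x^{k+1}-x^{k+2})$ relative to the chord length $|x^{k+1}-x^{k+2}|$, the comparability of $|\mathbf{v}^{k+1}_{p^{k+1},3}|/|\mathbf{v}^{k+1}_{p^{k+1}}|$ with that chord length, and a $C^2$ bound on the tangential derivative of the normal component.

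\textbf{Plan.} The key object is the scalar function $s\mapsto \xi(x^{k+1}-s v^{k+1})$ along the backward trajectory from $x^{k+1}$ in direction $-v^{k+1}$. Since $x^{k+1},x^{k+2}\in\partial\Omega$ we have $\xi(x^{k+1})=\xi(x^{k+2})=0$, where $x^{k+2}=x^{k+1}-\tb^{k+1}v^{k+1}$. I would Taylor-expand $\xi$ along this segment: writing $\phi(s):=\xi(x^{k+1}-sv^{k+1})$, we have $\phi(0)=\phi(\tb^{k+1})=0$, $\phi'(s)=-v^{k+1}\cdot\nabla\xi(x^{k+1}-sv^{k+1})$, and $\phi''(s)=v^{k+1}\cdot\nabla^2\xi\cdot v^{k+1}\sim|v^{k+1}|^2$ by the strict convexity~\eqref{convex}. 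The vanishing of $\phi$ at both endpoints forces $\phi'(0)\sim -\tb^{k+1}\phi''(\cdot)/2$, hence $|v^{k+1}\cdot\nabla\xi(x^{k+1})|\sim \tb^{k+1}|v^{k+1}|^2$, i.e. $|\mathbf{v}^{k+1}_{p^{k+1},3}|/|\mathbf{v}^{k+1}_{p^{k+1}}|\sim \tb^{k+1}|v^{k+1}|\sim|x^{k+1}-x^{k+2}|$, which is the second line of~\eqref{nv<v2}. (Here I use that $\nabla\xi$ is parallel to the normal $n$ near $\partial\Omega$ and $|\nabla\xi|$ is bounded above and below.) For the first line, I would expand $\xi$ quadratically around $x^{k+1}$ evaluated at $x^{k+2}$: $0=\xi(x^{k+2})=\xi(x^{k+1})+\nabla\xi(x^{k+1})\cdot(x^{k+2}-x^{k+1})+O(\|\xi\|_{C^2}|x^{k+2}-x^{k+1}|^2)$, so $|\nabla\xi(x^{k+1})\cdot(x^{k+1}-x^{k+2})|\lesssim|x^{k+1}-x^{k+2}|^2$; the matching lower bound $\gtrsim|x^{k+1}-x^{k+2}|^2$ comes from the strict convexity which gives a genuine quadratic lower bound for the second-order term along the chord (this is exactly where~\eqref{convex} is used and cannot be removed). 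The case $j=1$ (normal taken at $x^{k+1}$ itself) follows by comparing $n_{p^{k+1}}(\mathbf{x}^{k+1}_{p^{k+1}})$ with $\nabla\xi(x^{k+1})/|\nabla\xi(x^{k+1})|$; the case $j=2$ follows similarly after expanding around $x^{k+2}$ instead, using that $|n(x^{k+1})-n(x^{k+2})|\lesssim|x^{k+1}-x^{k+2}|$ so the two normal components differ by $O(|x^{k+1}-x^{k+2}|^2)$.

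For the derivative bound~\eqref{bound_vb_x}, I would differentiate the quantity $n_{p^{k+j}}(\mathbf{x}^{k+j}_{p^{k+j}})\cdot(x^{k+1}-\eta_{p^{k+2}}(\mathbf{x}^{k+2}_{p^{k+2}}))$ in $\mathbf{x}^{k+2}_{p^{k+2},j'}$ for $j'=1,2$. Since $x^{k+1}$ is held fixed and only $x^{k+2}=\eta_{p^{k+2}}(\mathbf{x}^{k+2}_{p^{k+2}})$ varies, differentiating produces two terms: one where the derivative hits the normal $n_{p^{k+j}}$ (only nonzero when $j=2$, producing $\partial_{j'}n(x^{k+2})\cdot(x^{k+1}-x^{k+2})$, bounded by $\|\eta\|_{C^2}|x^{k+1}-x^{k+2}|$ since $|\partial_{j'}n|\lesssim\|\eta\|_{C^2}$), and one where it hits the chord, producing $-n_{p^{k+j}}\cdot\partial_{j'}\eta_{p^{k+2}}(\mathbf{x}^{k+2}_{p^{k+2}})$. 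The latter is the crucial term: $\partial_{j'}\eta_{p^{k+2}}$ is a tangent vector at $x^{k+2}$, so $n(x^{k+2})\cdot\partial_{j'}\eta_{p^{k+2}}=0$ exactly; replacing $n_{p^{k+j}}$ by $n(x^{k+2})$ costs $|n_{p^{k+j}}-n(x^{k+2})|\lesssim|x^{k+1}-x^{k+2}|$ (for $j=1$; it is zero for $j=2$), so this term is also $\lesssim\|\eta\|_{C^2}|x^{k+1}-x^{k+2}|$. Summing gives~\eqref{bound_vb_x}.

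\textbf{Main obstacle.} The delicate point is the lower bound $|n\cdot(x^{k+1}-x^{k+2})|\gtrsim|x^{k+1}-x^{k+2}|^2$: the upper bound is just $C^2$-smoothness of $\partial\Omega$, but the lower bound genuinely requires uniform strict convexity~\eqref{convex} and the care is in showing the second-order term in the Taylor expansion along the chord dominates (the chord direction is tangent-like to the boundary, so the first-order term is itself $O(|x^{k+1}-x^{k+2}|^2)$ and one must separate scales). Concretely, writing $x^{k+2}-x^{k+1}=-\tb^{k+1}v^{k+1}$ and using $0=\phi(\tb^{k+1})=\phi'(0)\tb^{k+1}+\tfrac12\phi''(\sigma)(\tb^{k+1})^2$ with $\phi''\sim|v^{k+1}|^2$ isolates $\phi'(0)=-\tfrac12\phi''(\sigma)\tb^{k+1}$, hence $|\phi'(0)|\sim|v^{k+1}|^2\tb^{k+1}$ and $|n\cdot(x^{k+1}-x^{k+2})|=|\nabla\xi\cdot v^{k+1}|\tb^{k+1}/|\nabla\xi|\sim|v^{k+1}|^2(\tb^{k+1})^2\sim|x^{k+1}-x^{k+2}|^2$; this simultaneously yields the second line of~\eqref{nv<v2}. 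The rest is bookkeeping with $\|\eta\|_{C^2}$ bounds and the equivalence of the various normals ($n_{p^{k+j}}$, $\nabla\xi/|\nabla\xi|$, and $n$ of~\eqref{normal}) near the boundary.
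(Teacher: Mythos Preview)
Your proposal is correct and follows essentially the same route as the paper: a Taylor expansion of $\xi$ along the chord $x^{k+1}-x^{k+2}$, with strict convexity~\eqref{convex} supplying the lower bound and $C^2$-regularity the upper bound, and then the orthogonality of tangent and normal at the boundary to handle~\eqref{bound_vb_x}. The only cosmetic difference is that for the $j=1$ case of~\eqref{bound_vb_x} the paper compares the tangent vectors $\partial_{j'}\eta_{p^{k+2}}(\mathbf{x}^{k+2}_{p^{k+2}})$ and $\partial_{j'}\eta_{p^{k+1}}(\mathbf{x}^{k+1}_{p^{k+1}})$ rather than the normals $n(x^{k+1})$ and $n(x^{k+2})$ as you do, but these are equivalent via $\|\eta\|_{C^2}$.
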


\begin{proof}
First we prove~\eqref{nv<v2}. By Taylor's expansion, for $x,y\in \partial \Omega$ and some $0\leq t\leq 1$, 
\begin{align*}
  \xi(y)-\xi(x)  & =0-0=\nabla \xi(x)\cdot (y-x) + \frac{1}{2}(y-x)^T \nabla^2 \xi(x+t(y-x)) (y-x). 
\end{align*}
 Thus from~\eqref{normal} 
\[ |n(x)\cdot (x-y)| \sim  (y-x)^T \nabla^2 \xi(x+t(y-x)) (y-x).  \]

From the convexity~\eqref{convex}, we have
\[|n_{p^{k+j}} (\mathbf{x}_{p^{k+j}} ^{k+j}) \cdot  (x^{k+1} -
 \eta_{p^{k+2}} (\mathbf{x}_{p^{k+2}} ^{k+2})
 )|\geq C_\Omega |x^{k+1} -
 \eta_{p^{k+2}} (\mathbf{x}_{p^{k+2}} ^{k+2})
 |^2.\]
 Since $\xi$ is $C^2$ at least, 
\[|\{x^{k+1}-y\}\cdot n(x^{k+1})|\leq \Vert \xi\Vert_{C^2}|x^{k+1}-y|^2.\]

Also notice that
\[|n_{p^{k+1}} (\mathbf{x}_{p^{k+1}} ^{k+1}) \cdot  (x^{k+1} -
 \eta_{p^{k+2}} (\mathbf{x}_{p^{k+2}} ^{k+2})
 )|=|\mathbf{v}^{k+1}_{p^{k+1}, 3}| (t^{k+2}-t^{k+1}),\]
thus
\begin{equation*}
  \begin{split}
    \frac{|\mathbf{v}^{k+1}_{p^{k+1}, 3}|  }{|\mathbf{v}^{k+1}_{p^{k+1}  }|}\geq  &  \frac{1}{|\mathbf{v}^{k+1}_{p^{k+1}  }|}\frac{C_\Omega}{|t^{k+1}-t^{k+2}|}\Big|x^{k+1}-x^{k+2} \Big|^2\\
     = & C_\Omega|x^{k+1}-x^{k+2}|
     =  C_\Omega |x^{k+1}-\eta_{p^{k+2}}(\mathbf{x}_{p^{k+2}}^{k+2})|.
  \end{split}
\end{equation*}
By the same computation we can easily conclude
\[\frac{|\mathbf{v}^{k+1}_{p^{k+1}, 3}|  }{|\mathbf{v}^{k+j}_{p^{k+j}  }|} \leq C_\xi |x^{k+1}-\eta_{p^{k+2}}(\mathbf{x}_{p^{k+2}}^{k+2})| .\]

Then prove~\eqref{bound_vb_x}. For $j=1$, $j'=1,2$ we have
\begin{equation}\label{tang*nor}
\begin{split}
   & \bigg| \frac{\p[ n_{p^{k+1}} (\mathbf{x}_{p^{k+1}} ^{k+1}) \cdot  (x^{k+1} -
 \eta_{p^{k+2}} (\mathbf{x}_{p^{k+2}} ^{k+2})
 )]}{\p {\mathbf{x}_{p^{k+2},j^\prime}^{k+2}}}  \bigg|\leq |n_{p^{k+1}} (\mathbf{x}_{p^{k+1}} ^{k+1}) \cdot \partial_{j'}\eta_{p^{k+2}}(\mathbf{x}_{p^{k+2}} ^{k+2})| \\
   & =\Big|n_{p^{k+1}} (\mathbf{x}_{p^{k+1}} ^{k+1}) \cdot \partial_{j'}\eta_{p^{k+1}}(\mathbf{x}_{p^{k+1}} ^{k+1})+n_{p^{k+1}} (\mathbf{x}_{p^{k+1}} ^{k+1}) \cdot \big[\partial_{j'}\eta_{p^{k+2}}(\mathbf{x}_{p^{k+2}} ^{k+2})-\partial_{j'}\eta_{p^{k+1}}(\mathbf{x}_{p^{k+1}} ^{k+1})\big]\Big|\\
   & \leq 0+ \Vert \eta\Vert_{C^2} |x^{k+1} -
 \eta_{p^{k+2}} (\mathbf{x}_{p^{k+2}} ^{k+2})|,
\end{split}
\end{equation}
where we applied~\eqref{orthogonal}.

For $j=2$, we have
\begin{equation*}
  \begin{split}
     \bigg| \frac{\p[ n_{p^{k+2}} (\mathbf{x}_{p^{k+2}} ^{k+2}) \cdot  (x^{k+1} -
 \eta_{p^{k+2}} (\mathbf{x}_{p^{k+2}} ^{k+2})
 )]}{\p {\mathbf{x}_{p^{k+2},j^\prime}^{k+2}}}  \bigg|\lesssim &  |n_{p^{k+2}} (\mathbf{x}_{p^{k+2}} ^{k+2}) \cdot \partial_{j'}\eta_{p^{k+2}}|+\Vert \eta\Vert_{C^2}|x^{k+1} -
 \eta_{p^{k+2}} (\mathbf{x}_{p^{k+2}} ^{k+2})| \\
     = & \Vert \eta\Vert_{C^2}|x^{k+1} -
 \eta_{p^{k+2}} (\mathbf{x}_{p^{k+2}} ^{k+2})|,
  \end{split}
\end{equation*}
where we applied~\eqref{orthogonal}.

\end{proof}

\subsection{Properties of tangential derivative} 
Aiming the regularity estimate of ~\eqref{estF_tau} without the $\alpha-$weight, we establish several properties of the tangential derivative. We summarize them in Lemma \ref{Lemma: equivalent} - \ref{Lemma: ndot nabla xb}.

\begin{lemma}\label{Lemma: equivalent}
For $x=\eta_{p}(\mathbf{x}_p)\in \partial\Omega$, we have the following equivalence:
\begin{equation}\label{equivalent}
|G(x)\nabla_x f(x,v)|\thicksim  \sum_{j=1,2} \partial_{\mathbf{x}_{p,j}} f(\eta_p(\mathbf{x}_p),v).
\end{equation}
\end{lemma}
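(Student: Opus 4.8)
The plan is to work in the local chart $\eta_p$ around the boundary point $x=\eta_p(\mathbf{x}_p)$ with $\mathbf{x}_{p,3}=0$, and to exploit the orthogonality relation~\eqref{orthogonal} together with the structure of the $G$-derivative~\eqref{G}. First I would use the chain rule $\partial_{\mathbf{x}_{p,j}} f(\eta_p(\mathbf{x}_p),v)=\partial_j\eta_p(\mathbf{x}_p)\cdot \nabla_x f(\eta_p(\mathbf{x}_p),v)$ for $j=1,2$, as recorded in~\eqref{fBD_x1}. By~\eqref{orthogonal}, at $\mathbf{x}_{p,3}=0$ the vectors $\{\partial_1\eta_p/\sqrt{g_{p,11}},\partial_2\eta_p/\sqrt{g_{p,22}},\partial_3\eta_p/\sqrt{g_{p,33}}\}$ form an orthonormal basis of $\R^3$, and moreover $\partial_3\eta_p(\mathbf{x}_p)/\sqrt{g_{p,33}(\mathbf{x}_p)}$ is (up to sign) the unit outward normal $\nabla\xi/|\nabla\xi|$, which by~\eqref{normal} coincides with $n(x)$ at the boundary (where $\chi'_{\e/2}(\text{dist}(x,\p\O))=\chi'_{\e/2}(0)=1$). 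Hence $\partial_1\eta_p$ and $\partial_2\eta_p$ span exactly the tangent plane, i.e. the range of $G(x)=I-n(x)\otimes n(x)$.

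Next I would decompose $\nabla_x f(x,v)$ in this orthonormal frame: writing $e^p_j := \partial_j\eta_p/\sqrt{g_{p,jj}}$, we have $\nabla_x f = \sum_{j=1}^3 (e^p_j\cdot\nabla_x f)\, e^p_j$, and applying $G(x)$ kills the $j=3$ component since $G(x)n(x)=0$ by~\eqref{tang times normal}. Therefore $G(x)\nabla_x f(x,v)=\sum_{j=1,2}(e^p_j\cdot\nabla_x f)\,e^p_j$, and because $\{e^p_1,e^p_2\}$ is orthonormal we get $|G(x)\nabla_x f(x,v)|^2=\sum_{j=1,2}|e^p_j\cdot\nabla_x f|^2$. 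On the other hand $\partial_{\mathbf{x}_{p,j}}f(\eta_p(\mathbf{x}_p),v)=\sqrt{g_{p,jj}}\,(e^p_j\cdot\nabla_x f)$, so $\sum_{j=1,2}|\partial_{\mathbf{x}_{p,j}}f|$ and $\sum_{j=1,2}|e^p_j\cdot\nabla_x f|$ differ only by the bounded, bounded-away-from-zero factors $\sqrt{g_{p,jj}(\mathbf{x}_p)}$ (these are controlled since $\eta_p\in C^3$ and $\partial_i\eta_p\neq0$, over the finitely many charts $p\in\mathcal{P}$). Comparing the $\ell^1$ and $\ell^2$ norms of the two-component vector $(e^p_1\cdot\nabla_x f, e^p_2\cdot\nabla_x f)$ then yields the claimed equivalence $|G(x)\nabla_x f(x,v)|\thicksim \sum_{j=1,2}|\partial_{\mathbf{x}_{p,j}}f(\eta_p(\mathbf{x}_p),v)|$, with constants depending only on $\eta_p$ and the finite atlas.

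The only genuine point requiring a little care — and the step I expect to be the main (though minor) obstacle — is the identification $\partial_3\eta_p/\sqrt{g_{p,33}}=\pm n(x)$ at the boundary: one must check that the reparametrization guaranteed after~\eqref{orthogonal} makes $\partial_3\eta_p$ genuinely transversal (indeed normal, by orthogonality) to the boundary surface $\{\mathbf{x}_{p,3}=0\}$ and points along $\nabla\xi$, so that~\eqref{normal} gives $n(x)=\partial_3\eta_p(\mathbf{x}_p)/\sqrt{g_{p,33}(\mathbf{x}_p)}$ exactly on $\p\O$. Granting this, the rest is the elementary linear-algebra comparison above. I would also remark that although the statement is phrased for $x\in\p\O$, the same argument localizes to a neighborhood $\text{dist}(x,\p\O)\le\e/2$ where~\eqref{tang times normal} still holds, which is what is actually used later; but for the lemma as stated restricting to $\mathbf{x}_{p,3}=0$ suffices.
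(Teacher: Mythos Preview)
Your proposal is correct and follows essentially the same route as the paper. The paper packages the orthonormal frame $\{e^p_j=\partial_j\eta_p/\sqrt{g_{p,jj}}\}$ into the matrix $T_{\mathbf{x}_p}$ of~\eqref{T} and writes $G(x)\nabla_x f=\nabla_x f\,(T^t_{\mathbf{x}_p}T_{\mathbf{x}_p}-T^t_{\mathbf{x}_p}e_3e_3^tT_{\mathbf{x}_p})=\mathfrak{F}e_1\,e^p_1+\mathfrak{F}e_2\,e^p_2$ with $\mathfrak{F}=\nabla_x f\,T^t_{\mathbf{x}_p}$, which is exactly your decomposition $G(x)\nabla_x f=\sum_{j=1,2}(e^p_j\cdot\nabla_x f)\,e^p_j$ in matrix language; your identification $e^p_3=\pm n(x)$ is likewise used implicitly when the paper replaces $n(x)\otimes n(x)$ by $T^t_{\mathbf{x}_p}e_3e_3^tT_{\mathbf{x}_p}$.
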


\begin{proof}
By~\eqref{T} we have
\[\partial_i \eta_p(\mathbf{x}_p)=\sqrt{g_{p,ii}(\mathbf{x}_p)}T^t_{\mathbf{x}_p}e_i.\]
Denote $\mathfrak{F}(x)=\nabla_x f(x,v) T^t_{\mathbf{x}_p}$, we have
\begin{align*}
 \sum_{j=1,2}\partial_{\mathbf{x}_{p,j}}f(\eta_p(\mathbf{x}_p),v)&= \sqrt{g_{p,11}(\mathbf{x}_p)}\nabla_x f(x,v) T^t_{\mathbf{x}_p}e_1+  \sqrt{g_{p,22}(\mathbf{x}_p)}\nabla_x f(x,v) T^t_{\mathbf{x}_p}e_2
    \\
   & =  \sqrt{g_{p,11}(\mathbf{x}_p)}\mathfrak{F}e_1+\sqrt{g_{p,22}(\mathbf{x}_p)}\mathfrak{F}e_2.
\end{align*}

We also have
\begin{align*}
  G(x)\nabla_x f(x,v) &=\nabla_x f \Big(T_{\mathbf{x}_p}^t T_{\mathbf{x}_p}- T^t_{\mathbf{x}_p}e_3 e_3^t T_{\mathbf{x}_p} \Big)  \\
   & =\mathfrak{F}\Big(T_{\mathbf{x}_p}-e_3e_3^t T_{\mathbf{x}_p} \Big)=\mathfrak{F}\Big(I-e_3\otimes e_3 \Big)T_{\mathbf{x}_p}\\
   &=\left(
       \begin{array}{ccc}
         \mathfrak{F}e_1 & \mathfrak{F}e_2 & 0 \\
       \end{array}
     \right)T_{\mathbf{x}_p} = \mathfrak{F}e_1 \frac{\partial_1 \eta_p(\mathbf{x}_p)}{\sqrt{g_{p,11}(\mathbf{x}_p)}}+\mathfrak{F}e_2 \frac{\partial_2 \eta_p(\mathbf{x}_p)}{\sqrt{g_{p,22}(\mathbf{x}_p)}}.
\end{align*}
Since $\partial_1 \eta_p(\mathbf{x}_p) \perp \partial_2 \eta_p(\mathbf{x}_p)$,
\[|G(x)\nabla_x f(x,v)|\thicksim \sqrt{g_{p,11}(\mathbf{x}_p)}\mathfrak{F}e_1+\sqrt{g_{p,22}(\mathbf{x}_p)}\mathfrak{F}e_2\thicksim \sum_{j=1,2}\partial_{\mathbf{x}_{p,j}}f(\eta_p(\mathbf{x}_p),v).\]

%For simplicity we assume $n(x)=(0,0,1)$. Otherwise we perform a change of coordinate. Then $\partial_{\mathbf{x}_{p,j}}\eta_{p}(\mathbf{x}_{p})=e_j$. Thus
%\[\sum_{j=1,2}\partial_{\mathbf{x}_{p,j}}\eta_{p}(\mathbf{x}_{p})=e_1\nabla_x f(x,v)+e_2\nabla_x f(x,v)=\partial_{x_1}f+\partial_{x_2}f.\]
%Also
%\[\big(I-e_3\otimes e_3\big)\nabla_x f=\left(
%                               \begin{array}{c}
%                                 0 \\
%                                 0 \\
%                                 \partial_{x_1}f(x,v)+\partial_{x_2}f(x,v) \\
%                               \end{array}
%                             \right).
%\]
%Then we conclude the lemma.

\end{proof}

\begin{lemma}\label{Lemma: nx-nxb}
For any $s\in [0,\tb]$, we have
\begin{equation}\label{n(x)-n(xb)}
|G(x)-G(x-sv)| \lesssim \frac{\tilde{\alpha}(x,v)}{|v|}.
\end{equation}

And thus
\begin{equation}\label{Gf bdd}
|G(x)\nabla_x f(x-sv)|\lesssim \frac{\Vert w_{\tilde{\theta}/2}|v|\nabla_\parallel f\Vert+\Vert w_{\tilde{\theta}}\alpha\nabla_x f\Vert_\infty}{|v|w_{\tilde{\theta}/2}(v)}.
\end{equation}

\end{lemma}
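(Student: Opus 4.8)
The plan is to prove the two bounds in order, since \eqref{Gf bdd} follows from \eqref{n(x)-n(xb)} together with the weighted $C^1$-estimates \eqref{estF_n}--\eqref{estF_tau}. For \eqref{n(x)-n(xb)}, recall $G(x) = I - n(x) \otimes n(x)$, so $G(x) - G(x-sv) = n(x-sv)\otimes n(x-sv) - n(x)\otimes n(x)$, which is controlled by $|n(x) - n(x-sv)|$ (times a bounded factor, since $|n|\le 1$ by the normalization in \eqref{normal}). From \eqref{normal}, $n(x) = \chi'_{\e/2}(\mathrm{dist}(x,\p\O))\nabla\xi(x)/|\nabla\xi(x)|$, and since $\xi \in C^3$ and $\nabla\xi\neq 0$ near $\p\O$, the map $x \mapsto n(x)$ is Lipschitz on $\bar\O$, so $|n(x) - n(x-sv)| \lesssim |x - (x-sv)| = s|v|$. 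The remaining point is to convert the bound $s|v|$ into $\tilde{\alpha}(x,v)/|v|$. Here I would use that $s \le \tb(x,v)$ together with the geometric bound on $\tb$: by \eqref{tb bounded} from Lemma \ref{Lemma: nabla tbxb bounded}, $\tb(x,v) \lesssim |n(\xb(x,v))\cdot v|/|v|^2$, and by \eqref{n geq alpha} (or the equivalence $|n(\xb)\cdot v| \sim \tilde\alpha(x,v)$ recorded after the kinetic distance definition), $|n(\xb(x,v))\cdot v| \lesssim \tilde\alpha(x,v)$. Hence $s|v| \le \tb(x,v)|v| \lesssim \tilde\alpha(x,v)/|v|$, which is exactly \eqref{n(x)-n(xb)}.

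The one subtlety I would want to handle carefully is when the chord $x - sv$ wanders far from the boundary; there $n$ vanishes identically once $\mathrm{dist}(\cdot,\p\O) \ge 2\e$, so $G \equiv I$, and the estimate is only nontrivial (and the above argument only needed) on the portion of the segment within distance $2\e$ of $\p\O$. On that portion the Lipschitz bound for $n$ holds with a constant depending on $\|\xi\|_{C^2}$ and the lower bound for $|\nabla\xi|$, and elsewhere the difference $G(x)-G(x-sv)$ is either zero or controlled by the same Lipschitz estimate applied at the point where the segment crosses the level set $\mathrm{dist} = 2\e$; in all cases $s|v|$ dominates, so the conclusion is unchanged.

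For \eqref{Gf bdd}, write $G(x)\nabla_x f(x-sv) = G(x-sv)\nabla_x f(x-sv) + \big(G(x) - G(x-sv)\big)\nabla_x f(x-sv) = \nabla_\parallel f(x-sv) + \big(G(x)-G(x-sv)\big)\nabla_x f(x-sv)$. The first term is bounded by $\|w_{\tilde\theta/2}|v|\nabla_\parallel f\|_\infty / (|v| w_{\tilde\theta/2}(v))$ directly from \eqref{estF_tau}, using $w_{\tilde\theta/2}(v) \le w_{\tilde\theta/2}(v)$ (no loss; the weight evaluated at the same $v$). For the second term, apply \eqref{n(x)-n(xb)} to get the factor $\tilde\alpha(x,v)/|v|$, and bound $|\nabla_x f(x-sv)| \le \|w_{\tilde\theta}\alpha\nabla_x f\|_\infty / (w_{\tilde\theta}(x-sv)\,\alpha(x-sv,v))$; then use \eqref{Velocity_lemma} (equivalence of $\alpha(x,v)$ and $\alpha(x-sv,v)$ up to the bounded factor $e^{\pm|v|s}$, with $|v|s \le |v|\tb \lesssim 1$) and \eqref{n geq alpha} ($\tilde\alpha \gtrsim \alpha$, and $\tilde\alpha(x,v)/\alpha(x-sv,v) \lesssim \tilde\alpha(x,v)/\alpha(x,v)$, which is bounded since $\chi_\e$ caps $\tilde\alpha$ at $2\e$ while $\alpha = \chi_\e(\tilde\alpha)$ — more precisely $\tilde\alpha/\alpha \lesssim 1$ follows from $\tau\chi_\e'(\tau)\le\chi_\e(\tau)$ only when $\tilde\alpha \lesssim \e$, and is trivially bounded otherwise). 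The main obstacle in the whole lemma is this last bookkeeping: making sure the ratio $\tilde\alpha(x,v)/\alpha(x-sv,v)$ is genuinely $O(1)$ uniformly, rather than just the already-recorded $\tilde\alpha \gtrsim \alpha$; this is handled by splitting into the two regimes of $\tilde\alpha$ relative to $\e$ as indicated, and absorbing the weight loss $e^{(\tilde\theta - \tilde\theta/2)|v|^2}$ into $w_{\tilde\theta/2}(v)$ in the denominator.
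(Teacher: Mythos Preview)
Your approach is essentially the paper's: for \eqref{n(x)-n(xb)} you use the Lipschitz bound on $n$ together with $s|v|\le \tb|v|\lesssim \tilde\alpha(x,v)/|v|$ from \eqref{tb bounded}, and for \eqref{Gf bdd} you split $G(x)\nabla_x f(x-sv)$ into $\nabla_\parallel f(x-sv)$ plus $(G(x)-G(x-sv))\nabla_x f(x-sv)$ and absorb the second piece via the weight gap $w_{\tilde\theta}/w_{\tilde\theta/2}$. One correction: your parenthetical claim that $\tilde\alpha/\alpha$ is ``trivially bounded'' when $\tilde\alpha\gtrsim\e$ is false---in that regime $\alpha=\chi_\e(\tilde\alpha)\le 2\e$ while $\tilde\alpha\lesssim|v|$, so the ratio grows like $|v|$; the point (which you do state at the end) is precisely that this $|v|$ is swallowed by $e^{(\tilde\theta-\tilde\theta/2)|v|^2}$, i.e.\ $w_{\tilde\theta/2}(v)\tilde\alpha(x,v)\lesssim w_{\tilde\theta}(v)\alpha(x,v)$, which is exactly how the paper closes the estimate.
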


\begin{proof}
By the definition~\eqref{G}
\[|G(x)-G(x-sv)|\leq \big|n(x-sv)\otimes (n(x-sv)-n(x))\big|+\big|(n(x-sv)-n(x))\otimes n(x)\big|.\]
\begin{align*}
 |\nabla_x G(x)-\nabla_x G(x-sv)|  &  |\nabla_x n(x-sv)\otimes (n(x-sv)-n(x))|+ | n(x-sv)\otimes \nabla_x (n(x-sv)-n(x))| \\
   & +|\nabla_x (n(x-sv)-n(x))\otimes n(x)|+|(n(x-sv)-n(x))\otimes \nabla_x n(x)|.
\end{align*}
Then by~\eqref{normal} we have
\begin{equation*}
\begin{split}
\nabla_x n(x)
   & \lesssim \nabla_x \Big[\chi'_{\e/2}(dist(x,\partial \Omega)) \Big]+\chi'_{\e/2}(dist(x,\partial \Omega))\nabla_x \frac{\nabla_x \xi(x)}{|\nabla \xi(x)|} \\
    & \lesssim   \chi''\times \nabla_x dist(x,\partial \Omega)+ \chi'_{\e/2}(dist(x,\partial \Omega))\frac{|\nabla \xi(x)|\nabla^2 \xi(x)-\nabla \xi(x)\otimes \frac{\nabla^2 \xi(x) \nabla \xi(x)}{|\nabla \xi(x)|} }{|\nabla \xi(x)|^2}\\
    &\lesssim 1+\chi'_{\e/2}(dist(x,\partial \Omega))\frac{|\nabla^2\xi(x)|}{|\nabla \xi(x)|}.
\end{split}
\end{equation*}
From~\eqref{chi} we have $|\nabla \xi(x)|\gtrsim 1$ when $dist(x,\partial\Omega)\ll 1$. When $dist(x,\partial \Omega)\gtrsim 1$ we take $\e$ to be small enough such that $\chi_\e'(dist(x,\partial \Omega))=0$. Hence
\begin{equation*}
|\nabla_x n(x)|\lesssim \Vert \xi\Vert_{C^2}.
\end{equation*}

Then we use~\eqref{tb bounded} to have
\begin{equation*}
|n(x-sv)-n(x)|\lesssim \tb|v|\Vert \xi\Vert_{C^2}\lesssim \frac{\tilde{\alpha}(x,v)}{|v|}.
\end{equation*}
Thus we conclude~\eqref{n(x)-n(xb)}.

Last we prove~\eqref{Gf bdd}. We rewrite
\begin{align*}
   G(x)\nabla_x f(x-sv,v)&=G(x-sv)\nabla_x f(x-sv,v)+[G(x)-G(x-sv)]\nabla_x f(x-sv,v)  \\
   & \lesssim       \frac{\Vert w_{\tilde{\theta}/2}|v|\nabla_\parallel f\Vert_\infty}{|v|w_{\tilde{\theta}/2}(v)}+\frac{\tilde{\alpha}(x,v)}{|v|}\nabla_x f(x-sv,v)\\
   &\lesssim \frac{\Vert w_{\tilde{\theta}/2}|v|\nabla_\parallel f\Vert_\infty}{|v|w_{\tilde{\theta}/2}(v)}+\frac{\Vert w_{\tilde{\theta}/2}\tilde{\alpha}\nabla_x f\Vert_\infty}{|v|w_{\tilde{\theta}/2}(v)}\\
   &\lesssim \frac{\Vert w_{\tilde{\theta}/2}|v|\nabla_\parallel f\Vert_\infty+\Vert w_{\tilde{\theta}}\alpha\nabla_x f \Vert_\infty}{|v|w_{\tilde{\theta}/2}(v)},
\end{align*}
where we have used $w_{\tilde{\theta}/2}(v)\tilde{\alpha}(x,v)\lesssim w_{\tilde{\theta}}(v)\alpha(x,v).$ Then we conclude the lemma.\end{proof}

\begin{lemma}\label{Lemma: ndot nabla xb}
For $\xb(x,v)=\eta_{p^1}(\mathbf{x}_{p^1}^1)$ and $i=1,2$, we have
\begin{equation}\label{nx nabla xb}
\Big|G(x)\nabla_x \mathbf{x}_{p^1,i}^1\Big|\lesssim 1,
\end{equation}

\begin{equation}\label{nx nabla tb}
\Big|G(x) \nabla_x \tb(x,v)\Big|\lesssim \frac{1}{|v|}.
\end{equation}

And thus from~\eqref{equivalent},
\begin{equation}\label{tang nabla tb}
|\partial_{\mathbf{x}_{p^1,j}^1}\tb(\eta_{p^1}(\mathbf{x}_{p^1}^1),v^1)|\lesssim \frac{1}{|v^1|},
\end{equation}

\begin{equation}\label{tang nabla xb}
|G(x)\nabla_x \xb(x,v)|\lesssim 1,\quad |\partial_{\mathbf{x}_{p^1,j}^1}\xb(\eta_{p^1}(\mathbf{x}_{p^1}^1),v^1)|\lesssim 1.
\end{equation}

\end{lemma}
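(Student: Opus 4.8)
The plan is to reduce all four estimates to a single cancellation: in Lemma~\ref{Lemma: nabla tbxb} every singular quantity carries the factor $n(\xb)/(n(\xb)\cdot v)$, and applying the tangential projector $G(x)$ to the numerator $n(\xb)$ produces exactly the compensating factor $|n(\xb)\cdot v|/|v|$. To see this, note that since $\xb=\xb(x,v)\in\p\O$ we have $\mathrm{dist}(\xb,\p\O)=0$, so $G(\xb)n(\xb)=0$ by~\eqref{tang times normal}; hence $G(x)n(\xb)=\big(G(x)-G(\xb)\big)n(\xb)$, and since $\|G\|_{\mathrm{op}}\le1$ and $G(y)=I-n(y)\otimes n(y)$ with $|n(y)|\le1$, we get $|G(x)n(\xb)|\le|G(x)-G(\xb)|\le 2|n(x)-n(\xb)|$. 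Along the segment $\{x-sv:0\le s\le\tb(x,v)\}\subset\bar{\O}$ one has $|n(x)-n(\xb)|\le\|\nabla n\|_{L^\infty(\bar{\O})}\,|x-\xb|\lesssim \|\xi\|_{C^2}\,\tb(x,v)|v|$, where $\|\nabla n\|_{L^\infty}\lesssim\|\xi\|_{C^2}$ is exactly the pointwise bound established inside Lemma~\ref{Lemma: nx-nxb}. Combining with the convexity estimate~\eqref{tb bounded} in the form $\tb(x,v)|v|\lesssim|n(\xb)\cdot v|/|v|$ yields the clean bound $|G(x)n(\xb)|\lesssim|n(\xb(x,v))\cdot v|/|v|$, valid for all $(x,v)$ with $x\in\bar{\O}$; this is where convexity enters, and it is what lets one avoid any splitting into ``near/far from $\p\O$''.

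With this in hand I would feed the explicit derivative formulas from Lemma~\ref{Lemma: nabla tbxb} through $G(x)$. From~\eqref{nabla_tbxb}, $G(x)\nabla_x\tb=G(x)n(\xb)/(n(\xb)\cdot v)$, so $|G(x)\nabla_x\tb|\lesssim1/|v|$, which is~\eqref{nx nabla tb}; and $G(x)\nabla_x\xb=G(x)-\frac{(G(x)n(\xb))\otimes v}{n(\xb)\cdot v}$, whose norm is $\lesssim 1+\frac{|G(x)n(\xb)|\,|v|}{|n(\xb)\cdot v|}\lesssim1$, the first half of~\eqref{tang nabla xb}. For~\eqref{nx nabla xb} I would specialize~\eqref{xi deri xbp} to the first bounce (index $k=-1$, i.e.\ $x^{k+1}=x$, $x^{k+2}=x^1=\xb$, $v^{k+1}=v$); there $|\mathbf{v}_{p^1,3}|=|n(\xb)\cdot v|$ since $\mathbf{x}_{p^1,3}^1=0$ forces $\p_3\eta_{p^1}/\sqrt{g_{p^1,33}}=\pm n(\xb)$ by~\eqref{orthogonal}, while $|\mathbf{v}_{p^1,i}|\le|v|$, $|\p_i\eta_{p^1}/\sqrt{g_{p^1,ii}}|=1$, $G(x)\big(\p_3\eta_{p^1}/\sqrt{g_{p^1,33}}\big)=\pm G(x)n(\xb)$, and $g_{p^1,ii}\sim1$ uniformly over the finite boundary cover; this gives $|G(x)\nabla_x\mathbf{x}_{p^1,i}^1|\lesssim 1+\frac{|\mathbf{v}_{p^1,i}|}{|\mathbf{v}_{p^1,3}|}\,|G(x)n(\xb)|\lesssim 1+\frac{|v|}{|n(\xb)\cdot v|}\cdot\frac{|n(\xb)\cdot v|}{|v|}\lesssim1$.

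Finally, the two ``and thus'' statements~\eqref{tang nabla tb} and the second half of~\eqref{tang nabla xb} follow by applying~\eqref{nx nabla tb} (resp.\ the first half of~\eqref{tang nabla xb}) at the boundary point $x^1=\xb(x,v)=\eta_{p^1}(\mathbf{x}_{p^1}^1)$ with the fresh velocity $v^1$, $n(x^1)\cdot v^1>0$, together with the chart equivalence of Lemma~\ref{Lemma: equivalent}: since $\partial_{\mathbf{x}_{p^1,j}^1}[\,\cdot\,]=\p_j\eta_{p^1}(\mathbf{x}_{p^1}^1)\cdot\nabla_{x^1}[\,\cdot\,]$ and $\p_j\eta_{p^1}\perp n(x^1)$ at $\mathbf{x}_{p^1,3}^1=0$, each chart derivative is controlled componentwise by $|G(x^1)\nabla_{x^1}[\,\cdot\,]|$. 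I do not anticipate a genuine obstacle here (the lemma is essentially a corollary of Lemma~\ref{Lemma: nabla tbxb}); the only points requiring care are the index shift $k\mapsto-1$ when specializing~\eqref{xi deri xbp} to the first bounce, and verifying that $\|\nabla n\|_{L^\infty}$, $g_{p^1,ii}$ and $|\p_i\eta_{p^1}|$ are uniformly comparable to positive constants on the compact boundary.
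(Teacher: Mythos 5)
Your proposal is correct and takes essentially the same approach as the paper's proof: the key cancellation $G(\xb)n(\xb)=0$ from~\eqref{tang times normal}, combined with $|G(x)-G(\xb)|\lesssim|n(x)-n(\xb)|\lesssim\tb|v|\lesssim|n(\xb)\cdot v|/|v|$ (which is precisely \eqref{n(x)-n(xb)} together with~\eqref{tb bounded}), is exactly what Lemma~\ref{Lemma: nx-nxb} and the paper's proof of Lemma~\ref{Lemma: ndot nabla xb} employ. The only difference is organizational — you extract $|G(x)n(\xb)|\lesssim|n(\xb)\cdot v|/|v|$ once and feed it through the explicit formulas of Lemma~\ref{Lemma: nabla tbxb}, whereas the paper splits $G(x)=G(\xb)+(G(x)-G(\xb))$ inside each estimate — but the ingredients and the cancellation are identical.
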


\begin{proof}
By~\eqref{n(x)-n(xb)} in Lemma \ref{Lemma: nx-nxb} we have

\begin{equation}\label{I-n otimes nablax x}
\Big| G(x) \nabla_x \mathbf{x}_{p^1,i}^1 \Big|  \lesssim \underbrace{G(\xb) \nabla_x \mathbf{x}_{p^1,i}^1}_{\eqref{I-n otimes nablax x}_1} +\underbrace{\frac{\tilde{\alpha}(x,v)}{|v|}|\nabla_x \mathbf{x}_{p^1,i}^1|}_{\eqref{I-n otimes nablax x}_2}.
\end{equation}

By~\eqref{xi deri xbp}, the definition of $n(\xb)$, $\mathbf{v}_{p^1,3}$ in~\eqref{bar_v} and~\eqref{tang times normal}, we have
\[\eqref{I-n otimes nablax x}_1\lesssim 1+|v|G(\xb)\frac{n(\xb)}{|n(\xb)\cdot v|}=1.\]
Again by~\eqref{xi deri xbp} we have
\[\eqref{I-n otimes nablax x}_2\lesssim \frac{\tilde{\alpha}(x,v)}{|v|}+    \frac{\tilde{\alpha}(x,v)}{|v|}  \frac{|v|}{|n(\xb)\cdot v|}\lesssim 1.\]
We conclude~\eqref{nx nabla xb}.

For~\eqref{nx nabla tb} by~\eqref{xi deri tb} we have
\[\eqref{I-n otimes nablax x}\lesssim 0+\frac{\tilde{\alpha}}{|v|}\frac{1}{|n(\xb)\cdot v|}\lesssim \frac{1}{|v|}.\]
We conclude~\eqref{nx nabla tb}.\end{proof}

\subsection{Properties of H\"older estimate}

To prove the $C^{1,\beta}$ estimate~\eqref{estF_C1beta} we need several $C^{1,\beta}$ estimate for $\xb$ and $\tb$. We summarize them in Lemma \ref{Lemma: min max} and Lemma \ref{Lemma: Gf-Gf}. Lemma \ref{Lemma: x-y} serves as a key ingredient to prove Lemma \ref{Lemma: min max}.

\begin{lemma}\label{Lemma: min max}
We have the following estimates:

\begin{equation}\label{min: xb}
\big|\frac{\xb(x,v)-\xb(y,v)}{|x-y|^\beta}\big|\lesssim \frac{1}{\min\{\frac{\alpha(x,v)}{|v|},\frac{\alpha(y,v)}{|v|}\}^{\beta}},
\end{equation}

\begin{equation}\label{min: tb}
\frac{|e^{-C\nu\tb(x,v)}-e^{-C\nu\tb(y,v)}|^\beta}{|x-y|^\beta}\lesssim \big|  \frac{e^{-C\nu\tb(x,v)}-e^{-C\nu\tb(y,v)}}{|x-y|^\beta}\big|\lesssim \frac{1}{|v|\min\{\frac{\alpha(x,v)}{|v|},\frac{\alpha(y,v)}{|v|}\}^{\beta}},
\end{equation}

\begin{equation}\label{min: nxb}
\big|   \frac{n(\xb(x,v))-n(\xb(y,v))}{|x-y|^\beta}\big|\lesssim \Vert \xi\Vert_{C^2}\frac{1}{\min\{\frac{\alpha(x,v)}{|v|},\frac{\alpha(y,v)}{|v|}\}^{\beta}},
\end{equation}

\begin{equation}\label{min: nabla xb}
\big|  \frac{\nabla_x\xb(x,v)-\nabla_x\xb(y,v)}{|x-y|^\beta}\big|\lesssim \frac{1}{\min\{\frac{\alpha(x,v)}{|v|},\frac{\alpha(y,v)}{|v|}\}^{2+\beta}},
\end{equation}

\begin{equation}\label{min: nabla tb}
\big|    \frac{\nabla_x\tb(x,v)-\nabla_x\tb(y,v)}{|x-y|^\beta}\big|\lesssim \frac{1}{|v|\min\{\frac{\alpha(x,v)}{|v|},\frac{\alpha(y,v)}{|v|}\}^{2+\beta}},
\end{equation}

\begin{equation}\label{min: I nabla xb}
\big|G(y)  \frac{\nabla_x\xb(x,v)-\nabla_x\xb(y,v)}{|x-y|^\beta}\big|\lesssim \frac{1}{\min\{\frac{\alpha(x,v)}{|v|},\frac{\alpha(y,v)}{|v|}\}^{1+\beta}},
\end{equation}

\begin{equation}\label{min: I nabla tb}
\big|  G(y)  \frac{\nabla_x\tb(x,v)-\nabla_x\tb(y,v)}{|x-y|^\beta}\big|\lesssim \frac{1}{|v|\min\{\frac{\alpha(x,v)}{|v|},\frac{\alpha(y,v)}{|v|}\}^{1+\beta}},
\end{equation}

\begin{equation}\label{min: f}
 \big|\frac{f(x,v)-f(y,v)}{|x-y|^\beta}\big|\lesssim  \frac{\Vert wf\Vert_\infty^{1-\beta}\Vert w_{\tilde{\theta}}\alpha\nabla_x f\Vert_\infty^\beta}{w_{2\tilde{\theta}}(v)\min\{\alpha(x,v),\alpha(y,v)\}^{\beta}}.
\end{equation}

 When $x,y\in \partial \Omega$,

\begin{equation}\label{min: xb tang}
\frac{|\xb(x,v)-\xb(y,v)|}{|x-y|}\lesssim 1,
\end{equation}

\begin{equation}\label{min: M_w}
\frac{|M_W(x,v)-M_W(y,v)|}{\sqrt{\mu(v)}|x-y|^\beta}\lesssim \Vert T_W-T_0\Vert_{C^1}.
\end{equation}

For $x=\eta_{p(x)}(\mathbf{x}_{p(x)}),y=\eta_{p(y)}(\mathbf{x}_{p(y)}) \in \partial \Omega$, and $\xb(x,v)=\eta_{p^1(x)}(\mathbf{x}_{p^1(x)}^1), \xb(y,v)=\eta_{p^1(y)}(\mathbf{x}_{p^1(y)}^1),$( see the definition~\eqref{xpk x} in section 7). For $i,j\in \{1,2\}$ we have 
\begin{equation}\label{min: partial xip1 xip2}
\frac{\big|\partial_{\mathbf{x}_{p(x),j}}\mathbf{x}_{p^1(x),i}^1 -\partial_{\mathbf{x}_{p(y),j}} \mathbf{x}_{p^1(y),i}^1  \big|}{|x-y|^\beta}\lesssim \frac{1}{\min\{\frac{\alpha(x,v)}{|v|},\frac{\alpha(y,v)}{|v|}\}^{3}}.
\end{equation}

%\begin{equation}\label{min: nabla tau xb}
%  \frac{\partial_{\mathbf{x}_{p,j}}\xb(\eta_{p}(\mathbf{x}_{p}(x)),v)-\partial_{\mathbf{x}_{p,j}}\xb(\eta_{p}(\mathbf{x}_{p}(y)),v)}{|x-y|^\beta}\lesssim \frac{1}{\min\{\frac{\alpha(x,v)}{|v|},\frac{\alpha(y,v)}{|v|}\}^{1+\beta}},
%\end{equation}
%
%
%
%\begin{equation}\label{min: nabla tau tb}
%  \frac{\partial_{\mathbf{x}_{p,j}}\tb(\eta_{p}(\mathbf{x}_{p}(x)),v)-\partial_{\mathbf{x}_{p,j}}\tb(\eta_{p}(\mathbf{x}_{p}(y)),v)}{|x-y|^\beta}\lesssim \frac{1}{\min\{\frac{\alpha(x,v)}{|v|},\frac{\alpha(y,v)}{|v|}\}^{1+\beta}},
%\end{equation}

\end{lemma}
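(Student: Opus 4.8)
The common engine for every item is the first–order Taylor expansion along the segment $x(\tau):=\tau x+(1-\tau)y$, $\tau\in[0,1]$: for a quantity $a(\cdot,v)$ that is $C^1$ in $x$ on this segment,
\[
\frac{a(x,v)-a(y,v)}{|x-y|^\beta}=|x-y|^{1-\beta}\int_0^1\frac{(x-y)}{|x-y|}\cdot\nabla_x a(x(\tau),v)\,\dd\tau,
\]
so each H\"older quotient is $\lesssim |x-y|^{1-\beta}\sup_{\tau\in[0,1]}|\nabla_x a(x(\tau),v)|$. I would run this against the dichotomy on $\ell:=\min\{\alpha(x,v)/|v|,\alpha(y,v)/|v|\}$. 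If $|x-y|\geq\ell$, I bound the numerator directly by the triangle inequality using the weighted $C^1$ information already in hand (the explicit formulas \eqref{nabla_tbxb} of Lemma~\ref{Lemma: nabla tbxb}, the bound $\tb\lesssim|n(\xb)\cdot v|/|v|^2$ of \eqref{tb bounded}, and $|n(\xb)\cdot v|\sim\alpha$), then divide by $|x-y|^\beta\geq\ell^\beta$; this produces exactly the claimed power of $\ell$. If $|x-y|<\ell$, then by Lemma~\ref{Lemma: x-y} the whole segment stays in the good region and $\alpha(x(\tau),v)\gtrsim\min\{\alpha(x,v),\alpha(y,v)\}$ for all $\tau$; I insert the derivative formulas of Lemma~\ref{Lemma: nabla tbxb} into the Taylor identity and count powers of $\alpha$. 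One differentiation of $\xb$ or $\tb$ costs $|v|/\alpha$ (from $1/(n(\xb)\cdot v)$), giving \eqref{min: xb}, \eqref{min: tb}, \eqref{min: nxb} at once (for \eqref{min: nxb} one also uses the Lipschitz bound $|\nabla n|\lesssim\|\xi\|_{C^2}$ from Lemma~\ref{Lemma: nx-nxb}); a second differentiation costs an extra $(|v|/\alpha)^2$, so $|\nabla_x^2\xb(x(\tau),v)|\lesssim|v|^3/\alpha^3$ as in \eqref{difference of nabla xb}, whence $|x-y|^{1-\beta}|v|^3/\min\alpha^3\lesssim(|v|/\min\alpha)^{2+\beta}=\ell^{-(2+\beta)}$, i.e.\ \eqref{min: nabla xb} and likewise \eqref{min: nabla tb}.

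For the $G$-projected estimates \eqref{min: I nabla xb}, \eqref{min: I nabla tb} and the boundary Lipschitz bound \eqref{min: xb tang}, the point is the tangential gain of Lemma~\ref{Lemma: ndot nabla xb}: the $G$-projection removes the worst $1/\alpha$, so $|G\nabla_x\xb|\lesssim1$, $|G\nabla_x\tb|\lesssim1/|v|$, and the projected second derivatives scale like $|v|^2/\alpha^2$ rather than $|v|^3/\alpha^3$, producing the power $1+\beta$. Since the projector in the statement is evaluated at the fixed point $y$ and not at $x(\tau)$, I would split $G(y)\nabla_x a(x(\tau),v)=G(x(\tau))\nabla_x a(x(\tau),v)+[G(y)-G(x(\tau))]\nabla_x a(x(\tau),v)$ and absorb the error using $|G(y)-G(x(\tau))|\lesssim|y-x(\tau)|\leq|x-y|$, which more than compensates the single power of $\alpha$ lost in the unprojected second term. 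Estimate \eqref{min: xb tang} is just the $C^{0,1}$ version: a displacement between two boundary points is tangential, hence controlled by $|G\nabla_x\xb|\lesssim1$.

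The remaining items are more elementary. For \eqref{min: f} I interpolate between the two a priori bounds: $|f(x,v)-f(y,v)|\leq\min\{2\|f\|_\infty,\ |x-y|\sup_\tau|\nabla_x f(x(\tau),v)|\}$, use $\|w_{\tilde{\theta}}\alpha\nabla_x f\|_\infty$ together with $\alpha(x(\tau),v)\gtrsim\min\{\alpha(x,v),\alpha(y,v)\}$ for the gradient term, and optimize over which bound to use according to whether $|x-y|$ is below or above $\min\{\alpha(x,v),\alpha(y,v)\}$; the weight $w_{2\tilde{\theta}}$ appears from $w^{1-\beta}w_{\tilde{\theta}}^{\beta}\gtrsim w_{2\tilde{\theta}}$ since $\tilde{\theta}\ll\varrho$, and $\|wf\|_\infty$ is supplied by the Existence Theorem of \cite{EGKM}. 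For \eqref{min: M_w} one differentiates \eqref{Wall Maxwellian} through $T_W(x)$: the Gaussian $e^{-|v|^2/2T_W}$ dominates every polynomial weight coming from $\nabla_x T_W$, so $|M_W(x,v)-M_W(y,v)|\lesssim\|T_W-T_0\|_{C^1}|x-y|\sqrt{\mu(v)}$, and $|x-y|\lesssim(\mathrm{diam}\,\Omega)^{1-\beta}|x-y|^\beta$. Finally \eqref{min: partial xip1 xip2} is the tangential boundary-chart analogue of \eqref{min: nabla xb}: writing $\partial_{\mathbf{x}_{p,j}}\mathbf{x}^1_{p^1,i}=\nabla_x\mathbf{x}^1_{p^1,i}\cdot\partial_j\eta_p$ via \eqref{xip deri xbp}, taking the difference and bounding crudely by the mean value argument (the $x$-gradient of $\mathbf{x}^1_{p^1,i}$ is $O(|v|/\alpha)$, its second $x$-gradient $O(|v|^3/\alpha^3)$, plus $C^2$-control of the chart $\eta_p$) yields the stated power $3$, which is deliberately conservative (recall $2+\beta<3$) and thus harmless downstream.

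The main obstacle is the small-displacement regime $|x-y|<\ell$: everything there hinges on the uniform lower bound $\alpha(x(\tau),v)\gtrsim\min\{\alpha(x,v),\alpha(y,v)\}$ along the whole segment, which is precisely where strict convexity of $\Omega$ is used and which I isolate in Lemma~\ref{Lemma: x-y}. The second delicate point is the bookkeeping showing that the two regimes $|x-y|\gtrless\ell$ yield the same final power of $\ell$ in each estimate, and, for \eqref{min: I nabla xb}–\eqref{min: I nabla tb}, the transfer of the tangential gain of Lemma~\ref{Lemma: ndot nabla xb} from the evaluation point $y$ to the intermediate points $x(\tau)$ via the Lipschitz continuity of $G$.
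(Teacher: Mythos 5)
Your proposal is correct and follows essentially the same strategy as the paper's proof: Taylor-expand along the segment $x(\tau)=\tau x+(1-\tau)y$, invoke the convexity lower bound $\tilde\alpha(x(\tau),v)\gtrsim\min\{\tilde\alpha(x,v),\tilde\alpha(y,v)\}$ from Lemma~\ref{Lemma: x-y} in the small-$|x-y|$ regime (the large-$|x-y|$ regime is handled trivially by the triangle inequality and~\eqref{nabla_tbxb}), and then count powers of $\alpha$ in the explicit derivative formulas of Lemma~\ref{Lemma: nabla tbxb}. The one place where the paper takes a slightly different route is in~\eqref{min: I nabla xb}--\eqref{min: I nabla tb}: rather than splitting off $[G(y)-G(x(\tau))]$ as you propose, it keeps $G(y)$ fixed and estimates $|G(y)n(\xb(x(\tau),v))|\lesssim|G(y)n(y)|+|n(\xb(x(\tau),v))-n(y)|\lesssim\min\{\tilde\alpha(x,v),\tilde\alpha(y,v)\}/|v|$ directly (this is \eqref{G n}), which collapses the tangential gain and the base-point discrepancy into a single step; your variant works too, provided you make explicit the (true, but not stated in Lemma~\ref{Lemma: ndot nabla xb}) second-derivative fact that $G(x(\tau))n(\xb(x(\tau),v))$ is itself $O(\tilde\alpha/|v|)$, since that lemma only records first-derivative bounds.
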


We need the following lemma to prove it.
\begin{lemma}\label{Lemma: x-y}
Define
\begin{equation}\label{xtau}
x(\tau):=(1-\tau)x+\tau y,\quad |\dot{x}(\tau)|=|x-y|.
\end{equation}
If $|x-y|\leq \e\min\{\frac{\tilde{\alpha}(x,v)}{|v|},\frac{\tilde{\alpha}(y,v)}{|v|}\}\ll 1$, then
\begin{equation}\label{alpha geq min}
\tilde{\alpha}(x(\tau),v)\gtrsim \min\{\tilde{\alpha}(x,v),\tilde{\alpha}(y,v)\}.
\end{equation}

\end{lemma}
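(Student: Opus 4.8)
The plan is to work with the squared kinetic distance $h(\tau):=\tilde\alpha(x(\tau),v)^2$ and to prove the equivalent statement $h(\tau)\gtrsim\min\{h(0),h(1)\}$ for all $\tau\in[0,1]$. Write $a(z):=v\cdot\nabla_x\xi(z)$ and $b(z):=v\cdot\nabla_x^2\xi(z)\cdot v$, so that by the definition in \eqref{kinetic_distance} one has $\tilde\alpha(z,v)^2=a(z)^2-2\xi(z)b(z)$. Since $\Omega=\{\xi<0\}$ is convex (a consequence of \eqref{convex}), the whole segment $x(\tau)$ stays in $\bar\Omega$, hence $\xi(x(\tau))\le 0$; and by the strict convexity \eqref{convex} one has $b(z)\sim|v|^2$ uniformly on $\bar\Omega$. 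Therefore $h(\tau)=a(x(\tau))^2+2|\xi(x(\tau))|\,b(x(\tau))$ is a sum of two nonnegative pieces, and comparing the second piece with $|v|^2$ gives $\tilde\alpha(z,v)^2\sim a(z)^2+|\xi(z)|\,|v|^2$ with constants depending only on $\Omega$; in particular $\tilde\alpha(z,v)^2\gtrsim|\xi(z)|\,|v|^2$ and $\tilde\alpha(z,v)\ge|a(z)|$.

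First I would record the two mechanisms that keep $h$ slowly varying along the segment. \emph{(i) Convexity of $\xi$:} the map $\tau\mapsto\xi(x(\tau))$ is convex and $\le 0$ at the endpoints, so $\xi(x(\tau))\le(1-\tau)\xi(x)+\tau\xi(y)\le 0$, which gives $|\xi(x(\tau))|\ge\min\{|\xi(x)|,|\xi(y)|\}$ and hence $2|\xi(x(\tau))|\,b(x(\tau))\gtrsim|v|^2\min\{|\xi(x)|,|\xi(y)|\}$; this step uses no smallness of $|x-y|$. \emph{(ii) Lipschitz control of $a$:} since $\nabla_z a(z)=\nabla_x^2\xi(z)\,v$, the fundamental theorem of calculus gives $|a(x(\tau))-a(x)|\le\|\xi\|_{C^2}|v|\,|x-y|$ and likewise with $y$ in place of $x$; feeding in the hypothesis $|x-y|\le\e\min\{\tilde\alpha(x,v)/|v|,\tilde\alpha(y,v)/|v|\}$ yields $|a(x(\tau))-a(x)|\le C_\Omega\e\,\tilde\alpha(x,v)$ and $|a(x(\tau))-a(y)|\le C_\Omega\e\,\tilde\alpha(y,v)$.

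With these in hand I would finish by a short case split, $\e$ chosen small depending only on $\Omega$. For each endpoint $z\in\{x,y\}$ either $a(z)^2\ge\tfrac12\tilde\alpha(z,v)^2$ (the ``slope'' piece dominates) or $|\xi(z)|\,|v|^2\gtrsim\tilde\alpha(z,v)^2$ (the ``distance'' piece dominates). If at the endpoint realizing $\min\{h(0),h(1)\}$ the slope piece dominates, then $|a(z)|\gtrsim\tilde\alpha(z,v)$ and (ii) forces $|a(x(\tau))|\ge\tfrac12|a(z)|\gtrsim\sqrt{\min\{h(0),h(1)\}}$, so $h(\tau)\ge a(x(\tau))^2\gtrsim\min\{h(0),h(1)\}$. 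If instead the distance piece dominates there, say $|\xi(x)|\,|v|^2\gtrsim h(0)=\min$: when also $|\xi(y)|\gtrsim|\xi(x)|$, (i) gives $h(\tau)\gtrsim|v|^2|\xi(x)|\sim h(0)$; when $|\xi(y)|\ll|\xi(x)|$, the distance piece of $h(1)$ is $\ll h(0)$, so either $h(1)\le h(0)$ is itself governed by $|\xi(y)|\,|v|^2$ and (i) gives $h(\tau)\gtrsim|v|^2|\xi(y)|\sim h(1)=\min$, or the slope piece dominates $h(1)$ and $|a(y)|\gtrsim\sqrt{h(1)}\ge\sqrt{\min}$, so (ii) finishes as before. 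The hypothesis forces $x=y$ whenever $\tilde\alpha(x,v)$ or $\tilde\alpha(y,v)$ vanishes, so every degenerate configuration is vacuous.

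The delicate point I expect — and the reason the weak normalization $|x-y|\le\e\tilde\alpha/|v|$ (rather than the much stronger $|x-y|\ll\tilde\alpha^2/|v|^2$) still suffices — is that the slope piece $a(x(\tau))^2$ genuinely may cross zero along the segment, so it cannot be bounded below by $a$ at a single fixed endpoint. One must play the two endpoints against each other, using that the hypothesis controls $|x-y|$ by the \emph{smaller} of the two endpoint kinetic distances and that the convexity mechanism (i) never lets the distance piece collapse below $|v|^2\min\{|\xi(x)|,|\xi(y)|\}$. The uniform two-sided bound $b\sim|v|^2$, coming from the strict convexity \eqref{convex}, is precisely what lets one pass freely between ``$|\xi|$ is small'' and ``$\tilde\alpha^2$ is essentially $a^2$'', and it is the place where convexity of the domain (not merely $C^2$-regularity) is indispensable.
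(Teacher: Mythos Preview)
Your approach is correct and uses the same two ingredients as the paper: (i) the concavity of $-\xi$ along the segment gives $|\xi(x(\tau))|\ge\min\{|\xi(x)|,|\xi(y)|\}$, and (ii) a first-order Taylor/Lipschitz bound on $a(z)=v\cdot\nabla\xi(z)$ gives $|a(x(\tau))-a(z)|\le C\e\min\{\tilde\alpha(x,v),\tilde\alpha(y,v)\}$ for $z\in\{x,y\}$.

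The difference is only in packaging. The paper Taylor-expands $a(x(\tau))^2$ directly (rather than $a$ itself) from \emph{each} endpoint, obtaining two lower bounds
\[
\tilde\alpha(x(\tau),v)^2\gtrsim a(z)^2+|v|^2\min\{|\xi(x)|,|\xi(y)|\}-O(\e)\min\{\tilde\alpha(x,v),\tilde\alpha(y,v)\}^2,\qquad z\in\{x,y\},
\]
and then picks whichever $z$ realizes the smaller $|\xi(z)|$, so that $a(z)^2+|v|^2|\xi(z)|\sim\tilde\alpha(z,v)^2\ge\min$. This sidesteps the case split entirely and avoids the tangle you ran into in the last subcase: your phrase ``either $h(1)\le h(0)$ is itself governed by $|\xi(y)||v|^2$ \dots $h(1)=\min$'' contradicts your standing assumption $h(0)=\min$. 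What you actually need there is simply the dichotomy at $y$: if the distance piece dominates $h(1)$ then $|\xi(y)||v|^2\sim h(1)\ge h(0)$ and (i) finishes, while if the slope piece dominates $h(1)$ then $|a(y)|\gtrsim\sqrt{h(1)}\ge\sqrt{h(0)}$ and (ii) applied from $y$ (with the error $C\e\sqrt{h(0)}$, not $C\e\sqrt{h(1)}$, since the hypothesis bounds $|x-y|$ by the \emph{minimum}) finishes. With that correction your argument goes through.
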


\begin{proof}

By the definition~\eqref{kinetic_distance} we have
\begin{equation}\label{alpha square}
\tilde{\alpha}^2(x(\tau),v) =|\nabla \xi(x(\tau))\cdot v|^2-2\xi(x(\tau))(v\cdot \nabla^2 \xi(x(\tau))\cdot v) .
\end{equation}
We expand $|\nabla \xi(x(\tau))\cdot v|^2$ and $-2\xi(x(\tau))(v\cdot \nabla^2 \xi(x(\tau))\cdot v)$ separately: we expand in $\tau$ as
\begin{equation}\label{alpha^2}
|\nabla \xi(x(\tau))\cdot v|^2=|\nabla \xi(x(0))\cdot v|^2+\underbrace{\int_0^\tau \dd \tau' 2(\nabla\xi(x(\tau'))\cdot v)\dot{x}(\tau)\cdot \nabla^2 \xi(x(\tau'))\cdot v}_{\eqref{alpha^2}_*},
\end{equation}

\begin{equation}\label{alpha^2 2}
-2\xi(x(\tau))(v\cdot \nabla^2 \xi(x(\tau))\cdot v)=-2\xi(x(\tau))\{v\cdot \nabla^2 \xi(x(0))\cdot v+O(|x-y|)\Vert \xi\Vert_{C^3}|v^2|\},
\end{equation}
where we have used~\eqref{xtau}.

For$~\eqref{alpha^2}_*$ we further expand in $\tau^\prime$ and obtain

\begin{align}
 \eqref{alpha square}=  & |\nabla \xi(x)\cdot v|^2+2(\nabla \xi(x)\cdot v)O(|x-y|)|v|\Vert \xi\Vert_{C^2} \notag\\
   & +\int_0^\tau \dd \tau' \int_0^{\tau'} \dd \tau'' \dot{x}(\tau'')\cdot \nabla^2 \xi(x(\tau''))\cdot v \dot{x}(\tau'')\cdot \nabla^2 \xi(x(\tau''))\cdot v \label{double ftc 1}\\
   & +\int_0^\tau \dd \tau' \int_0^{\tau'}\dd \tau''    2(\nabla \xi(x(\tau''))\cdot v) \dot{x}(\tau'')\dot{x}(\tau'')\nabla^3 \xi(x(\tau''))\cdot v  \label{double ftc 2}\\
   & -2\xi(x(\tau))O(|v|^2). \notag
\end{align}

From the convexity~\eqref{convex} we have
\begin{equation}\label{estimate for double ftc}
\eqref{double ftc 1}+\eqref{double ftc 2}=O(1) |\dot{x}|^2 \Vert \xi\Vert_{C^3}|v|^2= O(
\e^2)\min\{\frac{\alpha(x,v)}{|v|},\frac{\alpha(y,v)}{|v|}\}^2|v|^2.
\end{equation}

From~\eqref{estimate for double ftc} we have
\begin{equation}\label{alpha2+2xi}
\begin{split}
  \eqref{alpha square}+2\xi(x(\tau))O(|v|^2) & =|\nabla \xi(x)\cdot v|^2+O(\e)\alpha(x,v)\min\{\alpha(x,v),\alpha(y,v)\} \\
   & +O(\e^2)\min\{\alpha(x,v),\alpha(y,v)\}^2.
\end{split}
\end{equation}

Now we claim
\begin{equation}\label{xi geq min}
-\xi(x(\tau))\geq \min\{-\xi(x),-\xi(y)\}.
\end{equation}

From $\frac{\dd}{\dd \tau}(-\xi(x(\tau)))=-\dot{x}(\tau)\cdot \nabla_x \xi(x(\tau))$ and convexity~\eqref{convex},
\[\frac{\dd^2}{\dd \tau^2}(-\xi(x(\tau)))=-\dot{x}(\tau)\cdot \nabla_x^2 \xi(x(\tau))\cdot \dot{x(\tau)}\lesssim -|\dot{x}(\tau)|^2 \leq 0.\]
Thus $-\xi(x(\tau))$ is a concave function of $\tau$. From $0\leq \tau\leq 1$, we prove our claim~\eqref{xi geq min} as
\begin{align*}
-\xi(x(\tau))   &=-\xi(x((1-\tau)\cdot 0+\tau\cdot 1))\geq -(1-\tau)\xi(x(0))-\tau\xi(x(1))  \\
   & =-(1-\tau)\xi(x)-\tau \xi(y)\geq \min\{-\xi(x),-\xi(y)\}.
\end{align*}

Now combining~\eqref{alpha2+2xi} and~\eqref{xi geq min} we conclude that
\begin{equation}\label{estimate for alpha2}
\begin{split}
\eqref{alpha square}&\gtrsim |\nabla \xi(x)\cdot v|^2+\min\{-\xi(x),-\xi(y)\}|v|^2\\
&+O(\e)\alpha(x,v)\min\{\tilde{\alpha}(x,v),\tilde{\alpha}(y,v)\}+O(\e^2)\min\{\tilde{\alpha}(x,v),\tilde{\alpha}(y,v)\}^2.
\end{split}
\end{equation}
Similarly we can set $x(\tau)=(1-\tau)y+\tau x$. From $x(0)=y$, following the same argument we derive
\begin{equation}\label{estimate for y}
\begin{split}
\eqref{alpha square}&\geq |\nabla \xi(y)\cdot v|^2+\min\{-\xi(x),-\xi(y)\}|v|^2\\
&+O(\e)\tilde{\alpha}(y,v)\min\{\tilde{\alpha}(x,v),\tilde{\alpha}(y,v)\}+O(\e^2)\min\{\tilde{\alpha}(x,v),\tilde{\alpha}(y,v)\}^2.
\end{split}
\end{equation}

From the definition of~\eqref{kinetic_distance} using~\eqref{estimate for alpha2} and~\eqref{estimate for y} we have
\[\eqref{alpha square}\geq \min\{\tilde{\alpha}(x,v),\tilde{\alpha}(y,v)\}^2-O(\e)\min\{\tilde{\alpha}(x,v),\tilde{\alpha}(y,v)\}^2.\]
Hence from $\e\ll 1$ we conclude~\eqref{alpha geq min}. \end{proof}

Then we start the proof of Lemma \ref{Lemma: min max}.

\begin{proof}[\textbf{Proof of Lemma \ref{Lemma: min max}}]
For all estimates we assume $|x-y|\leq \e \min\{\frac{\tilde{\alpha}(x,v)}{|v|},\frac{\tilde{\alpha}(y,v)}{|v|}\}$, otherwise the Lemma follows immediately by~\eqref{nabla_tbxb}. Thus we can apply~\eqref{alpha geq min} during the whole proof. We will use the $x(\tau)$ defined in~\eqref{xtau}.

\textit{Proof of~\eqref{min: xb}}.
We have
\begin{align*}
 \frac{|\xb(x,v)-\xb(y,v)|}{|x-y|^\beta}  & =\frac{1}{|x-y|^\beta}\int_0^1 \dd \tau \frac{\dd}{\dd \tau}\nabla_x \xb(x(\tau),v) \\
   & =\frac{1}{|x-y|^\beta}\int_0^1  |\dot{x}(\tau)||\nabla_x \xb(x(\tau),v)|\dd \tau\\
   & \lesssim \frac{1}{|x-y|^{\beta-1}}\int_0^1 \frac{|v|}{\tilde{\alpha}(x(\tau),v)}\lesssim \frac{|v|^\beta}{\min\{\alpha(x,v),\alpha(y,v)\}^\beta},
\end{align*}
where we have used Lemma \ref{Lemma: x-y} and~\eqref{n geq alpha} in the last line.

\textit{Proof of~\eqref{min: tb}}.
The first inequality is clear since $|e^{-C\nu \tb(x,v)}-e^{-C\nu\tb(y,v)}|\lesssim 1$.

To prove the second inequality we have
\begin{align*}
  \frac{|e^{-C\nu\tb(x,v)}-e^{-C\nu \tb(y,v)}|}{|x-y|^\beta} & =\frac{1}{|x-y|^\beta}\int_0^1 \dd \tau \frac{\dd}{\dd \tau} e^{-C\nu\tb(x(\tau),v)} \\
   & \lesssim\frac{1}{|x-y|^\beta}\int_0^1 \dd \tau (\nu \tb(x(\tau),v)) e^{-C\nu\tb(x(\tau),v)} |x-y| \frac{1}{n(\xb(x(\tau),v))\cdot v}\\
   &\lesssim  |x-y|^{1-\beta}\frac{1}{\min\{\tilde{\alpha}(x,v),\tilde{\alpha}(y,v)\}}\lesssim \frac{1}{|v|\min\{\frac{\alpha(x,v)}{|v|},\frac{\alpha(y,v)}{|v|}\}^{\beta}},
\end{align*}
where we have used~\eqref{nabla_tbxb} in the second line, Lemma \ref{Lemma: x-y} and~\eqref{n geq alpha} in the last line.

\textit{Proof of~\eqref{min: nxb}}.
Since
\[\frac{|n(\xb(x,v))-n(\xb(y,v))|}{|x-y|^\beta}=\frac{|n(\xb(x,v))-n(\xb(y,v))|}{|\xb(x,v)-\xb(y,v)|}\frac{|\xb(x,v)-\xb(y,v)|}{|x-y|^\beta}\lesssim \Vert \xi\Vert_{C^2}\frac{|\xb(x,v)-\xb(y,v)|}{|x-y|^\beta}.\]
By~\eqref{min: xb} we derive~\eqref{min: nxb}.

\textit{Proof of~\eqref{min: nabla xb}}. We have
\begin{align}
\frac{|\nabla_x \xb(x,v)-\nabla_x \xb(y,v)|}{|x-y|^\beta} & =\frac{1}{|x-y|^\beta}\int_0^1 \dd \tau \frac{\dd}{\dd \tau} \nabla_x \xb(x(\tau),v)  \notag\\
   & =\frac{1}{|x-y|^\beta}\int_0^1 |\dot{x}(\tau)||\nabla_x \nabla_x \xb(x(\tau),v)|\dd \tau  \label{nablanabla x}\\
   & \lesssim |x-y|^{1-\beta}\int_0^1 \frac{|v|^3}{|\tilde{\alpha}(x(\tau),v)|^3}.\notag
\end{align}
Here we have used~\eqref{nabla_tbxb} to have
\begin{align}
 |\nabla_x (\nabla_x \xb(x(\tau),v))|   &\lesssim \Big[\frac{\Vert \eta\Vert_{C^1}|v||n(\xb(x(\tau),v))\cdot v| }{|n(\xb(x(\tau),v))\cdot v|^2}+    \Vert \eta\Vert_{C^2}\frac{n(\xb(x(\tau),v))\otimes v}{|n(\xb(x(\tau),v))\cdot v|^2}      \Big]\times |\nabla_x \xb(\tau,v)| \label{second xb}\\
    & \lesssim \frac{|v|^2}{|n(\xb(x(\tau),v))\cdot v|^2}+\frac{|v|^3}{|n(\xb(x(\tau),v))\cdot v|^3}\lesssim \frac{|v|^3}{|n(\xb(x(\tau),v))\cdot v|^3} \label{second derivative xb},
\end{align}
where we have used $|n(\xb(x(\tau),v))\cdot v|\leq |v|$ in the last inequality. Then by Lemma \ref{Lemma: min max} and~\eqref{n geq alpha} we obtain~\eqref{min: nabla xb}.

%Then
%\begin{equation}\label{alpha^3 in case 2}
%\mathbf{1}_{|x-y|\leq \e\min\{\frac{\alpha(x,v)}{|v|},\frac{\alpha(y,v)}{|v|}\}}\times\eqref{Hd for xb}\leq \e^{1-\beta}\min\{\frac{\alpha(x,v)}{|v|},\frac{\alpha(y,v)}{|v|}\}^{1-\beta}\int_0^1 \frac{|v|^3}{|\alpha(x(\tau),v)|^3}\dd \tau,
%\end{equation}
%where we have used~\eqref{second derivative xb}.
%
%
%
%
%
%
%Now we apply Lemma \ref{Lemma: x-y} to~\eqref{alpha^3 in case 2} and obtain an estimate
%\[\eqref{alpha^3 in case 2}\lesssim \e^{1-\beta} \min\{\frac{\alpha(x,v)}{|v|},\frac{\alpha(y,v)}{|v|}\}^{-2-\beta}.\]

\textit{Proof of~\eqref{min: nabla tb}}. We have
\begin{align*}
  \frac{|\nabla_x \tb(x,v)-\nabla_x \tb(y,v)|}{|x-y|^\beta} &=\frac{1}{|x-y|^\beta}\int_0^1 \dd \tau \frac{\dd}{\dd \tau}\nabla_x \tb(x(\tau),v)  \\
   & =\frac{1}{|x-y|^\beta}\int_0^1 |\dot{x}(\tau)||\nabla_x \nabla_x \tb(x(\tau),v)|\dd \tau\\
   &   \lesssim |x-y|^{1-\beta}\int_0^1     \frac{|v|^2}{|\tilde{\alpha}(x(\tau),v)|^3},
\end{align*}
where we have used~\eqref{nabla_tbxb} to conclude
\begin{equation}\label{proof of min nabla tb}
\begin{split}
|\nabla_x (\nabla_x \tb(x(\tau),v))|\lesssim    & \frac{\Vert \eta\Vert_{C^2}(|n(\xb(x(\tau),v))\cdot v|)}{|n(\xb(x(\tau),v))|^2}\nabla_x \xb(x(\tau),v) \\
   & + \frac{n(\xb(x(\tau)),v)\Vert \eta\Vert_{C^2}|v|}{|n(\xb(x(\tau),v))|^2}\nabla_x \xb(x(\tau),v)\\
 &  \lesssim \frac{|v|^2}{|n(\xb(x(\tau),v))\cdot v|^3}.
\end{split}
\end{equation}

Thus by Lemma \ref{Lemma: min max} and~\eqref{n geq alpha} we obtain~\eqref{min: nabla tb}.

\textit{Proof of~\eqref{min: I nabla xb}}.
From~\eqref{nablanabla x} and~\eqref{second xb}, we bound
\begin{align*}
 |G(y) \nabla_x (\nabla_x \xb(x(\tau),v))|  & \lesssim      \Big[\frac{\Vert \eta\Vert_{C^2}|v|}{|n( \xb(x(\tau),v))\cdot v|}+       \frac{|v|G(y)n( \xb(x(\tau),v))\otimes v}{|n( \xb(x(\tau),v))\cdot v|^2} \Big]  |\nabla_x \xb(\tau,v)| \\
   & \lesssim \frac{|v|^2}{|n( \xb(x(\tau),v))\cdot v|^2},
\end{align*}
where we have used
\begin{align}
|G(y)n( \xb(x(\tau),v)) |  &\lesssim   |G(y)n(y)|    +|n(\xb(x(\tau,v)))-n(y)| \notag\\
   & \lesssim |\xb(x(\tau),v)-y|\lesssim |\xb(x(\tau),v)-x(\tau)|+|x(\tau)-y| \notag\\
   & \lesssim\min\{\frac{\tilde{\alpha}(x,v)}{|v|},\frac{\tilde{\alpha}(y,v)}{|v|}\}.\label{G n}
\end{align}
Thus
\begin{align*}
 \big| G(y)\frac{\nabla_x \xb(x,v)-\nabla_x \xb(y,v)}{|x-y|^\beta}\big| & \lesssim |x-y|^{1-\beta}\int_0^1 \frac{|v|^2}{|\tilde{\alpha}(x(\tau),v)|^2} ,
\end{align*}
and we conclude~\eqref{min: I nabla xb} from~\eqref{n geq alpha}.

\textit{Proof of~\eqref{min: I nabla tb}}. From~\eqref{proof of min nabla tb} we have
\begin{align*}
G(y) \frac{\nabla_x \tb(x,v)-\nabla_x \tb(y,v)}{|x-y|^\beta}   & \lesssim \frac{1}{|x-y|^\beta}\times  \int_0^1 \dd \tau \Big| \frac{\Vert \eta\Vert_{C^2}}{|n(\xb(x(\tau),v))\cdot v|}\nabla_x \xb(x(\tau),v) \\
   & +  \Vert \eta\Vert_{C^2}|v|  \frac{G(y) n(\xb(x(\tau),v))}{|n(\xb(x(\tau),v))\cdot v|^2}  \nabla_x \xb(x(\tau),v) \Big|\\
   & \lesssim \frac{1}{|x-y|^\beta}\int_0^1 \dd \tau\frac{|v|}{|n(\xb(x(\tau),v))\cdot v|^2}\lesssim \frac{1}{|v|\min\{\frac{\alpha(x,v)}{|v|},\frac{\alpha(y,v)}{|v|}\}^{1+\beta}},
\end{align*}
where we have used~\eqref{G n} to $G(y)n(\xb(x(\tau),v))$ and~\eqref{n geq alpha}.

\textit{Proof of~\eqref{min: f}}. By~\eqref{alpha geq min} in Lemma \ref{Lemma: x-y} we have
\begin{align*}
  \frac{|f(x,v)-f(y,v)|}{|x-y|^\beta}
   & \lesssim w^{-\beta}(v)\Vert wf\Vert_\infty^{1-\beta}  \frac{|f(x,v)-f(y,v)|^\beta}{|x-y|^\beta}\\
    &\lesssim w^{-\beta}(v)\Vert wf\Vert_\infty^{1-\beta}\frac{1}{|x-y|^\beta}\Big|\int_0^1 \dd\tau \dot{x}(\tau)\cdot \nabla_x f(x(\tau),v) \Big|^\beta\\
    & \lesssim w^{-\beta}(v)w^{\beta-1}_{\tilde{\theta}}(v)\Vert wf\Vert_\infty^{1-\beta}\Vert w_{\tilde{\theta}}\alpha\nabla_x f\Vert_\infty^\beta     \Big|\int_0^1 \dd \tau \frac{1}{\tilde{\alpha}(x(\tau),v)}\Big|^\beta\\
    &\lesssim w^{-1}_{2\tilde{\theta}}(v)\frac{\Vert wf\Vert_\infty^{1-\beta}\Vert w_{\tilde{\theta}}\alpha\nabla_x f\Vert_\infty^\beta}{\min\{\alpha(x,v),\alpha(y,v)\}^\beta},
\end{align*}
where we have used $\tilde{\theta}\ll \varrho$ to have
\[w^{\beta-1}_{\tilde{\theta}}(v)w^{-\beta}(v)=e^{|v|^2[(\beta-1)\varrho-\beta \tilde{\theta}]}\leq e^{-2\tilde{\theta}|v|^2}.\]

\textit{Proof of~\eqref{min: xb tang}}.
Since $|x-y|\ll 1$, we can assume that $x,y \in B(p;\delta_2)$, where $B(p;\delta_2)$ is defined in~\eqref{iota}. Then both $x,y$ correspond to the same $p$. Only for proof of this estimate we denote
\[x=\eta_p(\mathbf{x}_p(x)),\quad y=\eta_p(\mathbf{x}_p(y)),\]
\[\eta_p(\mathbf{x}_p(\tau)):=\tau \eta_p(\mathbf{x}_{p}(x))+(1-\tau)\eta_p(\mathbf{x}_p(y)).\]

By mean value theorem
\begin{align*}
  x-y&= \eta_p(\mathbf{x}_p(x))-\eta_p(\mathbf{x}_p(y))=\nabla \eta_p\big(c\mathbf{x}_p(x)+(1-c)\mathbf{x}_p(y)\big) \big(\mathbf{x}_p(x)-\mathbf{x}_p(y)\big).
\end{align*}
Thus
\begin{align*}
&\frac{|\xb(\eta_p(\mathbf{x}_p(x)),v)-\xb(\eta_p(\mathbf{x}_p(y)),v)|}{|x-y|}
     \\
   & =   \frac{1}{|x-y|} \big| \int_0^1 \dd \tau \frac{\dd}{\dd \tau} \xb(\eta_{p}(\mathbf{x}_p(\tau)),v) \big|=\frac{1}{|x-y|} \big| \int_0^1 \dd \tau \nabla_x \xb(\eta_{p}(\mathbf{x}_p(\tau)),v) \frac{\dd }{\dd \tau}\eta_p(\mathbf{x}_p(\tau)) \big|\\
   &= \frac{1}{|x-y|}\int_0^1 \dd \tau \nabla_x \xb(\eta_{p}(\mathbf{x}_{p}(\tau)),v) \nabla \eta_p\big(c\mathbf{x}_p(x)+(1-c)\mathbf{x}_p(y)\big)  \big(\mathbf{x}_p(x)-\mathbf{x}_p(y)\big)\\
   &\lesssim \frac{|\mathbf{x}_p(x)-\mathbf{x}_p(y)|}{|x-y|} \Big|\int_0^1 \dd \tau \nabla_x \eta_p\big(c\mathbf{x}_p(x)+(1-c)\mathbf{x}_p(y)\big)-\frac{n(\xb(\eta_p(\mathbf{x}_p(\tau)),v))\otimes v}{|n(\xb(\eta_p(\mathbf{x}_p(\tau)),v))\cdot v|} \nabla \eta_p\big(c\mathbf{x}_p(x)+(1-c)\mathbf{x}_p(y)\big)\Big|\\
  & \lesssim  \Vert \eta\Vert_{C^1}+\frac{n(\eta_p\big(c\mathbf{x}_p(x)+(1-c)\mathbf{x}_p(y)\big)) \nabla \eta_p\big(c\mathbf{x}_p(x)+(1-c)\mathbf{x}_p(y)\big) |v|}{|n(\xb(\eta_p(\mathbf{x}_p(\tau)),v))\cdot v|}\\
&+\frac{|v|[n(\eta_p\big(c\mathbf{x}_p(x)+(1-c)\mathbf{x}_p(y)\big)) -n(\xb(\eta_p(\mathbf{x}_p(\tau)),v))]}{|n(\xb(\eta_p(\mathbf{x}_p(\tau)),v))\cdot v|}   \\
&\lesssim 1+\frac{\Vert \xi\Vert_{C^2}|\xb(\eta_p(\mathbf{x}_p(\tau)),v)-\eta_p(\mathbf{x}_p(\tau))|+|\eta_p(\mathbf{x}_p(\tau))-\eta_p\big(c\mathbf{x}_p(x)+(1-c)\mathbf{x}_p(y)\big)| }{|n(\xb(\eta_p(\mathbf{x}_p(\tau)),v))\cdot v|} \\
  &\lesssim 1+ \frac{\alpha(n(\xb(\eta_p(\mathbf{x}_p(\tau))),v))+|v||x-y|}{|n(\xb(\eta_p(\mathbf{x}_p(\tau)),v))\cdot v|}\lesssim 1.
\end{align*}
 In the fourth line we have used~\eqref{nabla_tbxb}. In the last two lines we have used~\eqref{n(x)-n(xb)} and $|x-y|\leq O(\e)\min\{\frac{\tilde{\alpha}(x,v)}{|v|},\frac{\tilde{\alpha}(y,v)}{|v|}\}$.

\textit{Proof of~\eqref{min: M_w}.} Since $\Vert T_W-T_0\Vert_\infty \ll 1$ from Existence Theorem, by the definition of $M_W$ in~\eqref{Wall Maxwellian} we apply the mean value theorem to have
\begin{align*}
  \frac{|M_W(x,v)-M_W(y,v)|}{\sqrt{\mu(v)}|x-y|^\beta} & \lesssim \frac{|M_W(x,v)-M_W(y,v)|}{\sqrt{\mu(v)}|x-y|} \leq \Big\Vert \frac{\nabla_x M_W(x,v)}{\sqrt{\mu(v)}}\Big\Vert_{\infty} \\
   & \lesssim_{T_0} \Big\Vert \nabla_x T_W   |v|^2\frac{M_W(x)}{\sqrt{\mu(v)}}   \Big\Vert_\infty\lesssim \Vert T_W-T_0\Vert_{C^1}.
\end{align*}

\textit{Proof of~\eqref{min: partial xip1 xip2}}. From~\eqref{O_p} it is equivalent to compute
\begin{align*}
  & \frac{|\partial_j \eta_{p(x)}(\mathbf{x}_{p(x)})\nabla_{x}\mathbf{x}^1_{p^1(x),i}  -\partial_j \eta_{p(y)}(\mathbf{x}_{p(y)})\nabla_{x}\mathbf{x}^1_{p^1(y),i}|}{|x-y|^\beta}  \\
  & \lesssim \frac{|\partial_j \eta_{p(x)}(\mathbf{x}_{p(x)})-\partial_j \eta_{p(y)}(\mathbf{x}_{p(y)})| |\nabla_x \mathbf{x}_{p^1(x),i}^1|}{|x-y|^\beta}+ \frac{\big| \partial_j \eta_{p(x)}(\mathbf{x}_{p(x)})[\nabla_{x}\mathbf{x}^1_{p^1(x),i}-\nabla_{x}\mathbf{x}^1_{p^1(y),i}]\big|}{|x-y|^\beta} \\
  &\lesssim \frac{\Vert \eta\Vert_{C^2}|v|}{\alpha(x,v)}+ \Vert \eta\Vert_{C^1}\frac{|\nabla_{x}\mathbf{x}^1_{p^1(x),i}-\nabla_{x}\mathbf{x}^1_{p^1(y),i}|}{|x-y|^\beta} ,
\end{align*}
where we have used~\eqref{xip deri xbp}. Denote $\xb(x(\tau),v) = \eta_{p^1(x(\tau))}(\mathbf{x}_{p^1(x(\tau))}^1)$. Then
\[\frac{\dd}{\dd \tau} \big(\nabla_x\mathbf{x}^1_{p^1(x(\tau)),i}\big)= \frac{\dd}{\dd \tau} x(\tau) \nabla_{x(\tau)} \nabla_x \mathbf{x}_{p^1(x(\tau)),i}^1=\frac{\dd}{\dd \tau} x(\tau) \nabla_{x(\tau)}\mathbf{x}_{p^1(x(\tau))}^1     \nabla_{\mathbf{x}_{p^1(x(\tau))}^1}        \nabla_x \mathbf{x}_{p^1(x(\tau)),i}^1 .\]
Applying~\eqref{xi deri xbp} we further bound
\begin{align*}
  &\frac{|\nabla_{x}\mathbf{x}^1_{p^1(x),i}-\nabla_{x}\mathbf{x}^1_{p^1(y),i}|}{|x-y|^\beta} \\
  &=\frac{1}{|x-y|^\beta} \Big|\int_0^1     \dd \tau  \frac{\dd}{\dd \tau} \big(\nabla_x\mathbf{x}^1_{p^1(x(\tau)),i}\big)\Big|=\frac{1}{|x-y|^\beta} \int_0^1     \dd \tau  |\dot{x}(\tau)| | \nabla_{x(\tau)}\mathbf{x}_{p^1(x(\tau))}^1     \nabla_{\mathbf{x}_{p^1(x(\tau))}^1}        \nabla_x \mathbf{x}_{p^1(x(\tau)),i}^1| \\
  &\lesssim \frac{\nabla_x \mathbf{x}_{p^1(x(\tau))}^1}{g_{p^1(x(\tau)),ii}(\mathbf{x}_{p^1(x(\tau))}^1)}\Vert \eta\Vert_{C^2}\big[\frac{1}{g_{p^1(x(\tau)),ii}(\mathbf{x}^1_{p^1(x(\tau))})}+\frac{|v|^2}{|n(\xb(x(\tau),v))\cdot v|^2 g_{p^1(x(\tau)),33}(\mathbf{x}^1_{p^1(x(\tau))})}\big]\\
  &\lesssim_{\eta} \frac{|v|^3}{\alpha^3(x(\tau),v)} \lesssim \frac{1}{\min\{\frac{\alpha(x,v)}{|v|},\frac{\alpha(y,v)}{|v|}\}^3}.
\end{align*}
In the third line we have taken derivative to~\eqref{xi deri xbp}. In the fourth line we have used~\eqref{n geq alpha} and $\nabla_x \mathbf{x}_{p^1}^1\lesssim \frac{|v|}{n(\xb(x(\tau),v))\cdot v}$ from~\eqref{xi deri xbp}.

\end{proof}

\begin{lemma}\label{Lemma: Gf-Gf}
For any $s\in [t-\tb,t]$, we have
\begin{align}
   & \frac{G(x)\nabla_x f(x-sv,v)-G(y)\nabla_x f(y-sv,v)}{|x-y|^\beta} \notag\\
   &\lesssim \frac{\nabla_\parallel f(x-sv,u)-\nabla_\parallel f(y-sv,v)}{|x-y|^\beta}+\frac{\tilde{\alpha}(x,v)}{|v|}\frac{\nabla_x f(x-sv,v)-\nabla_x f(y-sv,v)}{|x-y|^\beta}+ \frac{\Vert w_{\tilde{\theta}}\alpha\nabla_x f\Vert_\infty}{w_{\tilde{\theta}}(v)\alpha(y-sv,v)}. \label{Gf-Gf 1}
\end{align}

\end{lemma}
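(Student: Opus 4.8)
The plan is to treat \eqref{Gf-Gf 1} as an algebraic decomposition: the goal is to split the left-hand side into the ``honest'' tangential-derivative difference plus two corrections whose coefficients are controlled by the geometric bound \eqref{n(x)-n(xb)} of Lemma~\ref{Lemma: nx-nxb}. First I would insert and subtract $G(x-sv)\nabla_x f(x-sv,v)$ and $G(y-sv)\nabla_x f(y-sv,v)$; since $\nabla_\parallel f(z,v)=G(z)\nabla_x f(z,v)$ by \eqref{tang deri} (applied at $z=x-sv$ and $z=y-sv$, both of which lie in $\bar\Omega$ along the backward characteristics), this gives
\[
G(x)\nabla_x f(x-sv,v)-G(y)\nabla_x f(y-sv,v)=\big(\nabla_\parallel f(x-sv,v)-\nabla_\parallel f(y-sv,v)\big)+\mathcal R,
\]
where $\mathcal R:=[G(x)-G(x-sv)]\nabla_x f(x-sv,v)-[G(y)-G(y-sv)]\nabla_x f(y-sv,v)$. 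Dividing by $|x-y|^\beta$, the first bracket is precisely the first term on the right of \eqref{Gf-Gf 1}.

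Next I would decompose $\mathcal R$ once more, inserting $\pm[G(x)-G(x-sv)]\nabla_x f(y-sv,v)$, to get
\[
\mathcal R=[G(x)-G(x-sv)]\big(\nabla_x f(x-sv,v)-\nabla_x f(y-sv,v)\big)+\Big([G(x)-G(x-sv)]-[G(y)-G(y-sv)]\Big)\nabla_x f(y-sv,v).
\]
For the first summand I apply $|G(x)-G(x-sv)|\lesssim\tilde\alpha(x,v)/|v|$ from \eqref{n(x)-n(xb)}; after dividing by $|x-y|^\beta$ this is exactly the second term on the right of \eqref{Gf-Gf 1}.

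The only mildly delicate piece is the last summand $\big([G(x)-G(x-sv)]-[G(y)-G(y-sv)]\big)\nabla_x f(y-sv,v)$. Here I would rewrite the matrix factor as $[G(x)-G(y)]-[G(x-sv)-G(y-sv)]$, use convexity of $\bar\Omega$ (so the segments $[x,y]$ and $[x-sv,y-sv]$ stay in $\bar\Omega$) together with the bound $|\nabla_x n|\lesssim\|\xi\|_{C^2}$, hence $|\nabla_x G|\lesssim\|\xi\|_{C^2}$, derived inside the proof of Lemma~\ref{Lemma: nx-nxb}, and conclude $|[G(x)-G(y)]-[G(x-sv)-G(y-sv)]|\lesssim|x-y|$. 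Since $\beta<1$ and $|x-y|\le\mathrm{diam}(\Omega)\lesssim1$, dividing by $|x-y|^\beta$ leaves $|x-y|^{1-\beta}\lesssim1$; combined with $|\nabla_x f(y-sv,v)|\le\|w_{\tilde\theta}\alpha\nabla_x f\|_\infty/(w_{\tilde\theta}(v)\alpha(y-sv,v))$, which is immediate from the definition of the weighted norm, this reproduces the last term of \eqref{Gf-Gf 1}. Collecting the three contributions gives the lemma. I do not anticipate a genuine obstacle: once the double telescoping above is written down, each summand is handled by \eqref{n(x)-n(xb)}, convexity of $\bar\Omega$, and the definitions of the norms; the only point of care is bookkeeping which summand carries which power of $\alpha$ (here $\tilde\alpha/|v|$ for the second term and a bare $1/\alpha$ for the third).
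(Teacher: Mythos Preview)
Your proof is correct and follows essentially the same telescoping decomposition as the paper: split off the genuine $\nabla_\parallel$ difference, then regroup the remainder into $[G(x)-G(x-sv)](\nabla_xf(x-sv)-\nabla_xf(y-sv))$ plus $\big([G(x)-G(y)]-[G(x-sv)-G(y-sv)]\big)\nabla_xf(y-sv)$, handling the first via \eqref{n(x)-n(xb)} and the second via the Lipschitz bound $|\nabla_x n|\lesssim\|\xi\|_{C^2}$. The paper records the last step more tersely (citing \eqref{min: nxb}), but your explicit use of $|x-y|^{1-\beta}\lesssim 1$ and the convexity to keep the segments in $\bar\Omega$ is exactly the intended argument.
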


\begin{proof}
First we rewrite
\begin{align}
   &G(x)\nabla_x f(x-sv,v)-G(y)\nabla_x f(y-sv,v) \notag \\
   & =G(x-sv)\nabla_x f(x-sv,v)-G(y-sv)\nabla_x f(y-sv,v) \label{rewrite Gfx-Gfy 1} \\
   &+\big(G(x)-G(x-sv)\big)\nabla_x f(x-sv,v)\label{rewrite Gfx-Gfy 2}\\
   &+\big( G(y-sv)-G(y)\big) \nabla_x f(y-sv,v)\label{rewrite Gfx-Gfy 3}.
\end{align}
Note that from~\eqref{tang deri} a contribution of~\eqref{rewrite Gfx-Gfy 1} appears in~\eqref{Gf-Gf 1} .

For~\eqref{rewrite Gfx-Gfy 2} and~\eqref{rewrite Gfx-Gfy 3} we apply~\eqref{n(x)-n(xb)} and rearrange terms to derive that
\begin{align}
   &  [G(x)-G(x-sv)] \nabla_x f(x-sv,v)-[G(y)-G(y-sv)] \nabla_x f(y-sv,v)\notag\\
   &= [G(x)-G(x-sv)][ \nabla_x f(x-sv,v)-\nabla_x f(y-sv,v)]\notag\\
   &+[G(x)-G(y)+G(y-sv)-G(x-sv)]\nabla_x f(y-sv,v)\notag\\
   &\lesssim \frac{\tilde{\alpha}(x,v)}{|v|}[ \nabla_x f(x-sv,v)-\nabla_x f(y-sv,v)]+ [n(x)-n(y)+n(y-sv)-n(x-sv)]\frac{\Vert \alpha \nabla_x f\Vert_\infty}{\alpha(y-sv,u)}         .\notag
\end{align}
By~\eqref{min: nxb} we conclude the lemma.

\end{proof}

\subsection{Properties of boundary condition and collision operators} 
In this subsection we list some properties of the boundary condition and collision operators. We summarize the property of diffuse boundary condition in Lemma \ref{Lemma: bc estimate}. The property of the collision operator is summarized in Lemma \ref{Lemma: K,Gamma} and Lemma \ref{Lemma: k tilde}. 
\begin{lemma}\label{Lemma: bc estimate}
For the diffuse boundary condition of $f$ in~\eqref{diffuse_f}, let $\xb(x,v)=\eta_{p^1(x)}(\mathbf{x}_{p^1(x)})\in \partial\Omega$ ( see~\eqref{xpk x} ), we have
\begin{equation}\label{Estimate for r}
\Vert r\Vert_\infty< \infty,\quad |\partial_{\mathbf{x}_{p^1,i}}r(\eta_{p^1}(\mathbf{x}_{p^1}),v) |\lesssim \Vert T_W-T_0\Vert_{C^1},\quad ||v|^2\nabla_v r(\xb(x,v),v)|\lesssim 1,
\end{equation}

\begin{equation}\label{Estimate for r beta}
w_{\tilde{\theta}}(v)|v|^2\frac{\partial_{\mathbf{x}_{p^1(x),i}}r(\eta_{p^1(x)}(\mathbf{x}_{p^1(x)}),v)-\partial_{\mathbf{x}_{p^1(y),i}}r(\eta_{p^1(y)}(\mathbf{x}_{p^1(y)}),v)}{|\xb(x,v)-\xb(y,v)|^\beta}\lesssim \Vert T_W-T_0\Vert_{C^2},
\end{equation}

\begin{equation}\label{Estimate for M_W beta}
w_{\tilde{\theta}}(v)|v|^2\frac{\partial_{\mathbf{x}_{p^1(x),i}}M_W(\eta_{p^1(x)}(\mathbf{x}_{p^1(x)}),v)-\partial_{\mathbf{x}_{p^1(y),i}}M_W(\eta_{p^1(y)}(\mathbf{x}_{p^1(y)}),v)}{\sqrt{\mu(v)}|\xb(x,v)-\xb(y,v)|^\beta}\lesssim \Vert T_W-T_0\Vert_{C^2},
\end{equation}

\begin{equation}\label{int f-int f}
\begin{split}
    & \frac{1}{|\xb(x,v)-\xb(y,v)|^\beta}\times \Big[\int_{n(\xb(x,v))\cdot v^1>0}f(\xb(x,v),v^1)\sqrt{\mu(v^1)}\{n(\xb(x,v))\cdot v^1\}\dd v^1 \\
     & -\int_{n(\xb(y,v))\cdot v^1>0}f(\xb(y,v),v^1)\sqrt{\mu(v^1)}\{n(\xb(y,v))\cdot v^1\}\dd v^1\Big]\lesssim \Vert \alpha\nabla_x f\Vert_\infty.
\end{split}
\end{equation}

\end{lemma}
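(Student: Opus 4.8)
The plan is to treat the five estimates one at a time, since each reduces to a computation that combines the explicit formula for $M_W$ in \eqref{Wall Maxwellian}, the chain-rule formulas of Lemma \ref{Lemma: nabla tbxb}, and the Hölder bounds of Lemma \ref{Lemma: min max}. First I would handle \eqref{Estimate for r}: writing $r=\frac{M_W/\sqrt{2\pi}-\mu}{\sqrt\mu}$ as in \eqref{remainder term}, the bound $\|r\|_\infty<\infty$ is immediate from $|T_W-T_0|\ll1$ and $0<\varrho<1/4$, because the Gaussian tail $e^{-|v|^2/2T_W}/\sqrt\mu$ is integrable-type bounded once $T_W$ is close to $1$; the tangential-derivative bound follows since $\partial_{\mathbf x_{p^1,i}}r$ hits $T_W$ through $M_W$, producing a polynomial factor $|v|^2\frac{M_W}{\sqrt\mu}$ times $|\nabla_x T_W|$, which is $\lesssim\|T_W-T_0\|_{C^1}$ (the same computation already appears in the proof of \eqref{min: M_w}); the $v$-derivative bound is the analogous elementary Gaussian estimate. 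Next, \eqref{Estimate for M_W beta} and \eqref{Estimate for r beta}: I would write the difference quotient as an integral $\int_0^1\frac{d}{d\tau}[\partial_{\mathbf x,i}M_W]\,d\tau$ along the segment joining $\xb(x,v)$ and $\xb(y,v)$ on the boundary (using the local-chart mean-value trick of the proof of \eqref{min: xb tang}), differentiate once more to bring in $\nabla^2 T_W$ and the geometric quantities $\partial_i\eta_{p^1}$, and then divide by $|\xb(x,v)-\xb(y,v)|^\beta$; since $\xb(x,v),\xb(y,v)\in\partial\Omega$ and $\beta<1$, the segment length is comparable to $|\xb(x,v)-\xb(y,v)|$ and the extra factor $|\xb(x,v)-\xb(y,v)|^{1-\beta}$ is harmless, so this term is $\lesssim\|T_W-T_0\|_{C^2}$ with the Gaussian weight $w_{\tilde\theta}(v)|v|^2$ absorbed into $\frac{M_W}{\sqrt\mu}$ as before.

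The genuinely substantive estimate is \eqref{int f-int f}, the Hölder regularity of the diffuse-reflection boundary integral. The key difficulty is that we only have a \emph{weighted} $C^1$ bound $\|\alpha\nabla_x f\|_\infty$ on $\nabla_x f$, so one cannot naively differentiate under the integral sign: the integrand's gradient blows up like $1/\alpha$ near the grazing set. The plan is to use the change of variables of Lemma \ref{Lemma: change of variable}: for a fixed base point $x$, the map $v^1\mapsto(\mathbf x^1_{p^1,1},\mathbf x^1_{p^1,2},\tb)$ is a diffeomorphism with the Jacobian in \eqref{jac_v_to_xbtb}, so integration by parts in the boundary surface variables converts $\nabla_x f(\xb,v^1)$ into $f(\xb,v^1)$ plus derivatives falling on the (smooth, bounded) geometric weights and the Gaussian $\sqrt{\mu(v^1)}$. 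This is exactly the scheme sketched in the introduction around \eqref{double bdr}. After this integration by parts, the integrand involves only $f$ (controlled in $L^\infty$) and its tangential derivative (controlled), both evaluated at $\xb(x,v)$ resp.\ $\xb(y,v)$, so the difference quotient is estimated by (i) the difference of $f$-values, handled by \eqref{min: f} together with $|\xb(x,v)-\xb(y,v)|\lesssim|x-y|$ from \eqref{min: xb tang}, and (ii) the differences of $\xb$, $n(\xb)$, and $\nabla_x\xb$ in the geometric coefficients, handled by \eqref{min: xb}, \eqref{min: nxb}, \eqref{min: nabla xb}. One must check the singular powers of $\alpha$ produced by these pieces do not exceed what the $\|\alpha\nabla_x f\|_\infty$-norm on the right-hand side allows — this is the bookkeeping core of the argument. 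Since here we only claim a bound by $\|\alpha\nabla_x f\|_\infty$ (not a weighted $C^{1,\beta}$ bound), the powers of $\alpha$ combine to an integrable exponent $<1$ exactly as in the discussion following \eqref{bdr-bdr intro}.

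I expect the main obstacle to be controlling, after the integration by parts in \eqref{int f-int f}, the term where the derivative lands on the change-of-variables Jacobian $\frac{|\tb|^3}{|n(x^2)\cdot v^1|}$ — this produces a factor that is singular in $\tb$ (equivalently in the second backward exit geometry), and one needs the convexity estimates of Lemma \ref{Lemma: nv<v2}, specifically $|n_{p^{k+j}}\cdot(x^1-x^2)|\sim|x^1-x^2|^2$ and $|\mathbf v^1_{p^1,3}|/|\mathbf v^1_{p^1}|\sim|x^1-x^2|$, to trade the singularity against the boundary measure $|n(\xb(x,v))\cdot v^1|$ in the original integrand. This is the same mechanism that makes \eqref{double bdr} bounded by $\|wf\|_\infty$ in the $C^1$ theory; the only new point for Hölder regularity is that the base point varies between $x$ and $y$, so I must also estimate the difference of the two Jacobians, which I will do by the mean-value trick along the segment joining $\xb(x,v)$ to $\xb(y,v)$, picking up a factor $|x-y|^{1-\beta}$ that compensates for the extra $\alpha$-singularity coming from $\nabla_x^2\xb$.
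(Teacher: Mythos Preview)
Your treatment of \eqref{Estimate for r}, \eqref{Estimate for r beta}, and \eqref{Estimate for M_W beta} is essentially the paper's approach: direct differentiation of the explicit formula for $M_W$ in \eqref{Wall Maxwellian}, followed by the mean value theorem term-by-term. That part is fine.

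For \eqref{int f-int f}, however, you have misread the statement. The integrand is $f(\xb,v^1)$, \emph{not} $\nabla_x f(\xb,v^1)$; there is no derivative to remove, and the change-of-variables/integration-by-parts machinery of Lemma \ref{Lemma: change of variable} is irrelevant here. (That machinery is reserved for the genuinely delicate terms in Section~7, such as \eqref{bdr: bdr-bdr}, where a tangential derivative of $f$ does sit inside the boundary integral.) Consequently your discussion of the Jacobian singularity, the convexity estimates of Lemma \ref{Lemma: nv<v2}, and the mean-value trick on $\nabla_x^2\xb$ is all beside the point.

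The paper's argument for \eqref{int f-int f} is elementary. Split the two integrals into a common region $\{n(\xb(x,v))\cdot v^1>0\}\cap\{n(\xb(y,v))\cdot v^1>0\}$ and two thin wedges where only one of the two normal components is positive. On the common region, take the difference of integrands: the piece $\frac{f(\xb(x,v),v^1)-f(\xb(y,v),v^1)}{|\xb(x,v)-\xb(y,v)|^\beta}$ is bounded by $\|\alpha\nabla_x f\|_\infty/\min\{\alpha(\xb(x,v),v^1),\alpha(\xb(y,v),v^1)\}^\beta$ via \eqref{min: f}, and since $\beta<1$ this singularity in $v^1$ is integrable against $\sqrt{\mu(v^1)}$; the piece coming from $n(\xb(x,v))\cdot v^1-n(\xb(y,v))\cdot v^1$ is bounded by \eqref{min: nxb} times $\|wf\|_\infty$. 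On each wedge, the integrand is $O(\|wf\|_\infty)$ and the $v^1$-measure of the wedge is $\lesssim|n(\xb(x,v))-n(\xb(y,v))|$, which again is controlled by \eqref{min: nxb}. No integration by parts, no Jacobian bookkeeping.
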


\begin{proof}
From~\eqref{remainder term}, it is easy to derive the estimate for $\Vert r\Vert_\infty$. We take derivative to $r$ to obtain
\begin{align}
&\partial_{\mathbf{x}_{p^1,i}}r(\eta_{p^1}(\mathbf{x}_{p^1}),v)\notag
\\&= \frac{\partial_i \eta_{p^1}(\mathbf{x}_{p^1})}{\sqrt{2\pi\mu(v)}} \nabla_x \frac{1}{2\pi [T_W(\xb(x,v))]^2}e^{-\frac{|v|^2}{2T_W(\xb(x,v))}} \notag \\
& =\frac{\partial_i \eta_{p^1}(\mathbf{x}_{p^1})\nabla_x T_W(\xb(x,v))}{\sqrt{2\pi \mu}}    \Big(\frac{-1}{\pi[T_W(\xb(x,v))]^3}+ \frac{|v|^2}{4\pi[T_W(\xb(x,v))]^4} \Big)e^{-\frac{|v|^2}{2T_W(\xb(x,v))}}\label{r C1beta}\\
&\lesssim_{T_0} \Vert T_W-T_0\Vert_{C^1},   \notag
\end{align}
where we have used $\Vert T_W-T_0\Vert_\infty \ll 1 $ from Existence Theorem.

Then we take $v$ derivative to have
\begin{align*}
  &|\nabla_v r(\xb(x,v),v)| \notag\\
  &= |\nabla_v \frac{M_W}{\sqrt{2\pi \mu}}-\nabla_v \sqrt{\mu}| \lesssim   \Big| \nabla_v  \frac{e^{-\frac{|v|^2}{2T_W(\xb(x,v))}}}{\sqrt{\mu(v)}[T_W(\xb(x,v))]^2 }    \Big| \\
  & \lesssim_{T_W} \frac{1}{\mu(v)}\times \big[\nabla_v e^{-\frac{|v|^2}{2T_W(\xb(x,v))}} \sqrt{\mu(v)} +\nabla_v [\sqrt{\mu(v)} T_W^2(\xb(x,v))] e^{-\frac{|v|^2}{2T_W(\xb(x,v))}}   \big]\\
  &\lesssim_{T_W} \frac{1}{\mu(v)}\times e^{-\frac{|v|^2}{2T_W(\xb(x,v))}}\sqrt{\mu(v)} |v|^2|\nabla_v \xb(x,v)|  \lesssim \frac{e^{-\frac{|v|^2}{2T_W(\xb(x,v))}}|v|}{\sqrt{\mu(v)}},
\end{align*}
where we have used~\eqref{nablav tb xb} in the last line. Since the coefficient for $|v|^2$ is negative, from $\Vert T_W-T_0\Vert_\infty \ll 1$ we conclude~\eqref{Estimate for r}.

For~\eqref{Estimate for r beta} from~\eqref{r C1beta} we apply the mean value theorem to bound
\begin{align*}
\frac{|\partial_i \eta_{p^1(x)}(\mathbf{x}_{p^1(x)})-\partial_i \eta_{p^1(y)}(\mathbf{x}_{p^1(y)})|}{|\xb(x,v)-\xb(y,v)|^\beta}&\lesssim  \Vert \eta\Vert_{C^2},\\
\frac{|\nabla_x T_W(\xb(x,v))-\nabla_x T_W(\xb(y,v))|}{|\xb(x,v)-\xb(y,v)|^\beta}  & \lesssim \Vert \nabla_x^2 T_W\Vert_{\infty},\\
w_{\tilde{\theta}}(v)|v|^2\frac{\big|e^{-\frac{|v|^2}{2T_W(\xb(x,v))}}-e^{-\frac{|v|^2}{2T_W(\xb(y,v))}}\big|}{|\xb(x,v)-\xb(y,v)|^\beta}  &\lesssim \Vert \nabla_x T_W\Vert_{\infty},
\end{align*}
\begin{align*}
  &w_{\tilde{\theta}}(v)|v|^2\frac{\frac{-1}{\pi[T_W(\xb(x,v))]^3}+\frac{1}{\pi[T_W(\xb(y,v))]^3}+\frac{|v|^2}{4\pi [T_W(\xb(x,v))]^4}-\frac{|v|^2}{4\pi[T_W(\xb(x,v))]^4}}{|\xb(x,v)-\xb(y,v)|^\beta}e^{-\frac{|v|^2}{2T_W(\xb(x,v))}}  \\
  &   \lesssim \Vert T_W\Vert_{C^1}|v|^4e^{-\frac{|v|^2}{2T_W(\xb(x,v))}}\lesssim \Vert \nabla_x^2 T_W\Vert_{\infty},
\end{align*}
and thus~\eqref{Estimate for r beta} follows from $\tilde{\theta}\ll \frac{1}{T_W(x)}$.

Since $\partial_{\mathbf{x}_{p^1,i}}r(\eta_{p^1}(\mathbf{x}_{p^1}),v)=\frac{1}{\sqrt{2\pi}}\partial_{\mathbf{x}_{p^1,i}}M_W(\eta_{p^1}(\mathbf{x}_{p^1}),v)$, \eqref{Estimate for M_W beta} also follows.

Last we prove~\eqref{int f-int f}. We rewrite the LHS of~\eqref{int f-int f} as
\begin{align}
&\frac{1}{|\xb(x,v)-\xb(y,v)|^\beta}\times\Big[ \int_{n(\xb(x,v))\cdot v^1>0,n(\xb(y,v))\cdot v^1>0}  f(\xb(x,v),v^1)\sqrt{\mu}(v^1)|n(\xb(x,v))\cdot v^1| \notag\\
&-f(\xb(y,v),v^1)\sqrt{\mu(v^1)}|n(\xb(y,v))\cdot v^1|\dd v^1 \Big]  \label{int f-int f 1}\\
& +\frac{  \int_{|n(\xb(x,v))-n(\xb(y,v))| \geq \frac{n(\xb(x,v))\cdot v^1}{|v^1|}>0} f(\xb(x,v),v^1)\sqrt{\mu(v^1)}|n(\xb(x,v))\cdot v^1|}{|\xb(x,v)-\xb(y,v)|^\beta} \notag\\
 &+ \frac{\int_{|n(\xb(x,v))-n(\xb(y,v))| \geq \frac{n(\xb(x,v))\cdot v^1}{|v^1|}>0}f(\xb(y,v),v^1)\sqrt{\mu(v^1)}|n(\xb(y,v))\cdot v^1|}{|\xb(x,v)-\xb(y,v)|^\beta}  \label{int f-int f 2}.
\end{align}
Clearly from~\eqref{min: f} and~\eqref{min: nxb}, we have
\begin{align*}
 \eqref{int f-int f 1} &\lesssim \int_{n(\xb(x,v))\cdot v^1>0,n(\xb(y,v))\cdot v^1>0}  \frac{\Vert \alpha\nabla_x f\Vert_\infty \sqrt{\mu(v^1)}}{\min\{\alpha(\xb(x,v),v^1),\alpha(\xb(y,v),v^1)\}^\beta}+ \Vert \eta\Vert_{C^2}\Vert wf\Vert_\infty \sqrt{\mu(v^1)}   \\
  &\lesssim \Vert \alpha\nabla_x f\Vert_\infty+\Vert wf\Vert_\infty.
\end{align*}

For~\eqref{int f-int f 2}, from~\eqref{min: nxb} we bound
\begin{align*}
  & \frac{  \int_{|n(\xb(x,v))-n(\xb(y,v))| \geq \frac{n(\xb(x,v))\cdot v^1}{|v^1|}>0} f(\xb(x,v),v^1)\sqrt{\mu(v^1)}|n(x)\cdot v^1|}{|\xb(x,v)-\xb(y,v)|^\beta} \\
  &\lesssim \frac{|n(\xb(x,v))-n(\xb(y,v))|}{|\xb(x,v)-\xb(y,v)|^\beta}\int f(\xb(x,v),v^1)\sqrt{\mu(v^1)}|v^1|\lesssim \Vert wf\Vert_\infty.
\end{align*}

Then we conclude the lemma.

\end{proof}

Besides the boundary condition, we also need to estimate the collision operator. The next two lemmas describe the properties of the collision operator $K$ and $\Gamma$.

\begin{lemma}\label{Lemma: K,Gamma}
The linear Boltzmann operator $K(f)$ in~\eqref{linear operator} is given by
\[Kf(x,v)=\int_{\mathbb{R}^3}\mathbf{k}(v,u)f(x,u)\dd u.\]

The kernel $\mathbf{k}(v,u)$ satisfies:
\Be\label{k_varrho}
 |\mathbf{k}  (v,u)| \lesssim \mathbf{k}_\varrho (v,u), \  | \nabla_ u\mathbf{k}  (v,u)| \lesssim  \langle u\rangle\mathbf{k}_\varrho (v,u)/|v-u|, \ \mathbf{k}_\varrho (v,u) := e^{- \varrho |v-u|^2}/ |v-u|.
  \Ee
And for $3>c\geq 0$,
\begin{equation}\label{integrate k u}
\int_{\mathbb{R}^3}\mathbf{k}_\varrho(v,u)\frac{1}{|u|^c}\dd u\lesssim \frac{1}{|v|^c}.
\end{equation}

Moreover, for the operator $\nu$ and $\Gamma$ in~\eqref{linear operator}, we have
\begin{equation}\label{h bounded}
|K(f)+\Gamma(f,f)|= O(1)\Vert f\Vert_\infty=O(1)\Vert wf\Vert_\infty,
\end{equation}

\begin{equation}\label{nablav nu}
\nu\gtrsim 1,\quad |\nabla_v \nu| \lesssim 1,
\end{equation}

\begin{equation}\label{nablav K Gamma}
|\nabla_v \Gamma(f,f)|\lesssim \frac{\Vert wf\Vert_\infty^2}{|v|^2}+\frac{\Vert wf\Vert_\infty \Vert |v|^2\nabla_v f\Vert_\infty}{|v|^2},
\end{equation}

 \begin{align}
| \nabla_x \Gamma(f,f)(v) |= O(\| wf \|_\infty)\Big\{
  | \nabla_x f(v) |+
  \int_{\R^3} \mathbf{k}_\varrho(v,u)
 |\nabla_x f(u)|
  \dd u
  \Big\},\label{Gamma_est}
 \end{align}

\begin{equation}\label{G Gamma}
|G(x)\nabla_x \Gamma(f,f)(x,v)|=O(\Vert wf\Vert_\infty)\{|G(x)\nabla_x f(x,v)|+\int_{\mathbb{R}^3}\mathbf{k}_\varrho(v,u)|G(x)\nabla_x f(x,u)|\}.
\end{equation}

\end{lemma}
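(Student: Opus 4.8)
The plan is to assemble \eqref{k_varrho}--\eqref{G Gamma} from the classical Grad decomposition of the hard-sphere collision operator together with direct differentiation of the explicit formulas in \eqref{collision operator}--\eqref{linear operator}; no single step is deep, and the only real care needed is balancing the $\mathbf{k}_\varrho$-singularity against the Gaussian and polynomial weights.

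\emph{Kernel bounds and the moment estimate.} I would write $K=K_{\mathrm{gain}}-K_{\mathrm{loss}}$, where $K_{\mathrm{loss}}f(v)=\int\int|(v-u)\cdot\omega|\sqrt{\mu(v)\mu(u)}f(u)\,\dd\omega\,\dd u$ has kernel $\propto|v-u|\sqrt{\mu(v)\mu(u)}$, and $K_{\mathrm{gain}}f=Q_{\mathrm{gain}}(\sqrt\mu f,\mu)/\sqrt\mu$, to which Carleman's representation applies (fix $v$, change $\dd\omega\,\dd u$ into the surface measure $\dd u'$ on the plane $(u'-v)\perp(v'-v)$), producing the kernel $\mathbf{k}_{\mathrm{gain}}(v,u)\sim|v-u|^{-1}\exp\big(-\tfrac{|v-u|^2}{8}-\tfrac{(|v|^2-|u|^2)^2}{8|v-u|^2}\big)$. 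Both pieces are $\lesssim\mathbf{k}_\varrho(v,u)$ and differentiating in $u$ costs at most a factor $\langle u\rangle/|v-u|$; this is precisely Grad's lemma as recorded in \cite{R} and used in \cite{GKTT}, so I would quote it rather than reproduce it. Then \eqref{integrate k u} follows by splitting $\R^3_u$ into $\{|u|\le|v|/2\}$, $\{|v|/2<|u|<2|v|\}$, $\{|u|\ge2|v|\}$: on the last two regions $|u|^{-c}\lesssim|v|^{-c}$ and $\int_{\R^3}\mathbf{k}_\varrho(v,u)\,\dd u\lesssim1$; on the first, $\mathbf{k}_\varrho(v,u)\lesssim|v|^{-1}e^{-\varrho|v|^2/4}$ while $\int_{|u|\le|v|/2}|u|^{-c}\,\dd u\lesssim|v|^{3-c}$ (finite since $c<3$), and $|v|^2 e^{-\varrho|v|^2/4}\lesssim1$ closes the estimate.

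\emph{The remaining bounds.} \eqref{h bounded}: $|Kf(v)|\le\|f\|_\infty\int\mathbf{k}_\varrho(v,u)\,\dd u\lesssim\|f\|_\infty$, while $|\Gamma(f,f)(v)|\lesssim\nu(v)w(v)^{-1}\|wf\|_\infty^2=O(1)\|wf\|_\infty$ by the standard Grad bilinear estimate together with $\nu(v)w(v)^{-1}=O(1)$ and $\|wf\|_\infty\ll1$ from \eqref{infty_bound}; also $\|f\|_\infty\le\|wf\|_\infty$ since $w\ge1$. \eqref{nablav nu}: $\nu(v)=\int\int|(v-u)\cdot\omega|\mu(u)\,\dd\omega\,\dd u$ is smooth, $\ge\nu(0)>0$, and $|\nabla_v\nu|\le\int\int\mu(u)\,\dd\omega\,\dd u<\infty$. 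For the derivatives of $\Gamma$ I would use $\mu(u')\mu(v')=\mu(u)\mu(v)$ to write $\Gamma_{\mathrm{gain}}(f,f)(v)=\int\int|(v-u)\cdot\omega|\sqrt{\mu(u)}f(u')f(v')\,\dd\omega\,\dd u$ and $\Gamma_{\mathrm{loss}}(f,f)(v)=f(v)\nu_f(v)$ with $\nu_f(v)=\int\int|(v-u)\cdot\omega|\sqrt{\mu(u)}f(u)\,\dd\omega\,\dd u\lesssim\langle v\rangle\|wf\|_\infty$. For \eqref{nablav K Gamma}, $\nabla_v$ falls on $|(v-u)\cdot\omega|$ (bounded derivative), on $u'$ ($\partial_v u'=\omega\otimes\omega$), on $v'$ ($\partial_v v'=I-\omega\otimes\omega$), and on the external $f(v)$; each term is bounded by putting $\|wf\|_\infty$ on one factor and $\|w_{\tilde\theta}|v|^2\nabla_v f\|_\infty$ (or $\|wf\|_\infty$) on the other, the leftover $\dd\omega\,\dd u$ integrals being handled by the same Carleman change of variables, and the polynomial factors $\langle v\rangle$ and $|v|^{-2}$ that appear are absorbed into the gap between the weights $w$ and $w_{\tilde\theta}$. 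Finally, for \eqref{Gamma_est}--\eqref{G Gamma}, since the collision is local in $x$ and $\Gamma$ is bilinear, $\nabla_x\Gamma(f,f)=\Gamma(\nabla_x f,f)+\Gamma(f,\nabla_x f)$; the loss parts give the local term $O(\|wf\|_\infty)|\nabla_x f(v)|$ and the gain parts give $O(\|wf\|_\infty)\int\mathbf{k}_\varrho(v,u)|\nabla_x f(u)|\,\dd u$ via the Carleman estimate, and since $G(x)$ is independent of $v$ it passes under the $\dd u$-integral and lands directly on $\nabla_x f(x,u)$, which yields \eqref{G Gamma} from \eqref{Gamma_est}.

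\emph{Main obstacle.} The only genuinely technical point is the Carleman change of variables — used to reduce the gain contributions of $K$, $\nabla_v\Gamma$, and $\nabla_x\Gamma$ to the $\mathbf{k}_\varrho$-kernel and to verify that the factors $\partial_v u'$, $\partial_v v'$ do not create a worse singularity than $\mathbf{k}_\varrho$. Since this is entirely standard, in the write-up I would state the reduction to the cited estimates and keep the weight bookkeeping explicit.
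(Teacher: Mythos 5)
Your plan is correct and matches the paper's own proof in all essentials: quoting Grad's kernel bounds from \cite{R} for \eqref{k_varrho}, the near/far dichotomy in $|u|$ for \eqref{integrate k u}, the bilinear structure and $\|wf\|_\infty\ll1$ for \eqref{h bounded} and \eqref{Gamma_est}, and the observation that $G(x)$ is $v$-independent so it passes under the $\dd u$-integral for \eqref{G Gamma}.

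One small imprecision worth flagging: for \eqref{nablav K Gamma} you say the $|v|^{-2}$ factors "are absorbed into the gap between the weights $w$ and $w_{\tilde\theta}$." That is not where the decay actually comes from — a weight gap $w/w_{\tilde\theta}$ gives Gaussian decay in $|v|$ for large $|v|$ but gives nothing near $|v|=0$, which is where $|v|^{-2}$ blows up. The paper instead pulls the $|u|^{-2}$ weight out of $\||v|^2\nabla_v f\|_\infty$ (note the statement uses $\||v|^2\nabla_v f\|_\infty$, not $\|w_{\tilde\theta}|v|^2\nabla_v f\|_\infty$) and then applies the moment estimate \eqref{integrate k u} with $c=2$, which you did prove, to convert $\int\mathbf{k}_\varrho(v,u)|u|^{-2}\,\dd u$ into $|v|^{-2}$. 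So the tool you need is already in your toolbox; you just attributed the step to the wrong mechanism. Similarly the paper pre-shifts $u\mapsto v+u$ and works with $f(v+u_\perp)f(v+u_\parallel)\nabla_v\sqrt{\mu(v+u)}$ rather than differentiating $f(u')$, $f(v')$ directly, but as you note $\partial_v u'=\omega\otimes\omega$ and $\partial_v v'=I-\omega\otimes\omega$ are bounded, so your parameterization is an equivalent bookkeeping choice.
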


\begin{proof}

\textit{Proof of~\eqref{k_varrho}}.
Due to the Grad estimate in~\cite{R},
\begin{equation}\label{eqn: Grad estimate for gain}
 \Gamma_{\text{gain}}(\sqrt{\mu},f)+\Gamma_{\text{gain}}(f,\sqrt{\mu})=\int_{\mathbb{R}^3}\mathbf{k}_2(v,u)f(u)du,
\end{equation}
\[\nu(\sqrt{\mu}f)=\int_{\mathbb{R}^3}\mathbf{k}_1(v,u)f(u)du,\]
where
\begin{eqnarray}
% \nonumber to remove numbering (before each equation)
\mathbf{k}_1(v,u)   &=&  C_{\mathbf{k}_1}|u-v|e^{-\frac{|v|^2+|u|^2}{2}}, \notag\\
 \mathbf{k}_2(v,u)  &=&  C_{\mathbf{k}_2}\frac{1}{|u-v|}e^{-\frac{1}{4}|u-v|^{2}-\frac{1}{4}
\frac{(|u|^{2}-|v|^{2})^{2}}{|u-v|^{2}}}. \notag
\end{eqnarray}

We compute the derivative:
\[|\nabla_u \mathbf{k}_1(v,u)|\lesssim e^{-\frac{|u|^2+|v|^2}{2}}+ |u||u-v|e^{-\frac{|u|^2+|v|^2}{2}}\lesssim e^{-\frac{|v-u|^2}{4}}\lesssim \frac{e^{-\varrho|v-u|^2}}{|v-u|^2}.\]
And
\begin{equation*}
  \begin{split}
    |\nabla_u \mathbf{k}_2(v,u)| &\lesssim \frac{1}{|v-u|^2}e^{-\frac{1}{4}|v-u|^2}+ \frac{1}{|v-u|}e^{-\frac{1}{4}|v-u|^2-\frac{1}{4}\frac{(|u|^2-|v|^2)^2}{|v-u|^2}} \\
   &\quad  \times \Big[|v-u|+\frac{|u|\Big||u|^2-|v|^2\Big||v-u|^2-(|u|^2-|v|^2)^2|v-u|}{|v-u|^4} \Big]  \\
      & \lesssim \frac{e^{-\varrho|v-u|^2}\langle u\rangle}{|v-u|^2},
  \end{split}
\end{equation*}
where we have used
\begin{eqnarray*}
% \nonumber to remove numbering (before each equation)
e^{-\frac{1}{4}\frac{(|u|^2-|v|^2)^2}{|v-u|^2}} \frac{|u|\Big||u|^2-|v|^2\Big||v-u|^2}{|v-u|^4}  &\lesssim& \frac{|u|}{|v-u|},  \\
e^{-\frac{1}{4}\frac{(|u|^2-|v|^2)^2}{|v-u|^2}} \frac{(|u|^2-|v|^2)^2|v-u|}{|v-u|^4}  &\lesssim &  \frac{1}{|v-u|}.
\end{eqnarray*}

\textit{Proof of~\eqref{integrate k u}.} We consider two cases. When $|u|>\frac{|v|}{2}$, we have
\begin{align*}
 \int_{|u|>\frac{|v|}{2}} \mathbf{k}_\varrho(v,u)\frac{1}{|u|^c}\dd u &\lesssim \int_{|u|>\frac{|v|}{2}} \frac{e^{-\varrho|v-u|^2}}{|v-u|}\frac{1}{|u|^c}\dd u  \\
   &  \lesssim \frac{1}{|v|^c}   \int_{\mathbb{R}^3}\frac{e^{-\varrho|v-u|^2}}{|v-u|}\dd u\lesssim \frac{1}{|v|^c}.
\end{align*}

When $|u|\leq \frac{|v|}{2}$ we bound $|v-u|\geq \frac{|v|}{2}$, and thus
\begin{align*}
  \int_{|u|\leq\frac{|v|}{2}} \mathbf{k}(v,u)\frac{1}{|u|^c}\dd u  &\lesssim \frac{e^{-\varrho|v|^2/2}}{|v|} \int_{|u|\leq \frac{|v|}{2}} \frac{1}{|u|^c} \dd u \\
   & \lesssim \frac{e^{-\varrho|v|^2/2}}{|v|}  \int_{0\leq r\leq \frac{|v|}{2}} \int_{\partial B(0,r)}\dd S \frac{1}{|r|^c} \dd r\\
   &\lesssim \frac{e^{-\varrho|v|^2/2}}{|v|} |v|^{3-c}\lesssim \frac{e^{-\varrho|v|^2/2}|v|^{2}}{|v|^c} \lesssim \frac{1}{|v|^c}.
\end{align*}
In the second line we used the polar coordinate with $|u|=|r|$. In the third line we used $c<3$ to the $r$ integral.

Then we conclude~\eqref{integrate k u}.

\textit{Proof of~\eqref{h bounded}}.
For $K(f)$ we bound
\begin{align*}
K(f)\lesssim \Vert f\Vert_\infty \int_{\mathbb{R}^3} |\mathbf{k}(v,u)|\dd u\lesssim \Vert f\Vert_\infty\lesssim\Vert wf\Vert_\infty,   \\
\end{align*}
where we have used $|\mathbf{k}(v,u)|\lesssim \mathbf{k}_\varrho(v,u)\in L^1_u$.

For $\Gamma$, clearly

\begin{equation}\label{gamma bounded}
|\Gamma_{\mathrm{gain}} (f, f)| \ \lesssim \ |\Gamma_{\mathrm{gain}} (e^{-\varrho |v|^{2}} , |f|)| \times || wf ||_{\infty}.
\end{equation}
By~\eqref{eqn: Grad estimate for gain} we bound $|\Gamma_{\mathrm{gain}} (e^{-\theta |v|^{2}} , |f|)|$ using different exponent of $\mathbf{k}_2(v,u)$, we conclude that
\[\Gamma_{\text{gain}}(f,f)\lesssim \Vert wf\Vert_\infty^2\lesssim \Vert wf\Vert_\infty.\]
For the other term we bound
\begin{align*}
\begin{split}
|\nu(\sqrt{\mu}f)f(v)|&\lesssim \Vert wf\Vert_\infty \int_{\mathbb{R}^3}|v-u|e^{-\varrho |v|^2}\sqrt{\mu(u)} |f(u)| \\
&\lesssim \Vert wf\Vert_\infty^2 \int_{\mathbb{R}^3} |v-u|e^{-C|v-u|^2}\lesssim \Vert wf\Vert_\infty \Vert f\Vert_\infty\lesssim \Vert f\Vert_\infty\lesssim \Vert wf\Vert_\infty,
\end{split}
\end{align*}
where we have used
\[e^{-\varrho|v|^2}e^{-\varrho|u|^2}\lesssim e^{-C(|v|^2+|u|^2)}\lesssim   e^{-\frac{C}{2}|v-u|^2}.\]

The proof for~\eqref{nablav nu} is standard.

\textit{Proof of~\eqref{nablav K Gamma}}.
The velocity derivative for the nonlinear Boltzmann operator reads
\begin{align}
  \nabla_v \Gamma(f,f) &=\nabla_v \left( \Gamma_{\text{gain}}(f,f)-\Gamma_{\text{loss}}(f,f) \right) \notag \\
   & \Gamma_{\textrm{gain}} (\nabla_v f,f) + \Gamma_{\textrm{gain}} ( f,\nabla_v f) - \Gamma_{\textrm{loss}} (\nabla_v f,f) -\Gamma_{\textrm{loss}} ( f,\nabla_v f)\label{gamma nablav f f} \\
   &  + \Gamma_{v,\text{gain}} (f,f)-\Gamma_{v,\text{loss}}(f,f).\label{gamma_v f}
\end{align}
Here we have defined
\begin{equation}\begin{split}\label{Gamma_v}
          & \Gamma_{v,\textrm{gain}}(f,f) - \Gamma_{v,\textrm{loss}}(f,f) \\
			&:=\int_{\mathbb{R}^{3}}   \int_{\S^2}   | u \cdot \omega|
			f(v+ u_\perp) f(v + u_\parallel)
			\nabla_v\sqrt{\mu(v+u)} \dd \omega  \mathrm{d}u  \\
			&\quad - \int_{\mathbb{R}^{3}}   \int_{\S^2}   | u \cdot \omega|
			f(v+u) f(v)
			\nabla_v \sqrt{\mu(v+u)} \dd \omega  \mathrm{d}u.
		\end{split}\end{equation}
Replacing the $\nabla_x$ by $\nabla_v$ in~\eqref{different exponent} and~\eqref{gamma loss}, we use~\eqref{integrate k u} with $c=2$ to conclude
\begin{align*}
  \eqref{gamma nablav f f} & \lesssim \Vert wf\Vert_\infty \int_{\mathbb{R}^3} \frac{e^{-\varrho |v-u|^2}}{|v-u|} |\nabla_v f| \\
   & \lesssim \Vert wf\Vert_\infty \Vert |v|^2\nabla_v f\Vert_\infty \int_{\mathbb{R}^3}\frac{e^{-\varrho |v-u|^2}}{|v-u|}\frac{|1}{|u|^2} \\
   & \lesssim \frac{\Vert |v|^2\nabla_v f\Vert_\infty}{|v|^2}.
\end{align*}

Then we further compute
\begin{align*}
  \eqref{gamma_v    f} &\lesssim     \Vert wf\Vert_\infty^2 \int_{\mathbb{R}^3} |u|[e^{-\varrho|v+u_\perp|^2}e^{-\varrho |v+u_\parallel|^2}+e^{-\varrho|v+u|^2}e^{-\varrho|v|^2}] e^{-\frac{|v+u|^2}{2}}  \\
   & \lesssim   \Vert wf\Vert_\infty^2      \int_{\mathbb{R}^3} |u|e^{-c|v|^2}e^{-c|u|^2}\dd u\\
   & \lesssim \Vert wf\Vert_\infty^2   \frac{|v|^2e^{-c|v|^2}}{|v|^2}\lesssim \frac{\Vert wf\Vert_\infty^2}{|v|^2},
\end{align*}
where we have used
\begin{align*}
   & e^{-\varrho |v+u_\perp|^2}e^{-\varrho|v+u_\parallel|^2}= e^{-(\varrho|v|^2+2v\cdot (u_\perp+u_\parallel)+\varrho|u|^2 )} e^{-\varrho|v|^2} \\
   & =e^{-\varrho|v+u|^2}e^{-\varrho |v|^2},
\end{align*}
and
\begin{align*}
e^{-\varrho|v+u|^2}e^{-\varrho |v|^2}   &=e^{-\varrho|v|^2/2} e^{-\varrho(3|v|^2/2+2v\cdot u+|u|^2)}  \\
   & =e^{-\varrho|v|^2/2}e^{-\varrho (\sqrt{3/2}v+\sqrt{2/3}u)^2  }e^{-u^2/3}.
\end{align*}

\textit{Proof of~\eqref{Gamma_est}}.
From~\eqref{gamma bounded} we have
\begin{equation}\label{different exponent}
|\Gamma_{\text{gain}}(f,\partial_x f)+\Gamma_{\text{gain}}(\partial_x f,f)|\lesssim    \Vert wf\Vert_\infty \int_{\mathbb{R}^3}\frac{e^{-\varrho |v-u|^2}}{|v-u|}|\partial_x f|du       .
\end{equation}
For $|\nu(\sqrt{\mu}\partial_x f)f(v)|$ we have

\begin{align}
 |\nu(\sqrt{\mu}\partial_x f)f(v)|  & \lesssim \Vert wf\Vert_\infty e^{-\varrho|v|^2}\nu(\sqrt{\mu} \partial_x f)(v)\notag\\
   & \lesssim \int_{\mathbb{R}^3}|v-u|e^{-\varrho|v|^2}\sqrt{\mu(u)}|\partial_x f(u)|\lesssim \int_{\mathbb{R}^3}\frac{e^{-\varrho|v-u|^2}}{|v-u|}|\partial_x f(u)|du,\label{gamma loss}
\end{align}
where we have used $e^{-\varrho|v|^2}|v-u|\sqrt{\mu(u)}\lesssim \frac{e^{-\varrho|v-u|^2}}{|v-u|}$.

\textit{Proof of~\eqref{G Gamma}.} Since
\begin{align}
   & G(x)\nabla_x \Gamma(f,f)(x,v)=G(x)\Gamma(\nabla_x f,f)+G(x)\Gamma(f,\nabla_x f)\notag \\
   &  =\Gamma(G(x)\nabla_xf,f )+\Gamma(f,G(x)\nabla_x f),  \label{G nabla Gamma}
\end{align}
from~\eqref{Gamma_est} we conclude~\eqref{G Gamma}.

\end{proof}

\begin{lemma}\label{Lemma: k tilde}
If $0<\frac{\tilde{\theta}}{4}<\varrho$, if $0<\tilde{\varrho}<  \varrho- \frac{\tilde{\theta}}{4}$,
\begin{equation}\label{k_theta}
\mathbf{k}_{\varrho}(v,u) \frac{e^{\tilde{\theta} |v|^2}}{e^{\tilde{\theta} |u|^2}} \lesssim  \mathbf{k}_{\tilde{\varrho}}(v,u) ,
\end{equation}
where $\mathbf{k}_{\varrho}$ is defined in~\eqref{k_varrho}.

\end{lemma}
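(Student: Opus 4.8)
The plan is to prove the pointwise bound~\eqref{k_theta} directly from the explicit Gaussian structure of $\mathbf{k}_\varrho$ given in~\eqref{k_varrho}. Writing out the left-hand side, we have
\[
\mathbf{k}_\varrho(v,u)\,\frac{e^{\tilde\theta|v|^2}}{e^{\tilde\theta|u|^2}}
= \frac{e^{-\varrho|v-u|^2}}{|v-u|}\, e^{\tilde\theta(|v|^2-|u|^2)},
\]
so everything reduces to controlling the exponent $-\varrho|v-u|^2 + \tilde\theta(|v|^2-|u|^2)$ by $-\tilde\varrho|v-u|^2$ for a suitable $\tilde\varrho>0$. Since the prefactor $1/|v-u|$ is identical on both sides, no work is needed there; the entire content is the scalar inequality on the exponents.

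The key step is the elementary estimate $|v|^2 - |u|^2 \le \tfrac14|v-u|^2 + |v-u|\,|u| \cdot \text{(something)}$—more cleanly, I would use $|v|^2-|u|^2 = (|v|-|u|)(|v|+|u|)$ together with $\big||v|-|u|\big|\le|v-u|$, giving $|v|^2-|u|^2 \le |v-u|\,(|v|+|u|)$. Then by Young's inequality $|v-u|\,(|v|+|u|) \le \tfrac{1}{4\epsilon}|v-u|^2 + \epsilon(|v|+|u|)^2$ for any $\epsilon>0$. A cleaner route that avoids introducing an extra small parameter: note $|v|^2 - |u|^2 \le |v-u|^2 + 2|v-u||u|$ (expand $|v|^2 = |v-u+u|^2$), and then absorb $2|v-u||u|$ via $2|v-u||u| \le \tfrac14|v-u|^2\cdot\tfrac{1}{\delta} + 4\delta|u|^2$; but since we want a bound uniform in $u$ with the $e^{-\tilde\theta|u|^2}$ already on the left, we actually want the $|u|^2$ terms to help us, not hurt. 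The sharp observation is simply: $\tilde\theta(|v|^2-|u|^2) \le \tilde\theta|v-u|^2 + 2\tilde\theta|v-u||u| - \tilde\theta\cdot 0$, and more usefully, for the regime where the bound could fail ($|v|$ large, $|u|$ small) we have $|v|^2-|u|^2\le|v|^2\le (|v-u|+|u|)^2\le 2|v-u|^2+2|u|^2$, hence $\tilde\theta(|v|^2-|u|^2)\le 2\tilde\theta|v-u|^2$ once one also checks the term is dominated when $|u|$ is comparable to $|v|$. Combining, $-\varrho|v-u|^2+\tilde\theta(|v|^2-|u|^2)\le -(\varrho-2\tilde\theta)|v-u|^2$; since the hypothesis is $\tilde\theta/4<\varrho$, we should instead track constants carefully to land exactly at the stated $\tilde\varrho < \varrho - \tilde\theta/4$.

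The correct bookkeeping: use $|v|^2-|u|^2 \le |v-u|\big(|v|+|u|\big)$ and then $|v-u|\big(|v|+|u|\big)\le |v-u|^2 + \tfrac14|v-u|^2 \cdot \tfrac{1}{\epsilon} \cdots$ — more directly, apply the inequality $ab \le \tfrac{\epsilon}{2}a^2 + \tfrac{1}{2\epsilon}b^2$ with $a=|v-u|$, $b=|v|+|u|$, and split $(|v|+|u|)^2\le 2|v|^2+2|u|^2$. Choosing $\epsilon$ so that the $|v|^2$ contribution is harmless and the $|u|^2$ contribution is absorbed against the Gaussian decay $e^{-\tilde\theta|u|^2}/e^{-\tilde\theta|u|^2}$ — actually since the weight ratio already produced the $|u|^2$ sign we want, the cleanest final statement follows by choosing $\epsilon = 1$: then $\tilde\theta(|v|^2-|u|^2) \le \tfrac{\tilde\theta}{2}|v-u|^2 + \tilde\theta(|v|^2+|u|^2)$, which is \emph{not} small enough, so one must instead exploit that the $e^{\tilde\theta|v|^2}$ factor is the problematic one and bound $|v|^2 \le 2|v-u|^2 + 2|u|^2$ directly, yielding $\tilde\theta|v|^2 - \tilde\theta|u|^2 \le 2\tilde\theta|v-u|^2 + \tilde\theta|u|^2$, and the residual $e^{\tilde\theta|u|^2}$ is then controlled only if we had started from $\mathbf{k}_\varrho$ with room to spare. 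This is why the hypothesis reads $\tilde\theta/4 < \varrho$: the factor $\tfrac14$ comes precisely from the optimal split $|v|^2 \le \tfrac{4}{3}|v-u|^2 + 4|u|^2$ (i.e. $ab\le\tfrac{1}{4}a^2+b^2$ applied appropriately), after which $-\varrho|v-u|^2 + \tilde\theta|v|^2 - \tilde\theta|u|^2 \le -(\varrho - \tfrac{\tilde\theta}{4})|v-u|^2 + (4\tilde\theta - \tilde\theta)|u|^2$, and reabsorbing the leftover $|u|^2$ into the $|v-u|^2$ and the already-present exponential gives the claim for any $\tilde\varrho < \varrho - \tilde\theta/4$. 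I expect the only real subtlety—hardly an obstacle—is getting this constant $\tfrac14$ exactly right, i.e. choosing the Young-type split so the coefficient of $|v-u|^2$ is exactly $\varrho - \tilde\theta/4$ and verifying the leftover $|u|^2$-term is nonpositive or absorbable; the rest is a one-line substitution.
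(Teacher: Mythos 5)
The proposal has a genuine gap that cannot be repaired by choosing the Young-inequality split more cleverly. Reading $\mathbf{k}_\varrho(v,u)=e^{-\varrho|v-u|^2}/|v-u|$ literally, as you do, makes the claimed inequality \emph{false} under the stated hypothesis: set $u=0$, so the left side of \eqref{k_theta} has exponent $(-\varrho+\tilde\theta)|v|^2$ while the right side has $-\tilde\varrho|v|^2$; this would force $\tilde\varrho\le\varrho-\tilde\theta$, strictly stronger than the assumed $\tilde\varrho<\varrho-\tilde\theta/4$. For instance $\varrho=1,\ \tilde\theta=2,\ \tilde\varrho=1/4$ satisfies the hypothesis, yet at $u=0$ the left side grows like $e^{|v|^2}/|v|$ while the right side decays. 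This is exactly what your manipulations keep running into: every bound of $|v|^2-|u|^2$ by a multiple of $|v-u|^2$ leaves a positive residue in $|u|^2$ (you notice this yourself), and $e^{-\varrho|v-u|^2}$ contains nothing to absorb a positive $e^{c|u|^2}$. Optimizing the split cannot turn a false pointwise inequality into a true one, so "hardly an obstacle" is not the right assessment.

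What the paper's proof actually uses --- and what the classical form of this lemma requires --- is the full Grad-type Gaussian with the cross term $\exp\!\bigl(-\varrho(|v|^2-|u|^2)^2/|v-u|^2\bigr)$, which the abbreviated display \eqref{k_varrho} suppresses. Setting $\eta=v-u$ and expanding the \emph{entire} exponent of $\mathbf{k}_\varrho(v,u)\,e^{\tilde\theta|v|^2}/e^{\tilde\theta|u|^2}$ yields the quadratic form
\[
(-2\varrho-\tilde\theta)\,|\eta|^2 \;+\; (4\varrho+2\tilde\theta)\,(v\cdot\eta) \;-\; 4\varrho\,\frac{(v\cdot\eta)^2}{|\eta|^2}
\]
in the two scalar variables $|\eta|$ and $(v\cdot\eta)/|\eta|$. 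It is precisely the cross term that supplies the $-4\varrho(v\cdot\eta)^2/|\eta|^2$ piece closing the quadratic; dropping it leaves a degenerate form (no $(v\cdot\eta)^2/|\eta|^2$ term at all) which is never negative definite. The paper's argument then reads the negativity condition off the discriminant of this $2\times2$ form and observes that negative definiteness persists when $\varrho$ is perturbed to $\varrho-\tilde\varrho$ for $\tilde\varrho$ small. That discriminant computation --- not a Young-type bound on $|v|^2-|u|^2$ --- is the source of the threshold on $\tilde\theta$, and it is the step your proposal is missing.
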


\begin{proof}

Note
		\Be\notag
		\begin{split}
			\mathbf{k}_{  \varrho}(v,u) \frac{e^{\tilde{\theta} |v|^2}}{e^{\tilde{\theta} |u|^2}}
			=  \frac{1}{|v-u| } \exp\left\{- {\varrho} |v-u|^{2}
			-  {\varrho} \frac{ ||v|^2-|u|^2 |^2}{|v-u|^2} + \tilde{\theta} |v|^2 - \tilde{\theta} |u|^2
	\right\}.
		\end{split}\Ee

		Let $v-u=\eta $ and $u=v-\eta $. Then the exponent equals
		\begin{eqnarray*}
			&&- \varrho|\eta |^{2}-\varrho\frac{||\eta |^{2}-2v\cdot \eta |^{2}}{%
				|\eta |^{2}}-\tilde{\theta} \{|v-\eta |^{2}-|v|^{2}\} \\
			&=&-2 \varrho |\eta |^{2}+ 4 \varrho v\cdot \eta - 4 \varrho\frac{|v\cdot
				\eta |^{2}}{|\eta |^{2}}-\tilde{\theta} \{|\eta |^{2}-2v\cdot \eta \} \\
			&=&(-2 \varrho-\tilde{\theta}  )|\eta |^{2}+(4 \varrho+2\tilde{\theta} )v\cdot \eta -%
			4 \varrho\frac{\{v\cdot \eta \}^{2}}{|\eta |^{2}}.
		\end{eqnarray*}%
		If $0<\tilde{\theta} <4 \varrho$ then the discriminant of the above quadratic form of
		$|\eta |$ and $\frac{v\cdot \eta }{|\eta |}$ is
		\begin{equation*}
		(4 \varrho+2\tilde{\theta} )^{2}-4
		(-2 \varrho-\tilde{\theta}  )(-%
		4 \varrho)
		=4\tilde{\theta} ^{2}- 16 \varrho \tilde{\theta}<0.
		\end{equation*}%
		Hence, the quadratic form is negative definite. We thus have, for $%
		0<\tilde{\varrho}< \varrho - \frac{\tilde{\theta}}{4}  $, the following perturbed quadratic form is still negative definite: $
		-(\varrho - \tilde{\varrho})|\eta |^{2}-(\varrho - \tilde{\varrho})\frac{||\eta
			|^{2}-2v\cdot \eta |^{2}}{|\eta |^{2}}-\tilde{\theta} \{|\eta |^{2}-2v\cdot \eta \}  \leq 0.$\end{proof}

%\section{Inflow problem}
%
%
%
%  \hide We note that extend the outward normal near the boundary through $\xi$ for
%\Be\label{normal}
%n(x) \parallel \nabla \xi(x), \ \ \text{dist} (x,\p\O) \leq \e/2.
%\Ee
%
%
%  \Be\label{normal}
%n(x) \equiv \nabla \xi(x) /| \nabla \xi(x)|   , \ \ x \in\p\O.
%  \Ee\unhide
%
%
%
%
%
%
%
%
%If $\nu\geq 0$ and $\nu , h, g \in C^1$ then $f \in C^1(\bar{\O} \times \R^3 \backslash \gamma_0)$ and
%\begin{align}
% \| w_{\tilde{\theta}}(v) \alpha(x,v)  n(x) \cdot \nabla_x f (x,v)\|_{L^\infty(\O\times \R^3)}
%& \lesssim \| h \|_{C^1} + \| g \|_{C^1},
%\label{inflow_n}
%\\
% \|G(x)   \nabla_x f (x,v)\|_{L^\infty(\O\times \R^3)}& \lesssim\| h \|_{C^1} + \| g \|_{C^1},
% \label{inflow_tau}
% \\
%  \|  \nabla_v f (x,v)\|  _{L^\infty(\O\times \R^3)}& \lesssim\| h \|_{C^1} + \| g \|_{C^1},\\
% \Vert \min\{\frac{\alpha(x,v)}{|v|},\frac{\alpha(y,v)}{|v|}\}^{2+\beta}\frac{\nabla_x f(x,v)-\nabla_x f(y,v)}{|x-y|^\beta}     \Vert_{L^\infty(\Omega\times \mathbb{R}^3)}   &\lesssim \Vert h\Vert_{C^1}+\Vert g\Vert_{C^1}
%\end{align}
%\end{proposition}
%\begin{proof}

\section{Differentiation along the Stochastic Cycles: Mixing via diffuse reflection and transport}
The main purpose of this section is to provide crucial differentiation form of the transport equation with the diffuse reflection boundary condition, which will be stated in Proposition \ref{prop_fx}. Several geometric integration by parts will be employed as being described in Section 1.3.

Consider a sequence of linear transport equation for $\ell\geq 1$ with the inflow boundary condition
\begin{align}
v\cdot \nabla_x f^{\ell} + \nu(v) f^{\ell} &= h^{\ell}(x,v), \ \ (x,v) \in \O \times \R^3,
\label{inhomo_transport}
\\
f^{\ell}( x,v)&= g^{\ell}(x,v), \ \  (x,v )\in\gamma_-. \label{inflowBC}
\end{align}
Here we set $f^{0}=0.$

Later we will substitute the $h^{\ell}$ by the sequence of collision operator~\eqref{h_K} and $g^{\ell}$ by the sequence of boundary condition:
\begin{equation}\label{diffuse_f}
\begin{split}
f^{\ell}(x,v)|_{\gamma_-}&=\frac{M_W(x,v)}{\sqrt{\mu(v)}}\int_{n(x)\cdot v^1>0}f^{\ell-1}(x,v^1)\sqrt{\mu(v^1)} \{n(x)\cdot v^1\}\dd v^1
%+\frac{M_w}{\sqrt{\mu}} \int_{n(x)\cdot u>0} \mu(u)\{n(x)\cdot u\}du
+r(x,v),
\end{split}
\end{equation}
where $r(x,v)$ is defined in~\eqref{remainder term}.

 Note that from the collision operator~\eqref{h_K} and boundary condition~\eqref{diffuse_f} and $f^0 = 0$, 
\[ h^1(x,v)=0, \ \ \ g^1(x,v) = r(x,v).\]

We have the following expansion:
\begin{proposition}\label{prop_fx} Suppose $f$ solves inhomogeneous steady transport equation (\ref{inhomo_transport}) with the diffuse BC (\ref{diffuse_f}). Then
\begin{align}
&w_{\tilde{\theta}}(v)\p_{x_i} f^\ell(x,v)\notag\\
& =O(1) w_{\tilde{\theta}}(v)\frac{ n_i (x^1) }{\alpha(x,v)}
\bigg\{
%n(x^1) \cdot v
 \frac{\nu(v)}{w(v)}\| w f^\ell\|_\infty+
|v|   \frac{M_W(x^1, v)}{\sqrt{\mu(v)}}\| wf ^{\ell-1}\|_\infty
+ |v| |  \underline{\nabla}_{x^1}  r(x^1)| (1+ \| wf^{\ell-1} \|_\infty)
  %+   e^{-\nu(v) t}  (\alpha \p_{x_i} f)(x-tv,v)
\bigg\}
\label{px_f_1}\\
& +  \frac{O(1)}{\alpha(x,v)} e^{-\nu(v) t}  (w_{\tilde{\theta}}\alpha \p_{x_i} f^\ell)(x-tv,v)
\label{px_f_2}
\\
&  +\int_{\max\{0,t-\tb \}}^t e^{-\nu(v)(t-s)}w_{\tilde{\theta}}(v) \p_{x_i}  \underbrace{ h^\ell (   x-(t-s) v,v) }_{(\ref{px_f_3})_*} \dd s
\label{px_f_3}
\\
& + O(1)e^{-\nu (v) \tb} \frac{n_i(x^1) |v|}{\alpha(x,v)} \frac{w_{\tilde{\theta}}(v) M_W(x^1, v)}{\sqrt{\mu(v)}}
\int_{n(x^1) \cdot v^1>0}  \dd v^1\{n^1 \cdot v^1\}\sqrt{\mu(v^1)}\label{px_f_int}\\
& \ \ \ \   \ \ \ \    \times  \bigg\{
  e^{-\nu(v^1) t^1} \frac{1}{\alpha(x^1 ,v^1)} (\alpha  \underline{\nabla}_{x^1}  f^{\ell-1})(x^1-t^1 v^1,v^1)
  \label{px_f_4} \\
& \ \ \ \   \ \ \ \   \ \ \ \  +  \int^{t^1}_{\max\{0,  t^1- \tb^1\}} e^{- \nu (v^1) (t^1-s^1)}
 \underline{\nabla}_{x^1}   \underbrace{  h^{\ell-1}(x^1- (t^1-s^1) v^1,v^1)}_{(\ref{px_f_5})_*}   \dd s^1\bigg\}.\label{px_f_5}
\end{align}
Here $ \underline{\nabla}_{x^1}  a(x^1+ \cdot )$ stands the tangential derivative $\nabla_{\mathbf{x}^1_{p^1}}[a(\eta_{p^1} (\mathbf{x}^1_{p^1})+ \cdot  )]$ in a local coordinate of (\ref{O_p}) as in (\ref{fBD_x1}).

\end{proposition}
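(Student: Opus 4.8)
The plan is to iterate the Duhamel representation~\eqref{trajectory} together with its differentiated version~\eqref{fx_x_1}--\eqref{fx_x_5}, and to convert every occurrence of $\nabla_x$ acting on a boundary term into a \emph{tangential} derivative via the local charts of Definition~\ref{definition: chart}, using the change of variables in Lemma~\ref{Lemma: change of variable}. First I would apply~\eqref{fx_x_1}--\eqref{fx_x_5} to $f=f^\ell$ with $h=h^\ell$ and inflow data $g=g^\ell$ given by the diffuse boundary condition~\eqref{diffuse_f}. The terms~\eqref{fx_x_3}, \eqref{fx_x_4}, \eqref{fx_x_5} map directly onto~\eqref{px_f_2}, \eqref{px_f_3}, and the first half of~\eqref{px_f_1} (the $\nu(v)$-term, coming from differentiating $e^{-\nu(v)\tb}$ and bounding $f^\ell$ by $\|wf^\ell\|_\infty$ with $|\nabla_x\tb|\lesssim |n(\xb)\cdot v|^{-1}\lesssim \alpha^{-1}$ by~\eqref{nabla_tbxb} and~\eqref{n geq alpha}), and the $|v||\underline{\nabla}_{x^1}r|$-term in~\eqref{px_f_1} (from $\nabla_x$ hitting the remainder $r(x^1,v)$ through $\xb$).

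The heart of the matter is the term~\eqref{fx_x_1}, i.e. $\mathbf{1}_{t\geq\tb}e^{-\nu(v)\tb}\partial_{x_j}[f^\ell(\xb(x,v),v)]$. Here I would write $f^\ell(\xb,v)$ using the diffuse boundary condition~\eqref{diffuse_f}: it equals $\frac{M_W(\xb,v)}{\sqrt{\mu(v)}}\int_{n(\xb)\cdot v^1>0} f^{\ell-1}(\xb,v^1)\sqrt{\mu(v^1)}\{n(\xb)\cdot v^1\}\dd v^1 + r(\xb,v)$. Applying $\partial_{x_j}=\sum_k \partial_{x_j}\xb_k\,\partial_{\xb_k}$ and pulling the derivative inside the $v^1$-integral, I use that $\partial_{\xb}$ acting on a function of $\xb\in\partial\Omega$ is exactly the tangential derivative $\underline{\nabla}_{x^1}$ in the chart $\eta_{p^1}$; by Lemma~\ref{Lemma: equivalent} this is comparable to $\partial_{\mathbf{x}^1_{p^1,j}}$. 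The $\nabla_x\xb$-prefactor contributes $O(1)\frac{n_i(x^1)|v|}{\alpha(x,v)}$ after using~\eqref{nabla_tbxb}, $|v\cdot\nabla_x\xb|\lesssim|v|$, $G(x)\nabla_x\xb$ controlled by Lemma~\ref{Lemma: ndot nabla xb}, and $\alpha\thicksim|n(\xb)\cdot v|$. The derivative landing on $M_W$ and on $r$ produces the remaining pieces of~\eqref{px_f_1}: the $|v|\frac{M_W}{\sqrt{\mu}}\|wf^{\ell-1}\|_\infty$ term (from $\underline{\nabla}_{x^1}M_W$ times the $v^1$-integral bounded by $\|wf^{\ell-1}\|_\infty$, absorbing the $|v|^2$ from $M_W$-differentiation into $w_{\tilde\theta}/w$) and the $|v||\underline{\nabla}_{x^1}r|\|wf^{\ell-1}\|_\infty$ term. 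The derivative landing inside the integral on $f^{\ell-1}(\xb,v^1)$ gives the prefactor of~\eqref{px_f_int} and the inner bracket: here I apply the \emph{one-step expansion} of $\underline{\nabla}_{x^1}f^{\ell-1}(x^1,v^1)$ along the trajectory from $x^1$ with velocity $v^1$ — precisely~\eqref{fx_x_1}--\eqref{fx_x_5} again but now for $f^{\ell-1}$ and with the tangential derivative $\underline{\nabla}_{x^1}$ replacing $\nabla_x$. That produces~\eqref{px_f_4} (from the initial-data-along-$t^1$ term, weighted by $\alpha(x^1,v^1)$) and~\eqref{px_f_5} (from the Duhamel integral of $h^{\ell-1}$), together with boundary and $\tb^1$-derivative terms which I would absorb: the $\tb^1$-derivative term is harmless because, after the $v^1$-integration against $\sqrt{\mu(v^1)}\{n^1\cdot v^1\}$, the change of variables $v^1\mapsto(\mathbf{x}^2_{p^2,1},\mathbf{x}^2_{p^2,2},\tb^1)$ of Lemma~\ref{Lemma: change of variable} with Jacobian $\frac{|\tb^1|^3}{|n(x^2)\cdot v^1|}$ and the convexity estimate~\eqref{nv<v2} ($|n\cdot v^1|\sim|x^1-x^2|^2$) render the singular factor $|n(x^1)\cdot v^1|^{-1}$ from $\nabla_{v^1}\tb^1$ integrable, and an integration by parts in $\mathbf{x}^2$ removes the remaining tangential derivative at the cost of $\|wf^{\ell-1}\|_\infty$-type terms which are already present in~\eqref{px_f_1} at the previous level (or are lower order and can be folded into the $O(1)$ constants).

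The main obstacle I anticipate is bookkeeping the \emph{geometric integration by parts} cleanly: specifically, showing that when $\partial_{x_j}$ is moved inside the $\dd v^1$-integral in~\eqref{fx_x_1}, the term where the derivative hits the boundary measure $\{n(\xb)\cdot v^1\}$ and the term where it hits $M_W(\xb,v)$ combine so that no uncontrolled $\alpha^{-2}$ appears — this requires the identity $G(\xb)n(\xb)=0$ from~\eqref{tang times normal} and the bound $|G(x)\nabla_x\xb|\lesssim 1$ of~\eqref{tang nabla xb} to see that the dangerous component of $\nabla_x\xb$ is the $n(\xb)\otimes v/(n(\xb)\cdot v)$ part, which is exactly the source of the single $\alpha^{-1}$ and the factor $n_i(x^1)$ in all of~\eqref{px_f_1}, \eqref{px_f_int}. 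A secondary technical point is the continuity of the piecewise formula across $\{t=\tb\}$ and the justification that the expansion terminates or is self-consistent as a fixed-point identity for $\nabla_x f^\ell$ in terms of lower-level data $f^{\ell-1}$; this is handled by the compatibility-condition remark after~\eqref{fx_x_5} (cf. \cite{GKTTBV}) and by choosing $t\gg 1$ so that $\mathbf{1}_{t<\tb}$ and $\mathbf{1}_{t^1<\tb^1}$ terms are the genuinely interior contributions in~\eqref{px_f_2} and~\eqref{px_f_4}. The remaining estimates are then routine applications of~\eqref{nabla_tbxb}, Lemma~\ref{Lemma: nabla tbxb bounded}, Lemma~\ref{Lemma: ndot nabla xb}, and the weight manipulation $w_{\tilde\theta}(v)M_W(x^1,v)/\sqrt{\mu(v)}\lesssim 1$ valid since $\tilde\theta\ll\varrho<1/4$ and $\|T_W-T_0\|_\infty\ll1$.
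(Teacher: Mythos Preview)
Your outline is essentially correct and follows the paper's strategy: Duhamel expansion, diffuse boundary expansion, a second Duhamel along $v^1$, and the geometric integration by parts via Lemma~\ref{Lemma: change of variable}. Two points deserve clarification.

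First, the paper does not differentiate the $v^1$-integral directly in Cartesian $\xb$; it first rewrites the boundary integral in rotated coordinates $\mathbf{v}^1_{p^1}$ (via~\eqref{eqn: diffuse for f}) so that the domain $\{\mathbf{v}^1_{p^1,3}>0\}$ is independent of $\mathbf{x}^1_{p^1}$. The price is an extra velocity-derivative term (the paper's~\eqref{v_under_v}) from the chain rule through $T^t_{\mathbf{x}^1_{p^1}}$, which is then removed by integration by parts in $\mathbf{v}^1_{p^1}$ (see~\eqref{IBP_v}). Your direct approach instead produces a $\partial_{\xb}n(\xb)\cdot v^1$ term from differentiating the weight $\{n(\xb)\cdot v^1\}$; both routes yield harmless $O(\|wf^{\ell-1}\|_\infty)$ contributions, so either works, but you should be explicit about which you are using.

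Second, you misattribute the change of variables $v^1\mapsto(\mathbf{x}^2_{p^2},\tb^1)$ and the $\mathbf{x}^2$-integration by parts. The ``$\tb^1$-derivative term'' $\nu^1\,\partial_{\mathbf{x}^1_{p^1}}\tb^1\, e^{-\nu^1\tb^1}f^{\ell-1}(x^2,v^1)$ carries no remaining derivative on $f^{\ell-1}$; it is bounded directly using $|\partial_{\mathbf{x}^1}\tb^1|\lesssim|n(x^2)\cdot v^1|^{-1}$ against the measure $\{n^1\cdot v^1\}$, both factors being $\sim|x^1-x^2|$ by the convexity estimate~\eqref{nv<v2}. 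The change of variables and IBP are instead needed for the \emph{second-bounce boundary} term: after the inner Duhamel, the piece $e^{-\nu^1\tb^1}\sum_{j'}\frac{\partial\mathbf{x}^2_{p^2,j'}}{\partial\mathbf{x}^1_{p^1,j}}\,\partial_{\mathbf{x}^2_{p^2,j'}}f^{\ell-1}(x^2,v^1)$ still carries a tangential derivative at $x^2$, and it is \emph{this} derivative that the IBP in $\mathbf{x}^2$ removes (the paper's~\eqref{p_xf_total1_under}--\eqref{est:fBD_x_e1}). Also, no $\nabla_{v^1}\tb^1$ appears anywhere in this proof; that $\nabla_x\to\nabla_{v^1}$ conversion is a separate device used only in Section~4 for the $K$-operator. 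With these corrections the outline goes through.
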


 \begin{proof}[\textbf{Proof of Proposition \ref{prop_fx}}] Consider $f^\ell$ solves (\ref{inhomo_transport}) and (\ref{diffuse_f}). Choose $t\gg 1$. Recall (\ref{fBD_x1}). Same as (\ref{fx_x_1})-(\ref{fx_x_4}), for $k\geq 1$, $n(\mathbf{x}^k_{p^k}) \cdot v^k>0$, and $i=1,2$,  or $k=0$ with $i=1,2,3,$ 
\begin{align}
 w_{\tilde{\theta}}(v^k)\p_{\mathbf{x}^{k}_{p^{k},i}}[
f^\ell( \eta_{p^{k}} ( \mathbf{x}_{p^{k} }^{k}  ),    {v}^{k} ) ]
%= \ &\p_{x^1_j}[f( \eta_{p^1} (x_\parallel^1 (x,v ),0),  v^1)]
%\\
=   &  \mathbf{1}_{t^k\geq \tb^k}e^{- \nu^k\tb^k}  w_{\tilde{\theta}}(v^k) \p_{\mathbf{x}^{k}_{p^{k},i}}[  f^\ell(\xb(\eta_{p^{k}} ( \mathbf{x}_{p^{k} }^{k}  )  ,    {v}^{k} ), v^k)]  \label{fx1_x1_1}\\
- & \mathbf{1}_{t^k\geq \tb^k}  \nu^k \p_{\mathbf{x}^k_{p^k,i}} \tb^k e^{- \nu^k\tb^k } w_{\tilde{\theta}}(v^k) f^\ell(\xb(\eta_{p^{k}} ( \mathbf{x}_{p^{k} }^{k}  )  ,    {v}^{k} ), v^k)
\label{fx1_x1_2}\\
 +&
  \mathbf{1}_{t^k < \tb^k}
 e^{- \nu^k t^k }w_{\tilde{\theta}}(v^k) \p_{\mathbf{x}^k_{p^k,i}}  [ f ^\ell
 (\eta_{p^{k}} ( \mathbf{x}_{p^{k} }^{k}  )- t^k v^k, v^k
 )]\label{fx1_x1_3}\\
 + & \int^{t^k}_{\max\{
 0,t^k-\tb ^k
 \}}
 e^{-\nu ^k (t ^k- s^k)}w_{\tilde{\theta}}(v^k) \p_{\mathbf{x}^k_{p^k,i}}
 %[Kf+ \Gamma(f,f)]
 [h ^\ell(\eta_{p^{k}} ( \mathbf{x}_{p^{k} }^{k}  )- (t^k-s^k) v^k, v^k
 ) ]\dd s^k\label{fx1_x1_4}\\
+&  \partial_{\mathbf{x}_{p^k,i}^k}\tb^k e^{-\nu^k \tb^k} w_{\tilde{\theta}}(v^k) h^\ell(\xb(\eta_{p^k}(\mathbf{x}_{p^k}^k),v^k),v^k), \label{fx1_x1_5}
 \end{align}
where we  denoted $\nu^k = \nu(v^k)$.

\smallskip

\textit{Estimate of (\ref{fx1_x1_1}).}
From (\ref{diffuse_f}) and (\ref{eqn: diffuse for f}) with replacing $f$ by $f^\ell$, for $k\geq 1$ with $i=1,2$, or $k=0$ with $i=1,2,3$, if $\xb(\eta_{p^{k}} ( \mathbf{x}_{p^{k} }^{k}  ) ,v^{k } ) \in \mathcal{O}_{p^{k+1}}$ then %we take $\partial_{\tau_i}=\tau_i \cdot \nabla_x$ to~\eqref{diffuse_f} to obtain
%\Be
\begin{align}
&w_{\tilde{\theta}}(v^k)\p_{\mathbf{x}^{k } _{p^{k },i}}[ f^\ell( \xb(\eta_{p^{k}} ( \mathbf{x}_{p^{k} }^{k}  ) ,v^{k } ), v^{k } )]%|_{
%n(\xb(x^{k-1} ,v^{k-1} )) \cdot v^{k-1}<0
%}
\notag \\
 =& \sum_{j=1,2} \frac{\p \mathbf{x}^{k+1 }_{p^{k+1},j}}{\p{\mathbf{x}^{k  }_{p^{k },i}}}  w_{\tilde{\theta}}(v^k) \p_{\mathbf{x}_{p^{k+1},j}^{k+1}} [  f^\ell( \eta_{p^{k+1}} ( \mathbf{x}^{k+1}_{p^{k+1} } ),
 v^{k }
 %T^t_{\mathbf{x}^k_{p^k}} \mathbf{v}^{k-1}_{p^{k-1}}
 )]\label{fBD_x0}
\\
=& \sum_{j=1,2} \frac{\p \mathbf{x}^{k+1}_{p^{k+1}, j}}{\p{\mathbf{x} ^{k }_{p^{k },i}}}\Big[
\frac{w_{\tilde{\theta}}(v^k)M_W
( \eta_{p^{k+1}} ( \mathbf{x}^{k+1}_{p^{k+1} } ),
v^{k }
 )
}{\sqrt{\mu
(v^{k })
}}
\notag
\\
&\times
\int_{\mathbf{v}^{k+1}_{p^{k+1},3}>0}  \underbrace{\p_{\mathbf{x}^{k+1}_{p^{k+1},j}}
 [f^{\ell-1}( \eta_{p^{k+1}} ( \mathbf{x}_{p^{k+1} }^{k+1}  ), T^t_{\mathbf{x}^{k+1}_{p^{k+1}}} \mathbf{v}^{k+1}_{p^{k+1}}) ]}_{(\ref{fBD_x})_*}\sqrt{\mu(\mathbf{v}^{k+1}_{p^{k+1}})}
 \mathbf{v}^{k+1}_{p^{k+1},3}
 % \underline{v}^{k}_3
\dd  \mathbf{v}^{k+1}_{p^{k+1}}\label{fBD_x}
\\
+& \
O(1) \sum_{j=1,2}  w_{\tilde{\theta}}(v^k)\p_{\mathbf{x}^{k+1}_{p^{k+1},j}} r( \eta_{p^{k+1}} ( \mathbf{x}^{k+1}_{p^{k+1}} ), v^{k }) \{1+ \| w f ^{\ell-1}\|_\infty \}\Big].  \label{fBD_xr}
\end{align}
Note that the above equalities for $k=0$ gives an identity of $\p_{x_i}[ f^\ell( \xb(x  ,v  ), v   )]$.

It is relatively simple to derive that, from (\ref{xip deri xbp}),
\Be\label{est:fBD_xr}
(\ref{fBD_xr}) = O(1) |w_{\tilde{\theta}} r (x^{k+1})|
\{1+ \| w f^{\ell-1} \|_\infty\}
.
\Ee

Now we consider (\ref{fBD_x}). We compute $(\ref{fBD_x})_*= (\ref{fBD_x1})_{(k,i,a)\rightarrow (k+1, j, f^{\ell-1})}+ (\ref{v_under_v})$. Here (\ref{v_under_v}) is given by
%\Be
%\p_{\mathbf{x}^{k}_{p^{k},j}}[
%f( \eta_{p^{k}} ( \mathbf{x}_{p^{k} }^{k}  ),    {v}^{k} ) ]
% :=
%\frac{\p \eta_{p^{k}}(\mathbf{x}^{k}_{p^{k},j})}{\p \mathbf{x}^{k}_{p^{k},j}}
%\cdot \nabla_x f ( \eta_{p^{k}} ( \mathbf{x}_{p^{k} }^{k}  ), v^{k}) ,
%\label{fBD_x1}
%\Ee
%and
\Be\begin{split}
& \left(\p_{\mathbf{x}^{k+1}_{p^{k+1},j}} T^t_{\mathbf{x}^{k+1}_{p^{k+1}}}   \mathbf{v}^{k+1}_{p^{k+1}}\right) \cdot
\nabla_v
 f^{\ell-1} ( \eta_{p^{k+1}} ( \mathbf{x}_{p^{k+1} }^{k+1}  ), T^t_{\mathbf{x}^{k+1}_{p^{k+1}}} \mathbf{v}^{k+1}_{p^{k+1}})\\
= &
  \sum_{l,m} \frac{\p }{ \p{\mathbf{x}^{k+1}_{p^{k+1}, j}}  }\left(
  \frac{\p_m \eta_{p^{k+1},  l} (\mathbf{x}^{k+1}_{p^{k+1}})}{\sqrt{g_{p^{k+1},mm}(\mathbf{x}^{k+1}_{p^{k+1}})} }\right)
   \mathbf{v}_{p^{k+1},m}^{k+1}\p_{v_ l}f^{\ell-1} (  \eta_{p^{k+1}} ( \mathbf{x}_{p^{k+1} }^{k+1}  ), T^t_{\mathbf{x}^{k+1}_{p^{k+1}}} \mathbf{v}^{k+1}_{p^{k+1}})\\
=&  \sum_{  m,n}
(\ref{v_under_v_mn})_{mn}
% \p_{x^1_{j}} (\tau_{p,m})_\ell
 %(\tau_{p,n})_\ell
\mathbf{v}_{p^{k+1},m}^{k+1} \p_{\mathbf{v}^{k+1}_{p^{k+1},n}} [f^{\ell-1}(  \eta_{p^{k+1}} ( \mathbf{x}_{p^{k+1} }^{k+1}  ) , T^t_{\mathbf{x}^{k+1}_{p^{k+1}}}\mathbf{v}^{k+1}_{p^{k+1}})],
%( \p_{x_j^1}T^t_{x^1} \underline{v}^1)_\ell
%\p_{v_\ell }
%f( \eta_{p^1} (x_\parallel^1 (x,v ),0), T^t_{x^1} \underline{v}^1)
\label{v_under_v}
\end{split}\Ee
where
\Be\label{v_under_v_mn}
(\ref{v_under_v_mn})_{mn} :=\sum_l \frac{\p }{ \p{\mathbf{x}^{k+1}_{p^{k+1}, j}}  }\left(
  \frac{\p_m \eta_{p^{k+1},l} (\mathbf{x}^{k+1}_{p^{k+1}})}{\sqrt{g_{p^{k+1},mm}(\mathbf{x}^{k+1}_{p^{k+1}})} }\right)
  \frac{\p_n \eta_{p^{k+1},l} (\mathbf{x}^{k+1}_{p^{k+1}})}{\sqrt{g_{p^{k+1},nn}(\mathbf{x}^{k+1}_{p^{k+1}})} } .
\Ee
Here we have used (\ref{T}) and (\ref{bar_v}). % in (\ref{fBD_x2}).

%%%%%%%%%%%%%%%%%%%%%%%%
%%%%%%%%%%%%%%%%%%%%%%%%We emphasis that in (\ref{fBD_x1}) we regard $v^{k}$ as an independent variables from $\mathbf{x}^{k}_{p^{k}}$.
%%%%%%%%%%%%%%%%%%%%%%%%
\hide
From (\ref{T})
\Be
\begin{split}\label{v_under_v}
%\p_{x_j^1} [ g(T^t_{x^1} \underline{v}^1)] =
  ( \p_{x^1_{j}}T^t_{x^1} \underline{v}^1)_\ell  \p_{v_\ell}f (\cdot, T^t_{x^1} \underline{v}^1)
= &   \sum_m  \p_{x^1_{j}} (\tau_{p,m})_\ell  \underline{v}_m^1\p_{v_\ell}f (\cdot, T^t_{x^1} \underline{v}^1)\\
=&  \sum_{  m,k}  \underline{v}_m^1 \p_{x^1_{j}} (\tau_{p,m})_\ell
 (\tau_{p,k})_\ell \p_{\underline{v}^1_k} [f(\cdot, T^t_{x^1} \underline{v}^1)].
 \end{split}\Ee
 \unhide
 %%%%%%%%%%%%%%%%%%%%%%%%
 %%%%%%%%%%%%%%%%%%%%%%%%
 %%%%%%%%%%%%%%%%%%%%%%%%
First we consider a contribution of (\ref{v_under_v}) in (\ref{fBD_x}). We substitute (\ref{v_under_v})-(\ref{v_under_v_mn}) for $(\ref{fBD_x})_*$ and then apply the integration by parts with respect to $\p_{\mathbf{v}_{p^{k+1}}^{k+1}}$ to derive that
\Be\begin{split}\label{IBP_v}
% &-\sum_{j=1,2} \frac{\p \mathbf{x}^{k}_{p^{k}, j}}{\p{\mathbf{x} ^{k-1}_{p^{k-1},i}}}
%\frac{M_w
%( \eta_{p^{k}} ( \mathbf{x}^{k}_{p^{k} } ),
%| \mathbf{v}^{k-1}_{p^{k-1}}|
% )
%}{\sqrt{\mu
%( \mathbf{v}^{k-1}_{p^{k-1}} )
%}}\\
& \int_{\mathbf{v}^{k+1}_{p^{k+1},3}>0} f^{\ell-1} ( \eta_{p^{k+1}} ( \mathbf{x}_{p^{k+1} }^{k+1}  ), T^t_{\mathbf{x}^{k+1}_{p^{k+1}}} \mathbf{v}^{k+1}_{p^{k+1}})\\
& \ \ \ \  \ \ \ \   \times
\sum_{m,n}   (\ref{v_under_v_mn})_{mn}
\p_{\mathbf{v}^{k+1}_{p^{k+1},n}}
\big[
  \mathbf{v}^{k+1}_{p^{k+1}, m}  \mathbf{v}^{k+1}_{p^{k+1},3} \sqrt{\mu(\mathbf{v}^{k+1}_{p^{k+1}})}
 \big]
% \underbrace{\p_{\mathbf{x}^{k}_{p^{k},j}}
 %[f( \eta_{p^{k}} ( \mathbf{x}_{p^{k} }^{k}  ), T^t_{\mathbf{x}^{k}_{p^{k}}} \mathbf{v}^{k}_{p^{k}}) ]}_{(\ref{fBD_x})_*}
\dd
 % \underline{v}^{k}_3
 \mathbf{v}^{k+1}_{p^{k+1}}
%&\times
%\int_{\underline{v}^1_3>0}
%\sum_{\ell, m,k  } \p_{\underline{v}_k^1}\left[
%\underline{v}^1_3 \underline{v}_m^1 \p_{x^1_{j}} (\tau_{p,m})_\ell
 %(\tau_{p,k})_\ell
%\sqrt{\mu(\underline{v}^1)}
%\right]
%f( \eta_{p^1} (x_\parallel^1 (x,v ),0), T^t_{x^1} \underline{v}^1) \dd\underline{v}^1\\
  =   O(1)\| \eta \|_{C^2}
  \| w f^{\ell-1} \|_\infty
  \hide\sum_{j=1,2} \frac{\p \mathbf{x}^{k}_{p^{k}, j}}{\p{\mathbf{x} ^{k-1}_{p^{k-1},i}}}
\frac{M_W
( \eta_{p^{k}} ( \mathbf{x}^{k}_{p^{k} } ),
| \mathbf{v}^{k-1}_{p^{k-1}}|
 )
}{\sqrt{\mu
( \mathbf{v}^{k-1}_{p^{k-1}} )
}}\unhide.
\end{split}\Ee
Here we have used $f^{\ell-1} ( \eta_{p^{k+1}} ( \mathbf{x}_{p^{k+1} }^{k+1}  ), T^t_{\mathbf{x}^{k+1}_{p^{k+1}}} \mathbf{v}^{k+1}_{p^{k+1}})
\sum_{m,n}   (\ref{v_under_v_mn})_{mn}
%\p_{\mathbf{v}^{k+1}_{p^{k+1},n}}
%\big[
  \mathbf{v}^{k+1}_{p^{k+1}, m}  \mathbf{v}^{k+1}_{p^{k+1},3} \sqrt{\mu(\mathbf{v}^{k+1}_{p^{k+1}})}
% \big]
% \underbrace{\p_{\mathbf{x}^{k}_{p^{k},j}}
 %[f( \eta_{p^{k}} ( \mathbf{x}_{p^{k} }^{k}  ), T^t_{\mathbf{x}^{k}_{p^{k}}} \mathbf{v}^{k}_{p^{k}}) ]}_{(\ref{fBD_x})_*}
%\dd
 % \underline{v}^{k}_3
% \mathbf{v}^{k+1}_{p^{k+1},3}
\equiv 0
$ when $\mathbf{v}^{k+1}_{p^{k+1},3}=0$ for $\|wf^{\ell-1}\|_\infty<\infty$.

%\[\partial_{\tau_i}f|_{\gamma_-}=\frac{\partial_{\tau_i}M_w}{\sqrt{\mu}}\int_{n(x)\cdot v^1}f\sqrt{\mu(v^1)}\{n(x)\cdot v^1\}dv^1\]
%\begin{equation}
%+\frac{M_w}{\sqrt{\mu}}\int_{\underline{v}_3^1>0}\{\partial_{\tau_i}f+\partial_{\tau_i}T^{-1}\underline{v}^1\cdot \nabla_v f\}\sqrt{\mu(\underline{v}^1)} \underline{v}_3^1 d\underline{v}^1
%\end{equation}
%\[+\partial_{\tau_i} \frac{M_w}{\sqrt{2\pi}\sqrt{\mu}}.\]

%%%%%%%%%%%%%%%
%%%%%%%%%%%%%%%
%%%%%%%%%%%%%%%
\hide

Then
\[~\eqref{eqn: diffuse for f}=\int_{n(x_1^1,x_2^1,0)\cdot v^1>0} f(\eta_{p^1}(x_1^1,x_2^1),v^1)\sqrt{\mu(v^1)}\Big(n(x_1^1,x_2^1,0)\cdot v^1 \Big) dv^1\]
\[=\int_{\underline{v}_3^1>0}   f(\eta_{p^1}(x_1^1,x_2^1,0),T^{-1}(x_1^1,x_2^1,0)\underline{v}^1)  \sqrt{\mu(\underline{v}^1)}\underline{v}_3^1 d\underline{v}^1.         \]

 Note that for $\ell=0$
 \begin{align}
  (\ref{fBD_x})_{\ell=0} %&= \sum_{j=1,2}\frac{\p x^1_j(x,v)}{\p x_i} \int_{\underline{v}^1_3>0}
%\p_{x^1_j}[
%f( \eta_{p^1} (x_\parallel^1 (x,v ),0), T^t_{x^1} \underline{v}^1)]\sqrt{\mu(\underline{v}^1)}%\underline{v}^1_3 \dd\underline{v}^1
%\label{boundary_total}
%\\
&= \sum_{j=1,2}\frac{\p x^1_j }{\p x_i} \frac{M_W}{\sqrt{\mu}} \int_{n(x^1) \cdot v^1>0 }
\underline{\p_{x^1_j}[
f( \eta_{p^1} (x_\parallel^1 (x,v ),0),  v^1)]}\sqrt{\mu( v^1)} \{n(x^1) \cdot v^1\} \dd {v}^1
\label{boundary_x_parallel}
\\
&+  \sum_{j=1,2}\frac{\p x^1_j }{\p x_i} \frac{M_W}{\sqrt{\mu}}   \int_{\underline{v}^1_3>0}
\sum_\ell \underline{( \p_{x_j^1}T^t_{x^1} \underline{v}^1)_\ell
\p_{v_\ell }
f( \eta_{p^1} (x_\parallel^1 (x,v ),0), T^t_{x^1} \underline{v}^1)}\sqrt{\mu(\underline{v}^1)}\underline{v}^1_3 \dd\underline{v}^1 .
\label{boundary_v}
 \end{align}
% Note that $\nabla_v g (T^t_{x^1} \underline{v}^1)  = T_{x^1}^t  \nabla_{\underline{v}^1} [ g(T^t_{x^1} \underline{v}^1)]$ for arbitrary $g(v^1)$. Together with direct computations, this gives
\hide
\Be\begin{split}
   \p_{\underline{v}_\ell^1} [ g(T^t_{x^1} \underline{v}^1)]
& = \sum_k \frac{\p (T^t_{x^1} \underline{v}^1)_k}{\p \underline{v}_\ell^1}
\p_{v_k} g  (T^t_{x^1} \underline{v}^1) \\
&= \sum_k \p_{\underline{v}_\ell^1}
\big[
(\tau _{p,1})_k \underline{v}_1^1 + (\tau _{p,2})_k \underline{v}_2^1
+ (n_p)_k \underline{v}_3^1
\big] \p_{v_k} g  (T^t_{x^1} \underline{v}^1)\\
&= \sum_k (\tau _{p,\ell})_k  \p_{v_k} g  (T^t_{x^1} \underline{v}^1)
 \end{split}
\Ee
On the other hand \unhide
\unhide
%%%%%%%%%%%%%%%
%%%%%%%%%%%%%%%
%%%%%%%%%%%%%%%

\smallskip

\textit{Estimate of a contribution of $(\ref{fBD_x1})_{(k,i,a)\rightarrow (k+1, j, f^{\ell-1})}$ in (\ref{fBD_x}).} Since the velocity variables of

$(\ref{fBD_x1})_{(k,i,a)\rightarrow (k+1, j, f^{\ell-1})}$ is written in Cartesian coordinate as $v^{k+1}$ (not $\mathbf{v}^{k+1}_{p^{k+1}}$) we rewrite the $\mathbf{v}_{p^{k+1}}^{k
+1}$-integration of (\ref{fBD_x}) in $v^{k+1}$-integration. Then, along the trajectory, $(\ref{fBD_x1})_{(k,i,a)\rightarrow (k+1, j, f^{\ell-1})}$ can be represented by (\ref{fx1_x1_1})-(\ref{fx1_x1_4}) with $(k,i,\ell)\rightarrow (k+1, j,\ell-1)$. Here we further replace $(\ref{fx1_x1_1})_{(k,i,\ell)\rightarrow (k+1, j,\ell-1)}$ by $\mathbf{1}_{t^{k+1} \geq \tb^{k+1}} \times $ $
e^{-\nu^{k+1} \tb^{k+1}} (\ref{fBD_x0})_{(k,i,j,\ell)\rightarrow (k+1, j,j^\prime,\ell-1)}$. We note that we do not use a further expansion of (\ref{fBD_x})-(\ref{fBD_xr}). Throughout the process, we derive an identity
\begin{align}
& (\ref{fBD_x}) \text{ with } [(\ref{fBD_x})_*=(\ref{fBD_x1})_{(k,i,a)\rightarrow (k+1, j,f^{\ell-1})}] \notag
\\
\hide&=
\int_{
n^{k+1} \cdot v^{k+1}>0
%\mathbf{v}^{k}_{p^{k},3}>0
}
\sum_{p \in \mathcal{P}}
\iota_{p} (\xb(x^{k+1}, v^{k+1}))
\mathbf{1}_{t^{k+1} \geq \tb^{k+1}} e^{-\nu^{k+1} \tb^{k+1}}
\sum_{j^\prime=1,2}
\frac{\p \mathbf{x}^{k +2}_{p^{k+2},j^\prime}}{\p{\mathbf{x}^{k+1  }_{p^{k+1 },j}}}  \p_{\mathbf{x}_{p^{k+2},j^\prime}^{k+2}} [  f^{\ell-1}( \eta_{p^{k+2}} ( \mathbf{x}^{k+2}_{p^{k+2} } ),
 {v}^{k+1}
 )]\label{fBD_x_e1} \\
 & \ \ \ \ \ \ \ \ \ \ \ \ \ \ \  \ \ \ \ \  \times
  \sqrt{\mu (v^{k+1 })}
\{n ^{k+1}  \cdot v^{k+1}\} %\mathbf{v}^{k}_{p^{k},3}
 % \underline{v}^{k}_3
\dd  v^{k+1}  %\mathbf{v}^{k}_{p^{k}}
\notag
\\
\unhide
 %=&-\sum_{j=1,2} \frac{\p \mathbf{x}^{k}_{p^{k}, j}}{\p{\mathbf{x} ^{k-1}_{p^{k-1},i}}}
%\frac{M_w
%( \eta_{p^{k}} ( \mathbf{x}^{k}_{p^{k} } ),
%v^{k-1}
% )
%}{\sqrt{\mu
%( v^{k-1})
%}}\\
&=\int_{
n^{k+1} \cdot v^{k+1}>0
%\mathbf{v}^{k}_{p^{k},3}>0
}
\mathbf{1}_{t^{k+1} \geq \tb^{k+1}} e^{-\nu^{k+1} \tb^{k+1}}
\sum_{p^{k+2} \in \mathcal{P}}
\iota_{p^{k+2}} (\xb(x^{k+1}, v^{k+1}))
\notag \\
 & \ \ \ \ \ \ \ \    \times \sum_{j^\prime=1,2}
\frac{\p \mathbf{x}^{k +2}_{p^{k+2},j^\prime}}{\p{\mathbf{x}^{k+1  }_{p^{k+1 },j}}}  \p_{\mathbf{x}_{p^{k+2},j^\prime}^{k+2}} [  f^{\ell-1}( \eta_{p^{k+2}} ( \mathbf{x}^{k+2}_{p^{k+2} } ),
 {v}^{k+1}
 )]
  \sqrt{\mu (v^{k+1 })}
\{n ^{k+1}  \cdot v^{k+1}\} %\mathbf{v}^{k}_{p^{k},3}
 % \underline{v}^{k}_3
\dd  v^{k+1}  %\mathbf{v}^{k}_{p^{k}}
\label{fBD_x_e1}\\
 &  +
\int_{
n ^{k+1}  \cdot v^{k+1}>0
%\mathbf{v}^{k}_{p^{k},3}>0
}
\sum_{p^{k+2} \in \mathcal{P}}
\iota_{p^{k+2}} (\xb(x^{k+1}, v^{k+1}))
[
(\ref{fx1_x1_2}) + (\ref{fx1_x1_3}) + (\ref{fx1_x1_4})]_{(k,i, \ell ) \rightarrow (k+1, j,\ell-1)}
\notag\\
 & \ \ \ \ \ \ \ \    \times \sqrt{\mu (v^{k+1 })}
\{n ^{k+1}  \cdot v^{k+1}\} %\mathbf{v}^{k}_{p^{k},3}
 % \underline{v}^{k}_3
\dd  v^{k+1} . \label{fBD_x_e2}
\end{align}
Here we have denoted $n^{k+1}= n(x^{k+1})$. It is relatively easy to derive
\Be\label{est:fBD_x_e2}
(\ref{fBD_x_e2})= O(1) \| w f ^{\ell-1} \|_\infty + O(1)
[
(\ref{px_f_4}) + (\ref{px_f_5})
]_{(t^1,x^1,v^1,f) \rightarrow (t^{k+1},x^{k+1},v^{k+1},f^{\ell-1})} ,
%\int_{n^{k+1} \cdot v^{k+1}>0}
%\frac{e^{-\nu^{k+1} t^{k+1}}}{\alpha(x^{k+1}  , v^{k+1})} (\alpha \nabla_{x^{k+1}}  f^{\ell-1})(x^{k+1} -t^{k+1} v^{k+1}, v^{k+1})
%\sqrt{\mu(v^{k+1})}
%\{n^{k+1} \cdot v^{k+1}\}
%\dd v^{k+1}
 \Ee
where we have used $|\p_{\mathbf{x}^{k+1}_{p^{k+1}, j}} \tb ^{k+1}| \leq \frac{1}{n(x^{k+2}) \cdot v^{k+1}}$ from (\ref{nabla_tbxb}).

In order to take off  $\p_{\mathbf{x}_{p^{k+2},j^\prime}^{k+2}}$ from $f^{\ell-1}$ in (\ref{fBD_x_e1}) we use the change of variables of (\ref{map_v_to_xbtb}). Note that
\Be\label{v_k_xtb}
v^{k+1} = (x^{k+1} - \eta_{p^{k+2}} (\mathbf{x}_{p^{k+2}} ^{k+2})) /\tb^{k+1}.
\Ee
Now we apply the change of variables of (\ref{map_v_to_xbtb}) and derive that
\Be \label{p_xf_total1_under}
\begin{split}
 (\ref{fBD_x_e1})
=&
\sum_{p^{k+2} \in \mathcal{P}}\iint _{|\mathbf{x}_{p^{k+2}}^{k+2}|< \delta_1}
\int^{t^{k+1}}_0
e^{- \nu(v^{k+1}) \tb^{k+1}}
\iota_{p^{k+2}} (
\eta_{p^{k+2}} (\mathbf{x}_{p^{k+2}}^{k+2} )
)\\
& \times
\sum_{j^\prime=1,2}
\frac{\p \mathbf{x}^{k +2}_{p^{k+2},j^\prime}}{\p{\mathbf{x}^{k+1  }_{p^{k+1 },j}}}  \p_{\mathbf{x}_{p^{k+2},j^\prime}^{k+2}} [  f^{\ell-1}( \eta_{p^{k+2}} ( \mathbf{x}^{k+2}_{p^{k+2} } ),
 {v}^{k+1}
 )]\\
&
\times\frac{n_{p^{k+2}} (\mathbf{x}_{p^{k+1}} ^{k+1}) \cdot  (x^{k+1} -
 \eta_{p^{k+2}} (\mathbf{x}_{p^{k+2}} ^{k+2})
 )
}{\tb^{k+1}}
 \frac{ n_{p^{k+2}}(\mathbf{x}^{k+2}_{p^{k+2}}) \cdot (x^{k+1} -
 \eta_{p^{k+2}} (\mathbf{x}_{p^{k+2}} ^{k+2})
 ) }{|\tb^{k+1}|^4}\\
 &  \times  \sqrt{\mu ( v^{k+1})}
 \dd \tb^{k+1}
\sqrt{g_{p^{k+2},11}g_{p^{k+2},22}  }
 \dd \mathbf{x}^{k+2}_{p^{k+2},1}\dd \mathbf{x}^{k+2}_{p^{k+2},2}.
%%%%%%%%%%%%\hide
%%%%%%%%%%%%
%%%%%%%%%%%%
%%%%%%%%%%%%
% \times
%M_w (x^2, |v^1|)
%e^{
%- \nu(|v^1|) \tb^1
%} \sum_{j^\prime=1,2}
%\frac{\p \mathbf{x}^{k +2}_{p^{k+2},j^\prime}}{\p{\mathbf{x}^{k+1  }_{p^{k+1 },j}}}  \p_{\mathbf{x}_{p^{k+2},j^\prime}^{k+2}} [  f( \eta_{p^{k+2}} ( \mathbf{x}^{k+2}_{p^{k+2} } ),
% {v}^{k+1}
%%%%%%%%%%%% )]
%%%%%%%%%%%%  \\
%%%%%%%%%%%%&\sum_{j^\prime=1,2} \frac{\p x^2_{j^\prime}}{\p x^1_j }
%%%%%%%%%%%%\sum_{j^\prime=1,2}
%%%%%%%%%%%%\frac{\p}{\p x_{j^\prime}^2}\left[
%%%%%%%%%%%%  \int_{\underline{v}_3^2>0}
%%%%%%%%%%%%f(\eta^{2} (x_\parallel^2,0), T^t_{x^2} \underline{v}^{2} )  \sqrt{\mu  ^2}
%%%%%%%%%%%% \underline{v}_3^2 \dd \underline{v}^2\right]
 %%%%%%%%%%%%
 %%%%%%%%%%%%
 %%%%%%%%%%%%
%%%%%%%%%%%% \unhide
\end{split}
\Ee
Here we read $g_{p^{k+2}, ii}$ at $\mathbf{x}_{p^{k+2}} ^{k+2}$.

We apply the integration by parts with respect to $\p_{\mathbf{x}^{k+2}_{p^{k+2}, j^\prime}}$ for $j^{\prime}=1,2$. For $\iota_{p^{k+2} } (
\eta_{p^{k+2}} (\mathbf{x}_{p^{k+2}}^{k+2} )
)=0$ when $|\mathbf{x}_{p^{k+2}}^{k+2}|= \delta_1$ from (\ref{iota}), such contribution of $|\mathbf{x}_{p^{k+2}}^{k+2}|= \delta_1$ vanishes. Then we derive
\begin{align}
 (\ref{fBD_x_e1})
=&
\sum_{p^{k+2} \in \mathcal{P}}\iint %_{|\mathbf{x}_{p^{k+2}}^{k+2}|< \delta_1}
\int^{t^{k+1}}_0
 \p_{\mathbf{x}_{p^{k+2},j^\prime}^{k+2}} \Big[
e^{- \nu(v^{k+1}) \tb^{k+1}}
\sqrt{\mu(v^{k+1})}
\iota_{p^{k+2}} (
\eta_{p^{k+2}} (\mathbf{x}_{p^{k+2}}^{k+2} )
)\Big] \cdots \label{p_xf_total1_under1}
\\
+ &
\sum_{p^{k+2} \in \mathcal{P}}\iint %_{|\mathbf{x}_{p^{k+2}}^{k+2}|< \delta_1}
\int^{t^{k+1}}_0
 \p_{\mathbf{x}_{p^{k+2},j^\prime}^{k+2}} \Big[
 \sum_{j^\prime=1,2}
\frac{\p \mathbf{x}^{k +2}_{p^{k+2},j^\prime}}{\p{\mathbf{x}^{k+1  }_{p^{k+1 },j}}}
\sqrt{g_{p^{k+2},11}g_{p^{k+2},22}  } \Big] \cdots\label{p_xf_total1_under2}
\\
+ &
\sum_{p^{k+2} \in \mathcal{P}}\iint %_{|\mathbf{x}_{p^{k+2}}^{k+2}|< \delta_1}
\int^{t^{k+1}}_0
 \p_{\mathbf{x}_{p^{k+2},j^\prime}^{k+2}} \bigg[
\frac{n_{p^{k+1}} (\mathbf{x}_{p^{k+1}} ^{k+1}) \cdot  (x^{k+1} -
 \eta_{p^{k+2}} (\mathbf{x}_{p^{k+2}} ^{k+2})
 )
}{\tb^{k+1}} \notag\\
& \ \ \ \ \ \ \ \  \  \ \ \ \ \ \ \ \   \ \ \ \ \ \ \ \   \ \ \ \ \ \ \ \  \cdot
 \frac{ n_{p^{k+2}}(\mathbf{x}^{k+2}_{p^{k+2}}) \cdot (x^{k+1} -
 \eta_{p^{k+2}} (\mathbf{x}_{p^{k+2}} ^{k+2})
 ) }{|\tb^{k+1}|^4}\bigg]\cdots.
 \label{p_xf_total1_under3}
% \\
% &  \times  \sqrt{\mu\left(
% \frac{x^{k+1} - \eta_{p^{k+2}} (\mathbf{x}_{p^{k+2}} ^{k+2} ) }{\tb^{k+1}}
% \right)}
 %\dd \tb^{k+1}
% \dd \mathbf{x}^{k+2}_{p^{k+2},1}\dd \mathbf{x}^{k+2}_{p^{k+2},2}.
%%%%%%%%%%%%\hide
%%%%%%%%%%%%
%%%%%%%%%%%%
%%%%%%%%%%%%
% \times
%M_w (x^2, |v^1|)
%e^{
%- \nu(|v^1|) \tb^1
%} \sum_{j^\prime=1,2}
%\frac{\p \mathbf{x}^{k +2}_{p^{k+2},j^\prime}}{\p{\mathbf{x}^{k+1  }_{p^{k+1 },j}}}  \p_{\mathbf{x}_{p^{k+2},j^\prime}^{k+2}} [  f( \eta_{p^{k+2}} ( \mathbf{x}^{k+2}_{p^{k+2} } ),
% {v}^{k+1}
%%%%%%%%%%%% )]
%%%%%%%%%%%%  \\
%%%%%%%%%%%%&\sum_{j^\prime=1,2} \frac{\p x^2_{j^\prime}}{\p x^1_j }
%%%%%%%%%%%%\sum_{j^\prime=1,2}
%%%%%%%%%%%%\frac{\p}{\p x_{j^\prime}^2}\left[
%%%%%%%%%%%%  \int_{\underline{v}_3^2>0}
%%%%%%%%%%%%f(\eta^{2} (x_\parallel^2,0), T^t_{x^2} \underline{v}^{2} )  \sqrt{\mu  ^2}
%%%%%%%%%%%% \underline{v}_3^2 \dd \underline{v}^2\right]
 %%%%%%%%%%%%
 %%%%%%%%%%%%
 %%%%%%%%%%%%
%%%%%%%%%%%% \unhide
\end{align}

From~\eqref{xip deri xbp} and (\ref{nv<v2}), (\ref{bound_vb_x}), we derive that
\Be\begin{split}\label{px_pxx}
&\bigg| \frac{\p }{\p {\mathbf{x}_{p^{k+2},j^\prime}^{k+2}}} \bigg(
\sum_{j^\prime=1,2}
\frac{\p \mathbf{x}^{k +2}_{p^{k+2},j^\prime}}{\p{\mathbf{x}^{k+1  }_{p^{k+1 },j}}}
\sqrt{g_{p^{k+2},11}g_{p^{k+2},22}  }
\bigg) \bigg|\\
 \lesssim& \  \| \eta \|_{C^2} \Big\{1+
\frac{|\mathbf{v}^{k+2}_{p^{k+2} ,\parallel}|}{|\mathbf{v}^{k+2}_{p^{k+2}, 3}|^2  } | \p_3 \eta_{p^{k+2}}
(
\mathbf{x}^{k+2  }_{p^{k+2 }}
)
\cdot \p_j \eta_{p^{k+1}} (
\mathbf{x}^{k+1  }_{p^{k+1 }}
)|
\Big\}\\
  \leq & \ O(\| \eta \|_{C^2})
   \Big\{1+  \frac{|\mathbf{v}^{k+2}_{p^{k+2}  }|}{|\mathbf{v}^{k+2}_{p^{k+2}, 3}|^2  }
   |x^{k+1} - \eta_{p^{k+2}}
(
\mathbf{x}^{k+2  }_{p^{k+2 }}
) |
   \Big\}\\
   \leq & \  O(\| \eta\|_{C^2}) \frac{1}{|\mathbf{v}^{k+2}_{p^{k+2}, 3}|}=O(\Vert \eta\Vert_{C^2})\frac{|\tb^{k+1}|}{|n_{p^{k+2}}(\mathbf{x}_{p^{k+1}}^{k+1})  \cdot  (x^{k+1} -
 \eta_{p^{k+2}} (\mathbf{x}_{p^{k+2}} ^{k+2})|} .
\end{split}
\Ee

Now using (\ref{v_k_xtb}) for (\ref{p_xf_total1_under1}), (\ref{px_pxx}) for (\ref{p_xf_total1_under2}), and (\ref{bound_vb_x}) for (\ref{p_xf_total1_under3}), we derive that %
%
%Now applying the integration by parts to (\ref{p_xf_total1_under}) with respect to $\p_{\mathbf{x}^{k+2}_{p^{k+2}}}$ and using (\ref{nv<v2}), we derive that
\Be\begin{split}\label{est:fBD_x_e1}
&|(\ref{fBD_x_e1})|\\
&\lesssim \| \eta \|_{C^2} \| w f^\ell \|_\infty
\iint \int_0^{t^{k+1}} e^{-\nu_0 \tb^{k+1}}\Big[
\frac{|x^{k+1} - \eta_{p^{k+2} } (\mathbf{x}^{k+2}_{p^{k+2}})|^3}{|\tb^{k+1}|^5}+ \frac{|x^{k+1} - \eta_{p^{k+2} } (\mathbf{x}^{k+2}_{p^{k+2}})|^2}{|\tb^{k+1}|^4}\Big]
e^{-\frac{|x^{k+1} - \eta_{p^{k+2} } (\mathbf{x}^{k+2}_{p^{k+2}})|^2}{4|\tb^{k+1}|^2}}\\
&\lesssim  \| \eta \|_{C^2} \| w f^\ell \|_\infty
 \int_0^{\infty}
\frac{e^{-\nu_0 \tb^{k+1}}}{|\tb^{k+1}|^{1/2}}\iint \frac{1}{|x^{k+1} - \eta_{p^{k+2} } (\mathbf{x}^{k+2}_{p^{k+2}})|^{3/2}}\\
&\lesssim  \| \eta \|_{C^2} \| w f^\ell \|_\infty,
%\frac{|x^{k+1} - \eta_{p^{k+2} } (\mathbf{x}^{k+2}_{p^{k+2}})|^3}{|\tb^{k+1}|^5}
%e^{-\frac{|x^{k+1} - \eta_{p^{k+2} } (\mathbf{x}^{k+2}_{p^{k+2}})|^2}{4|\tb^{k+1}|^2}}
\end{split}\Ee
where we have used
\Be
\begin{split}\notag
&\Big[
\frac{|x^{k+1} - \eta_{p^{k+2} } (\mathbf{x}^{k+2}_{p^{k+2}})|^3}{|\tb^{k+1}|^5}+ \frac{|x^{k+1} - \eta_{p^{k+2} } (\mathbf{x}^{k+2}_{p^{k+2}})|^2}{|\tb^{k+1}|^4}\Big]
e^{-\frac{|x^{k+1} - \eta_{p^{k+2} } (\mathbf{x}^{k+2}_{p^{k+2}})|^2}{4|\tb^{k+1}|^2}}\\
&\leq \frac{1}{|\tb^{k+1}|^{1/2}} \frac{1}{|x^{k+1} - \eta_{p^{k+2} } (\mathbf{x}^{k+2}_{p^{k+2}})|^{3/2}}
\Big[\frac{|x^{k+1} - \eta_{p^{k+2} } (\mathbf{x}^{k+2}_{p^{k+2}})|^{9/2}}{|\tb^{k+1}|^{9/2}}+\frac{|x^{k+1} - \eta_{p^{k+2} } (\mathbf{x}^{k+2}_{p^{k+2}})|^{7/2}}{|\tb^{k+1}|^{7/2}}\Big]
\\
&\times e^{-\frac{|x^{k+1} - \eta_{p^{k+2} } (\mathbf{x}^{k+2}_{p^{k+2}})|^2}{4|\tb^{k+1}|^2}}\\
&\lesssim \frac{1}{|\tb^{k+1}|^{1/2}} \frac{1}{|x^{k+1} - \eta_{p^{k+2} } (\mathbf{x}^{k+2}_{p^{k+2}})|^{3/2}}.
\end{split}
\Ee

Finally collecting terms (\ref{fx1_x1_1})-(\ref{fx1_x1_4}), and (\ref{est:fBD_xr}), (\ref{est:fBD_x_e1}) and setting $k=0$, we prove the Proposition \ref{prop_fx}.\end{proof}

\section{Mixing via the binary collision and transport}
%In Proposition \ref{prop_fx} it remains to estimate the contribution of the collision operator.
In this section we mainly establish the integration by parts technique mentioned in Section 1.3 using the mixing of the binary collision and the transport operator. In particular, we will prove Proposition \ref{est:K_Gamma}. As direct consequence of Proposition \ref{prop_fx} and Proposition \ref{est:K_Gamma} we will give a proof of the (\ref{estF_n}) in \textbf{Main Theorem}.

We consider a solution of the Boltzmann equation (\ref{inhomo_transport}) with
 \Be\label{h_K}
 h^\ell(x,v):= K(f^{\ell-1}) + \Gamma(f^{\ell-1},f^{\ell-1}),
 \Ee
 and the diffuse BC (\ref{diffuse_f}). The main result is an estimate of (\ref{h_K})-contribution in (\ref{px_f_3}) and (\ref{px_f_5}):
 \begin{proposition}\label{est:K_Gamma}
 We bound $(\ref{px_f_3})_{(\ref{px_f_3})_*=Kf^{\ell-1} }$, $(\ref{px_f_3})_{(\ref{px_f_3})_*=  \Gamma(f^{\ell-1},f^{\ell-1})}$, and $(\ref{px_f_int}) \cdot (\ref{px_f_5})_{(\ref{px_f_5})_*=  Kf^{\ell-2} + \Gamma(f^{\ell-2},f^{\ell-2})}$ respectively as
 \Be\begin{split}
 \label{est:K_Gamma1}
& \int_{\max\{0,t-\tb \}}^t e^{-\nu(v)(t-s)} w_{\tilde{\theta}}(v)\p_{x_i}   Kf^{\ell-1}(   x-(t-s) v,v)  \dd s\\
&\leq  \frac{O(1)}{\alpha(x,v)} \Big\{
\Big(
\e  +  \sup_{i\geq 0} \| w f^{\ell-1-i}\|_\infty
\Big)
 \sup_{i\geq 0}  \| \alpha \nabla_x f^{\ell-1-i}\|_\infty +  \e^{-1}   \sup_{i\geq 0} \| w f^{\ell-1-i}\|_\infty\Big\},
 \end{split}\Ee
 \Be
 \begin{split} \label{est:K_Gamma2}
& e^{-\nu (v) \tb} \frac{n_i(x^1) |v|}{\alpha(x,v)} \frac{w_{\tilde{\theta}}(v)M_W(x^1, v)}{\sqrt{\mu(v)}}
\int_{n(x^1) \cdot v^1>0}  \dd v^1\{n^1 \cdot v^1\}\sqrt{\mu(v^1)}\\
& \ \ \ \ \times\int^{t^1}_{\max\{0,  t^1- \tb^1\}} e^{- \nu (v^1) (t^1-s^1)}
 \underline{\nabla}_{x^1}    K f^{\ell-1}(x^1- (t^1-s^1) v^1,v^1)    \dd s^1\\
& \leq \frac{O(1)}{\alpha(x,v)} \times \Big\{  \e  \sup_{i\geq 0}  \| w_{\tilde{\theta}}\alpha \nabla_x f^{\ell-1-i}\|_\infty +  \e^{-1}  \sup_{i\geq 0} \| w f^{\ell-1-i}\|_\infty\Big\},
\end{split} \Ee
 \Be\label{est:K_Gamma3}
 \begin{split}
  &\int_{\max\{0,t-\tb \}}^t e^{-\nu(v)(t-s)}w_{\tilde{\theta}} \p_{x_i}   \Gamma(f^{\ell-1},f^{\ell-1})(   x-(t-s) v,v)  \dd s\\
   &   + e^{-\nu (v) \tb} \frac{n_i(x^1) |v|}{\alpha(x,v)} \frac{w_{\tilde{\theta}}(v)M_W(x^1, v)}{\sqrt{\mu(v)}}
\int_{n(x^1) \cdot v^1>0}  \dd v^1\{n^1 \cdot v^1\}\sqrt{\mu(v^1)}\\
& \ \ \ \ \times\int^{t^1}_{\max\{0,  t^1- \tb^1\}} e^{- \nu (v^1) (t^1-s^1)}
 \underline{\nabla}_{x^1}   \Gamma( f^{\ell-1}, f^{\ell-1})(x^1- (t^1-s^1) v^1,v^1)    \dd s^1\\
& \leq  \frac{O(1)}{\alpha(x,v)}
\Big(
\e  +  \sup_{i\geq 0} \| w f^{\ell-1-i}\|_\infty
\Big)
 \sup_{i\geq 0}  \|w_{\tilde{\theta}} \alpha \nabla_x f^{\ell-1-i}\|_\infty  .
 \end{split}
 \Ee
\end{proposition}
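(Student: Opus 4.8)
The plan is to treat the three estimates \eqref{est:K_Gamma1}, \eqref{est:K_Gamma2}, \eqref{est:K_Gamma3} by the same mechanism sketched in Section 1.3: differentiate once more along the trajectory, trade the $x$-derivative against a $v$- or $u$-derivative using the identity $\nabla_x F(x-\sigma v,u) = -\sigma^{-1}\nabla_v[F(x-\sigma v,u)]$, and then integrate by parts in the mixing variable of $K$ (the $u$-integral coming from $\mathbf{k}(v,u)$ in Lemma~\ref{Lemma: K,Gamma}) or in the $v^1$-integral coming from the diffuse boundary condition. First, for \eqref{est:K_Gamma1}, I would expand $\p_{x_i}Kf^{\ell-1}(x-(t-s)v,v) = \int \mathbf{k}(v,u)\,\p_{x_i}f^{\ell-1}(x-(t-s)v,u)\,\dd u$ and then insert the Duhamel formula \eqref{trajectory} for $f^{\ell-1}$ and its spatial-derivative version \eqref{fx_x_1}--\eqref{fx_x_5} once more, which produces (i) a boundary contribution at the second exit, (ii) a double-collision contribution $\int\!\!\int \mathbf{k}(v,u)\mathbf{k}(u,u')\p_{x}f^{\ell-2}(\cdots)$, and (iii) lower-order terms. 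On the double-collision piece I split $\{s-s'\ge \e\}$ versus $\{s-s'<\e\}$: for $s-s'\ge\e$ rewrite $\p_x = -(s-s')^{-1}\nabla_{u'}$ and integrate by parts in $u'$ (legitimate since the $u'$-integral has the Gaussian-type kernel $\mathbf{k}_\varrho$ and $|\nabla_{u'}\mathbf{k}(u,u')|\lesssim \langle u'\rangle\mathbf{k}_\varrho/|u-u'|$ from \eqref{k_varrho}), picking up $O(\e^{-1})\|wf^{\ell-2}\|_\infty$; for $s-s'<\e$ keep the $\alpha$-weight and invoke the nonlocal-to-local estimate (Lemma~\ref{Lemma: NLN}) to bound the $u'$-integral of $\mathbf{k}(u,u')/\alpha(\cdots,u)$, gaining the small factor $O(\e)$ against $\|\alpha\nabla_x f^{\ell-2}\|_\infty$. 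The $\langle v\rangle$-moment loss from $\alpha$ is absorbed by the weight shift $w_{\tilde\theta}$ via Lemma~\ref{Lemma: k tilde}. The overall $\alpha(x,v)^{-1}$ comes from the velocity-lemma equivalence \eqref{Velocity_lemma} relating $\alpha(x-(t-s)v,u)$ to $\alpha(x,v)$-scale quantities, exactly as in \eqref{NLL_intro}.

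Second, for \eqref{est:K_Gamma2}, the structure is the same but the outermost integration is the diffuse-reflection $v^1$-integral rather than a $u$-integral; so after expanding $\underline{\nabla}_{x^1}Kf^{\ell-1}(x^1-(t^1-s^1)v^1,v^1) = \int\mathbf{k}(v^1,u)\underline{\nabla}_{x^1}f^{\ell-1}(\cdots,u)\,\dd u$ I would write the $x^1$-derivative as $-(t^1-s^1)^{-1}\nabla_{v^1}$ and integrate by parts in $v^1$; here the boundary measure $\{n^1\cdot v^1\}\sqrt{\mu(v^1)}$ absorbs the singular factor $|\nabla_{v^1}\tb^1| \lesssim \tb^1 |n(x^2)\cdot v^1|^{-1}$ produced by the chain rule (using Lemma~\ref{Lemma: nabla tbxb} and the convexity bound \eqref{nv<v2} that gives $|n^1\cdot v^1|\sim$ distance$^2$). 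On $\{t^1-s^1\ge\e\}$ this yields $O(\e^{-1})\|wf^{\ell-1-i}\|_\infty$; on $\{t^1-s^1<\e\}$ one keeps the $\alpha$-weight and uses the nonlocal-to-local estimate again for the small factor. The prefactor $e^{-\nu\tb}n_i(x^1)|v|\alpha(x,v)^{-1}M_W/\sqrt\mu$ is controlled by \eqref{n geq alpha} and the Gaussian decay of $M_W/\sqrt\mu$ since $T_W$ is close to $T_0$.

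Third, for \eqref{est:K_Gamma3}, the nonlinear term $\Gamma(f^{\ell-1},f^{\ell-1})$ is handled by \eqref{Gamma_est}: $|\nabla_x\Gamma(f,f)(v)| = O(\|wf\|_\infty)\{|\nabla_x f(v)| + \int\mathbf{k}_\varrho(v,u)|\nabla_x f(u)|\,\dd u\}$, so the $\Gamma$-contribution is pointwise bounded by $\|wf^{\ell-1}\|_\infty$ times exactly the same kind of kernel-convolution-of-$\nabla_x f$ structure that appears in the $K$-analysis; running the same Duhamel-and-integrate-by-parts scheme gives the $\big(\e + \sup_i\|wf^{\ell-1-i}\|_\infty\big)\sup_i\|w_{\tilde\theta}\alpha\nabla_x f^{\ell-1-i}\|_\infty$ bound, where the extra $\|wf\|_\infty$ factor is harmless because in the iteration scheme $\|wf^{j}\|_\infty$ is already small by the Existence Theorem. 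The time integral $\int_{t-\tb}^t e^{-\nu(t-s)}\dd s$ contributes an $O(1)$ (or small, after the $\{t-s<\e\}$ split) factor.

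The main obstacle I expect is the $\{s-s'<\e\}$ (equivalently $\{t^1-s^1<\e\}$) regime, where one cannot integrate by parts and must instead extract smallness from the short time interval while paying the price of the $\alpha^{-1}$ weight: this forces the nonlocal-to-local estimate of Lemma~\ref{Lemma: NLN} at the critical power $p=1$ (the borderline case not covered by the earlier work \cite{GKTT}), together with the geometric change of variables $s\mapsto \xi(x-sv,u)$ with Jacobian $|u\cdot\nabla\xi|^{-1}$ and the velocity lemma, to convert a would-be logarithmically divergent $u$- and $s$-integral into a bounded quantity carrying a genuine factor $O(\e)$. Keeping track of the weight bookkeeping — ensuring every $\alpha$ that is introduced is matched by an $\alpha$ in the norm on the right-hand side, and every loss of Gaussian moment is compensated via Lemma~\ref{Lemma: k tilde} ($w_{\tilde\theta}$-shift) — is the other delicate point, but it is essentially mechanical once the splitting is set up.
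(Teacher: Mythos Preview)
Your overall strategy---expand once more along the trajectory, split time into $\{s-s'\ge\e\}$ and $\{s-s'<\e\}$, integrate by parts on the long-time piece and invoke the nonlocal-to-local Lemma~\ref{Lemma: NLN} on the short-time piece---is exactly the paper's route. But there is a genuine mix-up in the double-collision step for \eqref{est:K_Gamma1}: the position argument of $f^{\ell-2}$ is $x-(t-s)v-(s-s')u$, so the identity $\nabla_x = -(s-s')^{-1}\nabla_u$ holds with respect to the \emph{outer} integration variable $u$, not $u'$. The variable $u'$ enters only as the velocity slot of $f^{\ell-2}$ and never touches the spatial argument; there is no way to trade $\nabla_x$ for $\nabla_{u'}$. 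Consequently the integration by parts must be performed in the $u$-integral, and what gets differentiated is the product $\mathbf{k}(v,u)\mathbf{k}(u,u')$ together with $e^{-\nu(u)(s-s')}$ and the boundary of the $s'$-range through $\tb(y,u)$ (this last produces the $\nabla_u\tb(y,u)$ term, controlled as in \eqref{third term} by $|n\cdot u|^{-1}$ and then by the nonlocal-to-local estimate). With this correction your bookkeeping goes through and matches the paper's \eqref{est:px_f_3_K_1}--\eqref{est:px_f_3_K_2}.

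One smaller point on \eqref{est:K_Gamma3}: you propose to run the full Duhamel-and-IBP scheme for $\Gamma$, but the paper does something simpler. Since $|\nabla_x\Gamma(f,f)|$ already carries the small factor $O(\|wf\|_\infty)$ from \eqref{Gamma_est}, one inserts the $\alpha$-weight directly and bounds the remaining integral $\int_0^{\tb}\!\!\int \mathbf{k}_\varrho(v,u)\,\alpha(x-(t-s)v,u)^{-1}\,\dd u\,\dd s$ by $O(\alpha(x,v)^{-1})$ via Lemma~\ref{Lemma: NLN} (the $\e$ in the final bound comes only from the short-time split of the $K$-piece already handled, not from a new split for $\Gamma$). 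Your proposed route would also work but is more than what is needed.
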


\begin{proof}[\textbf{Proof of ~\eqref{estF_n}} in \textbf{Main Theorem}]
Combining Proposition \ref{prop_fx} and Proposition \ref{est:K_Gamma} we obtains that for $t\gg 1$ and $\e \ll 1$,
\[\Vert w_{\tilde{\theta}}\alpha\nabla_x f^\ell\Vert_\infty \leq o(1)\sup_{i\leq\ell-1}\Vert w_{\tilde{\theta}}\alpha\nabla_x f^{i}\Vert_\infty+C(\e)\Vert T_W-T_0\Vert_{C^1}\sup_{i\geq 0}\Vert wf^i\Vert_\infty,\]
where the $\Vert T_W-T_0\Vert_{C^1}$ comes from $| \underline{\nabla}_{x^1} r(x^1)|$ in~\eqref{px_f_1}.

By a standard argument we pass the limit and conclude that the unique solution in \textbf{Existence Theorem} satisfies the weighted $C^{1}$ estimate~\eqref{estF_n}.
\end{proof}

From now we give a proof of the proposition.

\subsection{Convert $\nabla_x$ for $\nabla_v$ along the trajectory using binary collision}

\

\textit{Expansion of $(\ref{px_f_3})_{(\ref{px_f_3})_*=Kf^{\ell-1}(x -(t -s )v,v )}.$ } First we consider (\ref{px_f_3}) with $(\ref{px_f_3})_*=Kf^{\ell-1}(x -(t -s )v, v )
= \int_{\R^3} \mathbf{k}(v,u)  f^{\ell-1}(x -(t -s )v,u) \dd u
$. Temporarily denote $y= x-(t-s)v$. Proposition \ref{prop_fx} gives a formula of $w_{\tilde{\theta}}(u)\p_{x_i} f^{\ell-1}(y,u)$ by (\ref{px_f_1})-(\ref{px_f_5}) with $h^\ell=(\ref{h_K})$ and $(x,v,\ell) \rightarrow (y, u,\ell-1)$. We split a contribution of (\ref{px_f_3}) with $(\ref{px_f_3})_*=Kf^{\ell-2}(y-(s-s^0) u, u)$, which is (\ref{px_f_3_Ku}), and the rest. The rest is given as
\begin{align}
%&\p_{x_i} f(x,v)\notag\\
&
\int^t_{\max\{0,t-\tb\}}
\dd s \,
 e^{-\nu(v)(t-s)}
\int_{\R^3} \dd u \, \mathbf{k}(v,u)\frac{w_{\tilde{\theta}}(v)}{w_{\tilde{\theta}}(u)}\notag
\\
\times &\bigg\{O(1) \frac{ n_i (x^1)w_{\tilde{\theta}}(u) }{\alpha(y,u)}
\bigg(
%n(x^1) \cdot v
\Big( \frac{\nu(u)}{w(u)} +
|u|   \frac{M_W(x^1,u)}{\sqrt{\mu(u)}}
\Big)
\| wf^{\ell-1} \|_\infty
+ |u| | \underline{\nabla}_{x^1}  r(x^1)| (1+ \| wf^{\ell-1} \|_\infty)
  %+   e^{-\nu(v) t}  (\alpha \p_{x_i} f)(x-tv,v)
\bigg)
\label{px_f_1_K}\\
&  \ +  \frac{O(1)}{\alpha(y,u)} e^{-\nu(u) s}  (w_{\tilde{\theta}}\alpha \p_{x_i} f^{\ell-1})(y-su,u)
\label{px_f_2_K}
\\
 & \  +\int_{\max\{0,s-\tb \}}^s e^{-\nu(u)(t-s)} w_{\tilde{\theta}}(u)\p_{x_i} % \underbrace{
 \Gamma(f^{\ell-2},f^{\ell-2}) (   y-(s-s') v,v)
%  }%_{(\ref{px_f_3})_*}
 \dd s'
 \label{px_f_3_K}
 \\
& \  + O(1)e^{-\nu (u) \tb} \frac{n_i(x^1) |u|}{\alpha(y,u)} \frac{w_{\tilde{\theta}}(u)M_W(x^1, u)}{\sqrt{\mu(u)}}
\int_{n(x^1) \cdot v^1>0}  \dd v^1\{n^1 \cdot v^1\}\sqrt{\mu(v^1)}\label{px_f_int_K}\\
& \ \ \ \   \ \ \ \    \times  \bigg(
  e^{-\nu(v^1) t^1} \frac{1}{\alpha(x^1 ,v^1)} (\alpha  \underline{\nabla}_{x^1}  f^{\ell-2})(x^1-t^1 v^1,v^1)
  \label{px_f_4_K} \\
& \ \ \ \   \ \ \ \   \ \ \ \  +  \int^{t^1}_{\max\{0,  t^1- \tb^1\}} e^{- \nu (v^1) (t^1-s^1)}
 \underline{\nabla}_{x^1}   \underbrace{  h^{\ell-2}(x^1- (t^1-s^1) v^1,v^1)}_{ h^{\ell-2} \ \text{in} \ (\ref{h_K})}   \dd s^1
\bigg)
\bigg\},\label{px_f_5_K}
\end{align}
where we intentionally have abused the notations as $x^1=x^1(y, u), t^1=s-\tb(y,u)$ for the sake of simplicity (see (\ref{xi}) and (\ref{tbi})). We will estimate (\ref{px_f_1_K})-(\ref{px_f_5_K}) later together with the other expansions.

Now we focus on the contribution of (\ref{px_f_3}) of $(\ref{px_f_3})_*=Kf^{\ell-2}
(y-(s-s^0) u, u)
$. We split the time integration in $s^0$ as
 \Be\label{px_f_3_Ku}
 \begin{split}
 & \int^t_{\max\{0, t-\tb\}}
 \dd s   \,   %\mathbf{1}_{s \leq t-\e}
 e^{-\nu(v) (t-s)}
  \int_{\R^3} \dd u   \,   \mathbf{k} (v,u)w_{\tilde{\theta}}(v)
  \int^s_{\max\{0, s-\tb(y, u)\}} \dd s^0   \,
  e^{-\nu(u) (s-s^0)}\\
 & \times
 \{
 \mathbf{1}_{s^0\leq s-\e} + \mathbf{1}_{s^0> s-\e}
 \}
  \int_{\R^3} \dd u^\prime    \,
 \mathbf{k} (u,u^\prime)   \p_{x_j}f^{\ell-2} (x-(t-s)v- (s-s^0)u, u^\prime) .
 \end{split}
 \Ee

 Note that
 %\begin{align}
 %&
 $\p_{x_i}f^{\ell-2}(x-(t-s)v- (s-s^0)u, u^\prime)
 =\frac{-1}{s-s^0}\p_{u_i}[f^{\ell-2}(x-(t-s)v- (s-s^0)u, u^\prime)]$. Applying an integration by parts with respect to $\p_{u_i}$, we derive an identity of a contribution of $\{s^0
 \leq s- \e
 \}$ in (\ref{px_f_3_Ku}) as
% \end{align}

 \Be\begin{split}\label{est:px_f_3_K_1}
 & \int^t_{\max\{0, t-\tb\}}
 \dd s   \,   %\mathbf{1}_{s \leq t-\e}
 e^{-\nu(v) (t-s)}w_{\tilde{\theta}}(v)
  \int_{\R^3} \dd u
  \int^s_{\max\{0, s-\tb(y, u)\}} \dd s^0   \,
  e^{-\nu(u) (s-s^0)} \frac{\mathbf{1}_{s^0\leq s-\e}}{s-s^0}  \\
 & \times
  \int_{\R^3} \dd u^\prime    \,
\p_{u_i}[ \mathbf{k} (v,u) \mathbf{k} (u,u^\prime) ]f^{\ell-2}(x-(t-s)v- (s-s^0)u, u^\prime) \\
-&\int^t_{\max\{0, t-\tb\}}
 \dd s   \,   %\mathbf{1}_{s \leq t-\e}
 e^{-\nu(v) (t-s)}w_{\tilde{\theta}}(v)
  \int_{\R^3} \dd u
  \int^s_{\max\{0, s-\tb(y, u)\}} \dd s^0   \,
 \p_{u_i} \nu(u)  e^{-\nu(u) (s-s^0)}  {\mathbf{1}_{s^0\leq s-\e}}  \\
 & \times
  \int_{\R^3} \dd u^\prime    \,
\mathbf{k} (v,u) \mathbf{k} (u,u^\prime) f^{\ell-2}(x-(t-s)v- (s-s^0)u, u^\prime)\\
+& \int^t_{\max\{0, t-\tb\}}
 \dd s   \,   %\mathbf{1}_{s \leq t-\e}
 e^{-\nu(v) (t-s)}w_{\tilde{\theta}}(v)
  \int_{\R^3} \dd u
  \mathbf{1}_{s \geq \tb(y,u)}
  e^{-\nu(u) \tb(y,u)} \frac{\mathbf{1}_{\tb(y,u) \geq \e}}{\tb(y,u)}\frac{\partial \tb(y,u)}{\partial u_i}  \\
 & \times
  \int_{\R^3} \dd u^\prime    \,
 \mathbf{k} (v,u) \mathbf{k} (u,u^\prime) f^{\ell-2}(x-(t-s)v- (s-s^0)u, u^\prime).
 \end{split}\Ee
From~\eqref{k_varrho} and Lemma \ref{Lemma: k tilde}, for the first term in~\eqref{est:px_f_3_K_1} we have
\begin{equation}\label{cal for first term}
\begin{split}
   & w_{\tilde{\theta}}(v) \p_{u_i}[ \mathbf{k} (v,u) \mathbf{k} (u,u^\prime) ]f^{\ell-2}(x-(t-s)v- (s-s^0)u, u^\prime)\\
    & \lesssim \frac{w_{\tilde{\theta}}(v)k_\varrho(v,u)}{w_{\tilde{\theta}}(u)} \frac{w_{\tilde{\theta}}(u)k_\varrho(u,u^\prime)\langle u\rangle}{|w_{\tilde{\theta}}(u')|\langle u'\rangle} \Big(\frac{1}{|v-u|}+\frac{1}{|u-u^\prime|} \Big) \frac{w_{\tilde{\theta}}(u')\langle u'\rangle}{w(u')} \Vert wf^{\ell-2}\Vert_\infty \\
    & \lesssim \frac{\mathbf{k}_{\tilde{\varrho}}(v,u)}{|v-u|}\frac{\mathbf{k}_{\tilde{\varrho}}(u,u')}{|u-u'|}\Vert wf^{\ell-2}\Vert_\infty.
\end{split}
\end{equation}
Since $\frac{\mathbf{k}_\varrho(v,u)}{|v-u|},\frac{\mathbf{k}_\varrho(u,u')}{|u-u'|}\in L^1_u$, the first term of~\eqref{est:px_f_3_K_1} is bounded by
\begin{equation}\label{first term}
O(\e^{-1})\Vert wf^{\ell-2}\Vert_\infty.
\end{equation}

For the second term in~\eqref{est:px_f_3_K_1}, similarly to~\eqref{cal for first term} we have
\begin{equation*}
\begin{split}
   & w_{\tilde{\theta}}(v)  \mathbf{k} (v,u) \mathbf{k} (u,u^\prime) f^{\ell-2}(x-(t-s)v- (s-s^0)u, u^\prime)\\
    & \lesssim \mathbf{k}_{\tilde{\varrho}}(v,u)\mathbf{k}_{\tilde{\varrho}}(u,u')\Vert wf^{\ell-2}\Vert_\infty.
\end{split}
\end{equation*}
Thus the second term is bounded by
\begin{equation}\label{second term}
O(\e^{-1})\Vert wf^{\ell-2}\Vert_\infty.
\end{equation}

From~\eqref{k_varrho} and~\eqref{nabla_tbxb}, we conclude the third term in~\eqref{est:px_f_3_K_1} is bounded by
\begin{equation}\label{third term}
  \begin{split}
  &    \Vert wf^{\ell-2}\Vert_\infty  \int^t_{\max\{0, t-\tb\}}
 \dd s   \,   %\mathbf{1}_{s \leq t-\e}
 e^{-\nu(v) (t-s)}
  \int_{\R^3} \dd u  \mathbf{k}_\varrho(v,u) \frac{\mathbf{1}_{\tb(y,u)\geq \e}}{\tb(y,u)}\frac{\partial \tb(y,u)}{\partial u_i} \int_{\mathbb{R}^3} du^\prime \mathbf{k}_\varrho(u,u^\prime)       \\
     & \lesssim \Vert wf^{\ell-2}\Vert_\infty\int^t_{\max\{0, t-\tb\}}
 \dd s   \,   %\mathbf{1}_{s \leq t-\e}
 e^{-\nu(v) (t-s)}
  \int_{\R^3} \dd u
 \frac{\mathbf{k}_\varrho(v,u)}{\alpha(y,u)}.
  \end{split}
\end{equation}
This term will be estimated later using Lemma \ref{Lemma: NLN}.

 On the other hand by Lemma \ref{Lemma: k tilde} a contribution of $\{s^0> s-\e\}$ in (\ref{px_f_3_Ku}) is controlled by
  \Be\label{est:px_f_3_K_2}
 \begin{split}
 &  \| w_{\tilde{\theta}}\alpha\nabla_{x}f^{\ell-2}  \|_\infty\int^t_{\max\{0, t-\tb\}}
 \dd s   \,   %\mathbf{1}_{s \leq t-\e}
 e^{-\nu(v) (t-s)}
  \int_{\R^3} \dd u   \,   \mathbf{k} (v,u)\frac{w_{\tilde{\theta}}(v)}{w_{\tilde{\theta}}(u)}
 \\
 & \times
 \int^s_{\max\{0, s-\tb(y, u)\}}      \int_{\R^3}
 \mathbf{1}_{s^0> s-\e}  e^{-\nu_0(u) (s-s^0)} \mathbf{k} (u,u^\prime)
 \frac{w_{\tilde{\theta}}(u)}{w_{\tilde{\theta}}(u')\alpha (y- (s-s^0)u, u^\prime)}\dd u^\prime \dd s^0
\\
&\lesssim \| w_{\tilde{\theta}}\alpha\nabla_{x}f^{\ell-2}  \|_\infty\int^t_{\max\{0, t-\tb\}}
 \dd s   \,   %\mathbf{1}_{s \leq t-\e}
 e^{-\nu(v) (t-s)}
  \int_{\R^3} \dd u   \,   \mathbf{k}_{\tilde{\varrho}} (v,u)\\
  &\times  \int^s_{\max\{0, s-\tb(y, u)\}}      \int_{\R^3}
 \mathbf{1}_{s^0> s-\e}  e^{-\nu_0\langle u\rangle (s-s^0)} \mathbf{k}_{\tilde{\varrho}} (u,u^\prime)
 \frac{1}{\alpha (y- (s-s^0)u, u^\prime)}\dd u^\prime \dd s^0.
\end{split} \Ee
This term will be estimate later using Lemma \ref{Lemma: NLN}.

\smallskip

\textit{Expansion of $(\ref{px_f_5})_{(\ref{px_f_5})_*=Kf^{\ell-2}(x^1-(t^1-s^1)v^1,v^1)}$ and $(\ref{px_f_5_K})_{h^{\ell-2}=Kf^{\ell-3}(x^1-(t^1-s^1),v^1)}$. }  We split
  \begin{align}
(\ref{px_f_5})_{(\ref{px_f_5})_*=Kf^{\ell-2}(x^1-(t^1-s^1)v^1,v^1)}=   \underbrace{
\int^{t^1}_{\max\{0, t^1- \tb^1 \}} \mathbf{1}_{s^1 \leq t^1-\e } \cdots }_{(\ref{px_f_5_split})_1}
+  \underbrace{
\int^{t^1}_{\max\{0, t^1- \tb^1 \}}
\mathbf{1}_{s^1 \geq t^1-\e } \cdots
 }_{(\ref{px_f_5_split})_2},\label{px_f_5_split} \\
(\ref{px_f_5_K})_{h^{\ell-2}=Kf^{\ell-3}(x^1-(t^1-s^1),v^1)}=   \underbrace{
\int^{t^1}_{\max\{0, t^1- \tb^1 \}} \mathbf{1}_{s^1 \leq t^1-\e } \cdots }_{(\ref{px_f_5_K_split})_1}
+  \underbrace{
\int^{t^1}_{\max\{0, t^1- \tb^1 \}}
\mathbf{1}_{s^1 \geq t^1-\e } \cdots
 }_{(\ref{px_f_5_K_split})_2}.\label{px_f_5_K_split}
 \end{align}

We simply derive an intermediate estimate (see (\ref{fBD_x1}))
\Be
\begin{split}\label{est1:px_f_5_split2}
&%|(\ref{px_f_3_split})_2|+
|(\ref{px_f_5_split})_2|
+ |(\ref{px_f_5_K_split})_2|
\\
\leq& \sum_{i=0,1} \int^{t^1}_{ \max\{t^2, t^1-\e\}} e^{-\nu_0 \langle v^1 \rangle (t^1-s^1)} \int_{\R^3} \mathbf{k}_\varrho (v^1, u^\prime)
|\nabla_{\mathbf{x}_{p^1}^1} \eta_{p^1}|  \frac{ \alpha |\nabla_x f^{\ell-2-i}(x^1
-(t^1-s^1) v^1, u^\prime
)|}{\alpha (x^1
-(t^1-s^1) v^1, u^\prime
)}
\dd u^\prime \dd s^1\\
 \leq& \  \| \eta \|_{C^1}\sum_{i=0,1}  \| \alpha \nabla_x f^{\ell-2-i} \|_\infty \sup_{t^1,x^1, v^1} \int^{t^1}_{ \max\{t^2, t^1-\e\}} e^{-\nu_0 \langle v^1 \rangle (t^1-s^1)}
\int_{\R^3} \mathbf{k}_\varrho (v^1, u^\prime)
  \frac{ 1}{\alpha (x^1
-(t^1-s^1) v^1, u^\prime
)}
\dd u^\prime \dd s^1.
\end{split}
\Ee
 This term, together with (\ref{est:px_f_3_K_2}), will be estimate later using Lemma \ref{Lemma: NLN}.

 Now we consider $(\ref{px_f_5_split})_1$ and $(\ref{px_f_5_K_split})_1$. Recall (\ref{fBD_x1}). The key observation is the following interchange of spatial derivatives and velocity derivatives: For $t^1\neq s^1$ and $i=0,1,$
  \begin{align}
 & \p_{\mathbf{x}_{p^1,j}^1} \Big[ \int_{\R^3} \mathbf{k} (v^1, u^\prime )  f^{\ell-2-i}(
\eta_{p^1} (\mathbf{x}_{p^1}^1)
- (t^1-s^1) v^1, u^\prime)
 \dd u^\prime\Big] \notag\\
 =& \sum_{\ell=1}^3\frac{\p \eta_{p^1, \ell}(\mathbf{x}_{p^1}^1)}{\p\mathbf{x}_{p^1,j}^1}
 \int_{\R^3} \mathbf{k} (v^1, u^\prime )
 \p_{x_\ell}  f^{\ell-2-i}(
\eta_{p^1} (\mathbf{x}_{p^1}^1)
- (t^1-s^1) v^1, u^\prime)
 \dd u^\prime\notag\\
 =& -\frac{1}{t^1-s^1}\sum_{\ell=1}^3\frac{\p \eta_{p^1, \ell}(\mathbf{x}_{p^1}^1)}{\p\mathbf{x}_{p^1,j}^1}
 \int_{\R^3} \mathbf{k} (v^1, u^\prime )
 \p_{v^1_\ell} [ f^{\ell-2-i}(
\eta_{p^1} (\mathbf{x}_{p^1}^1)
- (t^1-s^1) v^1, u^\prime)]
 \dd u^\prime\notag\\
 =&
  -\frac{1}{t^1-s^1}\sum_{\ell=1}^3\frac{\p \eta_{p^1, \ell}(\mathbf{x}_{p^1}^1)}{\p\mathbf{x}_{p^1,j}^1}
  \p_{v^1_\ell} \Big[
 \int_{\R^3} \mathbf{k} (v^1, u^\prime )
f^{\ell-2-i}(
\eta_{p^1} (\mathbf{x}_{p^1}^1)
- (t^1-s^1) v^1, u^\prime)
 \dd u^\prime\Big]
 \label{xK_to_vK1}
 \\
 & + \frac{1}{t^1-s^1}\sum_{\ell=1}^3\frac{\p \eta_{p^1, \ell}(\mathbf{x}_{p^1}^1)}{\p\mathbf{x}_{p^1,j}^1}
 \int_{\R^3}   \p_{v^1_\ell}  \mathbf{k} (v^1, u^\prime )
f^{\ell-2-i}(
\eta_{p^1} (\mathbf{x}_{p^1}^1)
- (t^1-s^1) v^1, u^\prime)
 \dd u^\prime .\label{xK_to_vK2}
 \end{align}
 Now we consider a contribution of (\ref{xK_to_vK1}) in $(\ref{px_f_5_split})_1$ and $(\ref{px_f_5_K_split})_1$ inside $v^1$-integration in (\ref{px_f_int}) and (\ref{px_f_int_K}). From the integration by parts with respect to $\p_{v^1_\ell}$, for $i=0,1$,
 \Be \begin{split}\label{est:xK_to_vK1}
 &\int_{n ^1  \cdot v^1 >0}
 [
\text{a contribution of } (\ref{xK_to_vK1}) \text{ in }
 (\ref{px_f_5_split})_1 \text{ and }  (\ref{px_f_5_K_split})_1]
 \sqrt{\mu(v^1)}\{n^1 \cdot v^1\}  \dd v^1
 \\
 =& \int_{n ^1  \cdot v^1 >0}
 \int^{t^1}_{\max\{0, t^1- \tb^1\}} \mathbf{1}_{s^1 \leq t^1- \e}
 e^{- \nu(v^1) (t^1-s^1)}
\frac{1}{t^1-s^1}\sum_{\ell=1}^3\frac{\p \eta_{p^1, \ell}(\mathbf{x}_{p^1}^1)}{\p\mathbf{x}_{p^1,j}^1}
\\
& \ \ \ \ \ \   \times
 \p_{v^1_\ell}  \Big[ \int_{\R^3}  \mathbf{k} (v^1, u^\prime )
f^{\ell-2-i}(
\eta_{p^1} (\mathbf{x}_{p^1}^1)
- (t^1-s^1) v^1, u^\prime)
 \dd u^\prime
\Big] \dd s^1
 \sqrt{\mu(v^1)}\{n^1 \cdot v^1\}  \dd v^1 \\
 =&
 - \int_{n ^1 \cdot v^1 >0}
 \int^{t^1}_{\max\{0, t^1- \tb^1\}}
\frac{\mathbf{1}_{\e \leq t^1- s^1  }}{t^1-s^1}
 \sum_{\ell=1}^3\frac{\p \eta_{p^1, \ell}(\mathbf{x}_{p^1}^1)}{\p\mathbf{x}_{p^1,j}^1}   \p_{v^1_\ell}  \big[ \sqrt{\mu(v^1)}\{n^1 \cdot v^1\}  e^{- \nu(v^1) (t^1-s^1)}\big]
  \\
 & \ \ \ \ \ \   \times
 \int_{\R^3}  \mathbf{k} (v^1, u ^\prime)
f^{\ell-2-i}(
\eta_{p^1} (\mathbf{x}_{p^1}^1)
- (t^1-s^1) v^1, u^\prime)
 \dd u^\prime
 \dd s^1
 \dd v^1 \\
 &+
  \int_{n ^1  \cdot v^1 =0}
 \int^{t^1}_{\max\{0, t^1- \tb^1\}}
 \cdots
 %
% \mathbf{1}_{s^1 \leq t^1- \e}
% e^{- \nu(v^1) (t^1-s^1)}
%\frac{1}{t^1-s^1}
%\notag
%\\
%& \times \sum_{\ell=1}^3\frac{\p \eta_{p^1, \ell}(\mathbf{x}_{p^1}^1)}{\p\mathbf{x}_{p^1,j}^1}
% \p_{v^1_\ell}
% \unhide
 \int_{\R^3}  \mathbf{k} (v^1, u^\prime )
f^{\ell-2-i}(
\cdot , u^\prime)
 \dd u^\prime
 \dd s^1
 \sqrt{\mu(v^1)}\{n^1 \cdot v^1\}  \dd v^1
  \\
&+ \int_{n ^1  \cdot v^1 >0}
\mathbf{1}_{t^1 \geq \tb^1  \geq  \e}
\frac{e^{- \nu(v^1)  \tb^1 }}{\tb^1}
\partial_{\mathbf{x}^1_{p^1,j}} \eta \cdot \nabla_{v^1} \tb^1
%\sum_{\ell=1}^3\frac{\p \eta_{p^1, \ell}(\mathbf{x}_{p^1}^1)}{\p\mathbf{x}_{p^1,j}^1}\p_{v_\ell^1} \tb ^1
\\
& \ \ \ \ \ \  \times  \int_{\R^3}  \mathbf{k} (v^1, u ^\prime)
f^{\ell-2-i}(
x^2, u^\prime)
 \dd u ^\prime
 \sqrt{\mu(v^1)}\{n^1 \cdot v^1\}  \dd v^1
 \\
% =& \ O(\e^{-1})%\mathbf{1}_{s^1 \leq t^1 -\e  \   \&  \   t^1\geq \tb^1 }
% \frac{\mathbf{1}_{s^1 \leq t^1 - \e}}{\tb^1}
% |\nabla_{v^1} \tb^1|
 %  \| \eta \|_{C^1} \| wf \|_\infty
 %  +
% \\
= & \ O(\e^{-1})
 \| \eta \|_{C^1} \| wf^{\ell-2-i} \|_\infty
 \times \Big\{1 +
 \int_{n ^1  \cdot v^1>0}
\frac{|n ^1  \cdot v^1|}{|n ^2  \cdot v^1|} \sqrt{\mu(v^1)} \dd v^1
 \Big\}\\
 =& \ O(\e^{-1})
 \| \eta \|_{C^1} \| wf ^{\ell-2-i}\|_\infty.
  \end{split} \Ee

Here we have used (\ref{nabla_tbxb}) and (\ref{Velocity_lemma}). Also we have used $\| w f^{\ell-2-i}\|_\infty<\infty$ and derived $\int_{n(x^1) \cdot v^1 =0}\cdots =0$.

  From (\ref{k_varrho}), (\ref{O_p}) and $t^1-s^1\geq \e$, we bound a contribution of (\ref{xK_to_vK2}) in $(\ref{px_f_5_split})_1$ and $(\ref{px_f_5_K_split})_1$ by
\Be
\begin{split}
&\int_{n(x^1) \cdot v^1 >0}
 [
\text{a contribution of } (\ref{xK_to_vK2}) \text{ of }
 (\ref{px_f_5_split})_1 \text{ and }  (\ref{px_f_5_K_split})_1]
 \sqrt{\mu(v^1)}\{n^1 \cdot v^1\}  \dd v^1 \\
 & =O
(\e^{-1})\| \eta \|_{C^1} \sup_{i=0,1} \| w f^{\ell-2-i} \|_\infty.\label{est:xK_to_vK2}
\end{split}\Ee

Now we consider contributions of $\Gamma$ in (\ref{px_f_3}), (\ref{px_f_5}), (\ref{px_f_3_K}), and (\ref{px_f_5_K}). From (\ref{Gamma_est})
 \begin{align}
& |(\ref{px_f_3})_{(\ref{px_f_3})_*=\Gamma(f^{\ell-1},f^{\ell-1})}|
+ |(\ref{px_f_5})_{(\ref{px_f_5})_*=\Gamma(f^{\ell-2},f^{\ell-2})}|
+ |(\ref{px_f_3_K})| + |(\ref{px_f_5_K})_{h= \Gamma(f^{\ell-3},f^{\ell-3})}|
\notag\\
 \leq & \
\sup_{i} O(\| wf^{\ell-1-i} \|_\infty)
\notag
\\
& \times  \sup_i
 \sum_{j=0,1} \Big\{
 \int^{t^j}_{\max\{0,{t^j}-\tb^j\}} e^{-\nu_0\langle v^j\rangle   (t^j-s^j)}
w_{\tilde{\theta}}(v^j) |\nabla_x f^{\ell-1-i}(x^j-(t^j-s^j)v^j,v^j)  |
 \dd u
 \dd s^j\notag\\
& \ \ \ \ \ \  +  \int^{t^j}_{\max\{0,{t^j}-\tb^j\}} e^{-\nu_0\langle v^j\rangle   (t^j-s^j)}
 \int_{\R^3}
 \mathbf{k}_\varrho (v^j,u)\frac{w_{\tilde{\theta}}(v)}{w_{\tilde{\theta}}(u)}
 |\nabla_x f^{\ell-1-i}(x^j-(t^j-s^j)v^j,u)  |
 \dd u
 \dd s^j
 \Big\}
 \notag\\
 \leq & \
\sup_i O(\| wf^{\ell-1-i} \|_\infty)\sup_i\|w_{\tilde{\theta}} \alpha \nabla_x f^{\ell-1-i}  \|_\infty\notag\\
 & \times \Big\{  \sum_{j=0,1}
  \int^{t^j}_{\max\{0,t^j-\tb^j\}}  e^{-\nu_0\langle v^j\rangle   (t^j-s^j)}
 \frac{1}{\alpha(x^j-(t^j-s^j)v^j,v^j)}
 \dd s
\label{int1/alpha0} \\
&  \ \ \ \
+   \sum_{j=0,1} \int^{t^j}_{\max\{0,t^j-\tb^j\}}  e^{-\nu_0\langle v^j\rangle   (t^j-s^j)}
\int_{\R^3}    \mathbf{k}_{\tilde{\varrho}}(v^j,u)
 \frac{1 }{\alpha(x^j-(t^j-s^j)v^j,u)}
\dd u \dd s^j \Big\},\label{int1/alpha}
 \end{align}
where we have used~\eqref{Lemma: k tilde} in the last line.

 From (\ref{Velocity_lemma})
 \Be\label{est:int1/alpha0}
(\ref{int1/alpha0})\leq  e^{C_\O}
\sum_{j=0,1}
  \int^t_0 e^{-\nu_0 \langle v^j\rangle (t^j-s^j)} \dd s^j \times
\alpha(x^j,v^j)^{-1}
  \leq   \frac{C_\O}{\nu_0 \langle v^j\rangle } \alpha(x^j,v^j)^{-1}.
 \Ee
 We will estimate (\ref{int1/alpha}), together with (\ref{est1:px_f_5_split2}) and (\ref{est:px_f_3_K_2}), later using Lemma \ref{Lemma: NLN}.

 %%%%%%%%%%%%%
 %%%%%%%%%%%%%
 %%%%%%%%%%%%%
 \hide
 Similarly we derive
 \Be\label{int1/alpha_K}
  |(\ref{px_f_3})_{(\ref{px_f_3})_*=Kf}|
+ |(\ref{px_f_5})_{(\ref{px_f_5})_*=Kf}| \leq O(\|\alpha \nabla_x f  \|_\infty) \times (\ref{int1/alpha}).
 \Ee
 \Be
 \begin{split}
 |(\ref{px_f_3})_{(\ref{px_f_3})_*=Kf}| & \leq
 \int^t_{\max\{0,t-\tb\}} e^{-\nu(v) (t-s)}
 \int_{\R^3}
 \mathbf{k}_\varrho (v,u)
 |\nabla_x f(x-(t-s)v,u)  |
 \dd u
 \dd s\\
 &
 \end{split}
 \Ee
 \unhide
 %%%%%%%%%%%%%
 %%%%%%%%%%%%%
 %%%%%%%%%%%%%

 \subsection{Nonlocal-to-Local estimate and Small time contributions }
In this subsection we estimate (\ref{est:px_f_3_K_2}), (\ref{est1:px_f_5_split2}), (\ref{int1/alpha0}), and (\ref{int1/alpha}). The key lemma is the following \textit{Nonlocal-to-Local estimate}:

 %%%%%%%%%%%%%%%%%
 %%%%%%%%%%%%%%%%%
 %  Nonlocal to Local estimate    %
 %%%%%%%%%%%%%%%%%
 %%%%%%%%%%%%%%%%%

\begin{lemma}\label{Lemma: NLN}
Denote $x'=x-(t-s)v,\quad y'=y-(t-s)v$. Assume $(t,x,v)\in \lbrack 0,\infty )\times \bar{\Omega}%
\times \mathbb{R}^{3}$ and $t-\tb(x,v)\leq t-t_1 \leq t-t_2 \leq t$. Then for $0<\beta<1$ and some $C_1>0$,
\begin{align}
&\int^{t-t_1}_{t-t_2} \int_{\mathbb{R}^{3}}\frac{e^{-C\langle v\rangle (t-s)} e^{-\varrho |v-u|^{2}}}{|v-u| \alpha (  x', {u}) }\mathrm{d}u\mathrm{d}s \notag\\
  &\lesssim  |e^{-C_1\nu t_1 }-e^{-C_1\nu t_2}|^{\beta}\big[ \langle v\rangle^{-1/2}\big(
1+  |\ln |v|| +  |\ln  {\alpha}(x,v)|\big) +  \frac{1}{|v|^{1-\beta}}\big] \notag \\
   & \lesssim \frac{|e^{-C_1\nu t_1 }-e^{-C_1\nu t_2}|^{\beta}}{\alpha(x,v)}.\label{NLN general}
\end{align}

Thus
\begin{equation}\label{est:nonlocal_wo_e}
\begin{split}
  \int^{t}_{t- \tb(x,v)} \int_{\mathbb{R}^{3}}\frac{e^{-C\langle v\rangle (t-s)} e^{-\varrho |v-u|^{2}}}{|v-u| \alpha (  x', {u}) }\mathrm{d}u\mathrm{d}s
  &
 \lesssim \frac{1}{\alpha(x,v)},
\end{split}
\end{equation}
and for $\e \ll 1$,
\begin{equation}\label{est:NLN}
\begin{split}
  \int^{t}_{t- \tb(x,v) } \int_{\mathbb{R}^{3}}\mathbf{1}_{s\geq t-\e} \frac{e^{-C\langle v\rangle (t-s)} e^{-\varrho |v-u|^{2}}}{|v-u| \alpha (  x', {u}) }\mathrm{d}u\mathrm{d}s
   \leq
\frac{O(\e)}{\alpha(x,v)}.
\end{split}
\end{equation}

For $1< p< 3$, we have
\begin{equation}\label{integrate alpha beta}
\int^{t}_{t-\tb} e^{-\nu(t-s)}\dd s \int_{\mathbb{R}^3}\dd u\frac{\mathbf{k}(v,u)}{|u|^2\min\{\frac{\alpha(x',u)}{|u|},\frac{\alpha(y',u)}{|u|}\}^p}\lesssim \frac{\min\{1,O(t)\}}{|v|^2\min\{\frac{\alpha(x,v)}{|v|},\frac{\alpha(y,v)}{|v|}\}^p},
\end{equation}
and
\begin{equation}\label{integrate alpha beta small}
\int^{t}_{t-\tb} e^{-\nu(t-s)}\dd s \int_{\mathbb{R}^3}\dd u
 \mathbf{1}_{s\geq t-\e}  \frac{\mathbf{k}(v,u)}{|u|^2\min\{\frac{\alpha(x',u)}{|u|},\frac{\alpha(y',u)}{|u|}\}^p}\lesssim \frac{\e}{|v|^2\min\{\frac{\alpha(x,v)}{|v|},\frac{\alpha(y,v)}{|v|}\}^p}.
\end{equation}

For $\beta<1$,
\begin{equation}\label{integrate xi beta/2}
\int^t_{t-\tb(x,v)} \frac{e^{-C\langle v\rangle(t-s)}}{|v|\min\{\xi(x'),\xi(y')\}^{\beta/2}}\lesssim \frac{1}{|v|^2\min\{\frac{\alpha(x,v)}{|v|},\frac{\alpha(y,v)}{|v|}\}^{\beta}}.
\end{equation}

\begin{equation}\label{integrate beta<1}
\int_{\mathbb{R}^3} \mathbf{k}(v,u)\frac{1}{[\alpha(x,u)]^\beta}\dd u \lesssim 1.
\end{equation}

\begin{equation}\label{integrate k/v-u}
  \int^{t}_{t- \tb(x,v)} \int_{\mathbb{R}^{3}}\frac{e^{-C\langle v\rangle (t-s)} e^{-\varrho |v-u|^{2}}}{|v-u|^2 }\frac{1}{\min\{\alpha(x',u),\alpha(y',u)\}^\beta}\mathrm{d}u\mathrm{d}s\lesssim \frac{1}{\min\{\alpha(x,v),\alpha(y,v)\}^\beta}.
\end{equation}

\end{lemma}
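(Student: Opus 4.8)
\textbf{Proof proposal for Lemma \ref{Lemma: NLN}.}

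The plan is to treat all the estimates as consequences of one master computation: the $u$-integral of $\mathbf{k}_\varrho(v,u)/\alpha(x',u)$ (and its variants with extra powers or with the $\min$ of two $\alpha$'s), which should produce a gain of a power of the distance $\xi(x')$ to the boundary, followed by an $s$-integration in which we change variables $s\mapsto \xi(x'-(t-s)v,\cdot)$ with Jacobian $ds = \frac{1}{|u\cdot\nabla\xi|}\,d\xi$ and then convert powers of $\xi$ back to powers of $\alpha(x,v)$ via the velocity lemma \eqref{Velocity_lemma} and the lower bound \eqref{n geq alpha}. Concretely, I would first prove the \emph{nonlocal-to-local} gain: for $0\le c<3$ (here $c$ will be $0$ or $\beta\in(0,1)$, with $p\in(1,3)$ covered similarly),
\[
\int_{\R^3}\frac{e^{-\varrho|v-u|^2}}{|v-u|}\,\frac{1}{\alpha(x',u)^{c}}\,\dd u
\;\lesssim\; \frac{\langle v\rangle^{\sigma}}{\xi(x')^{(c-?)/2}}\big(1+|\ln\xi(x')|\big),
\]
where I split the $u$-integration into the region $|v-u|\lesssim 1$ (using that $\tilde\alpha(x',u)^2\gtrsim |u\cdot\nabla\xi(x')|^2 - 2\xi(x')(u\cdot\nabla^2\xi\cdot u)\gtrsim -\xi(x')|u|^2$ by convexity \eqref{convex}, so $\alpha(x',u)\gtrsim \sqrt{-\xi(x')}\,|u|$ on a cone of directions, and the transversal part contributes the logarithm / the $|u\cdot\nabla\xi(x')|^{-c}$ singularity which is integrable in one dimension exactly when $c<1$, and for $1<c<3$ gives the gain $\xi(x')^{-(c-1)/2}$ as in \eqref{gain_power_xi}), and the region $|v-u|\gtrsim 1$ where the Gaussian is harmless.

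Second, I would carry out the $s$-integration. Writing $x'=x-(t-s)v$ and using $\frac{\dd}{\dd s}\xi(x-(t-s)v)=v\cdot\nabla\xi(x-(t-s)v)$, on the relevant piece of the trajectory $|v\cdot\nabla\xi(x')|$ is comparable to $\tilde\alpha(x',v)\gtrsim \alpha(x,v)$ by \eqref{Velocity_lemma}, so $\dd s\sim \frac{1}{\alpha(x,v)}\dd\xi(x')$ away from the turning point, and near the turning point one uses instead the trivial bound $\xi(x')\gtrsim (\text{const})(t_*-s)^2|v|^2$ from convexity plus $\tb|v|\lesssim 1$. Integrating the gained power $\xi(x')^{-(c+\text{something})/2}$ against $\dd s$ over the short interval $[t-t_2,t-t_1]$ (or $[t-\tb,t]$, or $[t-\e,t]$) and using $\min\{\xi(x'),\xi(y')\}\gtrsim \alpha(x,v)^2/|v|^2\cdot(\text{geometric factor})$ yields the claimed $\alpha(x,v)^{-1}$, $\alpha(x,v)^{-p}$, etc.; the factor $|e^{-C_1\nu t_1}-e^{-C_1\nu t_2}|^\beta$ in \eqref{NLN general} comes from interpolating: either bound the $s$-integral directly by the length $|t_1-t_2|$ times the worst pointwise value, or by the $\alpha(x,v)^{-1}$ bound, and take the geometric mean; the smallness $O(\e)$ in \eqref{est:NLN} and \eqref{integrate alpha beta small} is precisely the case $t_2-t_1=\e$ (so $|t_1-t_2|\le\e$). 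Estimates \eqref{integrate xi beta/2}, \eqref{integrate beta<1}, \eqref{integrate k/v-u} are the ``already-localized'' special cases where one only needs the $s$-change-of-variables (for \eqref{integrate xi beta/2}) or only the $u$-integration with \eqref{integrate k u} and the bound $\alpha(x,u)\gtrsim$ (something times $\sqrt{-\xi(x)}|u|$) and $c=\beta<1$ integrable (for \eqref{integrate beta<1}, \eqref{integrate k/v-u}).

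The main obstacle is the uniformity near the \emph{grazing/turning region}, where $v\cdot\nabla\xi(x')$ changes sign along the trajectory and the naive change of variables $s\mapsto\xi(x')$ degenerates: there one must track that $\alpha(x,v)$ stays comparable along the whole segment (via \eqref{Velocity_lemma}) and that the quadratic lower bound $-\xi(x-(t-s)v)\gtrsim |v\cdot\nabla\xi(x)|\,|t_*-s| + |v|^2|t_*-s|^2$ (a consequence of convexity \eqref{convex}, used already in Lemma \ref{Lemma: nv<v2}) kicks in, so that $\int \xi(x')^{-a}\,\dd s$ is controlled by $\alpha(x,v)^{1-2a}$ with the right power and the right constant $C_\O$; keeping the logarithmic borderline cases ($c=1$, or $p\to 1^+$) honest, and making sure the small-time versions genuinely extract $O(\e)$ rather than $O(1)$, is where the bookkeeping is most delicate. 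The velocity weights $w_{\tilde\theta}$ play no role here and are suppressed; the $\langle v\rangle$ and $|v|^{-1}$ factors in the intermediate bounds are absorbed into the final $\alpha(x,v)^{-1}$ using $\alpha(x,v)\le |v|$ and $\tb|v|\lesssim 1$.
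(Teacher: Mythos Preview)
Your two–step strategy (first the $u$–integral to gain a function of $\xi(x')$, then the $s$–integral via the change of variables $s\mapsto \xi$) is exactly the paper's approach, and your master computation idea covering powers $c\in[0,3)$ is the right unifying viewpoint. Two points need correction, however.

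\textbf{The trajectory dichotomy.} Your claim that ``$|v\cdot\nabla\xi(x')|$ is comparable to $\tilde\alpha(x',v)$'' is false in general: at the turning point $t_*$ along the trajectory one has $v\cdot\nabla\xi(x')=0$ while $\tilde\alpha(x',v)\sim |v|\sqrt{-\xi(x')}$ is at its largest. The correct dichotomy (the paper's Step~2, equations \eqref{vpxxi_alpha_1}--\eqref{vpxxi_alpha_2}) is the reverse of what you wrote: on the two short end–intervals $[t-\tb,\,t-\tb+\tilde\delta\alpha/|v|^2]$ and $[t-\tilde\delta\alpha/|v|^2,\,t]$ one has $|v\cdot\nabla\xi(x')|\gtrsim |v|\sqrt{-\xi(x')}$, so the change of variables gives $\dd s\lesssim \frac{\dd|\xi|}{|v|\sqrt{|\xi|}}$ (not $\frac{\dd|\xi|}{\alpha}$); on the long middle interval one has instead $|v|\sqrt{-\xi(x')}\gtrsim \tilde\delta^{1/2}\alpha(x,v)$, so $|\ln\xi(x')|$ is directly bounded by $|\ln\alpha(x,v)|+|\ln|v||$ and one integrates $e^{-C\langle v\rangle(t-s)}$ trivially. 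Your ``near/away from the turning point'' split has the right shape but the roles of the two regions, and the form of the Jacobian bound, must be as above.

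\textbf{Extracting the factor $|e^{-C_1\nu t_1}-e^{-C_1\nu t_2}|^\beta$.} Your proposed ``length times worst pointwise value, then geometric mean'' does not work: after Step~1 the $u$–integral is $\lesssim 1+|\ln\xi(x')|+|\ln|v||$, which blows up as $x'\to\partial\Omega$ (i.e.\ as $s\to t$ or $s\to t-\tb$), so there is no finite sup to interpolate against. The paper's device is H\"older with exponents $1/\beta$ and $1/(1-\beta)$: on the end–intervals,
\[
\int e^{-C\langle v\rangle(t-s)}\,|\ln\xi(x')|\,\dd s
\le \Big(\int e^{-\frac{C}{\beta}\langle v\rangle(t-s)}\,\dd s\Big)^{\beta}
\Big(\int_0^{B} \frac{|\ln|\xi||^{1/(1-\beta)}}{|v|\sqrt{|\xi|}}\,\dd|\xi|\Big)^{1-\beta},
\]
the first factor producing exactly $|e^{-C_1\nu t_1}-e^{-C_1\nu t_2}|^\beta$ and the second being finite because $|\ln|\xi||^{1/(1-\beta)}/\sqrt{|\xi|}\in L^1_{\mathrm{loc}}$ for every $\beta<1$. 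This is also what delivers the $O(\e)$ smallness in \eqref{est:NLN}.

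Finally, for \eqref{integrate alpha beta} with $1<p<3$ the paper does not redo the nonlocal--to--local computation but invokes the corresponding estimate from \cite{GKTT}; your plan to derive it from the same master computation is fine, but be aware that the paper treats it as a black box.
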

\begin{remark}
 We note that~\eqref{est:NLN} can be considered as a boarderline case of Lemma 10 in~\cite{GKTT} in which the integral ${1} / {{ {\alpha}}^{\beta}}$ is considered for $\beta \gneqq1$.

Actually to prove Proposition \ref{est:K_Gamma} we only need~\eqref{est:nonlocal_wo_e} and~\eqref{est:NLN}. But in the later section we will prove the weighted $C^{1,\beta}$ estimate~\eqref{estF_C1beta}. This type of estimate will be involved with different power in terms of $\alpha$. We summarize all these estimates in this single lemma.
\end{remark}

\begin{proof}
During the whole proof we assume $\alpha=\tilde{\alpha}$. For the other case, when $\alpha\gtrsim 1$, the lemma follows from $\mathbf{k}(v,u)\in L^1_u$.

\textit{Proof of~\eqref{NLN general}~\eqref{est:nonlocal_wo_e} and~\eqref{est:NLN}}. We only prove~\eqref{NLN general}. \eqref{est:nonlocal_wo_e} and~\eqref{est:NLN} follows directly from~\eqref{NLN general}.
%Without loss of generality we may assume that $x \in \p\O$ and $n(x) \cdot v\geq 0$.
%

\textit{Step 1. }We claim that for $y\in \bar{\Omega}$ and $\varrho>0$,
\begin{equation}\label{eqn: int alpha du}
\int_{\mathbb{R}^3}\frac{e^{-\varrho|v-u|^2}}{|v-u|  }
\frac{1}{ \alpha(y,u)}
\dd u \lesssim 1+ |\ln
  |\xi(y)|   |  +  |\ln
  |v|   | .
\end{equation}
Recall (\ref{bar_v}) and set $\mathbf{v}= \mathbf{v} (y)$ and $\mathbf{u}= \mathbf{u} (y)$. For $|u|\geq O(1){|v|} $,
\Be\label{com:alpha}
\big[|\mathbf{u}_3 (y)|^2+|\xi(y)||u|^2 \big]^{1/2}
%\gtrsim\Big[|\mathbf{u}_3 (y)|^2+|\xi(y)|\frac{|v|^2}{4} \Big]^{1/2}
\gtrsim \big[|\mathbf{u}_3 (y)|^2+|\xi(y)||v|^2 \big]^{1/2}.
\Ee
Thus
\Be
\begin{split} \label{int_1/a_1}
 {\int_{\frac{|v|}{2} \leq |u|\leq 2|v| }}
%(\ref{eqn: three cases})_3
\lesssim & \
\iint
 \frac{e^{-\varrho|\mathbf{v}_\parallel-\mathbf{u}_\parallel|^2}}{|\mathbf{v}_\parallel-\mathbf{u}_\parallel|}\dd \mathbf{u}_\parallel
 \int_0^{ 2|v|}\frac{  \dd \mathbf{u}_3
% e^{-\varrho|\mathbf{v}_3-\mathbf{u}_3|^2}
 }{\big[ |\mathbf{u}_3|^2+|\xi (y)||v|^2\big]^{1/2}}
 \\
 \leq& \  \int_0^{2|v|}\frac{
\dd \mathbf{u}_3
 }{\big[ |\mathbf{u}_3|^2+|\xi (y)||v|^2\big]^{1/2}}
 \\
= & \ \ln\Big( \sqrt{|\mathbf{u}_3|^2+|\xi(y)||v|^2}+|\mathbf{u}_3|\Big)\Big|_{0}^{2|v|}\\
=& \
\ln (\sqrt{4|v|^2 + |\xi(y)||v|^2}
+ 4 |v|^2
)- \ln (\sqrt{  |\xi(y)||v|^2}
)\\
\leq &  \
\ln |v| + \ln |\xi(y)|.
 \end{split}
 \Ee
 %By a direct computation,
 % \Be
 %(\ref{int_1/a_1})=\ln\Big( \sqrt{|\mathbf{u}_3|^2+|\xi(y)||v|^2}+|\mathbf{u}_3|\Big)\Big|_{0}^{2|\mathbf{v}_3|}
 %=
 % \Ee

 %\[\lesssim \int_0^{5|v|}\frac{1}{\Big[|u_n|^2+|\xi||v|^2 \Big]^{1/2}}\dd u_n=\log\Big( \sqrt{|u_n|^2+|\xi||v|^2}+|u_n|\Big)\Big|_{0}^{5|v|}\]
 %\[\lesssim\log\Big(\sqrt{(1+|\xi|)|v|^2}+|v| \Big)\lesssim \Big|\log|v|\Big|+\Big|\log|\xi|\Big|.\]

If $|u|\geq 2|v|$ then $|u-v|^2 \geq |v|^2+|u|^2$ and hence $e^{- \varrho |v-u|^2}
\leq e^{- \frac{\varrho}{2} |v|^2}e^{- \frac{\varrho}{2} |u|^2} e^{- \frac{\varrho}{2} |v-u|^2}
$. This, together with (\ref{com:alpha}), implies
\Be
\begin{split} \label{int_1/a_2}
{\int_{|u|\geq 2|v|}}
\lesssim & \  e^{- \frac{\varrho}{2}|v|^2}
\iint
 \frac{e^{-\frac{\varrho}{2}|\mathbf{v}_\parallel-\mathbf{u}_\parallel|^2}}{|\mathbf{v}_\parallel-\mathbf{u}_\parallel|}\dd \mathbf{u}_\parallel
 \int_0^{\infty}\frac{
 e^{-\frac{\varrho}{2}|\mathbf{u}_3|^2}
% e^{-\varrho|\mathbf{v}_3-\mathbf{u}_3|^2}
 }{\big[ |\mathbf{u}_3|^2+|\xi (y)||v|^2\big]^{1/2}}   \dd \mathbf{u}_3
 \\
 \lesssim & \  e^{- \frac{\varrho}{2}|v|^2}
  \int_0^{\infty}\frac{
 e^{-\frac{\varrho}{2}|\mathbf{u}_3|^2}
% e^{-\varrho|\mathbf{v}_3-\mathbf{u}_3|^2}
 }{\big[ |\mathbf{u}_3|^2+|\xi (y)||v|^2\big]^{1/2}}   \dd \mathbf{u}_3\\
 \lesssim & \ e^{- \frac{\varrho}{2}|v|^2} +  e^{- \frac{\varrho}{2}|v|^2}
  \int_0^{1}\frac{
 \dd \mathbf{u}_3% e^{-\frac{\varrho}{2}|\mathbf{u}_3|^2}
% e^{-\varrho|\mathbf{v}_3-\mathbf{u}_3|^2}
 }{\big[ |\mathbf{u}_3|^2+|\xi (y)||v|^2\big]^{1/2}}
 \\
% \leq& \int_{0}^\infty\frac{
%\dd \mathbf{u}_3
% }{\big[ |\mathbf{u}_3|^2+|\xi (y)||v|^2\big]^{1/2}}
% \\
= & \ e^{- \frac{\varrho}{2}|v|^2}+ e^{- \frac{\varrho}{2}|v|^2}\ln\Big( \sqrt{|\mathbf{u}_3|^2+|\xi(y)||v|^2}+|\mathbf{u}_3|\Big)\Big|_{0}^{1}\\
=& \
e^{- \frac{\varrho}{2}|v|^2} \Big\{1+
\ln (\sqrt{1 + |\xi(y)||v|^2}
+ 1
)- \ln (\sqrt{  |\xi(y)||v|^2}
)
\Big\}
\\
\leq &  \
e^{- \frac{\varrho}{2}|v|^2}
\{\ln |v| + \ln |\xi(y)|\} .
 \end{split}
 \Ee

For $|u|\leq \frac{|v|}{2}$, we have $|v-u|\geq \big||v|-|u|\big|\geq |v|-\frac{|v|}{2}\geq \frac{|v|}{2}$. We have
\Be\notag
\begin{split}
{\int_{|u|\leq \frac{|v|}{2}}}
\lesssim    \frac{
e^{- \frac{\varrho}{2}|v|^2}
}{|v|}\int_{|\mathbf{u}_3|+|\mathbf{u}_\parallel|\leq \frac{|v|}{2}} \frac{ \dd\mathbf{u}_3 \dd \mathbf{u}_\parallel }{\Big[|\mathbf{u}_3|^2+|\xi(y)||\mathbf{u}_\parallel|^2 \Big]^{1/2}}
 %& = \frac{
 %e^{- \frac{\varrho}{2}|v|^2}
 %}{|v|}
 %\int_{|v|(|u_n|+|u_\tau|)\leq \frac{|v|}{2}}     \frac{e^{-C|v|^2|v|du_n|v|^2 du_\tau}}{\Big[|v|^2|u_n|^2+|\xi||v|^2|u_\tau|^2 \Big]^{1/2}}\\
  \lesssim |v|
e^{- \frac{\varrho}{2}|v|^2}\int_{|
\mathbf{\tilde{u}}_3
|\leq \frac{1}{2}}
 \int_{|\mathbf{\tilde{u}}_\parallel|\leq \frac{1}{2}}
 \frac{
 \dd \mathbf{\tilde{u}}_\parallel \dd \mathbf{\tilde{u}}_3
}{\Big[|\mathbf{\tilde{u}}_3|^2+|\xi(y)||\mathbf{\tilde{u}}_\parallel|^2 \Big]^{1/2}},
\end{split}\Ee
where we have used $|v|\tilde{u} = u$. Using the polar coordinate $\mathbf{\tilde{u}}_{1}=|\mathbf{\tilde{u}}_\parallel|\cos\rho, \mathbf{\tilde{u}}_{2}=|\mathbf{\tilde{u}}_\parallel|\sin \rho$, % for the integral over $u_\tau$
we have
\Be
\begin{split} \label{int_1/a_3}
{\int_{|u|\leq \frac{|v|}{2}}} \lesssim& \  |v|
e^{- \frac{\varrho}{2}|v|^2}\int_0^{ \frac{1}{2}} \dd \mathbf{\tilde{u}}_3
\int_{0}^{2\pi}\int_{0}^{\sqrt{1/2}}
% \int_{|\mathbf{\tilde{u}}_\parallel|\leq \frac{1}{2}}
 \frac{   |\mathbf{\tilde{u}}_\parallel|\dd |\mathbf{\tilde{u}}_\parallel|  \dd \rho
 %\dd \mathbf{\tilde{u}}_\parallel
}{\Big[|\mathbf{\tilde{u}}_3|^2+|\xi(y)||\mathbf{\tilde{u}}_\parallel|^2 \Big]^{1/2}}\\
 \lesssim & \
  |v|
e^{- \frac{\varrho}{2}|v|^2}%\int_{|
%\mathbf{\tilde{u}}_3
%|\leq \frac{1}{2}}
\int_0^{ \frac{1}{2}}
\dd \mathbf{\tilde{u}}_3
\int_{0}^{ {1/2}}
% \int_{|\mathbf{\tilde{u}}_\parallel|\leq \frac{1}{2}}
 \frac{   \dd |\mathbf{\tilde{u}}_\parallel|^2
 %\dd \mathbf{\tilde{u}}_\parallel
}{\Big[|\mathbf{\tilde{u}}_3|^2+|\xi(y)||\mathbf{\tilde{u}}_\parallel|^2 \Big]^{1/2}}\\
=& \   |v|
e^{- \frac{\varrho}{2}|v|^2}
\int_0^{ \frac{1}{2}} \dd \mathbf{\tilde{u}}_3
\frac{1}{|\xi(y)|}
\Big( \sqrt{
|\mathbf{\tilde{u}}_3|^2+\frac{|\xi(y)|}{2} } - |\mathbf{\tilde{u}}_3|
\Big)\\
=& \ \frac{|v|
e^{- \frac{\varrho}{2}|v|^2}}{|\xi(y)|}
  \bigg\{\frac{1}{2}|\mathbf{\tilde{u}}_3|\sqrt{|\mathbf{\tilde{u}}_3|^2+\frac{|\xi|}{2}}+\frac{|\xi|}{4}\log\Big(\sqrt{|\mathbf{\tilde{u}}_3|^2+\frac{|\xi|}{2}}+|\mathbf{\tilde{u}}_3| \Big)-\frac{1}{2}|\mathbf{\tilde{u}}_3|^{1/2} \bigg\}\bigg|_{|\mathbf{\tilde{u}}_3|=0}^{|\mathbf{\tilde{u}}_3|=1/2} \\
  = & \  \frac{|v|e^{-C|v|^2}}{|\xi|}\bigg\{\frac{1}{4}\sqrt{\frac{1}{4}+\frac{|\xi|}{2}}+\frac{\xi}{4}\log\Big( \sqrt{\frac{1}{4}+\frac{|\xi|}{2}}+\frac{1}{2}\Big)-\frac{|\xi|}{4}\log\Big(\sqrt{\frac{|\xi|}{2}} \Big)-\frac{1}{8} \bigg\} \\
 \lesssim& \   \frac{|v|e^{-C|v|^2}}{|\xi|}\Big[|\xi|\log(|\xi|)+|\xi|\log\Big(1+\sqrt{1+|\xi|}\Big) \Big]\\\lesssim  & \ 1+\log(|\xi(y)|).
\end{split}
\Ee
%%%%%%%%%%%%%%%%%%%%%
%%%%%%%%%%%%%%%%%%%%%
%%%%%%%%%%%%%%%%%%%%%
\hide
\[
~\eqref{eqn: three cases}_2\lesssim |v|e^{-C|v|^2} \int_{|u_n|\leq \frac{1}{2}}du_n \int_{0}^{2\pi}\int_{0}^{\sqrt{1/2}}\frac{rdr\theta}{\Big[|u_n|^2+|\xi|r^2 \Big]^{1/2}}\]
\[\lesssim  |v|e^{-C|v|^2} \int_{|u_n|\leq \frac{1}{2}}du_n \int_0^{\sqrt{1/2}} \frac{dr^2}{\Big[|u_n|^2+|\xi||u_\tau|^2 \Big]^{1/2}}\]
\[\lesssim  |v|e^{-C|v|^2} \int_{|u_n|\leq \frac{1}{2}}du_n \frac{1}{|\xi|}\Big[\sqrt{|u_n|^2+\frac{1}{2}|\xi|}-|u_n| \Big]\]
\[\lesssim\frac{|v|e^{-C|v|^2}}{|\xi|} \bigg\{\frac{1}{2}|u_n|\sqrt{|u_n|^2+\frac{|\xi|}{2}}+\frac{|\xi|}{4}\log\Big(\sqrt{|u_n|^2+\frac{|\xi|}{2}}+|u_n| \Big)-\frac{1}{2}|u_n|^{1/2} \bigg\}\bigg|_{|u_n|=0}^{|u_n|=1/2}       \]
\[=\frac{|v|e^{-C|v|^2}}{|\xi|}\bigg\{\frac{1}{4}\sqrt{\frac{1}{4}+\frac{|\xi|}{2}}+\frac{\xi}{4}\log\Big( \sqrt{\frac{1}{4}+\frac{|\xi|}{2}}+\frac{1}{2}\Big)-\frac{|\xi|}{4}\log\Big(\sqrt{\frac{|\xi|}{2}} \Big)-\frac{1}{8} \bigg\}\]
\[\lesssim \frac{|v|e^{-C|v|^2}}{|\xi|}\Big[|\xi|\log(|\xi|)+|\xi|\log\Big(1+\sqrt{1+|\xi|}\Big) \Big]\lesssim 1+\log(|\xi|).\]

For $|\xi(y)|\gtrsim 1$. Then we choose any orthonormal basis $(\tau_1,\tau_2,n)=\{e_1,e_2,e_3\}$ and thus decompose $u\in \mathbb{R}^3$ as $u=u_1e_1+u_2e_2+u_3e_3=u_{\tau,1}e_1+u_{\tau,2}e_2+u_{n}e_3$. Then
\[\tilde{\alpha}(y,u)=|u\cdot \nabla \xi(y)|^2-2\xi(y)\{u\cdot \nabla^2\xi(y)\cdot u\}\geq 2|\xi(y)|\{u\cdot \nabla^2\xi(y)\cdot u\}\]
\[\gtrsim |u|^2+|\xi(y)|\{u\cdot \nabla^2 \xi(y)\cdot u\}\gtrsim |u_n|^2+|\xi(y)||u|^2.\]
Then we follow the same proof as the case $|\xi(y)|\ll 1$ to conclude~\eqref{eqn: int alpha du}.
\unhide
%%%%%%%%%%%%%%%%%%%%%
%%%%%%%%%%%%%%%%%%%%%
%%%%%%%%%%%%%%%%%%%%%

Collecting terms from (\ref{int_1/a_1}), (\ref{int_1/a_2}), and (\ref{int_1/a_3}), we prove (\ref{eqn: int alpha du}).

  \textit{Step 2. } We prove the following statement: We can choose $0 < \tilde{\delta}\ll_\O 1$ such that
\Be
\begin{split}
 \tilde{\delta}^{1/2}   |v\cdot \nabla \xi ( x-(t-s) v) |\gtrsim_\O&  \ |v|\sqrt{-\xi (
x-(t-s) v)},
 \\
& \ \ for  \ s\in
  \Big[t-\tb(x,v),
t-\tb(x,v)+ \tilde{\delta}\frac{\alpha(x,v)}{|v|^2} \Big]\cup
 \Big[t- \tilde{\delta}\frac{\alpha(x,v)}{|v|^2},
t  \Big]
,
\label{vpxxi_alpha_1}
\end{split} \Ee
\Be
 \tilde{\delta}^{1/2} \times
\alpha(x,v)  \lesssim_\O    |v|\sqrt{-\xi (
x-(t-s) v)}, \ \ for  \  s\in \Big[
t-\tb(x,v)+ \tilde{\delta}\frac{\alpha(x,v)}{|v|^2}, t- \tilde{\delta} \frac{\alpha(x,v)}{|v|^2}
\Big]. \label{vpxxi_alpha_2}
\Ee

If $v=0$ or $v\cdot  \nabla \xi (x)=0$ then (\ref{vpxxi_alpha_1}) and (\ref{vpxxi_alpha_2}) hold clearly. We may assume $v\neq 0$ and $v\cdot \nabla \xi(x)>0$. Due to (\ref{Velocity_lemma}),  $%
v\cdot \nabla\xi (x_{\mathbf{b}}(x,v) ) <0$. By the mean value theorem there exists at least one $t^{\ast } \in (t-\tb(x,v)  , t)$ such that $v\cdot \nabla \xi
(x- t^*v)=0$. Moreover due to the convexity in (\ref{convex}) we have $\frac{d}{ds}\big(v\cdot \nabla \xi (x-(t-s)v)\big)=v\cdot
\nabla ^{2}\xi (x-(t-s)v )\cdot v\geq C_{\xi }|v|^{2},$ and therefore $t^{\ast }\in (t-\tb(x,v), t)$ is unique. %Clearly we have $v\cdot \nabla \xi (x-(\min\{\tau,t_{\mathbf{b}}\}-s)v)\geq 0$ for $s\in \lbrack t^{\ast },t_{\mathbf{b}%
%}(x,v)] $ and $v\cdot \nabla \xi (x-(\min\{\tau,t_{\mathbf{b}}\}-s)v)\leq 0$ for $%
%s\in \lbrack 0,t^{\ast }]$.

Let $s \in \big[t-
\tilde{\delta} \frac{\alpha(x,v)}{|v|^2},t\big]$ for $0<\tilde{\delta}\ll 1$. Then from the fact that $v\cdot \nabla_x \xi(x-(t-\tau )v)$ is non-decreasing function in $\tau  \in [t-t^*,t]$,
\Be \label{exp_xi}
|v|^2  (-1) \xi(x-(t-s)v)
%&=   \int^t_s |v|^2 v\cdot \nabla_x \xi(x-(t-\tau)v) \dd \tau \\
%&
=
\int^t_s |v|^2   v\cdot \nabla_x \xi(x-(t-\tau )v)    \dd \tau
\leq \tilde{\delta}  \alpha(x,v)
|v\cdot \nabla_x \xi(x)| .
\Ee
Since $ |v\cdot \nabla_x \xi (x)|\leq \alpha(x,v) \leq C_{\O} \alpha(x-(t-s)v,v)\leq C_\O \{
|v \cdot \nabla \xi (x-(t-s)v)| + \| \nabla_x^2 \xi \|_\infty |v| \sqrt{- \xi(x-(t-s)v)}
\},$ we choose $\tilde{\delta}\ll (C_\O \| \nabla_x^2 \xi \|_\infty)^{-2}$ and absorb
\[
\tilde{\delta} \alpha(x,v)  \times C_\O  \{
|v \cdot \nabla \xi (x-(t-s)v)| + \| \nabla_x^2 \xi \|_\infty |v| \sqrt{- \xi(x-(t-s)v)}
\}
 \leq
\tilde{\delta} \times \{C_\O \| \nabla_x^2 \xi \|_\infty |v| \sqrt{- \xi (x-(t-s)v)}\}^2\]
 by the left hand side of (\ref{exp_xi}). This gives (\ref{vpxxi_alpha_1}) for $s \in \big[t-
\tilde{\delta} \frac{\alpha(x,v)}{|v|^2},t\big]$. The proof for $s \in \big[t-\tb(x,v),
t-\tb(x,v)+ \tilde{\delta}\frac{\alpha(x,v)}{|v|^2} \big]$ is same.

For $\xi(x-(t-s)v)$ is non-increasing in $s \in [t-t^*,t]$, % and $v\cdot \nabla_x \xi(x- (t-s) v)$ is non-decreasing in $s \in [t-t^*,t]$, from (\ref{exp_xi}),
we have $|v|^2 (-1) \xi(x-(t-s)v)\geq |v|^2 (-1) \xi\Big(x-\tilde{\delta} \frac{\alpha(x,v)}{|v|^2}v\Big)$ for $s\in \big[t- t^*, t- \tilde{\delta} \frac{\alpha(x,v)}{|v|^2}
\big]$. By an expansion, for $s^*:=t-\tilde{\delta} \frac{\alpha(x,v)}{|v|^2}$,
%%%%%%%%%%%%%%%
%%%%%%%%%%%%%%%
%%%%%%%%%%%%%%%
\hide

\Be\begin{split}
|v|^2 (-1) \xi(x-(t-s)v)\geq |v|^2 (-1) \xi\Big(x-\tilde{\delta} \frac{\alpha(x,v)}{|v|^2}v\Big)  \geq  \int^t_s |v|^2 v\cdot \nabla_x \xi(x-(t-s)v) \dd \tau
\end{split}\Ee
\unhide
%%%%%%%%%%%%%%%
%%%%%%%%%%%%%%%
%%%%%%%%%%%%%%%
\Be\label{exp_xi_td}
|v|^2  (-1) \xi\Big(x-\tilde{\delta} \frac{\alpha(x,v)}{|v|^2}v\Big)
%&=   \int^t_s |v|^2 v\cdot \nabla_x \xi(x-(t-\tau)v) \dd \tau \\
%&
=
 |v|^2 ( v\cdot \nabla_x \xi(x ))\tilde{\delta} \frac{\alpha(x,v)}{|v|^2}
+
 \int^t_{s^*}
\int^\tau_{s^*}
|v|^2 v\cdot \nabla_x^2 \xi(x-(t-\tau^\prime) v) \cdot v
 \dd \tau^\prime
 \dd \tau.
\Ee
 The last term of (\ref{exp_xi_td}) is bounded by $ \| \nabla_x^2 \xi \|_\infty\tilde{\delta}^2 \big(\frac{\alpha(x,v)}{|v|^2}\big)^2 |v|^4 \leq  \| \nabla_x^2 \xi \|_\infty\tilde{\delta}^2   \alpha(x,v) ^2$. Since $v \cdot \nabla_x \xi(x) \leq \alpha(x,v)$, for $\tilde{\delta} \ll \| \nabla_x^2 \xi \|_\infty^{-1/2}$, the right hand side of (\ref{exp_xi_td}) is bounded below by $\frac{\tilde{\delta}}{2} \alpha(x,v)^2$. This completes the proof of (\ref{vpxxi_alpha_2}) when $s\in \big[t- t^*, t- \tilde{\delta} \frac{\alpha(x,v)}{|v|^2}
\big]$. The proof for the case of $s\in \big[ t - \tb(x,v)+ \delta \frac{\alpha(x,v)}{|v|^2}, t- t^*
\big]$ is same.
%%%%%%%%%%%%%%%
%%%%%%%%%%%%%%%
%%%%%%%%%%%%%%%
\hide

From the choice of $s$ and (\ref{Velocity_lemma}) we derive that
\Be
\begin{split}\notag
(\ref{exp_xi})_{1} & \leq \tilde{\delta} |v\cdot \nabla_x \xi(x-(t-s)v)| \times  \alpha(x,v)
\leq \tilde{\delta} |v\cdot \nabla_x \xi(x-(t-s)v)| \times C_\O  \alpha(x-(t-s)v,v)\\
& \leq   4C_\O \tilde{\delta} |v\cdot \nabla_x \xi(x-(t-s)v)|^2
+ C_\O \tilde{\delta} |v|^2  (-1) \xi(x-(t-s)v),\\
(\ref{exp_xi})_{2} & \leq C_\O^2 \tilde{\delta}^2  \alpha(x-(t-s)v,v)^2
\leq C_\O^2 \tilde{\delta}^2  |v\cdot \nabla_x \xi(x-(t-s)v)|^2
+  C_\O^2\tilde{\delta}^2 |v|^2  (-1) \xi(x-(t-s)v).
\end{split}\Ee
Then from (\ref{exp_xi}) we have
\Be\notag
\big\{
1 -C_\O \tilde{\delta}- C_\O^2 \tilde{\delta}^2
\big\}
|v|^2  (-1) \xi(x-(t-s)v) \leq \big\{
4 C_\O \tilde{\delta} + C_{\O}^2 \tilde{\delta}^2
\big\} |v\cdot \nabla_x \xi(x-(t-s)v)|,
\Ee
 and this completes the proof of (\ref{vpxxi_alpha_1}) for $s \in \big[t-
\tilde{\delta} \frac{\alpha(x,v)}{|v|^2},t\big]$. The proof for $s \in \big[t-\tb(x,v),
t-\tb(x,v)+ \tilde{\delta}\frac{\alpha(x,v)}{|v|^2} \big]$ is same.

 such that $t-s \in [0,t^*  ]$ and $|v\cdot \nabla \xi ( x-(t-\tau) v) |\geq  |v|\sqrt{-\xi (
x-(t-\tau) v)}$ holds $s \leq \tau \leq t$.

Applying the choice of $s$ to the first term and the convexity to the second term, this equals $ \tilde{\delta} \alpha(x,v ) ( v\cdot \nabla_x \xi(x-(t-s)v))
 +O(1) \tilde{\delta}^2\alpha(x,v )^2$.

\Be\notag
\begin{split}
%&|v|^2  (-1) \xi(x-(t-s)v)\\
%= & \int^t_s |v|^2 v\cdot \nabla_x \xi(x-(t-\tau)v) \dd \tau \\
%=&
%|v|^2 ( v\cdot \nabla_x \xi(x-(t-s)v)) (t-s)
%+
%\int^t_s
%\int^\tau_s
%|v|^2 v\cdot \nabla_x^2 \xi(x-(t-\tau^\prime) v) \cdot v
% \dd \tau^\prime
 %\dd \tau\\
  & \tilde{\delta} \alpha(x,v ) ( v\cdot \nabla_x \xi(x-(t-s)v))
 +O(1) \tilde{\delta}^2\alpha(x,v )^2\\
=& O(\tilde{\delta}) \alpha(x,v )^2  \\
\gtrsim& |v\cdot \nabla \xi ( x-(t-\tau) v) |^2.
\end{split}
\Ee

Define $\Phi (s)= |v\cdot \nabla \xi (x-(t-s)v)|^{2}+|v|^{2}\xi (x-(t-s)v) .$ From %$2%
%\big(v\cdot \nabla ^{2}\xi (x-(t-s)v)\cdot v\big)%
%+|v|^{2}>0$ for
(\ref{convex}), $\frac{d}{ds}\Phi (s)=\big(v\cdot \nabla \xi (x-(t-s)v)\big)%
\big\{2\big(v\cdot \nabla ^{2}\xi (x-(t-s)v)\cdot v\big)%
+|v|^{2}\big\}<0$ for $t-s\in \lbrack 0,t^{\ast }]$ and $\frac{d}{ds}\Phi (s)>0$ for $t-s\in \lbrack t^{\ast }, t-\tb(x,v)]$. Note that $\Phi
(0)>0$ and $\Phi(\tb(x,v))>0$ from $v\cdot \nabla \xi(x) >0$ and $v\cdot  \nabla \xi(x_{\mathbf{b}}(x,v))  <0$. Note that $\Phi$ is continuous
function on the interval $[0,  t_{\mathbf{b}}(x,v) ]$ so that it has a minimum.
If $\min_{[0, t_{\mathbf{b}} ]}\Phi (s)\leq 0,$ there exist $\sigma
_{1},\sigma _{2}>0$ satisfying
\begin{equation}
\begin{split}
\Phi (\min\{\tau,t_{\mathbf{b}}\}+\sigma _{1})&=\Phi (t_{\mathbf{b}%
}(x,v))+\int_{0}^{\sigma _{1}}\frac{d}{ds}\Phi (s)\mathrm{d}s=0, \\
\Phi (\min\{\tau,t_{\mathbf{b}}\}-\sigma _{2})&=\Phi (\min\{\tau,t_{\mathbf{b}}\}-\int_{\min\{\tau,t_{\mathbf{b}}\} -\sigma _{2}}^{\min\{\tau,t_{\mathbf{b}}\}}\frac{d}{ds}\Phi (s)%
\mathrm{d}s=0,
\end{split}
\notag
\end{equation}%
then $\sigma _{1}\leq t^{\ast }$ and $\min\{\tau,t_{\mathbf{b}}\}-\sigma _{2}\geq
t^{\ast }$ and there is no other $s\in [0,\min\{\tau,t_{\mathbf{b}}\}]$ satisfying $%
\Phi (s)=0$. Moreover we have $\Phi (s)\leq 0$ for $s\in \lbrack \sigma
_{1},\min\{\tau,t_{\mathbf{b}}\}-\sigma _{2}]$. If $\min_{[0,\min\{\tau,t_{\mathbf{b}}\}]}\Phi (s)>0,$ there do not exist such $\sigma _{1}$ and $\sigma
_{2} $ then we let $\sigma _{1}=t^{\ast }$ and $\sigma _{2}=\min\{\tau,t_{\mathbf{b}}\}-t^{\ast }$. This proves (\ref{sigma}).

Secondly we prove (\ref{COV_xi_s}). By the proof of ({\ref{sigma}}) and the fact
\begin{equation*}
\frac{d |\xi|}{ds} = -\frac{d}{ds} \xi ( x-(\min\{\tau,t_{\mathbf{b}}\} -s)v) = -
v\cdot \nabla_{x} \xi(x-(\min\{\tau,t_{\mathbf{b}}\} -s)v ),
\end{equation*}
and the inverse function theorem we prove (\ref{COV_xi_s}).

First we establish (\ref{sigma}) and (\ref{COV_xi_s}).
For small $0 < \tilde{\delta}\ll 1,$ we define
\begin{equation}  \label{sigma_delta}
\tilde{\sigma}_{1} := \min \Big\{\sigma_{1}, \tilde{\delta} \frac{
\tilde{\alpha}(x,v)}{|v|^{2}}\Big\} , \ \ \ \tilde{\sigma}_{2} := \min \Big\{%
\sigma_{2}, \tilde{\delta} \frac{\tilde{\alpha}(x,v)}{|v|^{2}}\Big\}.
\end{equation}

When $\alpha(x-(t-s)v,u)\geq 1$, by the definition in~\eqref{eqn: kinetic weight} we have
\[\int^{t}_{t-\min\{\tau,t_{\mathbf{b}}\}}  e^{-C\langle v\rangle (t-s)} \int_{\mathbb{R}^{3}}\frac{e^{-\theta |v-u|^{2}}}{|v-u|[\alpha (  x-(t-s)v, {u})]}\mathrm{d}u\mathrm{d}s \]
\[\lesssim \int^{t}_{t-\min\{\tau,t_{\mathbf{b}}\}}  e^{-C\langle v\rangle (t-s)} \int_{\mathbb{R}^{3}}\mathbf{k}_\varrho(v,u)\mathrm{d}u\mathrm{d}s\lesssim \int_{t-\min\{\tau,t_b\}}^t e^{-C\langle v\rangle (t-s)}ds\lesssim~\eqref{eqn: integrate alpha along trajectory 2}. \]
When $\alpha(x-(t-s)v,u)\leq 1$ since we have the relation~\eqref{eqn: relation of alpha and tildealpha}, we only need to prove this lemma with replacing all $\alpha$ by $\tilde{\alpha}$.

We first assume $v\cdot \nabla\xi(x)\geq 0$ and $x\in \partial \Omega.$ There exist $\sigma _{1},\sigma _{2}>0$ such that
\begin{equation}  \label{sigma}
\begin{split}
& |v\cdot\nabla \xi ( x-(\min\{\tau,t_{\mathbf{b}}\}-s) v ) |\gtrsim \tilde{\alpha} (
x-(\min\{\tau,t_{\mathbf{b}}\}-s) v,v)\  \\
& \ \ \ \ \ \ \ \ \ \ \ \ \ \ \ \ \ \ \ \ \ \ \ \ \ \ \ \ \ \ \ \ \ \ \ \ \
\ \ \ \ \ \ \ \ \ \ \ \ \ \ \ \ \ \ \ \ \ \text{for all }s\in \lbrack
0,\sigma _{1}]\cup \lbrack \min\{\tau,t_{\mathbf{b}}\}-\sigma _{2},t_{\mathbf{b}%
}(x,v)], \\
\end{split}%
\end{equation}
and $|v|\sqrt{-\xi ( x-(\min\{\tau,t_{\mathbf{b}}\}-s) v )}\gtrsim \tilde{\alpha} (
x-(\min\{\tau,t_{\mathbf{b}}\}-s) v ,v) \ \   \text{for all }s\in \lbrack \sigma
_{1},\min\{\tau,t_{\mathbf{b}}\}-\sigma _{2}].$ The mapping $s \mapsto \xi(x-(t_{%
\mathbf{b}}(x,v) -s )v)$ is one-to-one and onto on $s \in [0,\sigma_{1}]$ or
on $s \in [\min\{\tau,t_{\mathbf{b}}\} -\sigma_{2}, \min\{\tau,t_{\mathbf{b}}\}]$. Moreover
this mapping $s \mapsto \xi(x-(\min\{\tau,t_{\mathbf{b}}\} -s )v)$ is a diffeomorphism
and we have a change of variables on $s \in [0,\sigma_{1}] \ \text{ or } \
s\in [\min\{\tau,t_{\mathbf{b}}\} - \sigma_{2}, \min\{\tau,t_{\mathbf{b}}\}]$,
\begin{equation}  \label{COV_xi_s}
\mathrm{d}s = \frac{\mathrm{d} |\xi|}{ | \nabla \xi (x-(\min\{\tau,t_{\mathbf{b}}\}
-s )v ) \cdot v |} \lesssim \frac{\mathrm{d}|\xi|}{\tilde{\alpha} (x-(\min\{\tau,t_{\mathbf{b}}\} -s )v )}.
\end{equation}

Firstly we prove (\ref{sigma}). Recall the definition of $\tilde{\alpha}$ in~\eqref{eqn: kinetic weight tilde}. It suffices to show

\vspace{8pt}

\noindent\textit{Step 2.} For small $0 < \tilde{\delta}\ll 1,$ we define
\begin{equation}  \label{sigma_delta}
\tilde{\sigma}_{1} := \min \Big\{\sigma_{1}, \tilde{\delta} \frac{
\tilde{\alpha}(x,v)}{|v|^{2}}\Big\} , \ \ \ \tilde{\sigma}_{2} := \min \Big\{%
\sigma_{2}, \tilde{\delta} \frac{\tilde{\alpha}(x,v)}{|v|^{2}}\Big\}.
\end{equation}
Actually, thanks to the fact $\sigma_{i} \leq \tb \lesssim \frac{\tilde{\alpha}(x,v)}{|v|^{2}}$, by Lemma 3 in~\cite{GKTT}, we have $\tilde{\sigma}_{i} :=   \tilde{\delta}  {\tilde{\alpha}(x,v)}/{|v|^{2}}$.

Then both of (\ref{sigma}) and (\ref{COV_xi_s}) hold for $s\in [0,\tilde{
\sigma}_{1}]\cup [\min\{\tau,t_{\mathbf{b}}\}-\tilde{ \sigma}_{2}, t_{\mathbf{b}%
}(x,v)]$ without changing the constant. Moreover, if $s\in [0,\tilde{ \sigma}%
_{1}]\cup [\min\{\tau,t_{\mathbf{b}}\}-\tilde{ \sigma}_{2}, \min\{\tau,t_{\mathbf{b}}\}]$
then by the Velocity lemma
\begin{equation}  \label{max_xi}
\max \{|\xi|\} := \max_{s\in [0, \tilde{\sigma}_{1}] \cup [\min\{\tau,t_{\mathbf{b}}\} - \tilde{\sigma}_{2}, \min\{\tau,t_{\mathbf{b}}\} ]} |\xi(x-(t_b-s)v)| \ \lesssim \ \tilde{\delta} \frac{\tilde{\alpha}^2(x,v)}{|v|^{2}}.
\end{equation}
For $s\in [ \tilde{ \sigma}_{1}, \min\{\tau,t_{\mathbf{b}}\} -\tilde{\sigma}_{2} ]$
we have the following estimate with $\tilde{\delta}-$dependent constant:
\begin{equation}  \label{lower_delta}
\begin{split}
&|v| \sqrt{-\xi(x-(\min\{\tau,t_{\mathbf{b}}\}-s)v)} \  \ \gtrsim_{\xi, \tilde{\delta}} \ \  \tilde{\alpha}(x-(\min\{\tau,t_{\mathbf{b}}\}-s)v,v).
\end{split}%
\end{equation}

The proof of (\ref{max_xi}) is due to, for $s\in \lbrack 0,\tilde{\sigma}%
_{1}],$
\begin{equation}\label{tz}
\begin{split}
|\xi (x-(\min\{\tau,t_{\mathbf{b}}\}-s)v)| &\leq  \int_{0}^{s}|v\cdot \nabla \xi
(x-(\min\{\tau,t_{\mathbf{b}}\}-\tau )v)|\mathrm{d}\tau \\
&\lesssim \tilde{ \tilde{\alpha}} (x,v)|s|  \lesssim  \min \left\{ \tilde{\alpha} \min\{\tau,t_{\mathbf{b}}\},\frac{\tilde{\delta}\tilde{\alpha}^2 }{%
|v|^{2}}\right\} =: B,
\end{split}
\end{equation}
where we have used $\tilde{\alpha} (x-(\min\{\tau,t_{\mathbf{b}}\}-s)v,v)\lesssim _{\xi }\tilde{\alpha} (x,v)$ from the Velocity lemma (Lemma 2 in~\cite{GKTT}
). The proof for $s\in \lbrack \min\{\tau,t_{\mathbf{b}}\}-\tilde{%
\sigma}_{2},\min\{\tau,t_{\mathbf{b}}\}]$ is exactly same.

Now we prove (\ref{lower_delta}). Recall that for $t^{*} \in[0, t_{\mathbf{b}%
}(x,v)]$ in the previous step we proved: $v\cdot \nabla \xi(x-(t_{\mathbf{b}%
}(x,v)-t^{*})v)=0$. Clearly $|\xi(x-(t-s)v,v)|$ is an increasing
function on $[0, t^{*}]$ and a decreasing function on $[t^{*}, t_{%
\mathbf{b}}(x,v)]$. This is due to the convexity of $\xi$:
\begin{equation*}
\frac{d^{2}}{ds^{2}} [-\xi(s-(\min\{\tau,t_{\mathbf{b}}\}-s)v)] = v\cdot
\nabla^{2}\xi(x-(\min\{\tau,t_{\mathbf{b}}\}-s)v) \cdot v \gtrsim |v|^{2},
\end{equation*}
and $v\cdot \nabla \xi(x)>0$ and $v\cdot \nabla \xi(x_{\mathbf{b}}(x,v))<0.$

Therefore, from $\xi(x)= 0=\xi(\xb)$,
\begin{equation}
\begin{split}
-\xi (x-(\min\{\tau,t_{\mathbf{b}}\}-s)v)& =-\xi (x)-\int_{t_{\mathbf{b}%
}(x,v)}^{s}v\cdot \nabla \xi (x-(\min\{\tau,t_{\mathbf{b}}\}-\tau )v)\mathrm{d}\tau
\\
& =\int_{s}^{\min\{\tau,t_{\mathbf{b}}\}}v\cdot \nabla \xi (x-(t_{\mathbf{b}%
}(x,v)-\tau )v)\mathrm{d}\tau \\
& \geq (\min\{\tau,t_{\mathbf{b}}\}-s)(v\cdot \nabla \xi (x-(t_{\mathbf{b}%
}(x,v)-s)v)) \\
& \geq \tilde{\sigma}_{2}|v\cdot \nabla \xi (x-\tilde{\sigma}_{2}v)|\ \ \
\text{for}\ \ s\in \lbrack t^{\ast },\min\{\tau,t_{\mathbf{b}}\}-\tilde{\sigma}_{2}],
\end{split}
\notag
\end{equation}%
and
\begin{equation}
\begin{split}
-\xi (x-(\min\{\tau,t_{\mathbf{b}}\}-s)v)& =-\xi (x_{\mathbf{b}}(x,v))-\int_{0}^{s}v%
\cdot \nabla \xi (x-(\min\{\tau,t_{\mathbf{b}}\}-\tau )v)\mathrm{d}\tau \\
& \geq s|v\cdot \nabla \xi (x-(t_{\mathbf{b}}-s)v)| \\
& \geq \tilde{\sigma}_{1}|v\cdot \nabla \xi (x_{\mathbf{b}}(x,v)+\tilde{%
\sigma}_{1}v)|\ \ \ \text{for}\ \ s\in \lbrack 0, \tilde{\sigma}_{1}].
\end{split}
\notag
\end{equation}%

Hence, for $s \in [\tilde{\sigma}_{1}, \min\{\tau,t_{\mathbf{b}}\}-\tilde{\sigma}%
_{2}],$
\begin{equation}
\begin{split}
|\xi(x -(t_{\mathbf{b}} -s )v )| &\geq \min \Big\{ |\xi(x - \tilde{\sigma}%
_{2} v )|, | \xi( x_{\mathbf{b}}(x,v) + \tilde{\sigma}_{1} v ) |\Big\} \\
&\geq \min \Big\{ \tilde{\sigma}_{2} |v \cdot \nabla \xi(x -\tilde{\sigma}%
_{2} v)|, \tilde{\sigma}_{1} | v \cdot \nabla \xi(x_{\mathbf{b}} (x,v)+
\tilde{\sigma}_{1} v)|\Big\}.
\end{split}
\notag
\end{equation}
From the definition of $\tilde{\sigma}_{1}$ and $\tilde{\sigma}_{2}$ in (\ref%
{sigma_delta}) we have
\begin{equation}  \label{xi_lower1_tilde}\notag
|v |^{2} |\xi(x -(\min\{\tau,t_{\mathbf{b}}\} -s) v)| \geq \tilde{\delta} \tilde{\alpha}(x , v ) \min \Big\{ |v \cdot \nabla \xi(x -\tilde{\sigma}_{2} v)|, |
v \cdot \nabla \xi(x_{\mathbf{b}} (x,v)+ \tilde{\sigma}_{1} v)|\Big\} .
\end{equation}
Without loss of generality we may assume $|v \cdot \nabla \xi(x -\tilde{%
\sigma}_{2} v)|=\min \big\{ %\tilde{\sigma}_{2}
|v \cdot \nabla \xi(x -\tilde{\sigma}_{2} v)|, %\tilde{\sigma}_{1}
| v \cdot \nabla \xi(x_{\mathbf{b}} (x,v)+ \tilde{\sigma}_{1} v)|\big\}.$
Then by the Velocity lemma we have $\tilde{\alpha}(x,v)\gtrsim_{\xi}
|v||\xi(x-\tilde{\sigma}_{2} v)|^{1/2}$. Then we choose $s= t_{\mathbf{b}%
}(x,v) - \tilde{\sigma}_{2}$ to have $|v|^{2} |\xi(x-\tilde{\sigma}_{2}
v)|\geq \tilde{\delta} |v| |\xi(x-\tilde{\sigma}_{2} v)|^{1/2} \times
|v\cdot \nabla \xi(x-\tilde{\sigma}_{2}v)| $ and
\begin{equation}
\begin{split}
|v| |\xi(x-\tilde{\sigma}_{2} v)|^{1/2}\gtrsim \tilde{\delta} \times |v\cdot
\nabla \xi(x-\tilde{\sigma}_{2}v)|.
\end{split}
\notag
\end{equation}
The left hand side is the lower bound of $|v|^{2}|\xi(x-(t_{\mathbf{b}%
}(x,v)-s)v)|$ for $s\in [\tilde{\sigma}_{1}, \min\{\tau,t_{\mathbf{b}}\}-\tilde{%
\sigma}_{2}]$ and the right hand side is bounded below by the Velocity
lemma: $e^{-\mathcal{C}|v|\min\{\tau,t_{\mathbf{b}}\}} \tilde{\alpha}^2(x,v)\gtrsim_{\xi}
\tilde{\alpha}^2(x,v)$. Therefore we conclude (\ref{lower_delta}).

\unhide
%%%%%%%%%%%%%%%
%%%%%%%%%%%%%%%
%%%%%%%%%%%%%%%

 \textit{Step 3. } From (\ref{eqn: int alpha du}), for the proof of Lemma \ref{Lemma: NLN}, it suffices to estimate
 \Be\label{int_lnxi+lnv}
 \begin{split}
     &  \int^t_{t-\tb(x,v)} \mathbf{1}_{t-t_2\geq s \geq t-t_1 } e^{-C \langle v\rangle (t-s)} \big| \ln |\xi(x-(t-s)v)|\big| \dd s \\
      &  + \int^t_{t-\tb(x,v)} \mathbf{1}_{t-t_2\geq s \geq t-t_1 } e^{-C \langle v\rangle (t-s)}\big(1+ \big| \ln | v|\big| \big)\dd s .
 \end{split}
 \Ee
We simply bound the second term of (\ref{int_lnxi+lnv}) as
\Be\label{small_t}
(1+|\ln|v||)\int^{t-t_2}_{t-t_1}e^{-C\langle v\rangle (t-s)}   \lesssim (1+|\ln|v||)\langle v\rangle^{-1} |e^{-C\langle v\rangle t_2}-e^{-C\langle v\rangle t_1}|.
\Ee

For utilizing (\ref{vpxxi_alpha_1}) and (\ref{vpxxi_alpha_2}), we split the first term of (\ref{int_lnxi+lnv}) as
 \Be\label{int_ln}
% \int^t_{t-\tb(x,v)} \mathbf{1}_{s \geq t-\e } e^{-C \langle v\rangle (t-s)} \big| \ln |\xi(x-(t-s)v)|\big| \dd s
 %=
 \underbrace{ \int^t_{t- \tilde{\delta} \frac{ \alpha(x,v)}{|v|^2}}
 + \int^{t -\tb(x,v)+ \tilde{\delta} \frac{\alpha(x,v)}{|v|^2}}_{t -\tb(x,v) }}_{(\ref{int_ln})_1}
 + \underbrace{ \int_{t -\tb(x,v)+ \tilde{\delta} \frac{\alpha(x,v)}{|v|^2}} ^{t- \tilde{\delta} \frac{\alpha(x,v)}{|v|^2}}}_{(\ref{int_ln})_2}.
 \Ee
Without loss of generality, we assume $t-t_2\in [t-\tilde{\delta}\frac{\alpha(x,v)}{|v|^2}]$, $t-t_1\in [t-\tb(x,v)+\tilde{\delta}\frac{\alpha(x,v)}{|v|^2}]$. For the first term $(\ref{int_ln})_1$ we use a change of variables $s \mapsto   -\xi(x-(t-s)v)$ in $s \in [t-\tb(x,v), t-t^*]$ and $s \in [t-t^*,t ]$ separately with $\dd s = |v\cdot \nabla_x \xi(x-(t-s)v)|^{-1} \dd |\xi|$. From (\ref{exp_xi}) we have $|\xi(x-(t-s)v) | \leq \tilde{\delta}\frac{\alpha^2(x,v)}{|v|^2}$. Then applying H\"{o}lder inequality with $\beta+(1-\beta)=1$ and using (\ref{vpxxi_alpha_1}), we get
 \Be\label{est:int_ln_1}
 \begin{split}
 &(\ref{int_ln})_1 \mathbf{1}_{\{t-t_2\in [t-\tilde{\delta}\frac{\alpha(x,v)}{|v|^2},t]\},t-t_1\in [t-\tb(x,v)+\tilde{\delta}\frac{\alpha(x,v)}{|v|^2}]}  \\
   & \lesssim \Big(\big[\int^{t-t_2}_{t-\tilde{\delta}\frac{\alpha(x,v)}{|v|^2}}          e^{-C\langle v\rangle (t-s)/\beta} \dd s \big]^{\beta}+\big[\int^{t-\tb(x,v)+\tilde{\delta}\frac{\alpha(x,v)}{|v|^2}}_{t-t_1}          e^{-C\langle v\rangle (t-s)/\beta} \dd s \big]^{\beta}\Big)\\
   &\times  \Big[ \int_0^{\tilde{\delta}\frac{\alpha^2(x,v)}{|v|^2}}
|\ln|\xi| |^{1/(1-\beta)}
 \frac{\dd |\xi|}{ \tilde{\delta}^{-1/2}|v| \sqrt{ | \xi |}}  \Big]^{1-\beta} \\
      &\lesssim |e^{-C\langle v\rangle t_2/\beta }-e^{-C\langle v\rangle t_1/\beta }|^\beta\frac{1}{|v|^{1-\beta}},
 \end{split}
\Ee
where we have used $t-\tilde{\delta}\frac{\alpha(x,v)}{|v|^2}>t-t_1, \quad t-t_2>t-\tb(x,v)+\tilde{\delta}\frac{\alpha(x,v)}{|v|^2}$ and $\frac{|\ln |\xi||^{1/(1-\beta)}}{\sqrt{\xi}} \in L^1_{loc}(0,\infty)$ for $\beta<1$ in the last line.

On the other hand, from (\ref{vpxxi_alpha_2}),
\Be\begin{split}\label{est:int_ln_2}
 (\ref{int_ln})_2& \leq \int^{t-t_2}_{t-t_1} e^{-C \langle v\rangle (t-s)} \Big| \ln \Big(\tilde{\delta} \frac{ \alpha(x,v)^2}{|v|^2}\Big) \Big| \dd s
 \leq
2 \int^{t-t_2}_{t-t_1} e^{-C \langle v\rangle (t-s)}  \{
| \ln \tilde{\delta} |  + |\ln \alpha(x,v)| + |\ln |v||\}
 \dd s
\\
& \leq 2
|e^{-C\langle v\rangle t_2}-e^{-C\langle v\rangle t_1}| \langle v\rangle^{-1/2}\{
 | \ln \tilde{\delta} | +  |\ln \alpha(x,v)|   +  |\ln |v||\},
\end{split}\Ee
 where we have used a similar estimate of (\ref{small_t}). Then we conclude the first inequality in (\ref{est:NLN}) using $\beta<1$.

For the second inequality, from (\ref{kinetic_distance}) and (\ref{chi}) we bound a term of the upper bound of (\ref{est:NLN}) as
 \Be
 \begin{split}\notag
&  \{ \langle v\rangle^{-1/2}\big(
1+  |\ln |v|| +  |\ln  {\alpha}(x,v)|\big) +  \frac{1}{|v|^{1-\beta}} \} \times \frac{\alpha(x,v)}{\alpha(x,v)} \\
\leq &\{1 +   \langle v\rangle^{-1/2} \min\{1, |v|\}| \ln |v||
+ \alpha| \ln |\alpha||  + \frac{\alpha(x,v)}{|v|^{1-\beta}}
 \} \times \frac{1}{\alpha(x,v)}
 \lesssim   \frac{1}{\alpha(x,v)},
\end{split} \Ee
where we have used $\alpha(x,v) \leq \min\{1,|v|\}$ and $1-\beta<1$.

   \hide
 We prove~\eqref{est:NLN}. From~\eqref{eqn: int alpha du} with $y= x-(\min\{\tau,t_{\mathbf{b}}\}
-s)v$
\begin{equation}
\begin{split}
& \int_{0}^{\min\{\tau,t_{\mathbf{b}}\}}e^{-C\langle v\rangle s}\int_{\mathbb{R}^{3}}\frac{e^{-\theta |v-u|^{2}}}{|v-u|\tilde{\alpha} (x-(t_{\mathbf{b}%
}-s)v,u)}\mathrm{d}u\mathrm{d}s \\
& \lesssim {\int_{0}^{\min\{\tau,t_{\mathbf{b}}\}}e^{-C\langle v\rangle s}\Big[1+\big|\log|\xi|\big|+\big| \log|v|\big| \Big]\mathrm{d}s}.
\end{split}
\notag
\end{equation}%
According to (\ref{sigma_delta}) we split the time integration as
\begin{equation}\label{eqn: two cases}
{\int_{0}^{\min\{\tau,t_{\mathbf{b}}\}}}e^{-C\langle v\rangle s}\Big[1+\big|\log|\xi|\big|+\big| \log|v|\big| \Big]\mathrm{d}s=\underbrace{
\int_{0}^{\tilde{\sigma}_{1}}+\int_{\min\{\tau,t_{\mathbf{b}}\}-\tilde{\sigma}%
_{2}}^{\min\{\tau,t_{\mathbf{b}}\}}}_{~\eqref{eqn: two cases}_1}+\underbrace{\int_{\tilde{\sigma}%
_{1}}^{\min\{\tau,t_{\mathbf{b}}\}-\tilde{\sigma}_{2}}}_{~\eqref{eqn: two cases}_2}.
\end{equation}%
For the first two terms$~\eqref{eqn: two cases}_1$, we use the mapping of (\ref%
{COV_xi_s})
\begin{equation*}
s\in \lbrack 0,\tilde{\sigma}_{1}]\cup \lbrack \min\{\tau,t_{\mathbf{b}}\}-\tilde{%
\sigma}_{2},\min\{\tau,t_{\mathbf{b}}\}]\mapsto |\xi (x-(\min\{\tau,t_{\mathbf{b}}\}-s)v)|\in \big[0,B\big),
\end{equation*}%
where the range of $|\xi |$ has been bounded in (\ref{max_xi}), and $B$ is
given by (\ref{tz}). By the change of variables of (\ref{COV_xi_s})
\begin{equation}
\begin{split}
~\eqref{eqn: two cases}_1& \lesssim \ \sup_{0\leq s\leq \min\{\tau,t_{\mathbf{b}}\}}\bigg[\int_{0}^{B}\big| \log|\xi|\big|\frac{%
\mathrm{d}|\xi |}{\tilde{\tilde{\alpha} }(x,v)}+\big| \log|v|\big|\int_0^B e^{-C\langle v\rangle |\xi|} \frac{d|\xi|}{\tilde{\alpha}(x,v)} \bigg]\\
& \lesssim \sup_{0\leq s\leq \min\{\tau,t_{\mathbf{b}}\}}\frac{1}{\tilde{\alpha}(x,v)}\bigg[\Big[|\xi|\big| \log |\xi|\big|-|\xi| \Big]_{|\xi|=0}^{|\xi|=B}+\big| \frac{\big| \log|v|\big|(1-e^{-C\langle v\rangle B})}{\langle v\rangle}   \bigg]
\end{split}
\notag
\end{equation}%
~\eqref{est:NLN} follows with $B$ given by
(\ref{tz}).

For $~\eqref{eqn: two cases}_2$ we use $\tilde{\alpha}(x-(\min\{\tau,t_{\mathbf{b}}\}-s)v)\lesssim_{\xi,%
\tilde{\delta}} |v|\sqrt{-\xi(x-(\min\{\tau,t_{\mathbf{b}}\}-s)v)}$ for $s\in [ \tilde{\sigma%
}_{1}, \min\{\tau,t_{\mathbf{b}}\}-\tilde{\sigma}_{2}]$, from (\ref{sigma}), to have
\begin{equation*}
\big|\log|\xi|\big|+\big| \log|v|\big|=\big| \log(|v||\sqrt{-\xi}|)\big|\lesssim \big| \log|\tilde{\alpha}(x,v)|\big|
\end{equation*}
Finally
\begin{equation*}
~\eqref{eqn: two cases}_2 \lesssim  \big| \log|\tilde{\alpha}(x,v)|\big|\int_{0}^{\min\{\tau,t_{\mathbf{b}}\}} e^{-C\langle v\rangle s}   \mathrm{d}s \lesssim
\big|\log|\tilde{\alpha}(x,v)|\big|\frac{1-e^{-C\langle v\rangle\min\{\tau,t_{\mathbf{b}}\}}}{\langle v\rangle}\}  .
\end{equation*}

Now we assume $x\notin \partial \Omega .$ We find $\bar{x}\in \partial
\Omega $ and $\bar{t}_{\mathbf{b}}$ so that
\begin{equation*}
x-(\min\{\tau,t_{\mathbf{b}}\}-s)v=\bar{x}-(\overline{\min\{\tau,t_{\mathbf{b}}\}} -s)v.
\end{equation*}%
Therefore, by the \textit{Step 1} and the fact $\bar{x}\in \partial \Omega $%
, we just need to estimate
\begin{equation*}
\int_{0}^{\overline{\min\{\tau,t_{\mathbf{b}}\}}} \int_{\mathbb{R}^{3}}\frac{e^{-\theta |v-u|^{2}}}{|v-u| \big[\tilde{\alpha}(%
\bar{x} -(\overline{\min\{\tau,t_{\mathbf{b}}\}} -s )v,u )\big]} \mathrm{d}u
\mathrm{d}s
\end{equation*}%
We then deduce~\eqref{est:NLN} since $\tilde{\alpha} (\bar{x},v)\simeq \tilde{\alpha} (x,v)$ via
the Velocity Lemma and the fact $\overline{\min\{\tau,t_{\mathbf{b}}\}}|v|\lesssim_{\Omega}
1. $

The~\eqref{eqn: integrate alpha along trajectory 2} follows immediately by the inequality $1-e^{-x}\leq x$ and thus we deduce our lemma.
\unhide

\textit{Proof of~\eqref{integrate alpha beta} and~\eqref{integrate alpha beta small}.} Again we only prove~\eqref{integrate alpha beta}. Clearly
\begin{align*}
   & \int^{t}_0 e^{-\nu(t-s)}\dd s \int_{\mathbb{R}^3}\dd u\frac{\mathbf{k}(v,u)}{|u|^2\min\{\frac{\alpha(x-(t-s)v,u)}{|u|},\frac{\alpha(y-(t-s)v,u)}{|u|}\}^p}
 \\
   & \leq \int^{t}_0 e^{-\nu(t-s)}\dd s \int_{\mathbb{R}^3}\dd u\frac{\mathbf{k}(v,u)}{|u|^2\big(\frac{\alpha(x-(t-s)v,u)}{|u|}\big)^p}+\int^{t}_0 e^{-\nu(t-s)}\dd s \int_{\mathbb{R}^3}\dd u\frac{\mathbf{k}(v,u)}{|u|^2\big(\frac{\alpha(y-(t-s)v,u)}{|u|}\big)^p}.
\end{align*}

By Lemma 1 in \cite{GKTT} we have
\begin{align*}
   &\int^{t}_0 e^{-\nu(t-s)}\dd s \int_{\mathbb{R}^3}\dd u\frac{\mathbf{k}(v,u)}{\alpha^p(x-(t-s)v,u)}\frac{|u|^{p-2}}{|v|^{p-2}}|v|^{p-2}
  \\
   & \lesssim |v|^{p-2}\Big[ \min\{1,t\}\times \min\{\frac{1}{|v|^2 \alpha^{p-2}(x,v)},\frac{\alpha^{1/2-p/2}(x,v)}{|v|^{p-1}}\}+\frac{1}{\alpha^{p-1}(x,v)}\int_0^t e^{-\frac{C}{2}\langle v\rangle (t-s)}\dd s \Big].
\end{align*}
We bound
\[\frac{1}{||v|^2\alpha^{p-2}(x,v)}\leq \frac{\alpha^2(x,v)}{|v|^2}\frac{1}{\alpha^p(x,v)}\leq \frac{1}{\alpha^p(x,v)},\]
\[\frac{\alpha^{1/2-p/2}(x,v)}{|v|^{p-1}}\leq \frac{\alpha^{p-1}(x,v)}{|v|^{p-1}}\frac{1}{\alpha^{p/2-1/2+p-1}(x,v)}\leq \frac{1}{\alpha^{3p/2-3/2}(x,v)}\lesssim \frac{1}{\alpha^p(x,v)},\]
\[\frac{1}{\alpha^{p-1}(x,v)}\int_0^t e^{-\frac{C}{2}\langle v\rangle (t-s)}\dd s\lesssim \min\{1,O(t)\}\frac{1}{\alpha^p(x,v)},\]
where we have used $\alpha\leq 1$.

Then we conclude~\eqref{integrate alpha beta}.

\textit{Proof of~\eqref{integrate xi beta/2}}
Since $\frac{1}{\min\{\xi(x'),\xi(y')\}^{\beta/2}}\leq \frac{1}{\xi^{\beta/2}(x')}+\frac{1}{\xi^{\beta/2}(y')}$, we only need to prove
\[\int^t_{t-\tb(x,v)} \frac{e^{-C\langle v\rangle(t-s)}}{|v|\xi(x')^{\beta/2}}\lesssim \frac{1}{|v|^2\min\{\frac{\alpha(x,v)}{|v|},\frac{\alpha(y,v)}{|v|}\}^{\beta}}.\]

We split the integral as
\begin{equation*}
  \int^t_{t- \tilde{\delta} \frac{ \tilde{\alpha}(x,v)}{|v|^2}}
 + \int^{t -\tb(x,v)+ \tilde{\delta} \frac{\tilde{\alpha}(x,v)}{|v|^2}}_{t -\tb(x,v) }
 +  \int_{t -\tb(x,v)+ \tilde{\delta} \frac{\tilde{\alpha}(x,v)}{|v|^2}}^{t- \tilde{\delta} \frac{\tilde{\alpha}(x,v)}{|v|^2}}.
\end{equation*}
Similarly to~\eqref{est:int_ln_1} the first two terms are bounded by
\[2\int_0^{\tilde{\delta}\frac{\alpha^2(x,v)}{|v|}} \frac{1}{|v| |\xi|^{\beta/2}} \frac{d|\xi|}{\tilde{\delta}^{-1/2}|v|\sqrt{|\xi|}}\lesssim \frac{1}{|v|^2}.\]

Similarly to~\eqref{est:int_ln_2} the third term is bounded by
\[\int^t_{0}\frac{e^{-C\langle v\rangle(t-s)}}{|v|} \frac{1}{|\tilde{\delta}|^{\beta/2} \frac{\alpha^\beta(x,v)}{|v|^\beta}} \dd s\lesssim  \frac{1}{|v|\langle v\rangle\min\{\frac{\alpha(x,v)}{|v|},\frac{\alpha(y,v)}{|v|}\}^\beta}.\]

\textit{Proof of~\eqref{integrate beta<1}.}
Similarly to the proof of~\eqref{est:NLN} we consider three cases: $\frac{|v|}{2}\leq |u|\leq 2|v|$, $|u|\geq 2|v|$, $|u|\leq \frac{|v|}{2}$.

By a similar computation as~\eqref{int_1/a_1},\eqref{int_1/a_2} and \eqref{int_1/a_3}, we obtain
\begin{equation*}
\begin{split}
 {\int_{\frac{|v|}{2} \leq |u|\leq 2|v| }}
%(\ref{eqn: three cases})_3
\lesssim & \
\iint
 \frac{e^{-\varrho|\mathbf{v}_\parallel-\mathbf{u}_\parallel|^2}}{|\mathbf{v}_\parallel-\mathbf{u}_\parallel|}\dd \mathbf{u}_\parallel
 \int_0^{ 2|v|}\frac{  \dd \mathbf{u}_3
 }{\big[ |\mathbf{u}_3|^2+|\xi (y)||v|^2\big]^{\beta/2}}
 \lesssim 1,
 \end{split}
 \end{equation*}
\begin{equation*}
\begin{split}
{\int_{|u|\geq 2|v|}}
\lesssim & \  e^{- \frac{\varrho}{2}|v|^2}
\iint
 \frac{e^{-\frac{\varrho}{2}|\mathbf{v}_\parallel-\mathbf{u}_\parallel|^2}}{|\mathbf{v}_\parallel-\mathbf{u}_\parallel|}\dd \mathbf{u}_\parallel
 \int_0^{\infty}\frac{
 e^{-\frac{\varrho}{2}|\mathbf{u}_3|^2}
% e^{-\varrho|\mathbf{v}_3-\mathbf{u}_3|^2}
 }{\big[ |\mathbf{u}_3|^2+|\xi (y)||v|^2\big]^{\beta/2}}   \dd \mathbf{u}_3
 \lesssim 1,
 \end{split}
 \end{equation*}
\Be\notag
\begin{split}
{\int_{|u|\leq \frac{|v|}{2}}}
\lesssim |v|
e^{- \frac{\varrho}{2}|v|^2}\int_{|
\mathbf{\tilde{u}}_3
|\leq \frac{1}{2}}
 \int_{|\mathbf{\tilde{u}}_\parallel|\leq \frac{1}{2}}
 \frac{
 \dd \mathbf{\tilde{u}}_\parallel \dd \mathbf{\tilde{u}}_3
}{|\mathbf{\tilde{u}}_3|^\beta} \lesssim 1.
\end{split}\Ee
Here we use $\frac{1}{|u|^\beta}\in L^1_u$. And thus we conclude the proof.

\textit{Proof of~\eqref{integrate k/v-u}}.
By H$\ddot{o}$lder inequality with $\frac{1}{3}+\frac{1}{3/2}=1$ and split $|v-u|^2=|v-u|^{4/3+\e}|v-u|^{2/3-\e}$ we have
\begin{align}
   &  \int^{t}_{t- \tb(x,v)} \int_{\mathbb{R}^{3}}\frac{e^{-C\langle v\rangle (t-s)} e^{-\varrho |v-u|^{2}}}{|v-u|^2 }\frac{1}{\min\{\alpha(x-(t-s)v,u),\alpha(y-(t-s)v,u)\}^\beta}\mathrm{d}u\mathrm{d}s \notag \\
   & \lesssim         \int^t_{t-\tb(x,v)}e^{-C \langle v\rangle (t-s)} \dd s     \Big(\int_{\mathbb{R}^3} \frac{e^{\varrho|v-u|^2}}{|v-u|^{2+\e}} \dd u\Big)^{2/3} \notag\\
   & \times \Big(\int_{\mathbb{R}^3}  \frac{ e^{-\varrho|v-u|^2}}{|v-u|^{2-\e}}\frac{1}{\min\{\alpha(x-(t-s)v,u),\alpha(y-(t-s)v,u)\}^{3\beta}} \dd u   \Big)^{1/3} \notag\\
   &  \lesssim \Big(\int^t_{t-\tb(x,v)}    e^{-C\langle v\rangle(t-s)} \int_{\mathbb{R}^3}  \frac{ e^{-\varrho|v-u|^2}}{|v-u|^{2-\e}}\frac{1}{\min\{\alpha(x-(t-s)v,u),\alpha(y-(t-s)v,u)\}^{3\beta}} \dd u \Big)^{1/3}\notag\\
    & \times \Big(\int^t_{t-\tb(x,v)} e^{-C\langle v\rangle(t-s)} \Big)^{2/3}         \label{Holder}.
\end{align}
Since $3\beta<3$ by~\eqref{integrate alpha beta} we have
\begin{align*}
  \eqref{Holder} & \lesssim    \Big( \frac{1}{\min\{\alpha(x,v),\alpha(y,v)\}^{3\beta}}\Big)^{1/3}   ,
\end{align*}
then we finish the proof.

\end{proof}

\begin{proof}[\textbf{Proof of Proposition \ref{est:K_Gamma}}]
Clearly
\[\Vert \alpha\nabla_x f^\ell\Vert_\infty\leq \Vert w_{\tilde{\theta}}\alpha\nabla_x f^\ell \Vert_\infty.\]
Applying Lemma \ref{Lemma: NLN} we bound
 \Be
 \begin{split}\label{apply_NLN}
 |  (\ref{est:px_f_3_K_2}) |+
 |(\ref{est1:px_f_5_split2}) |   \leq \frac{O(\e)}{\alpha(x,v)} \sup_{i\geq 0} \|w_{\tilde{\theta}} \alpha \nabla_x f^{\ell-1-i} \|_\infty,\
  \
  |  (\ref{int1/alpha}) |   \leq \frac{O(\sup_{i\geq 0} \| wf^{\ell-1-i} \|_\infty)}{\alpha(x,v)} \sup_{i\geq 0}\|w_{\tilde{\theta}} \alpha \nabla_x f^{\ell-1-i} \|_\infty.
\end{split} \Ee

First we prove (\ref{est:K_Gamma1}). From (\ref{est:NLN})
\Bes
\int^t_{\max\{0,t-\tb\}}
\dd s \,
 e^{-\nu(v)(t-s)}
\int_{\R^3} \dd u \, \mathbf{k}(v,u)(\ref{px_f_1_K})\lesssim \frac{1+ \| \theta\|_{C^1}}{\alpha(x,v)} (1+ \| w f^{\ell-1} \|_\infty),\\
\int^t_{\max\{0,t-\tb\}}
\dd s \,
 e^{-\nu(v)(t-s)}
\int_{\R^3} \dd u \, \mathbf{k}(v,u)
\{(\ref{px_f_2_K}) + (\ref{px_f_int_K})(\ref{px_f_4_K})\}
\lesssim
\frac{  e^{-\nu_0 t }}{\alpha(x,v)} \| w_{\tilde{\theta}}\alpha \nabla_x f^{\ell-1} \|_\infty.
\Ees
From (\ref{int1/alpha0})-(\ref{est:int1/alpha0}) and (\ref{apply_NLN})
\Be\begin{split}\notag
&\int^t_{\max\{0,t-\tb\}}
\dd s \,
 e^{-\nu(v)(t-s)}
\int_{\R^3} \dd u \, \mathbf{k}(v,u)
\{(\ref{px_f_3_K}) + (\ref{px_f_int_K})(\ref{px_f_5_K})
_{h^{\ell-2}=\Gamma(f^{\ell-3}, f^{\ell-3})}
\}\\
&\leq \ \frac{O(\sup_{i\geq 0} \| wf^{\ell-1-i} \|_\infty)}{\alpha(x,v)} \sup_{i\geq 0}\|w_{\tilde{\theta}} \alpha \nabla_x f^{\ell-1-i} \|_\infty.
\end{split}\Ee

From $(\ref{px_f_5_K_split})\leq (\ref{px_f_5_K_split})_1 + (\ref{est1:px_f_5_split2})$ and (\ref{est:xK_to_vK1}), (\ref{est:xK_to_vK2}), (\ref{apply_NLN})
\Be\begin{split}\notag
&\int^t_{\max\{0,t-\tb\}}
\dd s \,
 e^{-\nu(v)(t-s)}
\int_{\R^3} \dd u \, \mathbf{k}(v,u)(\ref{px_f_5_K})_{h^{\ell-2}=Kf^{\ell-3}(x^1-(t^1-s^1),v^1)}\\
& \leq\frac{1}{\alpha(x,v)} \times \Big\{ O(\e) \sup_{i\geq 0}  \| \alpha \nabla_x f^{\ell-1-i}\|_\infty + O(\e^{-1})  \sup_{i\geq 0} \| w f^{\ell-1-i}\|_\infty\Big\}.
\end{split}\Ee

From $|(\ref{px_f_3_Ku})|\leq |(\ref{est:px_f_3_K_1})|+ |(\ref{est:px_f_3_K_2})|$ and (\ref{apply_NLN}),
 \Be\notag
|(\ref{px_f_3_Ku})|\leq \frac{1}{\alpha(x,v)} \times \Big\{ O(\e^{-1} ) \| w f^{\ell-2}\|_\infty + O(\e) \sup_{i\geq 0} \| w_{\tilde{\theta}}\alpha \nabla_x f^{\ell-1-i} \|_\infty
\Big\}.
\Ee
Collecting terms we complete the proof of (\ref{est:K_Gamma1}).

The proof of (\ref{est:K_Gamma2}) comes from (\ref{px_f_5_split}), (\ref{est1:px_f_5_split2}), (\ref{est:xK_to_vK1}), and (\ref{est:xK_to_vK2}). We prove (\ref{est:K_Gamma3}) from (\ref{int1/alpha0})-  (\ref{est:int1/alpha0}) and (\ref{est:K_Gamma2}).\end{proof}

\section{$C^1$ estimate of tangential derivative and continuity of $C^1$ solution}
\subsection{$C^1$ estimate of tangential derivative}
In this subsection we prove~\eqref{estF_tau} in the Main Theorem. Section 3 and 4 already conclude the estimate~\eqref{estF_n}, from now on we will drop the super index in $f^{\ell}$ and only analyze the property of $\nabla_x f$.

\begin{proof}[\textbf{Proof of~\eqref{estF_tau}}]
For $x\in \Omega$ we use~\eqref{fx_x_1}-\eqref{fx_x_5} to have
\begin{align}
G(x)\nabla_x f(x,v)   &  =  \mathbf{1}_{t\geq \tb}e^{-\nu(v)\tb} G(x) \sum_{i=1,2}\nabla_x \mathbf{x}_{p^1,i}^1  \partial_{\mathbf{x}_{p^1,i}^1} f(\eta_{p^1}(\mathbf{x}_{p^1}^1),v)  \label{tang: nabla fxb} \\
   &  -\mathbf{1}_{t\geq \tb}\nu(v)\nabla_x \tb e^{-\nu(v)\tb}  G(x) f(\xb(x,v),v) \label{tang: nabla tb}\\
   &  +\mathbf{1}_{t<\tb}e^{-\nu(v)t}    G(x) \nabla_x f(x-tv,v)\label{tang: nabla f}\\
   &+\mathbf{1}_{t\geq \tb}G(x)\nabla_x \tb  e^{-\nu(v)\tb}h(x-\tb v,v) \label{tang: h}     \\
   &+\int_{\max\{0,t-\tb\}}^{t} G(x) e^{-\nu(v)(t-s)}\nabla_x h(x-(t-s)v,v)\dd s ,\label{tang: nabla h}
\end{align}
where $h=K(f)+\Gamma(f,f)$.

We focus on the estimate of~\eqref{tang: nabla h}.~\eqref{tang: nabla fxb}-\eqref{tang: h} will be estimated with~\eqref{tang: nabla h} together.

\textbf{Estimate of~\eqref{tang: nabla h} with $h=K(f)$}. Let $y=x-(t-s)v$. Rewriting $G(x)=G(x)-G(y)+G(y)$ and applying~\eqref{n(x)-n(xb)} in Lemma \ref{Lemma: nx-nxb} to $G(x)-G(y)$ we have
\begin{align}
\eqref{tang: nabla h}\mathbf{1}_{h=K(f)}   &\lesssim \int_{\max\{0,t-\tb\}}^{t}e^{-\nu(v)(t-s)}  G(y) \int_{\mathbb{R}^3}\mathbf{k}(v,u)\nabla_x f(y,u) \dd u \dd s \label{tang: k term}\\
   & + \frac{\tilde{\alpha}(x,v)}{|v|}\int_{\max\{0,t-\tb\}}^{t}   \int_{\mathbb{R}^3}\mathbf{k}(v,u)\nabla_x f(y,u) \dd u \dd s. \label{tang: alpha in k}
\end{align}
Then applying~\eqref{est:nonlocal_wo_e} in Lemma \ref{Lemma: NLN} with $y=x-(t-s)v$ and~\eqref{k_theta} we obtain
\begin{align*}
\eqref{tang: alpha in k}  & \lesssim \frac{\tilde{\alpha}(x,v)}{|v|w_{\tilde{\theta}}(v)}\int_{\max\{0,t-\tb\}}^{t}   \int_{\mathbb{R}^3}\frac{\mathbf{k}(v,u)}{\alpha(y,u)} \frac{w_{\tilde{\theta}}(v)}{w_{\tilde{\theta}}(u)} w_{\tilde{\theta}}(u)\alpha(y,u)\nabla_x f(y,u) \dd u \dd s\\
  & \lesssim \Vert w_{\tilde{\theta}}\alpha\nabla_x f\Vert_\infty \frac{\alpha(x,v)}{w_{\tilde{\theta}/2}(v)|v|}\int_{\max\{0,t-\tb\}}^{t}   \int_{\mathbb{R}^3}\frac{\mathbf{k}_{\tilde{\varrho}}(v,u)}{\alpha(y,u)} \dd u \dd s\lesssim \frac{\Vert w_{\tilde{\theta}}\alpha \nabla_x f\Vert_\infty}{w_{\tilde{\theta}/2}(v)|v|}.
  \end{align*}

We focus on~\eqref{tang: k term}. We further expand $G(y) \nabla_x f(y,u)$ along $u$:
\begin{align}
  \eqref{tang: k term} &\lesssim \int_{\max\{0,t-\tb\}}^{t}e^{-\nu(v)(t-s)} \int_{\mathbb{R}^3}\mathbf{k}(v,u) G(y) \sum_{i=1,2}   \nabla_x \mathbf{x}_{p^1(u),i}^1  \partial_{\mathbf{x}_{p^1(u),i}^1} f(\eta_{p^1(u)}(\mathbf{x}_{p^1(u)}^1),u) \dd u \dd s  \label{tang: k nabla f}\\
   &    +    \int_{\max\{0,t-\tb\}}^{t}e^{-\nu(v)(t-s)} \int_{\mathbb{R}^3}\mathbf{k}(v,u) G(y) \nabla_x \tb(y,u) f(y-\tb(y,u)u,u) \dd u \dd s        \label{tang: k nabla tb}\\
   &    +\int_{\max\{0,t-\tb\}}^{t}e^{-\nu(v)(t-s)}e^{-\nu(u)s} \int_{\mathbb{R}^3}\mathbf{k}(v,u) G(y) \nabla_x f(y-su,u) \dd u \dd s \label{tang: k s}   \\
   &+  \int_{\max\{0,t-\tb\}}^{t}e^{-\nu(v)(t-s)} \int_{\mathbb{R}^3}\mathbf{k}(v,u) G(y) \nabla_x \tb(y,u)e^{-\nu(u)\tb(y,u)}h(y-\tb(y,u)u,u) \dd u \dd s   \label{tang: k h}\\
   &+   \int_{\max\{0,t-\tb\}}^{t} \dd s\int_{\mathbb{R}^3}\mathbf{k}(v,u)\dd u G(y)\int_{\max\{0,s-\tb(y,u)\}}^s \dd s' \int_{\mathbb{R}^3}\mathbf{k}(u,u')\nabla_x f(y-(s-s')u,u')    \label{tang: kk}\\
   &+ \int_{\max\{0,t-\tb\}}^{t} \dd s\int_{\mathbb{R}^3}\mathbf{k}(v,u)\dd u G(y)\int_{\max\{0,s-\tb(y,u)\}}^s \dd s' \nabla_x \Gamma(f,f)(y-(s-s')u,u'). \label{tang: k gamma}
\end{align}
In~\eqref{tang: k nabla f} we denoted $\xb(x-(t-s)v,u)=\eta_{p^1(u)}(\mathbf{x}_{p^1(u)}^1).$

Then we estimate~\eqref{tang: nabla fxb}-\eqref{tang: h} together with~\eqref{tang: k nabla   f}-\eqref{tang: k h}.

First we estimate~\eqref{tang: nabla fxb} and \eqref{tang: k nabla f}. We start from~\eqref{tang: nabla fxb}. From section 3 we have
\[ |\partial_{\mathbf{x}_{p^1,i}^1} f(\eta_{p^1}(\mathbf{x}_{p^1}^1),v)|=\eqref{fBD_x}+\eqref{fBD_xr}\lesssim \eqref{est:fBD_xr}+\eqref{IBP_v}+\eqref{est:fBD_x_e2}+\eqref{est:fBD_x_e1}\lesssim \frac{M_W(\xb,v)}{\sqrt{\mu(v)}}\Vert T_W-T_0\Vert_{C^1}\Vert wf\Vert_\infty. \]
Thus by~\eqref{nx nabla xb}
\begin{equation}\label{tang: bound for nabla fxb}
\eqref{tang: nabla fxb}\lesssim \frac{\Vert T_W-T_0\Vert_{C^1}\Vert wf\Vert_\infty}{w_{\tilde{\theta}/2}(v)|v|}.
\end{equation}

For~\eqref{tang: k nabla   f} similarly by~\eqref{nx nabla xb} we apply~\eqref{k_theta} and~\eqref{integrate k u} with $c=1$ to have
\begin{align}
  \eqref{tang: k nabla f} & \lesssim \frac{\Vert T_W-T_0\Vert_{C^1}\Vert wf\Vert_\infty}{w_{\tilde{\theta}/2}(v)} \int^t_{\max\{0,t-\tb\}}e^{-\nu(v)(t-s)} \int_{\mathbb{R}^3}\mathbf{k}(v,u) \frac{w_{\tilde{\theta}/2}(v)}{w_{\tilde{\theta}/2}(u)}\frac{1}{|u|}\dd u\dd s  \lesssim \frac{\Vert wf\Vert_\infty}{w_{\tilde{\theta}/2}(v)|v|} .\label{tang: k bound for nabla f}
\end{align}

Then we estimate~\eqref{tang: nabla tb} and~\eqref{tang: k nabla tb}. We start from~\eqref{tang: nabla tb}. By~\eqref{nx nabla tb} we conclude
\begin{equation}\label{tang: bound for nabla tb}
\eqref{tang: nabla tb}\lesssim \frac{\Vert w_{\tilde{\theta}/2}f\Vert_\infty}{w_{\tilde{\theta}/2}(v)|v|}\lesssim \frac{\Vert wf\Vert_\infty}{w_{\tilde{\theta}/2}(v)|v|}.
\end{equation}

For~\eqref{tang: k nabla tb} similarly by~\eqref{nx nabla tb} we apply~\eqref{integrate k u} with $c=1$ to have
\begin{align}
  \eqref{tang: k nabla tb} &\lesssim \frac{\Vert wf\Vert_\infty}{w_{\tilde{\theta}/2}(v)}\int^t_{\max\{0,t-\tb\}} \int_{\mathbb{R}^3}\mathbf{k}(v,u)\frac{w_{\tilde{\theta}/2}(v)}{w_{\tilde{\theta}/2}(u)}\frac{1}{|u|} \dd u\dd s    \lesssim \frac{\Vert wf\Vert_\infty}{w_{\tilde{\theta}/2}(v)|v|}. \label{tang: k bound for nabla tb}
\end{align}

Then we estimate~\eqref{tang: nabla f} and~\eqref{tang: k s}. For~\eqref{tang: nabla f} we apply~\eqref{Gf bdd} to have
\begin{align}
  \eqref{tang: nabla f}  & \lesssim e^{-t}\big[\frac{\Vert w_{\tilde{\theta}/2}|v|\nabla_\parallel f(x,v)\Vert_\infty}{w_{\tilde{\theta}/2}(v)|v|}+\frac{\Vert w_{\tilde{\theta}}\alpha\nabla_x f\Vert_\infty}{w_{\tilde{\theta}/2}(v)|v|}\big]\label{tang: bound for nabla f}.
\end{align}

Similarly for~\eqref{tang: k s} applying~\eqref{Gf bdd} and~\eqref{integrate k u} with $c=1$ we have
\begin{align}
  \eqref{tang: k s} &\lesssim e^{-t} \frac{\Vert w_{\tilde{\theta}/2}|v|\nabla_\parallel f\Vert_\infty+\Vert w_{\tilde{\theta}}\alpha\nabla_x f\Vert_\infty }{w_{\tilde{\theta}/2}(v)}\int^t_{\max\{0,t-\tb\}} \int_{\mathbb{R}^3}\mathbf{k}(v,u)\frac{w_{\tilde{\theta}/2}(v)}{w_{\tilde{\theta}/2}(u)}\frac{1}{|u|}\dd u\dd s  \notag\\
   &\lesssim e^{-t}\big[\frac{\Vert w_{\tilde{\theta}/2}|v|\nabla_\parallel f(x,v)\Vert_\infty}{|v|}+\frac{\Vert w_{\tilde{\theta}}  \alpha\nabla_x f\Vert_\infty}{w_{\tilde{\theta}/2}(v)|v|}\big]. \label{tang: k bound for ks}
\end{align}

Then we estimate~\eqref{tang: h} and~\eqref{tang: k h}. For~\eqref{tang: h}, by~\eqref{nx nabla tb} and~\eqref{h_K} we have
\begin{align}
  \eqref{tang: h} &\lesssim \frac{ K(f)+\Gamma(f,f)}{|v|}  \lesssim \frac{(\Vert wf\Vert_\infty+1)}{|v|}\int_{\mathbb{R}^3}|\mathbf{k}(v,u)f(x-\tb v,u)|\dd u\notag\\
  &\lesssim \frac{\Vert w_{\tilde{\theta}}f\Vert_\infty}{w_{\tilde{\theta}/2}(v)|v|}\int_{\mathbb{R}^3}\mathbf{k}_\varrho(v,u)\frac{w_{\tilde{\theta}/2}(v)}{w_{\tilde{\theta}/2}(u)}\dd u\lesssim \frac{\Vert wf\Vert_\infty}{w_{\tilde{\theta}/2}(v)|v|}.\label{tang: bound for h}
\end{align}

For~\eqref{tang: k h} similarly by~\eqref{nx nabla tb} and~\eqref{integrate k u} with $c=1$ we have
\begin{align}
  \eqref{tang: k h} &\lesssim \frac{\Vert wf\Vert_\infty}{w_{\tilde{\theta}/2}(v)} \int^t_{\max\{0,t-\tb\}} e^{-\nu(v)(t-s)}\int_{\mathbb{R}^3}\mathbf{k}(v,u)\frac{w_{\tilde{\theta}/2}(v)}{w_{\tilde{\theta}/2}(u)}\frac{1}{|u|} \lesssim \frac{\Vert wf\Vert_\infty}{w_{\tilde{\theta}/2}(v)|v|} \label{tang: k bound for kh}.
\end{align}

Last we estimate~\eqref{tang: kk}. This estimate is the most delicate one. We apply the decomposition~\eqref{px_f_3_Ku} to $\dd s'$.

When $s'>s-\e$, we apply~\eqref{Gf bdd} in Lemma \ref{Lemma: nx-nxb} to have
\begin{align}
  &\eqref{tang: kk}\mathbf{1}_{s'>s-\e}\notag\\
   & \lesssim \frac{\Vert |v|\nabla_\parallel f\Vert_\infty + \Vert w_{\tilde{\theta}} \alpha\nabla_x f\Vert_\infty }{w_{\tilde{\theta}/2}(v)} \\
   &\times \int^t_{\max\{0,t-\tb\}} \int_{\mathbb{R}^3}\mathbf{k}(v,u)\frac{w_{\tilde{\theta}/2}(v)}{w_{\tilde{\theta}/2}(u)}\dd u \dd s\int^s_{s-\e} \dd s' \int_{\mathbb{R}^3}\mathbf{k}(u,u')\frac{w_{\tilde{\theta}/2}(u)}{w_{\tilde{\theta}/2}(u')} \frac{|u|}{|u'|}\frac{|v|}{|u|} \notag\\
   &\lesssim o(1)\frac{\Vert w_{\tilde{\theta}} \alpha\nabla_x f\Vert_\infty+\Vert w_{\tilde{\theta}/2} |v|\nabla_\parallel f\Vert_\infty}{w_{\tilde{\theta}/2}(v)|v|} ,\label{tang: k bound for kk1}
\end{align}
where we have applied~\eqref{integrate k u} twice with $c=1$.

On the other hand when $s'<s-\e$, we exchange $\nabla_x$ for $\nabla_u$:
\[\nabla_x f(x-(t-s)v- (s-s')u, u^\prime)
=\frac{-1}{s-s'}\nabla_u[f(x-(t-s)v- (s-s')u, u^\prime)].\]
Then we perform an integration by parts with respect to $\dd u$ and obtain
\Be\begin{split}\label{tang: IBP u}
\eqref{tang: kk} &= \int^t_{\max\{0, t-\tb\}}
 \dd s   \,   %\mathbf{1}_{s \leq t-\e}
 e^{-\nu(v) (t-s)}
  \int_{\R^3} \dd u
  \int^s_{\max\{0, s-\tb(y, u)\}} \dd s'   \,
  e^{-\nu(u) (s-s')} \frac{\mathbf{1}_{s'\leq s-\e}}{s-s'}  \\
 & \times
  \int_{\R^3} \dd u^\prime    G(y)\,
\nabla_u[ \mathbf{k} (v,u) \mathbf{k} (u,u^\prime) ]f(y- (s-s')u, u^\prime) \\
-&\int^t_{\max\{0, t-\tb\}}
 \dd s   \,   %\mathbf{1}_{s \leq t-\e}
 e^{-\nu(v) (t-s)}
  \int_{\R^3} \dd u
  \int^s_{\max\{0, s-\tb(y, u)\}} \dd s'   \,
 \nabla_u \nu(u) e^{-\nu(u) (s-s')}  {\mathbf{1}_{s'\leq s-\e}}  \\
 & \times
  \int_{\R^3} \dd u^\prime    G(y)\,
\mathbf{k} (v,u) \mathbf{k} (u,u^\prime) f(y- (s-s')u, u^\prime)\\
+& \int^t_{\max\{0, t-\tb\}}
 \dd s   \,   %\mathbf{1}_{s \leq t-\e}
 e^{-\nu(v) (t-s)}
  \int_{\R^3} \dd u
  \mathbf{1}_{s \geq \tb(y,u)}
  e^{-\nu(u) \tb(y,u)} \frac{\mathbf{1}_{\tb(y,u) \geq \e}}{\tb(y,u)}\nabla_u \tb(y,u)  \\
 & \times
  \int_{\R^3} \dd u^\prime   G(y) \,
 \mathbf{k} (v,u) \mathbf{k} (u,u^\prime) f(y- \tb(y,u)u, u^\prime).
 \end{split}\Ee
We bound $|G(y)|\leq 1$. Then applying~\eqref{k_varrho} and~\eqref{nablav nu} the first and second terms of~\eqref{tang: IBP u} are bounded by
\begin{align*}
   & \frac{O(\e^{-1})\Vert w_{\tilde{\theta}}f\Vert_\infty}{w_{\tilde{\theta}/2}(v)}\int^t_{\max\{0,t-\tb\}} \dd s e^{-\nu(v)(t-s)}\int_{\mathbb{R}^3}\int^s_{\max\{0,s-\tb(y,u)\}}\dd s' e^{-\nu(u)(s-s')} \\
   &  \times \int_{\mathbb{R}^3}\frac{ \mathbf{k}(v,u)\langle u\rangle^2}{|v-u|} \frac{w_{\tilde{\theta}/2}(v)}{w_{\tilde{\theta}}(u)}  \frac{|u|}{|u|}\frac{\mathbf{k}(u,u')}{|u-u'|}\frac{w_{\tilde{\theta}}(u)}{w_{\tilde{\theta}}(u')}    \\
   & \lesssim \frac{O(\e^{-1})\Vert wf\Vert_\infty}{w_{\tilde{\theta}/2}(v)}\int^t_{\max\{0,t-\tb\}} \dd s e^{-\nu(v)(t-s)}\int_{\mathbb{R}^3}\int^s_{\max\{0,s-\tb(y,u)\}}\dd s' e^{-\nu(u)(s-s')} \\
   &  \times \int_{\mathbb{R}^3}\frac{ \mathbf{k}(v,u)}{|v-u|} \frac{w_{\tilde{\theta}/2}(v)}{w_{\tilde{\theta}/2}(u)}  \frac{1}{|u|}\frac{\mathbf{k_{\tilde{\varrho
}}}(u,u')}{|u-u'|}    \lesssim \frac{O(\e^{-1})\Vert wf\Vert_\infty}{w_{\tilde{\theta}/2}(v)|v|},
\end{align*}
where we have used~\eqref{k_theta}, $\langle u\rangle^2 |u|w^{-1}_{\tilde{\theta}}(u)\lesssim w^{-1}_{\tilde{\theta}/2}(u)$ and~\eqref{integrate k u} with $c=1$.

For the third term we apply~\eqref{nabla_tbxb} and~\eqref{n(x)-n(xb)} to have
\begin{align}
   & G(y)\frac{\nabla_u \tb(y,u)}{\tb(y,u)}=\frac{G(y)n(\xb(y,u)) }{n(\xb(y,u))\cdot u} \notag\\
   & =\frac{[G(y)-G(\xb(y,u))]n(\xb(y,u))}{n(\xb(y,u))\cdot u}+\frac{G(\xb(y,u))n(\xb(y,u))}{n(\xb(y,u))\cdot u}\notag\\
   &\lesssim       \frac{\tilde{\alpha}(y,u)}{|n(\xb(y,u))\cdot u||u|}\lesssim \frac{1}{|u|},\label{G nabla_u tb}
\end{align}
Thus applying~\eqref{integrate k u} with $c=1$ the third term is bounded by
\begin{align*}
& \frac{\Vert w_{\tilde{\theta}/2}f\Vert_\infty}{w_{\tilde{\theta}/2}(v)}   \int^t_{\max\{0,t-\tb\}} \dd s e^{-\nu(v)(t-s)}\\
&\times \int_{\mathbb{R}^3}\dd u \mathbf{k}(v,u)\frac{w_{\tilde{\theta}/2}(v)}{w_{\tilde{\theta}/2}(u)}\frac{1}{|u|} e^{-\nu \tb(y,u)}\int_{\mathbb{R}^3} \mathbf{k}(u,u')  \lesssim \frac{\Vert wf\Vert_\infty}{w_{\tilde{\theta}/2}(v)|v|}.
\end{align*}

Therefore, we conclude
\begin{equation}\label{tang: k bound for kk2}
\eqref{tang: kk}\mathbf{1}_{s'<s-\e}\lesssim O(\e^{-1})\frac{\Vert wf\Vert_\infty}{w_{\tilde{\theta}/2}(v)|v|}.
\end{equation}

We estimate~\eqref{tang: k gamma} together with $\eqref{tang: nabla h}\mathbf{1}_{h=\Gamma}$. We apply~\eqref{G Gamma},~\eqref{Gf bdd} and~\eqref{integrate k/v-u} with $c=1$ to have
\begin{align}
  &\eqref{tang: nabla h}\mathbf{1}_{h=\Gamma} \notag\\
   & \lesssim    \int^t_{\max\{t-\tb\}}e^{-\nu(v)(t-s)}\Vert wf\Vert_\infty[G(x)\nabla_x f(x-(t-s)v)+\int_{\mathbb{R}^3}\mathbf{k}_\varrho (v,u)|G(x)\nabla_x f(x-(t-s)v,u)|] \notag\\
        &\lesssim\frac{\Vert wf\Vert_\infty[\Vert w_{\tilde{\theta}/2}|v|\nabla_\parallel f\Vert_\infty+\Vert w_{\tilde{\theta}}\alpha\nabla_x f\Vert_\infty]}{w_{\tilde{\theta}/2}(v)}\notag\\
   & \times \Big[\frac{1}{|v|}+ \int^t_{\max\{0,t-\tb\}} e^{-\nu(v)(t-s)} \int_{\mathbb{R}^3}\mathbf{k}_\varrho(v,u)\frac{w_{\tilde{\theta}/2}(v)}{w_{\tilde{\theta}/2}(u)}\frac{1}{|u|}\dd u \dd s \Big] \notag  \\
   &\lesssim \Vert wf\Vert_\infty \frac{\Vert w_{\tilde{\theta}/2}|v|\nabla_\parallel f\Vert_\infty}{|v|}+\frac{\Vert wf\Vert_\infty \Vert w_{\tilde{\theta}}\alpha\nabla_x f\Vert_\infty}{|v|}\notag\\
   &\lesssim \frac{\Vert wf\Vert_\infty[\Vert w_{\tilde{\theta}/2}|v|\nabla_\parallel f\Vert_\infty+\Vert w_{\tilde{\theta}}\alpha\nabla_x f\Vert_\infty]}{w_{\tilde{\theta}/2}(v)|v|} \label{tang: bound for nabla h gamma}.
\end{align}

Then similarly we have
\begin{align}
  \eqref{tang: k gamma} &\lesssim \int^t_{\max\{t-\tb\}}e^{-\nu(v)(t-s)} \int_{\mathbb{R}^3}\mathbf{k}_\varrho(v,u) \frac{\Vert wf\Vert_\infty[\Vert w_{\tilde{\theta}/2}|v|\nabla_\parallel f\Vert_\infty+\Vert w_{\tilde{\theta}}\alpha\nabla_x f\Vert_\infty]}{w_{\tilde{\theta}/2}(u)|u|}  \notag\\
   & \lesssim \frac{\Vert wf\Vert_\infty[\Vert w_{\tilde{\theta}/2}|v|\nabla_\parallel f\Vert_\infty+\Vert w_{\tilde{\theta}}\alpha\nabla_x f\Vert_\infty]}{w_{\tilde{\theta}/2}(v)} \int^t_{\max\{t-\tb\}}e^{-\nu(v)(t-s)} \int_{\mathbb{R}^3}\mathbf{k}_\varrho(v,u)\frac{w_{\tilde{\theta}/2}(v)}{w_{\tilde{\theta}/2}(u)}\frac{1}{|u|} \notag\\
   & \lesssim \frac{\Vert wf\Vert_\infty[\Vert w_{\tilde{\theta}/2}|v|\nabla_\parallel f\Vert_\infty+\Vert w_{\tilde{\theta}}\alpha\nabla_x f\Vert_\infty]}{w_{\tilde{\theta}/2}(v)|v|}.\label{tang: bound for k gamma}
\end{align}

Then combining~\eqref{tang: k bound for   nabla f},\eqref{tang: k bound for nabla tb},\eqref{tang: k bound for ks},\eqref{tang: k bound for kh},\eqref{tang: k bound for kk1},\eqref{tang: k bound for kk2},\eqref{tang: bound for    nabla h gamma} and~\eqref{tang: bound for k    gamma} we conclude
\begin{equation}\label{tang: bound for nabla h}
\eqref{tang: nabla h}\lesssim \big[o(1)+e^{-t}\big]\Big[\frac{\Vert wf\Vert_\infty}{w_{\tilde{\theta}/2}(v)|v|}+\frac{\Vert w_{\tilde{\theta}/2}|v|\nabla_\parallel f\Vert_\infty}{w_{\tilde{\theta}/2}(v)|v|}\Big]+\frac{\Vert w_{\tilde{\theta}}\alpha\nabla_x f\Vert_\infty}{w_{\tilde{\theta}/2}(v)|v|}.
\end{equation}

Combining~\eqref{tang: bound for nabla f},\eqref{tang: bound for nabla fxb},\eqref{tang: bound for nabla h} and~\eqref{tang: bound for nabla tb} we conclude
\begin{equation}\label{nablaparallel f bound}
|\nabla_\parallel f|\lesssim O(\e^{-1})\Big[\frac{\Vert wf\Vert_\infty}{|v|}+\frac{\Vert w_{\tilde{\theta}}\alpha\nabla_x f\Vert_\infty}{w_{\tilde{\theta}/2}|v|}\Big]+ \big[\Vert wf\Vert_\infty+e^{-t}\big]\frac{\Vert w_{\tilde{\theta}/2}|v|\nabla_\parallel f\Vert_\infty}{w_{\tilde{\theta}/2}|v|}.
\end{equation}
Then from $t\gg 1$ and $\Vert wf\Vert_\infty\ll 1$ in the \textbf{Existence Theorem} we conclude~\eqref{estF_tau}.

\subsection{Continuity of $C^1$ solution}
It remains to prove the continuity of $\nabla_x f$. The continuity of $G(x)\nabla_x f$ will follow directly from the continuity of $\nabla_x f$. We only need to prove $\nabla_x f$ is continuous at $t=\tb(x,v).$ When $t=\tb(x,v)$,~\eqref{tang: nabla f} reads
\[e^{-\nu \tb}\nabla_x f(x-\tb v,v). \]
At the boundary $x-\tb v=\xb(x,v)=\eta_{p^1}(\mathbf{x}_{p^1}^1)$, we use the notation~\eqref{x0v0},\eqref{fBD_x1} and decompose the spatial derivative as
\[v\cdot \nabla_x f= \sum_{i=1}^2\mathbf{v}^1_{p^1,i}\frac{\partial_{\mathbf{x}^1_{p^1,i}}f}{\sqrt{g_{p^1,ii}(\mathbf{x}_{p^1})}}+ \mathbf{v}^1_{p^1,3}\frac{\partial_{\mathbf{x}^1_{p^1,3}}f}{\sqrt{g_{p^1,33}(\mathbf{x}_{p^1})}}.\]
Then from the equation~\eqref{f}, we derive
\begin{equation}\label{partial n f}
\frac{\partial_{\mathbf{x}^1_{p^1,3}}f}{\sqrt{g_{p^1,33}(\mathbf{x}_{p^1})}}=\underbrace{-\sum_{i=1}^2\frac{\mathbf{v}^1_{p^1,i}\partial_{\mathbf{x}^1_{p^1,i}}f}{\mathbf{v}^1_{p^1,3}\sqrt{g_{p^1,ii}(\mathbf{x}_{p^1})}}}_{\eqref{partial n f}_1}-\underbrace{\frac{\nu(v)f}{\mathbf{v}^1_{p^1,3}}}_{\eqref{partial n f}_2}+\underbrace{\frac{K(f)+\Gamma(f,f)}{\mathbf{v}^1_{p^1,3}}}_{\eqref{partial n f}_3}.
\end{equation}
Plugging these terms back the spatial derivative $\nabla_x f$, we derive that
\begin{align}
  \nabla_x f & =\nabla_x f T T^t \notag\\
   &=\Big(\frac{\partial_{\mathbf{x}^1_{p^1,1}}f}{\sqrt{g_{p^1,11}(\mathbf{x}_{p^1})}},\frac{\partial_{\mathbf{x}^1_{p^1,2}}f}{\sqrt{g_{p^1,22}(\mathbf{x}_{p^1})}},\eqref{partial n f} \Big) \left(
                  \begin{array}{c}
                    \frac{\partial_1 \eta_{p^1}(\mathbf{x}_{p^1}^1)}{\sqrt{g_{p^1,11}(\mathbf{x}_{p^1}^1)}} \\
                     \frac{\partial_2 \eta_{p^1}(\mathbf{x}_{p^1}^1)}{\sqrt{g_{p^1,22}(\mathbf{x}_{p^1}^1)}}  \\
                     \frac{\partial_3 \eta_{p^1}(\mathbf{x}_{p^1}^1)}{\sqrt{g_{p^1,33}(\mathbf{x}_{p^1}^1)}}  \\
                  \end{array}
                \right)\label{nabla f expand}.
\end{align}
Then the contribution of$~\eqref{partial n f}_1$ in~\eqref{nabla f expand} is
\[\Big(\frac{\partial_{\mathbf{x}^1_{p^1,1}}f}{\sqrt{g_{p^1,11}(\mathbf{x}_{p^1})}},\frac{\partial_{\mathbf{x}^1_{p^1,2}}f}{\sqrt{g_{p^1,22}(\mathbf{x}_{p^1})}},\eqref{partial n f}_1  \Big)\left(
                  \begin{array}{c}
                    \frac{\partial_1 \eta_{p^1}(\mathbf{x}_{p^1}^1)}{\sqrt{g_{p^1,11}(\mathbf{x}_{p^1}^1)}} \\
                     \frac{\partial_2 \eta_{p^1}(\mathbf{x}_{p^1}^1)}{\sqrt{g_{p^1,22}(\mathbf{x}_{p^1}^1)}}  \\
                     \frac{\partial_3 \eta_{p^1}(\mathbf{x}_{p^1}^1)}{\sqrt{g_{p^1,33}(\mathbf{x}_{p^1}^1)}}  \\
                  \end{array}
                \right) \]
\[=\left(
                         \begin{array}{c}
                           \sum_{i=1}^2  \Big[\frac{\partial_{\mathbf{x}^1_{p^1,i}}f}{\sqrt{g_{p^1,ii}(\mathbf{x}_{p^1})}} \frac{\partial_i \eta_{p^1}(\mathbf{x}_{p^1}^1)}{\sqrt{g_{p^1,ii}(\mathbf{x}_{p^1}^1)}}-\frac{\mathbf{v}_{p^1,i}^1 \partial_{\mathbf{x}_{p^1,i}^1}f}{\mathbf{v}_{p^1,3}^1\sqrt{g_{p^1,ii}(\mathbf{x}_{p^1})}}\frac{\partial_3 \eta_{p^1}(\mathbf{x}_{p^1}^1)}{\sqrt{g_{p^1,33}(\mathbf{x}_{p^1}^1)}} \Big]\cdot e_1  \\
                           \sum_{i=1}^2  \Big[\frac{\partial_{\mathbf{x}^1_{p^1,i}}f}{\sqrt{g_{p^1,ii}(\mathbf{x}_{p^1})}} \frac{\partial_i \eta_{p^1}(\mathbf{x}_{p^1}^1)}{\sqrt{g_{p^1,ii}(\mathbf{x}_{p^1}^1)}}-\frac{\mathbf{v}_{p^1,i}^1 \partial_{\mathbf{x}_{p^1,i}^1}f}{\mathbf{v}_{p^1,3}^1\sqrt{g_{p^1,ii}(\mathbf{x}_{p^1})}}\frac{\partial_3 \eta_{p^1}(\mathbf{x}_{p^1}^1)}{\sqrt{g_{p^1,33}(\mathbf{x}_{p^1}^1)}} \Big]\cdot e_2  \\
                            \sum_{i=1}^2  \Big[\frac{\partial_{\mathbf{x}^1_{p^1,i}}f}{\sqrt{g_{p^1,ii}(\mathbf{x}_{p^1})}} \frac{\partial_i \eta_{p^1}(\mathbf{x}_{p^1}^1)}{\sqrt{g_{p^1,ii}(\mathbf{x}_{p^1}^1)}}-\frac{\mathbf{v}_{p^1,i}^1 \partial_{\mathbf{x}_{p^1,i}^1}f}{\mathbf{v}_{p^1,3}^1\sqrt{g_{p^1,ii}(\mathbf{x}_{p^1})}}\frac{\partial_3 \eta_{p^1}(\mathbf{x}_{p^1}^1)}{\sqrt{g_{p^1,33}(\mathbf{x}_{p^1}^1)}} \Big]\cdot e_3 \\
                         \end{array}
                       \right)^t,
                     \]
which is exactly the same as $\sum_{i=1,2}\nabla_x \mathbf{x}_{p^1,i}^1  \partial_{\mathbf{x}_{p^1,i}^1} f(\eta_{p^1}(\mathbf{x}_{p^1}^1),v)$ in~\eqref{tang: nabla fxb} by applying~\eqref{xi deri xbp}.

The contribution of$~\eqref{partial n f}_2$ and $\eqref{partial n f}_3$ in~\eqref{nabla f expand} are
\begin{align}
( \eqref{partial n f}_2+\eqref{partial n f}_3)\cdot \frac{\partial_3 \eta_{p^1}(\mathbf{x}_{p^1}^1)}{\sqrt{g_{p^1,33}(\mathbf{x}_{p^1}^1)}}  &=-\left(
                                                                             \begin{array}{c}
                                                                              \frac{\nu(v)f}{\mathbf{v}_{p^1,3}^1}\frac{\partial_3 \eta_{p^1}(\mathbf{x}_{p^1}^1)}{\sqrt{g_{p^1,33}(\mathbf{x}_{p^1}^1)}}\cdot e_1  \\
                                                                               \frac{\nu(v)f}{\mathbf{v}_{p^1,3}^1}\frac{\partial_3 \eta_{p^1}(\mathbf{x}_{p^1}^1)}{\sqrt{g_{p^1,33}(\mathbf{x}_{p^1}^1)}}\cdot e_2   \\
                                                                                \frac{\nu(v)f}{\mathbf{v}_{p^1,3}^1}\frac{\partial_3 \eta_{p^1}(\mathbf{x}_{p^1}^1)}{\sqrt{g_{p^1,33}(\mathbf{x}_{p^1}^1)}}\cdot e_3  \\
                                                                             \end{array}
                                                                           \right)^t +\left(
                                                                             \begin{array}{c}
                                                                              \frac{K(f)+\Gamma(f,f)}{\mathbf{v}_{p^1,3}^1}\frac{\partial_3 \eta_{p^1}(\mathbf{x}_{p^1}^1)}{\sqrt{g_{p^1,33}(\mathbf{x}_{p^1}^1)}}\cdot e_1  \\
                                                                               \frac{K(f)+\Gamma(f,f)}{\mathbf{v}_{p^1,3}^1}\frac{\partial_3 \eta_{p^1}(\mathbf{x}_{p^1}^1)}{\sqrt{g_{p^1,33}(\mathbf{x}_{p^1}^1)}}\cdot e_2   \\
                                                                                \frac{K(f)+\Gamma(f,f)}{\mathbf{v}_{p^1,3}^1}\frac{\partial_3 \eta_{p^1}(\mathbf{x}_{p^1}^1)}{\sqrt{g_{p^1,33}(\mathbf{x}_{p^1}^1)}}\cdot e_3  \\
                                                                             \end{array}
                                                                           \right)^t, \notag
\end{align}
which is exactly the same as $-\nu(v)\nabla_x \tb f$ in~\eqref{tang: nabla f} and $\nabla_x \tb h$ in~\eqref{tang: h} by applying~\eqref{nabla_tbxb}.

Thus $\nabla_x f$ is continuous at $t=\tb$.

\end{proof}

\section{$C^1_v$ estimate}
In this section we prove the $C^1_v$ estimate, which is~\eqref{estF_v} in Main Theorem.

\begin{proof}[\textbf{Proof of~\eqref{estF_v}}]
We take the $v$ derivative to~\eqref{trajectory} and have
\begin{align}
  \nabla_v f(x,v) &= \mathbf{1}_{t\geq \tb} e^{-\nu \tb} \nabla_v [f(\xb,v)] \label{nablav: 1}\\
   & -\mathbf{1}_{t\geq \tb}  \nabla_v \tb(x,v)  e^{-\nu \tb} f(\xb,v) \label{nablav: 2}\\
   & -\mathbf{1}_{t\geq \tb}  \nabla_v \nu(v) e^{-\nu \tb} f(\xb,v) \label{nablav: 3}\\
   & + \mathbf{1}_{t\leq \tb}e^{-\nu t}   \nabla_v [f(x-tv,v)]\label{nablav: 4}\\
   & -\mathbf{1}_{t\leq \tb}  \nabla_v \nu(v) e^{-\nu t}f(x-tv,v)\label{nablav: 5}\\
   &-\int^t_{\max\{0,t-\tb\}}  \nabla_v \nu(v) e^{-\nu (t-s)} h(x-(t-s)v,v) \dd s\label{nablav: 6}\\
   &+\int^t_{\max\{0,t-\tb\}} e^{-\nu (t-s)} \nabla_v [ h(x-(t-s)v,v) ]\dd s\label{nablav: 7}\\
   &- \mathbf{1}_{t\geq \tb} \nabla_v \tb(x,v) e^{-\nu \tb}h(x-\tb v,v). \label{nablav: 8}
\end{align}

First we estimate~\eqref{nablav: 1} and~\eqref{nablav: 7}, which are the most delicate.

For~\eqref{nablav: 1}, we apply the boundary condition~\eqref{eqn: diffuse for f} to obtain
\begin{align}
   \eqref{nablav:   1} & \lesssim    \nabla_v \Big[\frac{M_W(\xb,v)}{\sqrt{\mu(v)}}  \int_{\mathbf{v}^1_{3}>0}   f(\xb,T^t_{\mathbf{x}_p^1}\mathbf{v}^1)\sqrt{\mu(\mathbf{v}^1)}  \mathbf{v}_3^1 \dd \mathbf{v}^1 + r(\xb,v)   \Big]   \notag\\
     & \lesssim   \frac{\Vert wf\Vert_\infty \Vert T_W-T_0\Vert_{C^1}+\Vert |v|^2 \nabla_v r\Vert_\infty}{|v|^2}  \notag\\
     & +\frac{|v|^2 M_W(\xb,v) }{\sqrt{\mu(v)}|v|^2}  \int_{\mathbf{v}^1_{3}>0}   \underbrace{\nabla_v [f(\xb,T^t_{\mathbf{x}_p^1}\mathbf{v}^1)]}_{\eqref{nablav: estimate of 1}_*}\sqrt{\mu(\mathbf{v}^1)}  \mathbf{v}_3^1 \dd \mathbf{v}^1 .
     \label{nablav: estimate of 1}
\end{align}

Applying~\eqref{nablav tb xb} and using the chain rule we have
\begin{align*}
  \eqref{nablav: estimate of 1}_* &\lesssim     \nabla_v \xb  \nabla_x f(\xb,T^t_{\mathbf{x}_p^1}\mathbf{v}^1)+  \nabla_v T^t_{\mathbf{x}_p^1}\mathbf{v}^1 \nabla_v f(\xb,T^t_{\mathbf{x}_p^1}\mathbf{v}^1)  \\
   &  \lesssim   \underbrace{\frac{\nabla_x f(\xb,T^t_{\mathbf{x}_p^1}\mathbf{v}^1)}{|v|}}_{\eqref{nablav: estimate of 1}_1}+  \underbrace{\nabla_v T^t_{\mathbf{x}_p^1}\mathbf{v}^1 \nabla_v f(\xb,T^t_{\mathbf{x}_p^1}\mathbf{v}^1)}_{\eqref{nablav: estimate of 1}_2}.
\end{align*}
Then we estimate the contribution of both terms in~\eqref{nablav: estimate of 1}. The contribution of$~\eqref{nablav: estimate of 1}_1$ is bounded by
\begin{align}
 & \frac{1}{|v|^2}\int_{n(\xb)\cdot v^1>0}   \frac{\alpha(\xb,v^1)}{|v^1|} \nabla_x f(\xb,v^1)   \sqrt{\mu(v^1)}   \dd v^1         \notag \\
   & \lesssim  \frac{1}{|v|^2}\int_{n(\xb)\cdot v^1>0}   \frac{\Vert \alpha\nabla_x f\Vert_\infty}{|v^1|}    \sqrt{\mu(v^1)} \dd v^1\lesssim
   \frac{\Vert \alpha\nabla_x f\Vert_\infty}{|v|^2}.\label{nablav: estimate of 1_1}
\end{align}

For the contribution of$~\eqref{nablav: estimate of 1}_2$, we exchange the $v$ derivative into $\mathbf{v}^1$ derivative:
\[\nabla_v f(\xb,T^t_{\mathbf{x}_p^1}\mathbf{v}^1)=\nabla_{\mathbf{v}^1}[f(\xb,T^t_{\mathbf{x}_p^1}\mathbf{v}^1) ] T_{\mathbf{x}_p^1}.  \]
Then the contribution of$~\eqref{nablav: estimate of 1}_2$ in~\eqref{nablav: estimate of 1_1} can be written as
\begin{align}
   &\frac{O(1)}{|v|}\int_{\mathbf{v}^1_3>0}   \nabla_v T^t_{\mathbf{x}_p^1} \mathbf{v}^1 \nabla_{\mathbf{v}^1}[f(\xb,T^t_{\mathbf{x}_p^1}\mathbf{v}^1) ] T_{\mathbf{x}_p^1}            \mathbf{v}^1_3  \sqrt{\mu(\mathbf{v}^1)}   \dd \mathbf{v}^1         \notag  \\
   & =\frac{O(1)}{|v|} \int_{\mathbf{v}^1_3>0}   \nabla_v T^t_{\mathbf{x}_p^1} f(\xb,T^t_{\mathbf{x}_p^1}\mathbf{v}^1)  T_{\mathbf{x}_p^1}            \nabla_{\mathbf{v}^1}  \big[\mathbf{v}^1 \mathbf{v}^1_3  \sqrt{\mu(\mathbf{v}^1)} \big]  \dd \mathbf{v}^1    \notag\\
   & \lesssim \frac{O(1)}{|v|}\int_{\mathbf{v}^1_3>0}   \frac{1}{|v|} \Vert \eta\Vert_{C^2}\Vert wf\Vert_\infty      \mu^{1/4}(\mathbf{v}^1)  \dd \mathbf{v}^1 \lesssim \frac{\Vert wf\Vert_\infty}{|v|^2}, \label{nablav: estimate of 1_2}
\end{align}
where we applied an integration by parts to $\dd v^1$ in the second line, and used~\eqref{v deri of T} in the third line.

Combining~\eqref{nablav: estimate of 1_1} and~\eqref{nablav: estimate of 1_2} we conclude
\begin{equation}\label{nablav: bound for estimate of 1}
\eqref{nablav:   1}\lesssim \frac{\Vert wf\Vert_\infty+\Vert \alpha\nabla_x f\Vert_\infty}{|v|^2}.
\end{equation}

Then we estimate~\eqref{nablav:  7}. For $h=K(f)$, we compute
  \begin{align}
   &     \int^t_{\max\{0,t-\tb\}}  e^{-\nu (t-s)} \int_{\mathbb{R}^3}\nabla_v \big[ \mathbf{k}(v,u) f(x-(t-s)v,u) \big] \dd s \notag\\
     & = \int^t_{\max\{0,t-\tb\}}  e^{-\nu (t-s)}   \int_{\mathbb{R}^3} \big[\nabla_v \mathbf{k}(v,u)   f(x-(t-s)v,u)  + \mathbf{k}(v,u) \nabla_v [f(x-(t-s)v,u)]   \big]      \notag\\
     &  \lesssim \int^t_{\max\{0,t-\tb\}}  e^{-\nu (t-s)}   \int_{\mathbb{R}^3} \big[\Vert wf\Vert_\infty    \frac{w^{-1}(u)\langle v\rangle \mathbf{k}_\varrho(v,u)}{|v-u|}+ \mathbf{k}(v,u) (t-s) \nabla_x f(x-(t-s)v,u) \big]\notag\\
     &\lesssim   \int^t_{\max\{0,t-\tb\}}  e^{-\nu (t-s)}   \int_{\mathbb{R}^3}\big[ \Vert wf\Vert_\infty   \frac{e^{-\varrho |v-u|^2/2}}{|v-u|^2} \frac{1}{|v|^2}+ \tb \Vert w_{\tilde{\theta}}\alpha\nabla_x f\Vert_\infty  \frac{w_{\tilde{\theta}}(v)\mathbf{k}(v,u)}{w_{\tilde{\theta}}(u)\alpha(x-(t-s)v,u)}\frac{1}{w_{\tilde{\theta}}(v)} \big]\notag\\
     &\lesssim   \frac{\Vert wf\Vert_\infty}{|v|^2}+ \frac{\tilde{\alpha}(x,v)}{w_{\tilde{\theta}}(v)|v|^2}\frac{\Vert \alpha\nabla_x f\Vert_\infty}{\alpha(x-(t-s)v,v)}\lesssim \frac{\Vert wf\Vert_\infty+\Vert w_{\tilde{\theta}}\alpha\nabla_x f\Vert_\infty}{|v|^2}.
     \label{nablav: estimate of 7K}
  \end{align}
In the fourth line we have used
\begin{align*}
  w^{-1}(u)e^{-\varrho |v-u|^2}\langle v\rangle &=   e^{-\varrho|v-u|^2/2}   e^{-\varrho|v-u|^2/2}e^{-\varrho|u|^2} \langle v\rangle     \\
   & \lesssim e^{-\varrho|v-u|^2/2} \frac{e^{-C|v|^2}\langle v\rangle |v|^2}{|v|^2}\lesssim \frac{e^{-\varrho|v-u|^2/2}}{|v|^2}
\end{align*}
In the last line we have applied $\frac{e^{-\varrho|v-u|^2/2}}{|v-u|^2} \in L^1_u$, \eqref{est:nonlocal_wo_e} in Lemma \ref{Lemma: NLN} and \eqref{tb bounded}.

For $h=\Gamma(f,f)$ we apply~\eqref{nablav K Gamma} to have
\begin{align}
\eqref{nablav:    7}\mathbf{1}_{h=\Gamma(f,f)}   &\lesssim \frac{1}{|v|^2}\int^t_{\max\{0,t-\tb\}} e^{-\nu(t-s)} \Vert wf\Vert_\infty^2+\Vert wf\Vert_\infty \Vert |v|^2\nabla_v f\Vert_\infty  \notag\\
   & \lesssim \frac{\Vert wf\Vert_\infty^2+\Vert wf\Vert_\infty \Vert|v|^2 \nabla_v f\Vert_\infty}{|v|^2}. \label{nablav: estimate of 7G}
\end{align}

Then we estimate all the other terms, which follow from more direct computation. For~\eqref{nablav: 2} we apply~\eqref{nablav tb xb}; for~\eqref{nablav: 3} we apply~\eqref{nablav nu}; for~\eqref{nablav: 4} we apply~\eqref{nablav tb xb} and $t\leq \tb\leq \frac{\tilde{\alpha}(x,v)}{|v|^2}$; for~\eqref{nablav: 5} we apply \eqref{nablav nu}; for~\eqref{nablav: 6} we apply \eqref{nablav nu} and~\eqref{h bounded}; for~\eqref{nablav: 8} we apply \eqref{nablav tb xb} and~\eqref{h bounded}, then we obtain the following bound:
\begin{equation}\label{nablav: estimate of 2}
\begin{split}
   \eqref{nablav: 2} & \lesssim \Vert wf\Vert_\infty \frac{\tilde{\alpha}(x,v)}{\alpha(x,v)|v|^2w^{-1}(v)}\lesssim \frac{\Vert wf\Vert_\infty}{|v|^2},
\end{split}
\end{equation}

\begin{equation}\label{nablav: estimate of 3}
\eqref{nablav: 3}\lesssim \frac{\Vert |v|^2f\Vert_\infty}{|v|^2}\lesssim \frac{\Vert wf\Vert_\infty}{|v|^2},
\end{equation}

\begin{equation}\label{nablav: estimate of 4}
\begin{split}
   \eqref{nablav: 4} & \lesssim e^{-t} t \nabla_x f(x-tv,v)+e^{-t}\nabla_v f(x-tv,v)  \\
     & \lesssim e^{-t}\frac{\Vert \alpha \nabla_x f\Vert_\infty}{|v^2|}+e^{-t}\frac{\Vert |v|^2 \nabla_v f\Vert_\infty}{|v|^2},
\end{split}
\end{equation}

\begin{equation}\label{nablav: estimate of 5}
\eqref{nablav: 5}\lesssim e^{-t}\frac{\Vert |v|^2f \Vert_\infty}{|v|^2}\lesssim \frac{e^{-t}\Vert wf\Vert_\infty}{|v|^2},
\end{equation}

\begin{equation}\label{nablav: estimate of 6}
\eqref{nablav: 6}\lesssim  \Vert wf\Vert_\infty\int^t_{\max\{0,t-\tb\}}  e^{-(t-s)}\dd s \lesssim \frac{\Vert wf\Vert_\infty}{|v|^2},
\end{equation}

\begin{equation}\label{nablav: estimate of 8}
\eqref{nablav: 8} \lesssim \frac{\Vert wf\Vert_\infty}{|v|^2}.
\end{equation}

Combining~\eqref{nablav: bound for estimate of 1},\eqref{nablav: estimate of 2},\eqref{nablav: estimate of 3},\eqref{nablav: estimate of 4},\eqref{nablav: estimate of 5},\eqref{nablav: estimate of 6},\eqref{nablav: estimate of 7G},\eqref{nablav: estimate of 7K} and~\eqref{nablav: estimate of 8} we conclude
\[|\nabla_v f|\lesssim  \Vert T_W-T_0\Vert_{C^1}\frac{(e^{-t}+\Vert wf\Vert_\infty)\Vert |v|^2\nabla_v f\Vert_\infty +\Vert wf\Vert_\infty+\Vert \alpha\nabla_x f\Vert_\infty}{|v|^2}.\]
Since $e^{-t}\ll 1$ from $t\gg 1$ and $\Vert wf\Vert_\infty \ll 1$ from Existence Theorem, we conclude the proof.

\end{proof}

\section{$C^{1,\beta}$ Solutions in Convex Domains}
 Given the continuity of the $C^1$ solution in section 5.2, in this section we prove the H\"{o}lder regularity, which are~\eqref{estF_C1beta} and~\eqref{estF_C1betatang} in the \textbf{Main Theorem}. Since the proof of (\ref{estF C1v betax}) is simpler than these two, we omit it.

For simplicity we denote
\begin{equation}\label{C1beta norm}
[\nabla_x f(\cdot,v)]_{C^{0,\beta}_{x;2+\beta}}:=\sup_{x,y\in \Omega}\big\Vert w_{\tilde{\theta}}(v)|v|^2\min\{\frac{\alpha(x,v)}{|v|},\frac{\alpha(y,v)}{|v|}\}^{2+\beta} \frac{\nabla_x f(x,v)-\nabla_x f(y,v)}{|x-y|^\beta}\big\Vert_{L_v^\infty},
\end{equation}
\begin{equation}\label{C1beta tang norm}
[\nabla_{x_\parallel}f(\cdot,v)]_{C^{0,\beta}_{x;1+\beta}}:=\sup_{x,y\in \Omega}\big\Vert w_{\tilde{\theta}/2}(v) |v|^2 \min\{\frac{\alpha(x,v)}{|v|},\frac{\alpha(y,v)}{|v|}\}^{1+\beta}\frac{|\nabla_\parallel f(x,v)-\nabla_\parallel f(y,v)|}{|x-y|^\beta}\big\Vert_{L_v^\infty}.
\end{equation}
Here $\nabla_\parallel=G(x)\nabla_x$ and $G$ is defined in~\eqref{G}.
We note that the weight in $\eqref{C1beta norm}$ and $\eqref{C1beta tang norm}$ are different in terms of the power.

To prove the weighted $C^{1,\beta}$ we will estimate the trajectory starting from two different positions $x$ and $y$. In result we define the backward exit time and positions corresponding to them.

The first backward exit position and time are denoted using the previous notation as
\[\xb(x,v),\quad \xb(y,v),\quad \tb(x,v),\quad \tb(y,v).\]
For simplicity we denote the second backward exit position and time as
\begin{equation}\label{second backward}
\xb^2(x)=\xb(\xb(x,v),v^1),\quad \xb^2(y)=\xb(\xb(y,v),v^1),\quad \tb^2(x)=\tb(\xb(x,v),v^1),\quad \tb^2(y)=\tb(\xb(y,v),v^1).
\end{equation}

Similarly to Definition \ref{definition: chart}, we choose $p^1(x),p^2(x),p^1(y),p^2(y)\in \mathcal{P}$ such that
\begin{equation}\label{xpk x}
\mathbf{x}_{p^i(x)}^i:=(\mathbf{x}_{p^i(x),1}^i,\mathbf{x}_{p^i(x),2}^i,0) \text{ such that }\eta_{p^i(x)}(\mathbf{x}_{p^i(x)}^i)=\left\{
                                     \begin{array}{ll}
                                       \xb(x,v), & \hbox{$i=1$;} \\
                                       \xb^2(x), & \hbox{$i=2$.}
                                     \end{array}
                                   \right.
\end{equation}
\begin{equation}\label{xpk y}
\mathbf{x}_{p^i(y)}^i:=(\mathbf{x}_{p^i(y),1}^i,\mathbf{x}_{p^i(y),2}^i,0) \text{ such that }\eta_{p^i(y)}(\mathbf{x}_{p^i(y)}^i)=\left\{
                                     \begin{array}{ll}
                                       \xb(y,v), & \hbox{$i=1$;} \\
                                       \xb^2(y), & \hbox{$i=2$.}
                                     \end{array}
                                   \right.
\end{equation}

Without of loss of generality and also for the purpose of simplicity, throughout this section we assume
\begin{equation}\label{simplicity}
\max\{\Vert wf\Vert_\infty,\Vert |v|\nabla_\parallel f \Vert_\infty,\Vert |v|^2\nabla_v f\Vert_\infty\}\leq \Vert \alpha\nabla_x f\Vert_\infty\leq \Vert \alpha\nabla_x f\Vert_\infty^2 \leq \Vert w_{\tilde{\theta}}\alpha\nabla_x f\Vert_\infty^2.
\end{equation}

Then the bound of~\eqref{C1beta norm} and~\eqref{C1beta tang norm} are given by the following proposition.
\begin{proposition}\label{Prop: C1beta}
Suppose $F=\mu+\sqrt{\mu}f$ solves the steady Boltzmann equation~\eqref{BE_F} with boundary condition~\eqref{diffuseBC_F}, then
\begin{equation}\label{Bound of C1beta}
[\nabla_{x}f(\cdot,v)]_{C^{0,\beta}_{x;2+\beta}}\lesssim o(1)[\nabla_{x_\parallel}f(\cdot,v)]_{C^{0,\beta}_{x;1+\beta}}+C_{\e}\Vert T_W-T_0\Vert_{C^2}\Vert w_{\tilde{\theta}}\alpha\nabla_x f \Vert_\infty^2,
\end{equation}
and
\begin{equation}\label{Bound of C1beta tangential}
[\nabla_{x_\parallel}f(\cdot,v)]_{C^{0,\beta}_{x;1+\beta}}\lesssim o(1)[\nabla_x f(\cdot,v)]_{C^{0,\beta}_{x;2+\beta}}+C_{\e}\Vert T_W-T_0\Vert_{C^2}\Vert w_{\tilde{\theta}}\alpha\nabla_x f \Vert_\infty^2,
\end{equation}
where $C_\e\gg 1$.
\end{proposition}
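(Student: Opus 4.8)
The plan is to prove the coupled estimates \eqref{Bound of C1beta} and \eqref{Bound of C1beta tangential} simultaneously via the Duhamel/stochastic-cycle expansion of Section 3, exactly mirroring the strategy used for the $C^1$ bounds \eqref{estF_n} and \eqref{estF_tau} but now applied to the \emph{difference quotient} $\frac{\nabla_x f(x,v)-\nabla_x f(y,v)}{|x-y|^\beta}$. The first reduction is: if $|x-y|>\e\min\{\tilde\alpha(x,v)/|v|,\tilde\alpha(y,v)/|v|\}$, then each of the two terms $\nabla_x f(x,v)$, $\nabla_x f(y,v)$ is separately controlled by $\|w_{\tilde\theta}\alpha\nabla_x f\|_\infty$ via \eqref{estF_n}, and dividing by $|x-y|^\beta\gtrsim (\e\min\{\dots\})^\beta$ produces a bound with weight $\min\{\alpha/|v|\}^{-\beta}$, which is absorbed by the stronger weight $\min\{\alpha/|v|\}^{2+\beta}$ (resp. $\min\{\alpha/|v|\}^{1+\beta}$) in \eqref{C1beta norm}, \eqref{C1beta tang norm}. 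So throughout we may assume $|x-y|\leq \e\min\{\tilde\alpha(x,v)/|v|,\tilde\alpha(y,v)/|v|\}$, which lets us invoke Lemma \ref{Lemma: x-y} (i.e. \eqref{alpha geq min}) to replace $\alpha(x(\tau),v)$ by $\min\{\alpha(x,v),\alpha(y,v)\}$ along the segment $x(\tau)=\tau x+(1-\tau)y$.

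Next I would write $\frac{\nabla_x f(x,v)-\nabla_x f(y,v)}{|x-y|^\beta}$ using the difference of the Duhamel formula \eqref{fx_x_1}--\eqref{fx_x_5}. This splits into: (i) differences of the geometric factors $\nabla_x\tb$, $\nabla_x\xb$, $e^{-\nu\tb}$, $n(\xb)$, estimated by Lemma \ref{Lemma: min max} (the key inputs being \eqref{min: nabla xb} with weight $\min\{\alpha/|v|\}^{2+\beta}$, \eqref{min: I nabla xb}, \eqref{min: I nabla tb} with the improved power $1+\beta$ for the $G$-projected versions, and \eqref{min: nxb}, \eqref{min: tb}, \eqref{min: f}); (ii) the boundary term $\frac{\nabla_x f(\xb(x,v),v^1)-\nabla_x f(\xb(y,v),v^1)}{|\xb(x,v)-\xb(y,v)|^\beta}$ integrated against the diffuse kernel, which is iterated once more along the trajectory exactly as in Proposition \ref{prop_fx}; and (iii) the collision term $\frac{1}{|x-y|^\beta}\int e^{-\nu(t-s)}\nabla_x h(\cdot)\,ds$ together with the harmful residual $\frac{1}{|x-y|^\beta}\int_{\tb(y,v)}^{\tb(x,v)} Q(\nabla_x F,F)(x-sv,v)\,ds$. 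For (iii), the $K$-contribution is handled by the $s'\gtrless s-\e$ splitting of \eqref{px_f_3_Ku}: for $s'\geq s-\e$ convert $\nabla_x$ to $\nabla_v$ (as in \eqref{x_to_v}) and integrate by parts, for $s'\leq s-\e$ use the nonlocal-to-local estimates of Lemma \ref{Lemma: NLN}, now in the form \eqref{integrate alpha beta}, \eqref{integrate alpha beta small}, \eqref{integrate xi beta/2}, \eqref{integrate beta<1}, \eqref{integrate k/v-u} which are precisely tailored to the powers $p=2+\beta$, $1+\beta$, $\beta<1$; the $\Gamma$-contribution uses \eqref{Gamma_est}, \eqref{G Gamma} plus the same NLL estimates. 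The residual $Q(\nabla_x F,F)$ term is bounded using $\|\alpha\nabla_x f\|_\infty$ and the logarithmic bound $Q(\nabla_x F,F)(x-sv,v)\sim |\ln|\xi(x-sv)||$, so that $\int_{\tb(y,v)}^{\tb(x,v)}$ gives $\min\{\alpha/|v|\}\,|\ln\min\{\alpha/|v|\}|$, and division by $|x-y|^\beta\lesssim\min\{\alpha/|v|\}^\beta$ with $\beta<1$ leaves something bounded by $\min\{\alpha/|v|\}^{1-\beta}|\ln(\cdot)|$, absorbable into the weight.

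The genuinely delicate point — and the reason the two estimates must be proved \emph{together} — is term (ii) at the second bounce: after converting the $v^1$-integral to an integral over $(\mathbf{x}^2_{p^2},\tb^2)$ via Lemma \ref{Lemma: change of variable} and integrating by parts in $\mathbf{x}^2_{p^2}$ (as in \eqref{p_xf_total1_under1}--\eqref{p_xf_total1_under3}), the boundary term produces $\frac{f(\xb^2(x),v^1)-f(\xb^2(y),v^1)}{|\xb^2(x)-\xb^2(y)|^\beta}$, which via \eqref{min: f} carries an extra singularity $\alpha^{-\beta}$, rendering the resulting $v^1$-integral \emph{non-integrable} if one only has the normal-derivative weight $2+\beta$. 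The resolution is the $|n(\xb)\cdot v^1|\gtrless\e$ split: for $|n(\xb)\cdot v^1|>\e$ one simply integrates by parts (no singularity issue); for $|n(\xb)\cdot v^1|<\e$ one does \emph{not} integrate by parts but instead observes that $\nabla_{\xb^2(x)}f-\nabla_{\xb^2(y)}f$ is a \emph{tangential} difference (since $\xb^2(x),\xb^2(y)\in\partial\Omega$), hence controlled by $[\nabla_{x_\parallel}f(\cdot,v)]_{C^{0,\beta}_{x;1+\beta}}$ with the weaker power $1+\beta$, which combined with the extra $|n(\xb)\cdot v^1|$ from the diffuse measure makes the $v^1$-integral convergent ($\beta<1$) and, moreover, extracts a small factor $O(\e)$. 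Symmetrically, to close \eqref{Bound of C1beta tangential} one differentiates $\nabla_\parallel f=G(x)\nabla_x f$ along the trajectory (using Lemma \ref{Lemma: Gf-Gf} to handle the difference $G(x)\nabla_x f(x-sv)-G(y)\nabla_x f(y-sv)$, which re-introduces $[\nabla_x f(\cdot,v)]_{C^{0,\beta}_{x;2+\beta}}$ with coefficient $\tilde\alpha/|v|$), and exploits that the geometric difference-quotient inputs for the $G$-projected quantities, \eqref{min: I nabla xb}--\eqref{min: I nabla tb}, carry the improved power $1+\beta$; the optimality of this power is dictated by \eqref{nabla xb H}. One then obtains the system
\begin{align*}
[\nabla_{x}f]_{C^{0,\beta}_{x;2+\beta}}&\lesssim o(1)[\nabla_{x_\parallel}f]_{C^{0,\beta}_{x;1+\beta}}+C_\e\|T_W-T_0\|_{C^2}\|w_{\tilde\theta}\alpha\nabla_x f\|_\infty^2,\\
[\nabla_{x_\parallel}f]_{C^{0,\beta}_{x;1+\beta}}&\lesssim o(1)[\nabla_x f]_{C^{0,\beta}_{x;2+\beta}}+C_\e\|T_W-T_0\|_{C^2}\|w_{\tilde\theta}\alpha\nabla_x f\|_\infty^2,
\end{align*}
where the $o(1)$'s come from (a) the time $t\gg1$ via $e^{-\nu_0 t}$, (b) the small-time integrals $O(\e)$ from Lemma \ref{Lemma: NLN}, and (c) the smallness $\|wf\|_\infty\ll1$ from the Existence Theorem multiplying the $\Gamma$-contributions; the $\|w_{\tilde\theta}\alpha\nabla_x f\|_\infty^2$ (rather than $\|\cdot\|_\infty$) appears because the residual $Q(\nabla_x F,F)$ and the $\Gamma$ differences are quadratic in $f$ together with its first derivative, and $\|T_W-T_0\|_{C^2}$ enters through the $C^{1,\beta}$ bounds on $r$ and $M_W$, i.e. \eqref{Estimate for r beta}, \eqref{Estimate for M_W beta}. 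This is exactly the assertion of Proposition \ref{Prop: C1beta}. The main obstacle, as flagged, is the bookkeeping of term (ii) at the second bounce: getting the $|n(\xb)\cdot v^1|\gtrless\e$ dichotomy and the tangential reinterpretation to interlock so that the non-integrable $\alpha^{-(1+\beta)}$ is downgraded to an integrable $\alpha^{-\beta}$ while simultaneously yielding the $o(1)$ factor.
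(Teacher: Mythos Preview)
Your proposal is correct and follows essentially the same route as the paper's proof: the Duhamel difference expansion organized via Lemma~\ref{Lemma: intermediate estimate}, the boundary second-bounce handled by the $\alpha\gtrless\e$ (equivalently $|n(\xb)\cdot v^1|\gtrless\e$) split with the change of variables \eqref{map_v_to_xbtb} plus integration by parts on the large-$\alpha$ side and the tangential $[\nabla_{x_\parallel}f]_{C^{0,\beta}_{x;1+\beta}}$ norm on the small-$\alpha$ side, and the collision term treated by the $s-s'\gtrless\e$ split together with Lemma~\ref{Lemma: NLN} and Lemma~\ref{Lemma: gamma-gamma}. One slip: you have the two $s'$ cases reversed --- it is when $s-s'\geq\e$ (i.e.\ $s'\leq s-\e$) that one converts $\nabla_x\to\frac{-1}{s-s'}\nabla_u$ and integrates by parts, and when $s-s'\leq\e$ that one invokes the nonlocal-to-local smallness \eqref{integrate alpha beta small}.
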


These two estimates together conclude~\eqref{estF_C1beta} and~\eqref{estF_C1betatang}.

Below we present two lemmas about the collision operators. We will use them in the proof when we estimate the difference of the collision operators.

\begin{lemma}\label{Lemma: h-h}
For $h(x,v)=Kf(x,v)+\Gamma(f,f)(x,v)$, we have
\begin{align}\label{h-h bounded}
\frac{|h(x,v)-h(y,v)|}{|x-y|^\beta}   &  \lesssim \frac{\Vert w_{\tilde{\theta}}\alpha\nabla_x f\Vert_\infty}{w_{\tilde{\theta}}(v)\min\{\alpha(x,v),\alpha(y,v)\}^\beta},
\end{align}

\begin{equation}\label{nabla h bounded}
\nabla_x h(x,v)\lesssim  \frac{\Vert w_{\tilde{\theta}}\alpha\nabla_x f\Vert_\infty}{w_{\tilde{\theta}}(v)} \int_{\mathbb{R}^3}\frac{\mathbf{k}_{\tilde{\varrho}}(v,u)}{\alpha(x,u)}\dd u+ \frac{\Vert w_{\tilde{\theta}}\alpha\nabla_x f\Vert_\infty}{w_{\tilde{\theta}}(v)\alpha(x,v)}.
\end{equation}

\end{lemma}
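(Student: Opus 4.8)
\textbf{Proof proposal for Lemma \ref{Lemma: h-h}.}

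The plan is to treat $K$ and $\Gamma$ separately, using the explicit kernel representation from Lemma \ref{Lemma: K,Gamma} together with the already-established difference quotient bound~\eqref{min: f} and the $L^1_u$-integrability properties~\eqref{k_varrho}, \eqref{integrate beta<1}, and the exponent-shift estimate~\eqref{k_theta}. For the first estimate~\eqref{h-h bounded}, I would write
\[
\frac{Kf(x,v)-Kf(y,v)}{|x-y|^\beta}=\int_{\mathbb{R}^3}\mathbf{k}(v,u)\,\frac{f(x,u)-f(y,u)}{|x-y|^\beta}\,\dd u,
\]
and plug in~\eqref{min: f}, which controls the integrand by $\frac{\Vert wf\Vert_\infty^{1-\beta}\Vert w_{\tilde{\theta}}\alpha\nabla_x f\Vert_\infty^{\beta}}{w_{2\tilde{\theta}}(u)\min\{\alpha(x,u),\alpha(y,u)\}^{\beta}}$. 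Using~\eqref{simplicity} to absorb the powers of norms into $\Vert w_{\tilde\theta}\alpha\nabla_x f\Vert_\infty$, it remains to bound $\int \mathbf{k}(v,u)\frac{w_{\tilde\theta}(v)}{w_{2\tilde\theta}(u)}\frac{1}{\min\{\alpha(x,u),\alpha(y,u)\}^\beta}\dd u$; after a shift of exponent via~\eqref{k_theta} this is $\int \mathbf{k}_{\tilde\varrho}(v,u)\min\{\alpha(x,u),\alpha(y,u)\}^{-\beta}\dd u$, which is $\lesssim 1$ by~\eqref{integrate beta<1} applied with $\min\{\alpha(x,u),\alpha(y,u)\}^{-\beta}\leq \alpha(x,u)^{-\beta}+\alpha(y,u)^{-\beta}$ and monotonicity to pull out $\min\{\alpha(x,v),\alpha(y,v)\}^{-\beta}$ using~\eqref{Velocity_lemma}. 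For the $\Gamma$ contribution, I would use the Grad-type bound~\eqref{gamma bounded} and the decomposition of $\Gamma$ into gain/loss as in the proof of~\eqref{h bounded}, reducing everything to an integral of the same shape $\int \mathbf{k}_{\tilde\varrho}(v,u)\frac{|f(x,u)-f(y,u)|}{|x-y|^\beta}\dd u$ plus a local term $\Vert wf\Vert_\infty\frac{|f(x,v)-f(y,v)|}{|x-y|^\beta}$, both handled the same way.

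For the second estimate~\eqref{nabla h bounded}, I would apply~\eqref{Gamma_est} for the $\Gamma$ part, giving $|\nabla_x\Gamma(f,f)(x,v)|=O(\Vert wf\Vert_\infty)\{|\nabla_x f(x,v)|+\int \mathbf{k}_\varrho(v,u)|\nabla_x f(x,u)|\dd u\}$. I bound $|\nabla_x f(x,v)|\leq \frac{\Vert w_{\tilde\theta}\alpha\nabla_x f\Vert_\infty}{w_{\tilde\theta}(v)\alpha(x,v)}$ directly, and for the integral term insert the weight to get $\int \mathbf{k}_\varrho(v,u)\frac{w_{\tilde\theta}(v)}{w_{\tilde\theta}(u)}\frac{1}{\alpha(x,u)}\dd u\cdot\frac{\Vert w_{\tilde\theta}\alpha\nabla_x f\Vert_\infty}{w_{\tilde\theta}(v)}$, which by~\eqref{k_theta} becomes $\int\mathbf{k}_{\tilde\varrho}(v,u)\alpha(x,u)^{-1}\dd u$. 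For the $Kf$ part I use $\nabla_x Kf(x,v)=\int\mathbf{k}(v,u)\nabla_x f(x,u)\dd u$ and the same manipulation. The $\int\mathbf{k}_{\tilde\varrho}(v,u)\alpha(x,u)^{-1}\dd u$ term is exactly the first term on the right of~\eqref{nabla h bounded}; no further simplification is needed since this lemma's statement keeps it in that form. (If one wants the $s$-integrated version later, Lemma \ref{Lemma: NLN} supplies the bound, but that is not needed here.)

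The main obstacle I anticipate is bookkeeping the weight exponents so that every appearance of $\frac{w_{\tilde\theta}(v)}{w_?(u)}\mathbf{k}_\varrho(v,u)$ genuinely satisfies the hypotheses of Lemma \ref{Lemma: k tilde} (i.e. $\tilde\theta/4<\varrho$ and the resulting $\tilde\varrho$ stays positive), particularly where~\eqref{min: f} has already consumed a factor $w_{2\tilde\theta}(u)^{-1}$ in the denominator — one must check $w_{\tilde\theta}(v)/w_{2\tilde\theta}(u)\le w_{\tilde\theta}(v)/w_{\tilde\theta}(u)$ and only then shift. A secondary (routine) point is justifying the monotonic transfer $\min\{\alpha(x,u),\alpha(y,u)\}\gtrsim \min\{\alpha(x,v),\alpha(y,v)\}$: here I cannot invoke Lemma \ref{Lemma: x-y} because $u$ is the collision variable, not along the trajectory, so instead I keep $\alpha(x,u)^{-\beta}$ inside the $u$-integral, apply~\eqref{integrate beta<1} to get an $O(1)$ bound, and only then observe that the target denominator $\min\{\alpha(x,v),\alpha(y,v)\}^\beta\le 1$ makes the stated inequality trivially consistent — i.e. the right-hand side of~\eqref{h-h bounded} dominates an $O(1)$ bound precisely because $\alpha\le 1$. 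Everything else is a direct combination of the cited lemmas with no new ideas.
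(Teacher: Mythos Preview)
Your proposal is correct and follows essentially the same route as the paper: split $h=Kf+\Gamma(f,f)$, use the bilinear decomposition $\Gamma(f,f)(x)-\Gamma(f,f)(y)=\Gamma(f(x)-f(y),f(x))+\Gamma(f(y),f(x)-f(y))$ together with the Grad-type kernel bound to reduce to $\int\mathbf{k}(v,u)\frac{|f(x,u)-f(y,u)|}{|x-y|^\beta}\dd u$ plus the local term, then invoke \eqref{min: f} and \eqref{integrate beta<1}; for \eqref{nabla h bounded} the paper likewise inserts the weighted $\alpha$-norm and shifts the exponent via Lemma~\ref{Lemma: k tilde}. Your self-correction in the last paragraph is exactly the point: there is no transfer from $\alpha(x,u)$ to $\alpha(x,v)$---the integral term is simply $O(1)$ by \eqref{integrate beta<1}, and since $\alpha\lesssim 1$ this is dominated by the stated right-hand side (the genuine $\min\{\alpha(x,v),\alpha(y,v)\}^{-\beta}$ comes only from the local term $|f(x,v)-f(y,v)|/|x-y|^\beta$).
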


\begin{proof}
Since
\begin{equation}\label{Gammax-Gammay}
\begin{split}
|\Gamma(f,f)(x,v)-\Gamma(f,f)(y,v)|&=      \Gamma(f(x)-f(y),f(x))(v)+\Gamma(f(y),f(x)-f(y))(v)         \\
&\lesssim  \Vert wf\Vert_\infty \big( \int_{\mathbb{R}^3}\mathbf{k}(v,u)|f(x,u)-f(y,u)|\dd u+|f(x,v)-f(y,v)|\big) ,
\end{split}
\end{equation}
we have
\begin{align*}
   &\frac{h(x,v)-h(y,v)}{|x-y|^\beta} \\
   &=\frac{K[f(x,v)-f(y,v)]+\Gamma(f,f)(x,v)-\Gamma(f,f)(y,v)}{|x-y|^\beta}  \\
   & \lesssim \int_{\mathbb{R}^3} \mathbf{k}(v,u)\frac{f(x,u)-f(y,u)}{|x-y|^\beta} \dd u\\
   &+     \Vert wf\Vert_\infty \int_{\mathbb{R}^3} \mathbf{k}(v,u)\frac{f(x,u)-f(y,u)}{|x-y|^\beta} \dd u+ \frac{|f(x,v)-f(y,v)|}{|x-y|^\beta}\\
  & \lesssim   \frac{\Vert wf\Vert_\infty \Vert w_{\tilde{\theta}}\alpha\nabla_x f\Vert_\infty}{w_{\tilde{\theta}}(v)} \Big[ \int_{\mathbb{R}^3}\frac{w_{\tilde{\theta}}(u)\mathbf{k}_\varrho(v,u)}{w_{\tilde{\theta}}(u)\min\{\alpha(x,u),\alpha(y,u)\}^\beta}+\frac{1}{w_{\tilde{\theta}}(v)\min\{\alpha(x,v),\alpha(y,v)\}^\beta}\Big]\\
  & \lesssim \frac{\Vert w_{\tilde{\theta}}\alpha\nabla_x f\Vert_\infty}{w_{\tilde{\theta}}(v)\min\{\alpha(x,v),\alpha(y,v)\}^\beta},
\end{align*}
where we have applied~\eqref{min: f} and~\eqref{integrate beta<1} in Lemma \ref{Lemma: NLN}.

Then we prove~\eqref{nabla h bounded}. Clearly from Lemma \ref{Lemma: k tilde},
\begin{align*}
\nabla_x Kf(x,v)&=\int_{\mathbb{R}^3} \mathbf{k}(v,u)\nabla_x f(x,u)\lesssim \frac{\Vert w_{\tilde{\theta}}\alpha\nabla_x f\Vert_\infty}{w_{\tilde{\theta}}(v)}  \int_{\mathbb{R}^3} \frac{\mathbf{k}(v,u)}{\alpha(x,u)}\frac{w_{\tilde{\theta}}(v)}{w_{\tilde{\theta}}(u)}\lesssim \frac{\Vert w_{\tilde{\theta}}\alpha\nabla_x f\Vert_\infty}{w_{\tilde{\theta}}(v)}  \int_{\mathbb{R}^3} \frac{\mathbf{k}_{\tilde{\varrho}}(v,u)}{\alpha(x,u)}.
\end{align*}

For $\Gamma$, we bound
\begin{align*}
 \nabla_x \Gamma(f,f)(x,v)  & =\Gamma(\nabla_x f,f)+\Gamma(f,\nabla_x f)\lesssim \Vert wf\Vert_\infty \Big[|\nabla_x f(x,v)|+ \int_{\mathbb{R}^3} |\mathbf{k}(v,u)\nabla_x f(x,u)| \Big] \\
   & \lesssim \frac{\Vert w_{\tilde{\theta}}\alpha \nabla_x f \Vert_\infty}{w_{\tilde{\theta}}(v)\alpha(x,v)}+ \frac{\Vert w_{\tilde{\theta}}\alpha\nabla_x f\Vert_\infty}{w_{\tilde{\theta}}(v)} \int_{\mathbb{R}^3} \frac{\mathbf{k}(v,u)}{\alpha(x,u)}\frac{w_{\tilde{\theta}}(v)}{w_{\tilde{\theta}}(u)} \dd u,
\end{align*}
again by Lemma \ref{Lemma: k tilde} we conclude the lemma.

\end{proof}

\begin{lemma}\label{Lemma: gamma-gamma}
Denote $x'=x-(t-s)v$, $y'=y-(t-s)v$. For the difference of $\nabla_x\Gamma$, we have
\begin{align}
  & \int_0^te^{-\nu(v)(t-s)}\Big|\frac{\nabla_x \Gamma(f,f)(x',v)-\nabla_x \Gamma(f,f)(y',v)}{|x-y|^\beta}\Big|  \notag \\
   & \lesssim     \frac{o(1) [\nabla_x f(\cdot ,v)]_{C_{x,2+\beta}^{0,\beta}}  +\Vert w_{\tilde{\theta}} \alpha\nabla_x f\Vert^2_\infty}{w_{\tilde{\theta}}(v)|v|^2\min\{\frac{\alpha(x,v)}{|v|},\frac{\alpha(y,v)}{|v|}\}^{2+\beta}},
   \label{int gamma-gamma}
\end{align}
and

\begin{align}
  & \int_0^te^{-\nu(v)(t-s)}\Big|\frac{G(x')\nabla_x \Gamma(f,f)(x',v)-G(y')\nabla_x \Gamma(f,f)(y',v)}{|x-y|^\beta}\Big|  \notag \\
   & \lesssim     \frac{o(1)[\nabla_\parallel f(\cdot ,v)]_{C_{x,1+\beta}^{0,\beta}} +\Vert w_{\tilde{\theta}} \alpha\nabla_x f\Vert^2_\infty}{w_{\tilde{\theta}/2}(v)|v|^2\min\{\frac{\alpha(x,v)}{|v|},\frac{\alpha(y,v)}{|v|}\}^{1+\beta}}.
   \label{G gamma- G gamma}
\end{align}

\end{lemma}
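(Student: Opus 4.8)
\textbf{Proof plan for Lemma~\ref{Lemma: gamma-gamma}.}
The plan is to reduce the two estimates to the already-established difference quotient bounds for $f$ and $\nabla_x f$ (namely~\eqref{min: f},~\eqref{Bound of C1beta}, and the weighted $C^1$ bounds~\eqref{estF_n}--\eqref{estF_tau}), together with the kernel estimates in Lemma~\ref{Lemma: K,Gamma}, Lemma~\ref{Lemma: k tilde}, and the nonlocal-to-local estimates of Lemma~\ref{Lemma: NLN}. The starting point is the bilinearity identity used throughout the paper,
\[
\nabla_x\Gamma(f,f) = \Gamma(\nabla_x f, f) + \Gamma(f,\nabla_x f),
\]
so that the difference $\nabla_x\Gamma(f,f)(x',v)-\nabla_x\Gamma(f,f)(y',v)$ splits into a ``difference in the first slot'' piece and a ``difference in the second slot'' piece. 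In each piece I would insert and subtract an intermediate term so that one factor carries the full difference quotient $\frac{\nabla_x f(x',\cdot)-\nabla_x f(y',\cdot)}{|x-y|^\beta}$ or $\frac{f(x',\cdot)-f(y',\cdot)}{|x-y|^\beta}$ while the other factor is bounded in $L^\infty$ with the appropriate weight (via~\eqref{infty_bound},~\eqref{estF_n}). Using the Grad-type bound~\eqref{Gamma_est}, each resulting term is of the schematic form $\|wf\|_\infty$ times either $\big|\frac{\nabla_x f(x',v)-\nabla_x f(y',v)}{|x-y|^\beta}\big|$ or $\int_{\R^3}\mathbf{k}_\varrho(v,u)\big|\frac{\nabla_x f(x',u)-\nabla_x f(y',u)}{|x-y|^\beta}\big|\,\dd u$, and similarly with $f$ in place of $\nabla_x f$ coming from the $\Gamma_{v}$-type lower-order terms; the $\|wf\|_\infty$ prefactor is what produces the $o(1)$ on the semi-norm and the $\Vert w_{\tilde\theta}\alpha\nabla_x f\Vert_\infty^2$ on the lower-order leftovers, after invoking~\eqref{simplicity}.

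For~\eqref{int gamma-gamma} I would then plug in the weighted bound
$\big|\frac{\nabla_x f(x',u)-\nabla_x f(y',u)}{|x-y|^\beta}\big| \lesssim \frac{[\nabla_x f(\cdot,u)]_{C^{0,\beta}_{x;2+\beta}}}{w_{\tilde\theta}(u)|u|^2}\min\{\tfrac{\alpha(x',u)}{|u|},\tfrac{\alpha(y',u)}{|u|}\}^{-(2+\beta)}$, move the weight $\mathbf{k}_\varrho(v,u)\frac{w_{\tilde\theta}(v)}{w_{\tilde\theta}(u)}\lesssim\mathbf{k}_{\tilde\varrho}(v,u)$ via Lemma~\ref{Lemma: k tilde}, perform the $\dd u$ integral and the $\dd s$ time integral along the trajectory using~\eqref{integrate alpha beta} of Lemma~\ref{Lemma: NLN} with $p=2+\beta<3$, and use~\eqref{est:int1/alpha0}/\eqref{est:nonlocal_wo_e} for the ``diagonal'' term without the kernel. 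The exponential factor $e^{-\nu(v)(t-s)}$ supplies the convergence in $s$ and, together with $\|wf\|_\infty\ll1$, gives the $o(1)$. The lower-order $\Gamma_v$-type contributions involving only $\frac{f(x',u)-f(y',u)}{|x-y|^\beta}$ are handled the same way but using~\eqref{min: f} and~\eqref{integrate beta<1}, and they land inside the $\Vert w_{\tilde\theta}\alpha\nabla_x f\Vert_\infty^2$ term after~\eqref{simplicity}. The estimate~\eqref{G gamma- G gamma} is proved in parallel: I would first commute $G(x')$ through the bilinear form exactly as in~\eqref{G nabla Gamma}, giving $\Gamma(G(x')\nabla_x f, f)+\Gamma(f, G(x')\nabla_x f)$ at the point $x'$, then write
$G(x')\nabla_x\Gamma(x')-G(y')\nabla_x\Gamma(y') = [G(x')-G(y')]\nabla_x\Gamma(x') + G(y')[\nabla_x\Gamma(x')-\nabla_x\Gamma(y')]$, bound $|G(x')-G(y')|\lesssim |x'-y'|\lesssim |x-y|$ by~\eqref{n(x)-n(xb)}-type reasoning (so the $|x-y|^{1-\beta}$ is absorbed), and for the second term repeat the slot-by-slot splitting but now feeding in the \emph{tangential} difference quotient bound $\big|\frac{\nabla_\parallel f(x',u)-\nabla_\parallel f(y',u)}{|x-y|^\beta}\big|$, which carries the lighter weight with power $1+\beta$ and the weight $w_{\tilde\theta/2}$; here I would use the bound~\eqref{Gf-Gf 1} of Lemma~\ref{Lemma: Gf-Gf} to relate $G(x')\nabla_x f(x')-G(y')\nabla_x f(y')$ to $\nabla_\parallel f(x')-\nabla_\parallel f(y')$ plus controllable remainders, and then~\eqref{integrate alpha beta} with $p=1+\beta$.

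The main obstacle I anticipate is bookkeeping the exponential velocity weights so that the final estimate comes out with exactly $w_{\tilde\theta}(v)$ in~\eqref{int gamma-gamma} and $w_{\tilde\theta/2}(v)$ in~\eqref{G gamma- G gamma}: the $\Gamma_v$-type terms carry an extra Maxwellian factor $\nabla_v\sqrt{\mu(v+u)}$ that must be split as in the proof of~\eqref{nablav K Gamma} to absorb the polynomial loss $|v|^{-2}$ and to convert $e^{-\varrho|v-u|^2}$ into $\mathbf{k}_{\tilde\varrho}$ while keeping a residual Gaussian in $|v|$; getting the $\tilde\theta$ versus $\tilde\theta/2$ thresholds consistent with Lemma~\ref{Lemma: k tilde} (which requires $\tilde\theta/4<\varrho$ and $\tilde\varrho<\varrho-\tilde\theta/4$) is the delicate point. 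A secondary subtlety is that the nonlocal-to-local integral~\eqref{integrate alpha beta} is only valid for $1<p<3$, which is why the weight powers $2+\beta$ and $1+\beta$ (both in $(1,3)$ since $0<\beta<1$) are exactly the borderline admissible ones; one must check that no term forces a power $\ge 3$, and the slot-splitting above is arranged precisely to avoid that. Once these weight computations are carried out, the time integration against $e^{-\nu(v)(t-s)}$ and the smallness $\|wf\|_\infty\ll1$ yield the claimed $o(1)$ prefactor on the semi-norms and collect everything else into $\Vert w_{\tilde\theta}\alpha\nabla_x f\Vert_\infty^2$.
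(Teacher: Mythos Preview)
Your overall strategy---bilinear splitting, feeding in the weighted difference-quotient bounds, and closing with Lemma~\ref{Lemma: NLN}---is the right one, and for the terms $\Gamma(\nabla_x f(x')-\nabla_x f(y'),f)$ and $\Gamma(f,\nabla_x f(x')-\nabla_x f(y'))$ it matches the paper's argument (these are the paper's ``$(\Gamma\text{-}\Gamma 1)$'' terms). The second estimate~\eqref{G gamma- G gamma} also follows the decomposition you describe.

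There is, however, a real gap in your treatment of the \emph{cross terms} $\Gamma(f(x')-f(y'),\nabla_x f(x'))$ and $\Gamma(\nabla_x f(y'),f(x')-f(y'))$. Your plan is to put one factor in weighted $L^\infty$ and let the other carry the difference quotient, but for these terms \emph{neither} factor admits the needed bound: $\nabla_x f$ is only controlled after multiplication by $\alpha$, and $\sup_u w(u)|f(x',u)-f(y',u)|$ is not $\lesssim |x-y|^\beta$ because the bound~\eqref{min: f} carries a factor $\alpha(x',u)^{-\beta}$ that blows up at grazing $u$. So the Grad-type schematic~\eqref{Gamma_est} with a clean $\|wf\|_\infty$ prefactor does not apply. (Your reference to ``$\Gamma_v$-type'' lower-order terms is also off: there is no $\nabla_v\sqrt{\mu}$ contribution here since the derivative is spatial.)

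The paper handles the gain part of these cross terms by a Carleman-type representation of $\Gamma_{\text{gain}}$, writing
\[
\Gamma_{\text{gain}}(f(x')-f(y'),\nabla_x f(x'))(v)\ \lesssim\ \int_{\R^3}\frac{|\nabla_x f(x',u)|}{|v-u|}\int_{(u-v)\cdot\omega=0}\frac{|f(x',v+\omega)-f(y',v+\omega)|}{|x-y|^\beta}\,\dd\omega\,\dd u,
\]
and then using the crude lower bound $\alpha(x',v+\omega)\gtrsim \sqrt{|\xi(x')|}\,|v+\omega|$ (from the $-2\xi\,v\!\cdot\!\nabla^2\xi\!\cdot\!v$ piece of $\tilde\alpha^2$) to decouple the $\omega$-integral from $\alpha$. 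This produces a factor $\min\{\xi(x'),\xi(y')\}^{-\beta/2}\big(1+|\ln|\xi||+|\ln|v||\big)$, and the time integral of this quantity is exactly what~\eqref{integrate xi beta/2} (and the argument of~\eqref{est:int_ln_1}--\eqref{est:int_ln_2}) is designed for. The loss parts of the cross terms are simpler and fit your outline; it is only the gain parts that need this extra ingredient. Once you add this step, the rest of your plan goes through.
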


\begin{proof}
We rewrite the difference of $\nabla_x\Gamma$ as
\begin{align*}
  \nabla_x \Gamma(f,f)(x,v)-\nabla_x \Gamma(f,f)(y,v) & =\Gamma(\nabla_x f(x)-\nabla_x f(y),f(x))(v)+\Gamma(f(x)-f(y),\nabla_x f(x))(v) \\
   & +\Gamma(\nabla_x f(y),f(x)-f(y))(v)+\Gamma(f(y),\nabla_x f(x)-\nabla_x f(y))(v).
\end{align*}
Then by~\eqref{gamma bounded} we bound
\[\Gamma(\nabla_x f(x)-\nabla_x f(y),f(x))(v)\lesssim  \Vert wf\Vert_\infty \int_{\mathbb{R}^3}\mathbf{k}_\varrho(v,u)|\nabla_x f(x,u)-\nabla_x f(y,u)|\dd u,  \]

\[\Gamma(f(y),\nabla_x f(x)-\nabla_x f(y))(v)\lesssim    \Vert wf\Vert_\infty \big(\nabla_x f(x,v)-\nabla_x f(y,v)\big)+\Vert wf\Vert_\infty \int_{\mathbb{R}^3}\mathbf{k}_\varrho(v,u)|\nabla_x f(x,u)-\nabla_x f(y,u)|.\]

For $\Gamma(f(x)-f(y),\nabla_x f(x))(v) $, we bound the loss term as
\[\Gamma_{\text{loss}}(f(x)-f(y),\nabla_x f(x))(x)\lesssim |\nabla_x f(x,v)|\nu(\sqrt{\mu}(f(x)-f(y)))\lesssim |\nabla_x f(x,v)|\int_{\mathbb{R}^3}\mathbf{k}_\varrho(v,u)|f(x,u)-f(y,u)|.\]
From the Grad's estimate~\cite{R}, the gain term can be written as
\begin{align}
   &\frac{\Gamma_{\text{gain}}(f(x)-f(y),\nabla_x f(x))(v)}{|x-y|^\beta}\notag\\
   &\lesssim \int_{\mathbb{R}^3} \dd u \frac{|\nabla_x f(x,u)|}{|v-u|} \int_{(u-v)\cdot \omega=0}  \frac{f(x,v+\omega)-f(y,v+\omega)}{|x-y|^\beta}  \label{gamma f-f nablaf}        \\
   &\lesssim \int_{\mathbb{R}^3} \dd u \mathbf{k}_\varrho(v,u)  |\nabla_x f(x,u)| \int_{\mathbb{S}^2} \frac{w^{-1}_{2\tilde{\theta}}(v+\omega)\Vert w_{\tilde{\theta}}\alpha\nabla_x f\Vert_\infty}{\min\{\alpha(x,v+\omega),\alpha(y,v+\omega)\}^\beta}\notag\\
   &  \lesssim \frac{\Vert w_{\tilde{\theta}}\alpha\nabla_x f\Vert_\infty^2}{w_{\tilde{\theta}}(v)}\int_{\mathbb{R}^3} \dd u \frac{w_{\tilde{\theta}}(v)\mathbf{k}_\varrho(v,u)}{w_{\tilde{\theta}}(u)\alpha(x,u)\min\{\xi(x),\xi(y)\}^{\beta/2}}   \int_{\mathbb{S}^2}\dd \omega e^{-\tilde{\theta}|v|^2/2} \frac{w_{\tilde{\theta}}^{-1}(v+\omega)}{|v+\omega|^\beta}\notag\\
&\lesssim  \frac{\Vert w_{\tilde{\theta}}\alpha\nabla_x f\Vert_\infty^2}{w^{3/2}_{\tilde{\theta}}(v)}    \int_{\mathbb{R}^3}\dd u \frac{\mathbf{k}_{\tilde{\varrho}}(v,u)}{\alpha(x,u) \min\{\xi(x),\xi(y)\}^{\beta/2}}\notag\\
   &      \lesssim \frac{w_{\tilde{\theta}}^{-3/2}(v)}{\min\{\xi(x),\xi(y)\}^{\beta/2}} [1+\log |\xi(y)|+|\log |v||]. \notag
\end{align}
Here we applied~\eqref{min: f} in the third line. In the fourth line we have used that for $|w|\leq 1$,
\begin{align*}
  w^{-1}_{2\tilde{\theta}}(v+\omega) &=w^{-1}_{\tilde{\theta}}(v+\omega)w^{-1}_{\tilde{\theta}}(v+\omega)\lesssim w^{-1}_{\tilde{\theta}}(v+\omega) e^{-\tilde{\theta}|v|^2}e^{2\tilde{\theta}v\cdot \omega} \\
   & \lesssim w^{-1}_{\tilde{\theta}}(v+\omega) e^{-\tilde{\theta}|v|^2}e^{\frac{\tilde{\theta}|v|^2}{2}}e^{2\tilde{\theta}|w|^2}\lesssim w^{-1}_{\tilde{\theta}}(v+\omega) e^{-\tilde{\theta}|v|^2/2}.
\end{align*}
and $\alpha(x,v)\geq \sqrt{-\xi(x)}|v|^2$. In the fifth line we use Lemma \ref{Lemma: k tilde} and in the last line we use~\eqref{eqn: int alpha du}.

Thus by~\eqref{min: f} with~\eqref{simplicity} we obtain
\begin{align}
& \frac{|\nabla_x \Gamma(f,f)(x',v)-\nabla_x \Gamma(f,f)(y',v)|}{|x-y|^\beta} \notag\\
&\leq  O(\Vert wf\Vert_\infty)\frac{|\nabla_x f(x',v)-\nabla_x f(y',v)|}{|x-y|^\beta}+O(\Vert wf\Vert_\infty)\int_{\mathbb{R}^3}\mathbf{k}_\varrho(v,u)\frac{|\nabla_x f(x',u)-\nabla_x f(y',u)|}{|x-y|^\beta} \label{Gamma-Gamma1} \\
   & +|\nabla_x f(x',v)|\int_{\mathbb{R}^3} \mathbf{k}_\varrho(v,u) \frac{| f(x',u)- f(y',u)|}{|x-y|^\beta}\dd u+\frac{|f(x',v)-f(y',v)|}{|x-y|^\beta}\int_{\mathbb{R}^3}\mathbf{k}_\varrho(v,u)|\nabla_x f(x',u)|\dd u \label{Gamma-Gamma2}\\
   &+\frac{\Vert w_{\tilde{\theta}}\alpha\nabla_x f\Vert_\infty^2}{w^{3/2}_{\tilde{\theta}}(v)} \Big[\frac{|\log|\xi(x')||}{\min\{\xi(x'),\xi(y')\}^{\beta/2}}+\frac{|\log |v||}{\min\{\xi(x'),\xi(y')\}^{\beta/2}}\Big]\label{Gamma-Gamma3}.
\end{align}

We bound $w_{\tilde{\theta}}^{-3/2}(v)|v|\lesssim w_{\tilde{\theta}}^{-1}(v).$ By \eqref{integrate alpha beta} and~\eqref{integrate beta<1} in Lemma \ref{Lemma: NLN} we have

\begin{align*}
  \int_0^t e^{-\nu(v)(t-s)} \eqref{Gamma-Gamma1} &\lesssim   \frac{\Vert wf\Vert_\infty [\nabla_x f(\cdot,v)]_{C_{x,2+\beta}^{0,\beta}} }{w_{\tilde{\theta}}(v)|v|^2\min\{\frac{\alpha(x,v)}{|v|},\frac{\alpha(y,v)}{|v|}\}^{2+\beta}}           \\
   & +\frac{\Vert wf\Vert_\infty [\nabla_x f(\cdot,v)]_{C_{x,2+\beta}^{0,\beta}}}{w_{\tilde{\theta}}(v)}  \int_0^t e^{-\nu(v)(t-s)}  \int_{\mathbb{R}^3} \frac{w_{\tilde{\theta}}(v)\mathbf{k}_\varrho(v,u)}{w_{\tilde{\theta}}(u)|u|^2\min\{\frac{\alpha(x',u)}{|u|},\frac{\alpha(y',u)}{|u|}\}^{2+\beta}}\\
   &\lesssim \frac{\Vert wf\Vert_\infty [\nabla_x f(\cdot,v)]_{C_{x,2+\beta}^{0,\beta}}}{w_{\tilde{\theta}}(v)|v|^2\min\{\frac{\alpha(x,v)}{|v|},\frac{\alpha(y,v)}{|v|}\}^{2+\beta}}.
\end{align*}

By \eqref{min: f},\eqref{integrate alpha beta} and~\eqref{integrate beta<1} in Lemma \ref{Lemma: NLN} we have
\begin{align}
  \int_0^t e^{-\nu(v)(t-s)} \eqref{Gamma-Gamma2} &\lesssim   \frac{w^{-2}_{\tilde{\theta}}(v)|v|\Vert w_{\tilde{\theta}}\alpha\nabla_x f\Vert_\infty^2}{|v|\alpha(x,v)} \int_0^t e^{-\nu(v)(t-s)}  \int_{\mathbb{R}^3} \frac{w_{\tilde{\theta}}(v)\mathbf{k}_\varrho(v,u)}{w_{\tilde{\theta}}(u)\min\{\alpha(x',u),\alpha(y',u)\}^\beta}    \notag        \\
   & + \frac{w^{-2}_{\tilde{\theta}}(v)|v|\Vert w_{\tilde{\theta}}\alpha\nabla_x f\Vert_\infty^2}{|v|\min\{\alpha(x,v),\alpha(y,v)\}^\beta}   \int_0^t e^{-\nu(v)(t-s)}  \int_{\mathbb{R}^3} \frac{w_{\tilde{\theta}}(v)\mathbf{k}_\varrho(v,u)}{w_{\tilde{\theta}}(u)\alpha(x',u)}\notag\\
   &\lesssim \frac{\Vert w_{\tilde{\theta}} \alpha\nabla_x f\Vert_\infty^2}{w_{\tilde{\theta}}(v)|v|^2\min\{\frac{\alpha(x,v)}{|v|},\frac{\alpha(y,v)}{|v|}\}^{1+\beta}}
   \label{inte Gamma-Gamma 2}\\
   &\lesssim \frac{\Vert w_{\tilde{\theta}} \alpha\nabla_x f\Vert_\infty^2}{w_{\tilde{\theta}}(v)|v|^2\min\{\frac{\alpha(x,v)}{|v|},\frac{\alpha(y,v)}{|v|}\}^{2+\beta}}. \notag
\end{align}

For~\eqref{Gamma-Gamma3}, we bound $|\log\xi(x')|\lesssim \frac{1}{\min\{\xi(x'),\xi(y')\}^\e}$ and $w_{\tilde{\theta}}^{-3/2}(v)|v|\lesssim w_{\tilde{\theta}}^{-1}(v)$. Then we have
\begin{align}
 &\int_0^t e^{-\nu(v)(t-s)} \eqref{Gamma-Gamma3}  \notag\\
  & \lesssim \Vert w_{\tilde{\theta}}\alpha\nabla_x f\Vert_\infty^2\Big[ \int_0^t \frac{w_{\tilde{\theta}}^{-3/2}(v)e^{-\nu(v)(t-s)}}{\min\{\xi(x'),\xi(y')\}^{\beta/2+\e}}+\int_0^t \frac{w_{\tilde{\theta}}^{-3/2}(v)e^{-\nu(v)(t-s)}|\log|v||}{\min\{\xi(x'),\xi(y')\}^{\beta/2}}\Big] \notag \\
   & \lesssim   \Vert w_{\tilde{\theta}}\alpha\nabla_x f\Vert_\infty^2\Big[  \int_0^t \frac{w_{\tilde{\theta}}^{-3/2}(v)|v|e^{-\nu(v)(t-s)}}{|v|\min\{\xi(x'),\xi(y')\}^{\beta/2+\e}}+\int_0^t \frac{w_{\tilde{\theta}}^{-3/2}(v)|v||\log|v||e^{-\nu(v)(t-s)}}{|v|\min\{\xi(x'),\xi(y')\}^{\beta/2}}\notag\\
   &\lesssim \int_0^t \frac{w^{-1}_{\tilde{\theta}}(v)e^{-\nu(v)(t-s)}}{|v|\min\{\xi(x'),\xi(y')\}^{\beta/2+\e}}+\int_0^t \frac{w^{-1}_{\tilde{\theta}}(v)e^{-\nu(v)(t-s)}}{|v|\min\{\xi(x'),\xi(y')\}^{\beta/2}}\Big]\notag\\
   &\lesssim \frac{\Vert w_{\tilde{\theta}}\alpha\nabla_x f\Vert_\infty^2}{w_{\tilde{\theta}}(v)|v|^2\min\{\frac{\alpha(x,v)}{|v|},\frac{\alpha(y,v)}{|v|}\}^{\beta}}            ,\label{bound for gamma-gamma3}
\end{align}
where we have applied Lemma \ref{Lemma: NLN} in the last line with small enough $\e$ such that$\beta/2+\e\ll 1$.

Using $\Vert wf\Vert_\infty \ll 1$ from Main Theorem we conclude~\eqref{int gamma-gamma}.

Then we prove~\eqref{G gamma- G gamma}. From~\eqref{Gamma-Gamma1}-\eqref{Gamma-Gamma3} we can rewrite
\begin{align}
  &\frac{G(x')\nabla_x \Gamma(f,f)(x',v)-G(y')\nabla_x \Gamma(f,f)(y',v)}{|x-y|^\beta}   \notag\\
   &   =\frac{G(y')-G(x')}{|x-y|^\beta}\nabla_x \Gamma(f,f)(y',v) +G(x')\frac{\nabla_x \Gamma(f,f)(x',v)-\nabla_x\Gamma(f,f)(y',v)}{|x-y|^\beta}\notag\\
   &\lesssim     \frac{G(y')-G(x')}{|x'-y'|^\beta} \Vert wf\Vert_\infty\Vert w_{\tilde{\theta}}\alpha\nabla_x f\Vert_\infty\big[\frac{w^{-1}_{\tilde{\theta}}(v)|v|}{|v||\alpha(y,v)|}+w^{-1}_{\tilde{\theta}/2}(v)\int_{\mathbb{R}^3}\frac{w_{\tilde{\theta}/2}(v)|u|\mathbf{k}_\varrho(v,u)}{w_{\tilde{\theta}}(u)|u|\alpha(y-(t-s)v,u)} \big] \label{G gamma-gamma1}\\
   &+ G(x')\times \big[\eqref{Gamma-Gamma1}+\eqref{Gamma-Gamma2}+\eqref{Gamma-Gamma3}\big]
   \label{G gamma-gamma3}.
 \end{align}
We bound $w_{\tilde{\theta}}^{-1}(v)|v|\lesssim w_{\tilde{\theta}/2}^{-1}(v)$. Then we have
\begin{align*}
  \int_0^t e^{-\nu(v)(t-s)}\eqref{G gamma-gamma1} &\lesssim \frac{\Vert w_{\tilde{\theta}}\alpha\nabla_x f\Vert_\infty^2}{w_{\tilde{\theta}/2}(v)|v|^2\min\{\frac{\alpha(x,v)}{|v|},\frac{\alpha(y,v)}{|v|}\}}\\
  &+ \frac{\Vert w_{\tilde{\theta}}\alpha\nabla_x f\Vert_\infty^2}{w_{\tilde{\theta}/2}(v)}\int_0^t e^{-\nu(v)(t-s)} \int_{\mathbb{R}^3} \frac{\mathbf{k}_{\tilde{\varrho}}(v,u)}{|u|^2\min\{\frac{\alpha(x',u)}{|u|},\frac{\alpha(y',u)}{|u|}\}} \dd u\\
  &\lesssim \frac{\Vert w_{\tilde{\theta}}\alpha\nabla_x f\Vert_\infty^2}{w_{\tilde{\theta}/2}(v)|v|^2\min\{\frac{\alpha(x,v)}{|v|},\frac{\alpha(y,v)}{|v|}\}^{1+\beta}} ,
\end{align*}
where we have used~\eqref{min: nxb} in the first line,~\eqref{k_theta} in the second line and~\eqref{est:nonlocal_wo_e} in Lemma \ref{Lemma: NLN} with $\frac{\alpha(x,v)}{|v|}\lesssim 1$ in the last line.

For~\eqref{G gamma-gamma3}, since $|G|\lesssim 1$, from~\eqref{bound for gamma-gamma3} and~\eqref{inte Gamma-Gamma 2} the contribution of~\eqref{Gamma-Gamma2}\eqref{Gamma-Gamma3} is already included in~\eqref{G gamma- G gamma}. Then we consider the contribution of \eqref{Gamma-Gamma1}, which reads
\begin{align*}
&\Vert wf\Vert_\infty G(x')\frac{\nabla_x f(x',v)-\nabla_x f(y',v) }{|x-y|^\beta} \\
   &+ \Vert wf\Vert_\infty\int_{\mathbb{R}^3}\mathbf{k}_\varrho(v,u)\frac{G(x')|\nabla_x f(x',u)-\nabla_x f(y',u)|}{|x-y|^\beta}  .
\end{align*}
Since $w_{\tilde{\theta}}^{-1}(v)|v|\lesssim w_{\tilde{\theta}/2}^{-1}(v)$, we rewrite
\begin{align*}
   &G(x')\frac{\nabla_x f(x',v)-\nabla_x f(y',v) }{|x-y|^\beta}
     \\
   & =\frac{\nabla_\parallel f(x',v)-\nabla_\parallel f(y',v)}{|x-y|^\beta}+\frac{\Big[G(x')-G(y')\Big]\nabla_x f(y',v)}{|x-y|^\beta}\\
   &\lesssim \frac{\nabla_\parallel f(x',v)-\nabla_\parallel f(y',v)}{|x-y|^\beta}+\frac{w^{-1}_{\tilde{\theta}}(v)|v|\Vert w_{\tilde{\theta}} \alpha\nabla_x f\Vert_\infty}{|v|\alpha(y',v)}\\
   &\lesssim \frac{[\nabla_\parallel f(\cdot ,v)]_{C_{x,1+\beta}^{0,\beta}}}{w_{\tilde{\theta}/2}(v)|v|^2\min\{\frac{\alpha(x',v)}{|v|},\frac{\alpha(y',v)}{|v|}\}^{1+\beta}}+\frac{\Vert w_{\tilde{\theta}} \alpha\nabla_x f\Vert_\infty}{w_{\tilde{\theta}/2}(v)|v|^2\min\{\frac{\alpha(x',v)}{|v|},\frac{\alpha(y',v)}{|v|}\}^{1+\beta}}.
\end{align*}
Thus applying~\eqref{k_theta} and~\eqref{integrate alpha beta} in Lemma \ref{Lemma: NLN} with $p=1+\beta$ we obtain
\begin{align*}
 \int^t_0 e^{-\nu(v)(t-s)}G(x')\eqref{Gamma-Gamma1} &\lesssim \Vert wf\Vert_\infty\frac{[\nabla_\parallel f(\cdot ,v)]_{C_{x,1+\beta}^{0,\beta}}+\Vert w_{\tilde{\theta}}\alpha\nabla_x f \Vert_\infty}{w_{\tilde{\theta}/2}(v)|v|^2\min\{\frac{\alpha(x,v)}{|v|},\frac{\alpha(y,v)}{|v|}\}^{1+\beta}} \\
  & +\frac{\Vert wf\Vert_\infty}{w_{\tilde{\theta}/2}(v)} \int^t_0 e^{-\nu(v)(t-s)}\int_{\mathbb{R}^3}w_{\tilde{\theta}/2}(v)\mathbf{k}_\varrho(v,u)\frac{[\nabla_\parallel f(\cdot ,v)]_{C_{x,1+\beta}^{0,\beta}}+\Vert w_{\tilde{\theta}}\alpha\nabla_x f \Vert_\infty}{w_{\tilde{\theta}/2}(u)|u|^2\min\{\frac{\alpha(x',u)}{|u|},\frac{\alpha(y',u)}{|u|}\}^{1+\beta}} \\
  & \lesssim \Vert wf\Vert_\infty\frac{[\nabla_\parallel f(\cdot ,v)]_{C_{x,1+\beta}^{0,\beta}}+\Vert w_{\tilde{\theta}}\alpha\nabla_x f \Vert_\infty}{w_{\tilde{\theta}/2}(v)|v|^2\min\{\frac{\alpha(x,v)}{|v|},\frac{\alpha(y,v)}{|v|}\}^{1+\beta}}.
\end{align*}

Finally by $\Vert wf\Vert_\infty\ll 1$ from the existence theorem we conclude the lemma.

\end{proof}

We need one more lemma before the proof. In the following lemma we express the difference quotient in~\eqref{C1beta norm},\eqref{C1beta tang norm} along the trajectory. This lemma will significantly simplify the proof of Proposition \ref{Prop: C1beta}.

\begin{lemma}\label{Lemma: intermediate estimate}
Suppose $f$ solves inhomogenenous steady transport equation with the diffuse BC. Then
\begin{align}
& \frac{\partial_{x_i} f(x,v)-\partial_{x_i} f(y,v)}{|x-y|^\beta} \notag\\
&=\frac{O(1)\Vert w_{\tilde{\theta}}\alpha\nabla_x f\Vert_\infty+o(1)[\nabla_{x}f(\cdot,v)]_{C^{0,\beta}_{x;2+\beta}}}{w_{\tilde{\theta}}(v)|v|^2\min\{\frac{\alpha(x,v)}{|v|},\frac{\alpha(y,v)}{|v|}\}^{2+\beta}} \label{C1beta inter:1} \\ &+\frac{O(1)}{w_{\tilde{\theta}}(v)|v|^2\min\{\frac{\alpha(x,v)}{|v|},\frac{\alpha(y,v)}{|v|}\}}\frac{|\xb(x,v)-\xb(y,v)|^\beta}{|x-y|^\beta}\notag\\
&\times w_{\tilde{\theta}}(v)|v|^2\frac{|\partial_{\mathbf{x}_{p^1(x),i}^1} f(\xb(x,v),v)-\partial_{\mathbf{x}_{p^1(y),i}^1}f(\xb(y,v),v)|}{|\xb(x,v)-\xb(y,v)|^\beta}   \label{C1beta inter:4}\\
& +\int_{\max\{0,t-\min\{\tb(x,v),\tb(y,v)\}\}}^t \frac{e^{-\nu (t-s)}[\partial_{x_i} h(x-(t-s)v,v)-\partial_{x_i} h(y-(t-s)v,v)]}{|x-y|^\beta}.\label{C1beta inter:3}
\end{align}

We denote $[G(x)\nabla_x f(x,v)]_i$ as the $i$-th element of $G(x)\nabla_x f$, then
\begin{align}
& \frac{[G(x) \nabla_x f(x,v)]_i-[G(y) \nabla_x f(y,v)]_i}{|x-y|^\beta} \notag\\
&=\frac{\tilde{\alpha}(x,v)\times \eqref{C1beta inter:3}}{|v|} \label{C1betatang inter:0}\\
&+\frac{O(1)\Vert w_{\tilde{\theta}}\alpha\nabla_x f\Vert_\infty+o(1)[\nabla_{x_\parallel}f(\cdot,v)]_{C^{0,\beta}_{x;1+\beta}}+o(1)[\nabla_{x}f(\cdot,v)]_{C^{0,\beta}_{x;2+\beta}}}{w_{\tilde{\theta}/2}(v)|v|^2\min\{\frac{\alpha(x,v)}{|v|},\frac{\alpha(y,v)}{|v|}\}^{1+\beta}} \label{C1betatang inter:1}\\
 & +\int_{\max\{0,t-\min\{\tb(x,v),\tb(y,v)\}\}}^t \notag\\
 &\frac{e^{-\nu (t-s)}\big[[G(x-(t-s)v)\nabla_x h(x-(t-s)v,v)]_i-[G(y-(t-s)v)\nabla_x h(y-(t-s)v,v)]_i\big]}{|x-y|^\beta}\label{C1betatang inter:3}\\
& +O(1)\frac{|\xb(x,v)-\xb(y,v)|^\beta}{|x-y|^\beta}\frac{|\partial_{\mathbf{x}_{p^1(x),i}^1} f(\xb(x,v),v)-\partial_{\mathbf{x}_{p^1(y),i}^1}f(\xb(y,v),v)|}{|\xb(x,v)-\xb(y,v)|^\beta}   \label{C1betatang inter:4}.
\end{align}

\end{lemma}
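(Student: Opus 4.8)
\textbf{Proof proposal for Lemma \ref{Lemma: intermediate estimate}.}

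The plan is to differentiate the Duhamel representation \eqref{trajectory} in $x$, take the difference between the trajectories issued from $x$ and from $y$, and organize the resulting terms according to where the singularity in $\alpha$ sits. First I would write $\partial_{x_i}f(x,v)$ and $\partial_{x_i}f(y,v)$ via \eqref{fx_x_1}--\eqref{fx_x_5}, and subtract. The crucial preliminary step is a case split on $|x-y|$: if $|x-y|\geq \e\min\{\frac{\tilde\alpha(x,v)}{|v|},\frac{\tilde\alpha(y,v)}{|v|}\}$, then each difference is estimated trivially by the sum of the two weighted $C^1$ bounds \eqref{estF_n} (together with $\eqref{nabla_tbxb}$ for the geometric factors), which is already absorbed into \eqref{C1beta inter:1}; so from now on I assume $|x-y|< \e\min\{\frac{\tilde\alpha(x,v)}{|v|},\frac{\tilde\alpha(y,v)}{|v|}\}$, which lets me invoke Lemma \ref{Lemma: x-y} (so $\tilde\alpha(x(\tau),v)\gtrsim\min\{\tilde\alpha(x,v),\tilde\alpha(y,v)\}$) throughout. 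When $t<\tb$ for both points, $\tb(x,v)\wedge\tb(y,v)$ is comparable to $t$, the trajectory does not hit the boundary, and the continuity of $\nabla_x f$ (section 5.2) plus \eqref{min: nabla tb}-type estimates handle the difference directly, contributing to \eqref{C1beta inter:1} with the $e^{-\nu t}$ gain; the substantive case is $t\geq \tb$ for both, which I treat below. (The mixed case where only one of $t\geq\tb$ holds is reconciled using the continuity across $\{t=\tb\}$ exactly as in section 5.2.)

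In the boundary case, the difference of \eqref{fx_x_1} splits into two pieces via
$\partial_{x_i}[f(\xb(x,v),v)]-\partial_{x_i}[f(\xb(y,v),v)]
= \sum_j\{(\partial_{x_i}\mathbf{x}_{p^1(x),j}^1-\partial_{x_i}\mathbf{x}_{p^1(y),j}^1)\partial_{\mathbf{x}^1_{p^1(x),j}}f(\xb(x,v),v)\}
+\sum_j\{\partial_{x_i}\mathbf{x}_{p^1(y),j}^1(\partial_{\mathbf{x}^1_{p^1(x),j}}f(\xb(x,v),v)-\partial_{\mathbf{x}^1_{p^1(y),j}}f(\xb(y,v),v))\}$,
after matching charts (using $|x-y|\ll1$ so $\xb(x,v),\xb(y,v)$ lie in a common $\mathcal O_{p^1}$ as in the proof of \eqref{min: xb tang}). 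The first piece is controlled by \eqref{min: nabla xb} combined with the weighted $C^1$ bound on the tangential derivative $\partial_{\mathbf{x}^1_{p^1},j}f\sim\nabla_\parallel f$ from \eqref{estF_tau}; after inserting the weight $\min\{\frac{\alpha(x,v)}{|v|},\frac{\alpha(y,v)}{|v|}\}^{2+\beta}$ this is exactly of the form \eqref{C1beta inter:1}. The second piece, after factoring $\frac{|\xb(x,v)-\xb(y,v)|^\beta}{|x-y|^\beta}$ out (which is bounded by \eqref{min: xb}), is precisely \eqref{C1beta inter:4}, with the prefactor $\frac{1}{w_{\tilde\theta}(v)|v|^2\min\{\cdots\}}$ coming from $\partial_{x_i}\mathbf{x}^1_{p^1(y),j}\lesssim \frac{|v|}{\alpha(y,v)}$ via \eqref{xi deri xbp} and \eqref{n geq alpha}. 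The differences of \eqref{fx_x_2} and \eqref{fx_x_5} (the $\partial_{x_i}\tb$ terms against $f$ and against $h$) are handled by \eqref{min: nabla tb}, \eqref{min: f}, \eqref{min: tb}, $\eqref{h-h bounded}$, and $\eqref{h bounded}$; each contributes to \eqref{C1beta inter:1}. The difference of \eqref{fx_x_4}, after splitting the $s$-integral into the overlapping range $[\,t-\tb(x,v)\wedge\tb(y,v)\,,t\,]$ and the sliver $[\,t-\tb(x,v)\vee\tb(y,v)\,,t-\tb(x,v)\wedge\tb(y,v)\,]$, yields \eqref{C1beta inter:3} from the overlap, while on the sliver $|\tb(x,v)-\tb(y,v)|\lesssim \frac{|x-y|}{|v\cdot\nabla\xi|}$ together with $|\partial_{x_i}h|\lesssim \frac{1}{\alpha}\times(\text{weighted }C^1)$ from $\eqref{nabla h bounded}$ gives a bound of the form \eqref{C1beta inter:1} (this uses $\beta<1$ so that $\min\{\cdots\}^{1-\beta}\ln(\cdots)$ stays bounded — the mechanism sketched in the last paragraph of Section 1.4). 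Collecting these proves the first identity.

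For the second identity I would repeat the same differentiation but apply $G(x)$ to $\nabla_x f(x,v)$ and $G(y)$ to $\nabla_x f(y,v)$, then write $G(x)\nabla_x f(x,v)-G(y)\nabla_x f(y,v) = [G(x)-G(y)]\nabla_x f(x,v)+G(y)[\nabla_x f(x,v)-\nabla_x f(y,v)]$. The first summand, using $|G(x)-G(y)|\lesssim\Vert\xi\Vert_{C^2}|x-y|$ and $\eqref{estF_n}$, produces the $\tilde\alpha(x,v)/|v|$-weighted multiple of the un-projected identity — i.e.\ $\eqref{C1betatang inter:0}$. For the second summand, the projected versions of Lemmas \ref{Lemma: nx-nxb}, \ref{Lemma: ndot nabla xb} and the $G$-weighted bounds \eqref{min: I nabla xb}, \eqref{min: I nabla tb}, \eqref{nx nabla xb}, \eqref{nx nabla tb} replace the naive ones wherever a geometric factor occurs: this lowers the power of $\alpha$ from $2+\beta$ to $1+\beta$ in all terms except \eqref{C1beta inter:4}, which remains as the tangential remainder \eqref{C1betatang inter:4} (and for which no gain in $\alpha$-power is available — this term will later be re-expanded, since $\partial_{\mathbf{x}^1_{p^1}}f$ is itself a tangential derivative). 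The Duhamel time-integral term becomes \eqref{C1betatang inter:3} once I also move $G$ inside and use $\eqref{G nabla Gamma}$ / $\eqref{G Gamma}$ to commute $G$ with $\Gamma$, and Lemma \ref{Lemma: Gf-Gf} to handle the discrepancy between $G(x)$ and $G(x-(t-s)v)$ along the trajectory. The main obstacle is bookkeeping: keeping track of \emph{which} geometric factors admit the $G$-projection gain (these give power $1+\beta$) versus which do not (the $\partial_{\mathbf{x}^1_{p^1}}f$ remainder, power-matched to \eqref{C1betatang inter:4}), and ensuring that every term genuinely falls into one of the four advertised buckets with the stated weight and the correct $o(1)$ vs $O(1)$ status — the $o(1)$'s must come only from small-time integration ($t\gg1$, or the $\e$-sliver) or from $\Vert wf\Vert_\infty\ll1$, never from the possibly-harmful $\int_{\tb(y,v)}^{\tb(x,v)}Q(\nabla_x f,f)$-type term, whose boundedness for $\beta<1$ is the delicate point borrowed from Section 1.4.
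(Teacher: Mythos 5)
Your first-identity plan and your initial $G(x)\nabla_x f(x,v)-G(y)\nabla_x f(y,v)=[G(x)-G(y)]\nabla_x f(x,v)+G(y)[\nabla_x f(x,v)-\nabla_x f(y,v)]$ split for the second identity both match the paper, but the bookkeeping for the tangential identity contains a genuine error that leaves the statement unestablished.

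You claim that $[G(x)-G(y)]\nabla_x f(x,v)$ ``produces the $\tilde\alpha(x,v)/|v|$-weighted multiple of the un-projected identity --- i.e.\ \eqref{C1betatang inter:0}.'' It cannot. The term $[G(x)-G(y)]\nabla_x f(x,v)$ involves only the gradient at $x$, never the difference $\nabla_x f(x,v)-\nabla_x f(y,v)$, and after dividing by $|x-y|^\beta$ it is a single bounded quantity, not a time integral. There is no mechanism for it to reproduce the Duhamel integral $\int e^{-\nu(t-s)}\frac{\partial_{x_i}h(x')-\partial_{x_i}h(y')}{|x-y|^\beta}$ that defines \eqref{C1beta inter:3}, so it cannot yield $\frac{\tilde\alpha(x,v)}{|v|}\times\eqref{C1beta inter:3}$. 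In the paper this piece is estimated as $O(1)\frac{\Vert w_{\tilde\theta}\alpha\nabla_x f\Vert_\infty}{w_{\tilde\theta}(v)\alpha(x,v)}$ and absorbed into \eqref{C1betatang inter:1}. The true origin of \eqref{C1betatang inter:0} is the \emph{middle} term of Lemma~\ref{Lemma: Gf-Gf}: when you push $G$ through the Duhamel integral, the mismatch between $G(y)$ and $G(x-(t-s)v)$, $G(y-(t-s)v)$ is resolved by \eqref{Gf-Gf 1}, whose three outputs after time integration are exactly \eqref{C1betatang inter:3}, \eqref{C1betatang inter:0}, and an \eqref{C1betatang inter:1}-type remainder. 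You write that ``the Duhamel time-integral term becomes \eqref{C1betatang inter:3}'' after invoking Lemma~\ref{Lemma: Gf-Gf}, so you have dropped the middle output and, simultaneously, wrongly credited \eqref{C1betatang inter:0} to a term that actually lives in \eqref{C1betatang inter:1}. The accounting is off in two places and the claimed equality does not follow from your decomposition.

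A smaller point: the paper does not differentiate \eqref{fx_x_1}--\eqref{fx_x_5} for $x$ and $y$ separately and then patch the mixed case $\tb(x,v)<t<\tb(y,v)$ by continuity; it instead introduces the variant Duhamel expansion \eqref{Hd:fx1_x1_1}--\eqref{Hd:fx1_x1_6} whose indicator is governed by $t_m(v)=\min\{\tb(x,v),\tb(y,v)\}$, the same for both trajectories, so the indicators cancel on subtraction and the sliver term \eqref{h itself} appears as a clean residual time integral rather than a case-by-case repair. Your tactic is workable in principle, but the sliver contribution is precisely the delicate $\int_{\tb(y,v)}^{\tb(x,v)}$ piece and needs to emerge as a genuine term to be estimated (with the $\ln|\xi|$ argument from \eqref{NLN general}), not be swept into a continuity remark.
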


\begin{proof}

For $f$ satisfying~\eqref{inhomo_transport}, different to~\eqref{fx_x_1}-\eqref{fx_x_5}, we use $\min\{\tb(x,v),\tb(y,v)\}$ to split the cases. For simplicity, denote $t_m(v)=\min\{\tb(x,v),\tb(y,v)\}$. We express $\nabla_x f(x,v)$ along the trajectory as:
\begin{align}
 \partial_{x_i}
f( x,v)
  & = \mathbf{1}_{t\geq t_m(v)}  \sum_{i=1,2}e^{- \nu \tb}   \partial_x \mathbf{x}_{p^1(x),i}^1 \partial_{\mathbf{x}_{p^1(x),i}^1}  f(\xb(x,v), v)  \label{Hd:fx1_x1_1}\\
 &- \mathbf{1}_{t\geq t_m(v)}  \nu \nabla_x \tb(x,v) e^{- \nu \tb(x,v) } f(\xb(x,v),v)
\label{Hd:fx1_x1_2}\\
 &+
  \mathbf{1}_{t < t_m(v)}
 e^{- \nu t } \partial_{x_i}   f
 (x-tv, v
 )\label{Hd:fx1_x1_3}\\
  &+ \mathbf{1}_{t< t_m(v)}\int^{t}_{0}
 e^{-\nu  (t - s)} \partial_{x_i}
 %[Kf+ \Gamma(f,f)]
 [h(x-(t-s)v, v
 ) ]\dd s\label{Hd:fx1_x1_4}\\
 &+\mathbf{1}_{t\geq t_m(v)}\int^t_{t-\tb(x,v)}e^{-\nu  (t - s)} \partial_{x_i}
 %[Kf+ \Gamma(f,f)]
 [h(x-(t-s)v, v
 ) ]\dd s\label{Hd:fx1_x1_5}  \\
 &+\mathbf{1}_{t\geq t_m(v)}\partial_{x_i} t_m(v) e^{-\nu t_m(v)} h(x-t_m(v) v,v)\dd s .\label{Hd:fx1_x1_6}
 \end{align}
Taking the difference of $\partial_{x_i}f(x,v)$ and $\partial_{y_i}f(y,v)$ using~\eqref{Hd:fx1_x1_1}-\eqref{Hd:fx1_x1_6} we obtain
\begin{align}
   &\frac{\partial_{x_i} f(x,v)-\partial_{x_i} f(y,v)}{|x-y|^\beta} \notag\\
   &=   \mathbf{1}_{t\geq t_m(v)}\frac{O(1)\nu |\partial_{x_i} \tb(x,v)-\partial_{x_i} \tb(y,v)|e^{-\nu \tb(x,v)}f(\xb(x,v),v)}{|x-y|^\beta}\label{nablatbx-nablatby}\\
    &+ \mathbf{1}_{t\geq t_m(v)}\frac{\partial_{x_i} \tb(y,v)\nu \big[ e^{-\nu \tb}f(\xb(x,v),v)-e^{-\nu \tb}f(\xb(y,v),v)  \big]}{|\xb(x,v)-\xb(y,v)|}\frac{|\xb(x,v)-\xb(y,v)|}{|x-y|^\beta}\label{gx-gy}\\
   &+  \mathbf{1}_{t\geq t_m(v)}\frac{\partial_{x_i} \tb(y,v)\nu \big[ e^{-\nu \tb}f(\xb(y,v),v)-e^{-\nu \tb(y,v)}f(\xb(y,v),v)  \big]}{|x-y|^\beta}\label{tbx-tby}  \\
   & +\mathbf{1}_{t\geq t_m(v)}\sum_{i=1,2}\frac{\partial_{x_i} \mathbf{x}_{p^1(x),i}^1-\partial_{x_i} \mathbf{x}_{p^1(y),i}^1}{|x-y|^\beta}e^{-\nu(v)\tb(x,v)}\partial_{\mathbf{x}_{p^1(x),i}^1} f(\xb(x,v),v)\label{nablaxbx-nablaxby}\\
   &+ \mathbf{1}_{t\geq t_m(v)}\sum_{i=1,2}\partial_{x_i} \mathbf{x}_{p^1(y),i}^1  \frac{e^{-\nu \tb(x,v)}-e^{-\nu\tb(y,v)}}{|x-y|^\beta}\partial_{\mathbf{x}_{p^1(y),i}^1}f(\xb(y,v),v)     \label{tbx-tby nablag}  \\
   &+\mathbf{1}_{t< t_m(v)}\frac{e^{-\nu t}[\partial_{x_i} f(x-tv,v)-\partial_{x_i} f(y-tv,v)]}{|x-tv-(y-tv)|^\beta}\label{nu t}\\
   &+ \mathbf{1}_{t\geq t_m(v)}\sum_{i=1,2}e^{-\nu(v)\tb(x,v)}|\partial_{x_i} \mathbf{x}_{p^1(x),i}^1| \frac{|\xb(x,v)-\xb(y,v)|^\beta}{|x-y|^\beta}\frac{|\partial_{\mathbf{x}_{p^1,i}^1} f(\xb(x,v),v)-\partial_{\mathbf{x}_{p^1,i}^1}f(\xb(y,v),v)|}{|\xb(x,v)-\xb(y,v)|^\beta} \label{nablagx-nablagy}\\
   &+   \mathbf{1}_{t< t_m(v)}  \int_{0}^t \frac{e^{-\nu(t-s)} \big[\partial_{x_i} h(x-(t-s)v,v)-\partial_{x_i} h(y-(t-s)v,v) \big]}{|x-y|^\beta}   \label{h-h deri}                \\
   &+ \mathbf{1}_{t\geq t_m(v)}  \int_{t-t_m(v)}^t \frac{e^{-\nu(t-s)} \big[\partial_{x_i} h(x-(t-s)v,v)-\partial_{x_i} h(y-(t-s)v,v) \big]}{|x-y|^\beta}    \label{h-h deri tb}\\
   &+\mathbf{1}_{t\geq t_m(v)}   \int^{t-t_m(v)}_{\min\{t-\tb(x,v),t-\tb(y,v)\}} \Big[  \mathbf{1}_{\tb(x,v)>\tb(y,v)}\frac{e^{-\nu(t-s)}\partial_{x_i} h(x-(t-s)v,v)}{|x-y|^\beta}\notag\\
   &+\mathbf{1}_{\tb(y,v)>\tb(x,v)}\frac{e^{-\nu(t-s)}\partial_{x_i} h(y-(t-s)v,v)}{|x-y|^\beta}\Big]\label{h itself}\\
   &  +\mathbf{1}_{t\geq t_m(v)}\partial_{x_i} \tb(x,v)e^{-\nu(v)\tb(x,v)}\frac{h(x-\tb(x,v)v,v)-h(y-\tb(y,v)v,v)}{|x-y|^\beta}
     \label{partial h-h}\\
   &+ \mathbf{1}_{t\geq t_m(v)}\frac{\partial_{x_i} \tb(x,v)-\partial_{x_i} \tb(y,v)}{|x-y|^\beta} e^{-\nu(v)\tb(x,v)} h(y-\tb(y,v)v,v)\label{partial nabla-nabla}\\
   &+\mathbf{1}_{t\geq t_m(v)}\frac{e^{-\nu(v)\tb(x,v)}-e^{-\nu(v)\tb(y,v)}}{|x-y|^\beta}\partial_{x_i} \tb(y,v)h(y-\tb(y,v),v)   .  \label{partial e-e}
\end{align}

First we estimate~\eqref{nablatbx-nablatby}-\eqref{nu t}. For ~\eqref{nablatbx-nablatby} we apply~\eqref{min: nabla tb}; for~\eqref{gx-gy} we apply \eqref{nabla_tbxb},\eqref{min: f},\eqref{min: xb} and \eqref{simplicity}; for~\eqref{tbx-tby} we apply \eqref{nabla_tbxb} and \eqref{min: tb}, then we obtain
\begin{equation}\label{Hd: bound for nablatbx-nablatby}
\eqref{nablatbx-nablatby}= \frac{O(1)w_{\tilde{\theta}}(v)|v|f(\xb(x,v),v)}{w_{\tilde{\theta}}|v|^2\min\{\frac{\alpha(x,v)}{|v|},\frac{\alpha(y,v)}{|v|}\}^{2+\beta}}=\frac{O(1)\Vert wf\Vert_\infty}{w_{\tilde{\theta}}(v)|v|^2\min\{\frac{\alpha(x,v)}{|v|},\frac{\alpha(y,v)}{|v|}\}^{2+\beta}},
\end{equation}

\begin{equation}\label{Hd: bound of gx-gy}
  \begin{split}
\eqref{gx-gy}   &=O(1)\partial_{x_i} \tb(x,v)\frac{f(\xb(x,v),v)-f(\xb(y,v),v)}{|\xb(x,v)-\xb(y,v)|}\frac{|\xb(x,v)-\xb(y,v)|}{|x-y|^\beta}\\
& = \frac{O(1)}{\min\{\alpha(x,v),\alpha(y,v)\}} \frac{\Vert w_{\tilde{\theta}}(v) \alpha \nabla_x f\Vert_\infty}{w_{\tilde{\theta}}(v)\min\{\alpha(x,v),\alpha(y,v)\}} \frac{1}{\min\{\frac{\alpha(x,v)}{|v|},\frac{\alpha(y,v)}{|v|}\}^{\beta}}   \\
&= \frac{O(1)\Vert \alpha\nabla_x f\Vert_\infty}{w_{\tilde{\theta}}(v)|v|^2\min\{\frac{\alpha(x,v)}{|v|},\frac{\alpha(y,v)}{|v|}\}^{2+\beta}},
  \end{split}
\end{equation}

\begin{align}
 \eqref{tbx-tby}& =     \frac{O(1)  \Vert w_{\tilde{\theta}}f\Vert_\infty}{w_{\tilde{\theta}}(v)\alpha(x,v)}                 \frac{1}{|v|\min\{\frac{\alpha(x,v)}{|v|},\frac{\alpha(y,v)}{|v|}\}^{\beta}}=\frac{O(1)\Vert wf\Vert_\infty}{w_{\tilde{\theta}}(v)|v|^2\min\{\frac{\alpha(x,v)}{|v|},\frac{\alpha(y,v)}{|v|}\}^{1+\beta}}
  \label{Hd: bound for tbx-tby inte} \\
   &   =\frac{O(1)\Vert wf\Vert_\infty}{w_{\tilde{\theta}}(v)|v|^2\min\{\frac{\alpha(x,v)}{|v|},\frac{\alpha(y,v)}{|v|}\}^{2+\beta}}
   \label{Hd: bound for tbx-tby}    .
\end{align}

From section 4,
\[ \partial_{\mathbf{x}_{p^1(x),i}^1} f(\eta_{p^1}(\mathbf{x}_{p^1(x)}^1),v)=\eqref{fBD_x}+\eqref{fBD_xr}=O(1)\frac{M_W(\eta_{p^1(x)}(\mathbf{x}_{p^{1}(x)}^{1}),v)}{\sqrt{\mu(v)}}\Vert \alpha\nabla_x f\Vert_\infty, \]
thus with $\tilde{\theta}\ll 1$,
\[\max\{\big||v|\partial_{\mathbf{x}_{p^1(x),i}^1} f(\eta_{p^1(x)}(\mathbf{x}_{p^1(x)}^1),v)\big|, \big||v|^2\partial_{\mathbf{x}_{p^1(x),i}^1} f(\eta_{p^1(x)}(\mathbf{x}_{p^1(x)}^1),v)\big| \}=O(1) \frac{\Vert \alpha\nabla_x f\Vert_\infty}{w_{\tilde{\theta}}(v)}.\]

Then for~\eqref{nablaxbx-nablaxby} we apply~\eqref{min: nabla xb}; for~\eqref{tbx-tby nablag} we apply~\eqref{nabla_tbxb} and \eqref{min: tb}, then we obtain

\begin{equation}\label{Hd: bound for nablaxbx-nablaxby}
\eqref{nablaxbx-nablaxby}= \frac{O(1) |v|^2 \partial_{\mathbf{x}_{p^1,i}^1}f(\xb(x,v),v)}{|v|^2\min\{\frac{\alpha(x,v)}{|v|},\frac{\alpha(y,v)}{|v|}\}^{2+\beta}}=\frac{O(1)\Vert \alpha\nabla_x f\Vert_\infty}{w_{\tilde{\theta}}(v)|v|^2\min\{\frac{\alpha(x,v)}{|v|},\frac{\alpha(y,v)}{|v|}\}^{2+\beta}},
\end{equation}

\begin{align}
\eqref{tbx-tby nablag}&=    \frac{O(1)|v|\partial_{\mathbf{x}_{p^1,i}^1}f(\xb(x,v),v)}{|v|^2\min\{\frac{\alpha(x,v)}{|v|},\frac{\alpha(x,v)}{|v|}\}^{1+\beta}}=\frac{O(1)\Vert \alpha\nabla_x f\Vert_\infty}{w_{\tilde{\theta}}(v)|v|^2\min\{\frac{\alpha(x,v)}{|v|},\frac{\alpha(x,v)}{|v|}\}^{1+\beta}} \label{Hd: bound for tbx-tby nablag inte}\\
&=\frac{O(1)\Vert \alpha\nabla_x f\Vert_\infty}{w_{\tilde{\theta}}(v)|v|^2\min\{\frac{\alpha(x,v)}{|v|},\frac{\alpha(x,v)}{|v|}\}^{2+\beta}}
 \label{Hd: bound for tbx-tby nablag}  ,
\end{align}

\begin{equation}\label{Hd: bound for nu t}
\eqref{nu t}= O(1)\frac{e^{-\nu t}}{w_{\tilde{\theta}}(v)|v|^2\min\{\frac{\alpha(x,v)}{|v|},\frac{\alpha(y,v)}{|v|}\}^{2+\beta}}[\nabla_x f(\cdot ,v)]_{C_{x,2+\beta}^{0,\beta}}.
\end{equation}

Therefore, from~\eqref{simplicity} and $t\gg 1$, we conclude
\[\eqref{nablatbx-nablatby}+\cdots+\eqref{nu t}=\eqref{C1beta inter:1}.\]

For~\eqref{nablagx-nablagy}, from~\eqref{xi deri xbp}, such contribution is included in~\eqref{C1beta inter:4}.

The contribution of~\eqref{h-h deri} and~\eqref{h-h deri tb} are already included in~\eqref{C1beta inter:3}.

Then we estimate~\eqref{h itself}-\eqref{partial e-e}. We apply~\eqref{nabla h bounded} to~\eqref{h itself} to have
\begin{align}
  \eqref{h itself} & =   O(1)\frac{\Vert w_{\tilde{\theta}}\alpha\nabla_x f\Vert_\infty}{w_{\tilde{\theta}}(v)}\int_{\min\{t-\tb(x,v),t-\tb(y,v)\}}^{\max\{t-\tb(x,v),t-\tb(y,v)\}} \int_{\mathbb{R}^3} \frac{e^{-\nu(t-s)}\mathbf{k}_{\tilde{\varrho}}(v,u)}{\alpha(x-(t-s)v,u)|x-y|^\beta} \notag           \\
   & =   \frac{O(1)\Vert w_{\tilde{\theta}}\alpha\nabla_x f\Vert_\infty}{w_{\tilde{\theta}}(v)\alpha(x,v)} \frac{|e^{-C_1\nu\tb(x,v)}-e^{-C_1\nu\tb(y,v)}|^\beta}{|x-y|^\beta}  =    \frac{O(1)\Vert w_{\tilde{\theta}}\alpha\nabla_x f\Vert_\infty}{w_{\tilde{\theta}}(v)|v|^2\min\{\frac{\alpha(x,v)}{|v|},\frac{\alpha(y,v)}{|v|}\}^{1+\beta}}
   \label{Hd: bound for h itself inte}\\
   & = \frac{O(1)\Vert w_{\tilde{\theta}}\alpha\nabla_x f\Vert_\infty}{w_{\tilde{\theta}}(v)|v|^2\min\{\frac{\alpha(x,v)}{|v|},\frac{\alpha(y,v)}{|v|}\}^{2+\beta}}, \label{Hd: bound for h itself}
\end{align}
where we have applied~\eqref{NLN general} of Lemma \ref{Lemma: NLN} in the first equality of the second line, \eqref{min: tb} in the second equality.

Then for~\eqref{partial h-h} we apply~\eqref{nabla_tbxb},\eqref{min: xb} and \eqref{h-h bounded}; for~\eqref{partial nabla-nabla} we apply~\eqref{min: nabla tb} and \eqref{h bounded}; for~\eqref{partial e-e} we apply~\eqref{min: tb} and \eqref{h bounded}, then we obtain

\begin{align}
  \eqref{partial h-h} &=O(1) \frac{1}{\alpha(x,v)}\frac{h(\xb(x,v),v)-h(\xb(y,v),v)}{|\xb(x,v)-\xb(y,v)|^\beta}\frac{|\xb(x,v)-\xb(y,v)|^\beta}{|x-y|^\beta}  \notag\\
   & = \frac{O(1)\Vert w_{\tilde{\theta}}\alpha\nabla_x f\Vert_\infty}{w_{\tilde{\theta}}(v)\min\{\alpha(x,v),\alpha(y,v)\}^{2}}\frac{1}{\min\{\frac{\alpha(x,v)}{|v|},\frac{\alpha(y,v)}{|v|}\}^\beta}= \frac{O(1)\Vert w_{\tilde{\theta}}\alpha\nabla_x f\Vert_\infty}{w_{\tilde{\theta}}(v)|v|^2\min\{\alpha(x,v),\alpha(y,v)\}^{2+\beta}} ,\label{Hd: bound for partial h-h}
\end{align}

\begin{align}
  \eqref{partial nabla-nabla} &=O(1)\frac{\Vert w_{\tilde{\theta}}|v|h\Vert_\infty}{w_{\tilde{\theta}}(v)|v|} \frac{\nabla_x \tb(x,v)-\nabla_x \tb(y,v)}{|x-y|^\beta}= \frac{O(1)\Vert wf\Vert_\infty}{w_{\tilde{\theta}}(v)|v|^2\min\{\alpha(x,v),\alpha(y,v)\}^{2+\beta}} ,\label{Hd: bound for partial nabla-nabla}
\end{align}

\begin{align}
  \eqref{partial e-e} &=\frac{O(1)\Vert w_{\tilde{\theta}}h\Vert_\infty}{w_{\tilde{\theta}}(v)\alpha(y,v)} \frac{e^{-\nu \tb(x,v)}-e^{-\nu \tb(y,v)}}{|x-y|^\beta}= \frac{O(1)\Vert wf\Vert_\infty}{w_{\tilde{\theta}}(v)|v|^2\min\{\alpha(x,v),\alpha(y,v)\}^{1+\beta}}\label{Hd: bound for partial e-e inte} \\
  &=\frac{O(1)\Vert wf\Vert_\infty}{w_{\tilde{\theta}}(v)|v|^2\min\{\alpha(x,v),\alpha(y,v)\}^{2+\beta}}.
  \label{Hd: bound for partial e-e}
\end{align}

Therefore, by~\eqref{simplicity}, collecting~\eqref{Hd: bound for h itself},\eqref{Hd: bound for partial h-h},\eqref{Hd: bound for partial nabla-nabla},\eqref{Hd: bound for partial e-e} we conclude
\[\eqref{h itself}+\cdots+\eqref{partial e-e}=\eqref{C1beta inter:1}.\]

Then we prove the estimate~\eqref{C1betatang inter:0}-\eqref{C1betatang inter:4}.

We rewrite
\begin{align}
    \frac{    G(x) \nabla_x f(x,v)-G(y)\nabla_x f(y,v)   }{|x-y|^\beta} &=  \frac{G(y)-G(x)}{|x-y|^\beta} \nabla_x f(x,v) \label{tang: nx-ny}\\
   &  + G(y) \frac{\nabla_x f(x,v)-\nabla_x f(y,v)}{|x-y|^\beta} \label{tang: n f-f}.
\end{align}
By~\eqref{min: nxb} we conclude that
\begin{align}
\eqref{tang: nx-ny}_i &= O(1)  \frac{\Vert w_{\tilde{\theta}}\alpha \nabla_x f\Vert_\infty}{w_{\tilde{\theta}}(v)\alpha(x,v)}.
  \label{tang: bound for 1}
\end{align}

Then we consider~\eqref{tang: n f-f}. Note that $\eqref{nablatbx-nablatby}-\eqref{partial e-e}$ represent the $i$-th component of $\nabla_x f(x,v)-\nabla_x f(y,v)$, for convenience we define a notation thats represent the vector consists of the element $\eqref{nablatbx-nablatby}:$
\[[\eqref{nablatbx-nablatby}]=[\eqref{nablatbx-nablatby}_{i=1},\eqref{nablatbx-nablatby}_{i=2},\eqref{nablatbx-nablatby}_{i=3}].\]
Similarly we can define the same notation for $\eqref{gx-gy}-\eqref{partial e-e}$. We can use this representation to express $G(y)\frac{\nabla_x f(x,v)-\nabla_x f(y,v)}{|x-y|^\beta}$.

Then for $G(y)[\eqref{nablatbx-nablatby}]$ we apply~\eqref{min: I nabla tb}; for $G(y)[\eqref{gx-gy}]$ we apply \eqref{tang nabla tb},\eqref{min: xb},\eqref{min: f} and \eqref{simplicity}; for $G(y)[\eqref{tbx-tby}]$ we apply $|G|=O(1)$; for $G(y)[\eqref{nablaxbx-nablaxby}]$ we apply \eqref{min: I nabla xb} and \eqref{equivalent}; for $G(y)[\eqref{tbx-tby nablag}]$ we apply $|G|=O(1)$; for $G(y)[\eqref{nablagx-nablagy}]$ we apply \eqref{tang nabla xb}, then we obtain
\begin{equation}\label{tang: bound for nablatbx-nablatby}
[G(y)\eqref{nablatbx-nablatby}]_i=O(1) \frac{\Vert w_{\tilde{\theta}}|v|f\Vert_\infty }{w_{\tilde{\theta}}(v)|v|^2\min\{\frac{\alpha(x,v)}{|v|},\frac{\alpha(y,v)}{|v|}\}^{1+\beta}}=\frac{O(1)\Vert wf\Vert_\infty}{w_{\tilde{\theta}}(v)|v|^2\min\{\frac{\alpha(x,v)}{|v|},\frac{\alpha(y,v)}{|v|}\}^{1+\beta}},
\end{equation}

\begin{equation}\label{tang: bound for gx-gy}
[G(y)[\eqref{gx-gy}]]_i= O(1) \frac{\Vert w_{\tilde{\theta}}\alpha \nabla_x f\Vert_\infty}{w_{\tilde{\theta}}(v)|v|^2 \min\{\frac{\alpha(x,v)}{|v|},\frac{\alpha(y,v)}{|v|}\}^{1+\beta}},
\end{equation}

\begin{equation}\label{tang: bound for tbx-tby}
[G(y)[\eqref{tbx-tby}]]_i=O(1)\eqref{Hd: bound for tbx-tby inte} =O(1) \frac{\Vert wf\Vert_\infty}{w_{\tilde{\theta}}(v)|v|^2 \min\{\frac{\alpha(x,v)}{|v|},\frac{\alpha(y,v)}{|v|}\}^{1+\beta}},
\end{equation}

\begin{equation}\label{tang: bound for nablaxbx-nablaxby}
 [G(y)[\eqref{nablaxbx-nablaxby}]]_i= O(1)\frac{\Vert w_{\tilde{\theta}}(v)|v|^2 \partial_{\mathbf{x}_{p^1,i}^1}f \Vert_\infty}{|v|^2\min\{\frac{\alpha(x,v)}{|v|},\frac{\alpha(y,v)}{|v|}\}^{1+\beta}}= O(1)\frac{\Vert wf\Vert_\infty}{w_{\tilde{\theta}}(v)|v|^2\min\{\frac{\alpha(x,v)}{|v|},\frac{\alpha(y,v)}{|v|}\}^{1+\beta}},
\end{equation}

\begin{equation}\label{tang: bound for tbx-tby nablag}
 [G(y)[\eqref{tbx-tby nablag}]]_i=O(1)\eqref{Hd: bound for tbx-tby nablag inte}=O(1) \frac{\Vert wf\Vert_\infty}{w_{\tilde{\theta}}(v)|v|^2\min\{\frac{\alpha(x,v)}{|v|},\frac{\alpha(y,v)}{|v|^2}\}^{1+\beta}},
\end{equation}

\[[G(y) [\eqref{nablagx-nablagy}]]_i=\eqref{C1betatang inter:3}.\]

For $[G(y)[\eqref{nu t}]]_i$, we rewrite
\begin{align}
& G(y)\frac{\nabla_x f(x-tv,v)-\nabla_x f(y-tv,v)}{|x-y|^\beta} \notag\\
&= \underbrace{\frac{G(x)\nabla_x f(x-tv,v)-G(y)\nabla_x f(y-tv,v)}{|x-y|^\beta}}_{\eqref{rewrite Gnablaxf-nablayf}_1}+\underbrace{\frac{\big[G(y)-G(x)\big]\nabla_x f(x-tv,v)}{|x-y|^\beta}}_{\eqref{rewrite Gnablaxf-nablayf}_2} \label{rewrite Gnablaxf-nablayf}.
\end{align}

We apply~\eqref{min: nxb} to have
\[[\eqref{rewrite Gnablaxf-nablayf}_2]_i=O(1)\frac{\Vert |v|^2 w_{\tilde{\theta}/2}\alpha\nabla_x f\Vert_\infty}{w_{\tilde{\theta}/2}(v)|v|^2 }=\frac{O(1)\Vert w_{\tilde{\theta}}\alpha\nabla_x f\Vert_\infty}{w_{\tilde{\theta}/2}(v)|v|^2\min\{\frac{\alpha(x,v)}{|v|^2},\frac{\alpha(y,v)}{|v|}\}^{1+\beta}}.\]

For$~\eqref{rewrite Gnablaxf-nablayf}_1$ we apply $\eqref{Gf-Gf 1}$ and conclude
\begin{align*}
  |\eqref{rewrite Gnablaxf-nablayf}_1| & =O(1) \frac{[\nabla_{x_\parallel}f(\cdot,v)]_{C^{0,\beta}_{x;1+\beta}}}{w_{\tilde{\theta}/2}(v)|v|^2\min\{\frac{\alpha(x,v)}{|v|},\frac{\alpha(y,v)}{|v|}\}^{1+\beta}}     +\frac{\frac{\tilde{\alpha}(x,v)}{|v|}[\nabla_{x}f(\cdot,v)]_{C^{0,\beta}_{x;2+\beta}}}{w_{\tilde{\theta}}(v)|v|^2\min\{\frac{\alpha(x,v)}{|v|},\frac{\alpha(y,v)}{|v|}\}^{2+\beta}}          \\
   &+\frac{|v|O(1)\Vert w_{\tilde{\theta}}\alpha\nabla_x f\Vert_\infty}{w_{\tilde{\theta}}(v)|v|^2 \min\{\frac{\alpha(x,v)}{|v|},\frac{\alpha(y,v)}{|v|}\}^{1+\beta}}   \\
   & =O(1)\frac{[\nabla_{x_\parallel}f(\cdot,v)]_{C^{0,\beta}_{x;1+\beta}}+[\nabla_{x}f(\cdot,v)]_{C^{0,\beta}_{x;2+\beta}}+\Vert w_{\tilde{\theta}} \alpha\nabla_x f\Vert_\infty}{w_{\tilde{\theta}/2}(v)|v|^2\min\{\frac{\alpha(x,v)}{|v|},\frac{\alpha(y,v)}{|v|}\}^{1+\beta}},
\end{align*}
where we have used $\tilde{\alpha}(v)w^{-1}_{\tilde{\theta}}(v)\lesssim \alpha(v)w^{-1}_{\tilde{\theta}/2}(v), |v|w^{-1}_{\tilde{\theta}}(v)\lesssim w^{-1}_{\tilde{\theta}/2}(v)$. Thus with $e^{-\nu t}\ll 1$ when $t\gg 1$,
\begin{equation}\label{tang: bound for nu t}
 [G(y) \eqref{nu t}]_i= o(1)\frac{\Vert w_{\tilde{\theta}}\alpha\nabla_x f\Vert_\infty+[\nabla_{x_\parallel}f(\cdot,v)]_{C^{0,\beta}_{x;1+\beta}}+[\nabla_{x}f(\cdot,v)]_{C^{0,\beta}_{x;2+\beta}}}{w_{\tilde{\theta}/2}(v)|v|^2\min\{\frac{\alpha(x,v)}{|v|},\frac{\alpha(y,v)}{|v|}\}^{1+\beta}},
\end{equation}
which is already included in~\eqref{C1betatang inter:1}.

Then we estimate $[G(y)\eqref{h-h deri}]_i$. We rewrite
\begin{align}
&\frac{G(y)[\nabla_x h(x-(t-s)v)-\nabla_x h(y-(t-s)v)]}{|x-y|^\beta} \notag\\
&=\underbrace{\frac{G(x)\nabla_x h(x-(t-s)v)-G(y)\nabla_x h(y-(t-s)v)}{|x-y|^\beta}}_{\eqref{Gxh-Gyh}_1}+\underbrace{\frac{G(y)-G(x)}{|x-y|^\beta}\nabla_x h(x-(t-s)v)}_{\eqref{Gxh-Gyh}_2}. \label{Gxh-Gyh}
\end{align}

We bound $w^{-1}_{\tilde{\theta}}(v)|v|\lesssim w_{\tilde{\theta}/2}^{-1}(v).$ From~\eqref{min: nxb} and~\eqref{nabla h bounded} the contribution of$~\eqref{Gxh-Gyh}_2$ in $[G(y)[\eqref{h-h deri}]]_i$ is
\begin{align*}
   &  O(1)\Vert \xi\Vert_{C^2} \frac{\Vert w_{\tilde{\theta}}\alpha\nabla_x f\Vert_\infty}{w_{\tilde{\theta}}(v)} \int_0^t \int_{\mathbb{R}^3} \frac{e^{-\nu(t-s)}\mathbf{k}_{\tilde{\varrho}}(v,u)}{\alpha(x-(t-s)v,u)}\\
   & =O(1)\Vert \xi\Vert_{C^2}\frac{\Vert w_{\tilde{\theta}}\alpha\nabla_x f\Vert_\infty |v|w^{-1}_{\tilde{\theta}}(v) }{|v|\alpha(x,v)}=O(1)\Vert \xi\Vert_{C^2}\frac{\Vert w_{\tilde{\theta}}\alpha\nabla_x f\Vert_\infty}{w_{\tilde{\theta}/2}(v)|v|^2\min\{\frac{\alpha(x,v)}{|v|},\frac{\alpha(y,v)}{|v|}\}^{1+\beta}},
\end{align*}
which is included in~\eqref{C1betatang inter:1}.

For$~\eqref{Gxh-Gyh}_1$ we apply~\eqref{Gf-Gf 1} in Lemma \ref{Lemma: Gf-Gf}. Then such contribution in $[G(y)[\eqref{h-h deri}]]_i$ equals to
\begin{align*}
& \frac{[\nabla_\parallel h(x-(t-s)v,u)]_i-[\nabla_\parallel h(y-(t-s)v,u)]_i}{|x-y|^\beta}\\
& +\frac{\tilde{\alpha}(x,v)}{|v|}\frac{\partial_{x_i} h(x-(t-s)v,v)-\partial_{x_i} h(y-(t-s)v,v)}{|x-y|^\beta}+\frac{\Vert w_{\tilde{\theta}} \alpha\nabla_x f\Vert_\infty}{w_{\tilde{\theta}/2}(v)|v|\min\{\frac{\alpha(x,v)}{|v|},\frac{\alpha(y,v)}{|v|}\}^{1+\beta}},
\end{align*}
which are included in~\eqref{C1betatang inter:0},\eqref{C1betatang inter:1} and~\eqref{C1betatang inter:4} respectively.

Last we estimate~\eqref{h itself}-\eqref{partial e-e}. For~\eqref{h itself} we apply $|G(y)|=O(1)$;
for~\eqref{partial h-h} we apply \eqref{min: xb},\eqref{h-h bounded} and \eqref{min: I nabla tb}; for~\eqref{partial nabla-nabla} we apply \eqref{h bounded},\eqref{nabla_tbxb} and \eqref{min: I nabla tb} ; for~\eqref{partial e-e} we apply $|G|=O(1)$, then we obtain

\begin{equation}\label{tang: bound for h itself}
[G(y) [\eqref{h itself}]]_{i}=O(1)        \eqref{Hd: bound for h itself inte}=\frac{O(1)\Vert w_{\tilde{\theta}}\alpha\nabla_x f\Vert_\infty}{w_{\tilde{\theta}/2}(v)|v|^2\min\{\frac{\alpha(x,v)}{|v|},\frac{\alpha(y,v)}{|v|}\}^{1+\beta}},
\end{equation}

\begin{align}
& [ G(y)[\eqref{partial h-h}] ]_i =\frac{O(1)}{|v|} \frac{h(\xb(x,v),v)-h(\xb(y,v),v)}{|\xb(x,v)-\xb(y,v)|^\beta}\frac{|\xb(x,v)-\xb(y,v)|^\beta}{|x-y|^\beta} \notag \\
  & \lesssim \frac{\Vert w_{\tilde{\theta}}\alpha\nabla_x f\Vert_\infty}{w_{\tilde{\theta}}(v)|v|\min\{\alpha(x,v),\alpha(y,v)\}} \frac{1}{\min\{\frac{\alpha(x,v)}{|v|},\frac{\alpha(y,v)}{|v|}\}^{\beta}}\lesssim \frac{1}{w_{\tilde{\theta}/2}(v)|v|^2\min\{\frac{\alpha(x,v)}{|v|},\frac{\alpha(y,v)}{|v|}\}^{1+\beta}},
   \label{tang: bound for partial h-h}
\end{align}

\begin{align}
  [G(y)[\eqref{partial nabla-nabla}]]_i & = \frac{O(1)|v| \Vert h\Vert_\infty }{w_{\tilde{\theta}}(v)|v|^2\min\{\frac{\alpha(x,v)}{|v|},\frac{\alpha(y,v)}{|v|}\}^{1+\beta}}=\frac{O(1)\Vert wf\Vert_\infty  }{w_{\tilde{\theta}/2}(v)|v|^2\min\{\frac{\alpha(x,v)}{|v|},\frac{\alpha(y,v)}{|v|}\}^{1+\beta}},
  \label{tang: bound for partial nabla-nabla}
\end{align}

\begin{align}
  [G(y)[\eqref{partial e-e}]]_i & =O(1) \eqref{Hd: bound for partial e-e inte} =O(1) \frac{\Vert wf\Vert_\infty}{w_{\tilde{\theta}/2}(v)|v|^2\min\{\frac{\alpha(x,v)}{|v|},\frac{\alpha(y,v)}{|v|}\}^{1+\beta}}.
     \label{tang: bound for partial e-e}
\end{align}

These four estimates are all included in~\eqref{C1betatang inter:1}.

We conclude the lemma.

\end{proof}

Now are ready to prove Proposition \ref{Prop: C1beta}.
\begin{proof}[\textbf{Proof of Proposition \ref{Prop: C1beta}}]
We will use 3 steps to prove this proposition. Since we already have the expression of the difference quotient from Lemma \ref{Lemma: intermediate estimate}, we mainly estimate~\eqref{C1beta inter:3},\eqref{C1beta inter:4} and~\eqref{C1betatang inter:0},\eqref{C1betatang inter:3},\eqref{C1betatang inter:4}. The estimate of~\eqref{C1beta inter:4} is put in Step 1 and the estimate of \eqref{C1beta inter:3} is put in Step 2. Thus Step 1 and Step 2 together conclude~\eqref{Bound of C1beta}. In Step 3 we estimate~\eqref{C1betatang inter:0},\eqref{C1betatang inter:3},\eqref{C1betatang inter:4} and thus conclude~\eqref{Bound of C1beta tangential}.

Before going into these steps we first list some estimates for the $\alpha$-weight. We will heavily rely on these estimates to make the computation more precise. For $0\leq s\leq \tb(\xb(x,v),v^1)$, we have
\begin{equation}\label{fact 1}
\alpha(\xb(x,v),v^1)\thicksim \alpha(\xb(x,v)-sv^1,v^1)\thicksim \alpha(\xb(\xb(x,v),v^1),v^1),
\end{equation}
and
\begin{equation}\label{fact 2}
\begin{split}
|n(\xb(x,v))\cdot v^1|e^{-C|v^1|^2}&=\mathbf{1}_{|n(\xb(x,v))\cdot v^1|\geq 1}e^{-C|v^1|^2}+\mathbf{1}_{|n(\xb(x,v))\cdot v^1|\leq 1}e^{-C|v^1|^2} \\
    & \lesssim \alpha(\xb(x,v),v^1)\frac{|n(\xb(x,v))\cdot v^1|}{\alpha(\xb(x,v),v^1)}e^{-C|v^1|^2}+\alpha(\xb(x,v),v^1)e^{-C|v^1|^2}\\
    &\lesssim \alpha(\xb(x,v),v^1)e^{-C|v^1|^2/2},
\end{split}
\end{equation}
where we have used~\eqref{kinetic_distance} and~\eqref{Velocity_lemma} in the derivation.

Suppose $\alpha(\xb(x,v))\leq \alpha(\xb(y,v))$. We let $\e\ll 1$ such that $\beta+\e<1$ and $\frac{1-\beta}{1-\beta-\e}<2$. Then we apply the H\"{o}lder inequality with $(\beta+\e)+(1-\beta-\e)=1  $ to have
\begin{align}
&   \int_{n(\xb(x,v))\cdot v^1>0}\frac{e^{-C|v^1|^2}|n(\xb(x,v))\cdot v^1|}{|v^1|^{2}\min\{\frac{\alpha(\xb(x,v),v^1)}{|v^1|},\frac{\alpha(\xb(x,v),v^1)}{|v^1|}\}^{1+\beta}} \dd v^1   \notag \\
&= \int_{n(\xb(x,v))\cdot v^1>0}\frac{e^{-C|v^1|^2}}{|v^1|^{1-\beta}\alpha^\beta(\xb(x,v),v^1)}\dd v^1  &  \notag\\
 & \lesssim \Big(\int   \frac{e^{-C|v^1|^2}}{|v^1|^{\frac{1-\beta}{1-\beta-\e}}}  \Big)^{1-\beta-\e}  \Big(\int   \frac{e^{-C|v^1|^2}}{\alpha^{\frac{\beta}{\beta+\e}}(\xb(x,v),v^1)}  \Big)^{\beta+\e}  \lesssim 1\label{fact 3},
 \end{align}
 where we have used
 \begin{align*}
 \int   \frac{e^{-C|v^1|^2}}{\alpha^{\frac{\beta}{\beta+\e}}(\xb(x,v),v^1)}&\lesssim\int_{n(\xb(x,v))\cdot v^1>0}\mathbf{1}_{\alpha(\xb(x,v),v^1)\geq 1}\frac{e^{-C|v^1|^2}}{\alpha^\frac{\beta}{\beta+\e}(\xb(x,v),v^1)}\dd v^1\notag\\
   &+\int_{n(\xb(x,v))\cdot v^1>0}\mathbf{1}_{\alpha(\xb(x,v),v^1)\leq 1}\frac{e^{-C|v^1|^2}}{|n(\xb(x,v))\cdot v^1|^\frac{\beta}{\beta+\e}}\dd v^1\lesssim 1.
\end{align*}
Then we start the proof.

\textbf{Step 1: estimate of~\eqref{C1beta inter:4}.}

We focus on
\begin{equation}\label{Key term in C1beta 4}
w_{\tilde{\theta}}(v)|v|^2\frac{|\partial_{\mathbf{x}_{p^1(x),i}^1} f(\xb(x,v),v)-\partial_{\mathbf{x}_{p^1(y),i}^1}f(\xb(y,v),v)|}{|\xb(x,v)-\xb(y,v)|^\beta} .
\end{equation}

Applying the diffuse boundary condition~\eqref{diffuse_f intro} we get
\begin{equation}\label{Hd: nabla bdr}
  \begin{split}
& \partial_{\mathbf{x}_{p^1(x),i}^1}f(\xb(x,v),v) \notag\\
=  &   \frac{M_W(\xb(x,v),v)}{\sqrt{\mu(v)}}  \int_{\mathbf{v}_{p^1(x),3}^1>0} \bigg[\underbrace{\Big(\partial_{\mathbf{x}_{p^1(x),i}^1}T^t_{\mathbf{x}_{p^1(x)}^1 \mathbf{v}_{p^1}^1}\Big)\cdot \nabla_v f(\eta_{p^1(x)}(\mathbf{x}_{p^1(x)}^1),T^t_{\mathbf{x}_{p^1(x)}^1}\mathbf{v}_{p^1(x)}^1)}_{
      \eqref{Hd: nabla bdr}_1}  \\
     & +     \underbrace{\partial_{\mathbf{x}_{p^1(x),j}^1}f(\eta_{p^1(x)}(\mathbf{x}_{p^1(x)}^1),T^t_{\mathbf{x}_{p^1(x)}^1}\mathbf{v}_{p^1(x)}^1) }_{\eqref{Hd: nabla bdr}_2}  \bigg]      \sqrt{\mu(\mathbf{v}_{p^1(x)}^1)}\mathbf{v}_{p^1(x),3}^1 \dd \mathbf{v}_{p^1(x)}^1    \\
     & +\frac{\partial_{\mathbf{x}_{p^1(x),i}^1} M_W(\xb(x,v),v)}{\sqrt{\mu(v)}}\int_{n(\xb(x,v))\cdot v^1>0}f(\xb(x,v),v^1)\sqrt{\mu(v^1)} \{n(\xb(x,v))\cdot v^1\}\dd v^1 \\
      &+ \partial_{\mathbf{x}_{p^1(x),i}^1}r(\xb(x,v),v)  .
  \end{split}
\end{equation}
From Lemma \ref{Lemma: bc estimate} the contribution of the last two terms of~\eqref{Hd: nabla bdr} in~\eqref{Key term in C1beta 4} is bounded by
\[\Vert T_W-T_0\Vert_{C^2}\Vert \alpha\nabla_x f\Vert_\infty.\]

\textit{Velocity derivative:} first we consider the contribution of$~\eqref{Hd: nabla bdr}_1$ in~\eqref{C1beta inter:4}. From~\eqref{v_under_v}, we rewrite$~\eqref{Hd: nabla bdr}_1$ as
\begin{align}
&\int_{\mathbf{v}_{p^1,3}^1>0} \sum_{m,n} \eqref{v_under_v_mn}_{mn,k+1\to 1}(x) \mathbf{v}_{p^1,m}^1 \partial_{\mathbf{v}_{p^1,n}^1}[f(\eta_{p^1(x)}(\mathbf{x}_{p^1(x)}^1),T^t_{\mathbf{x}_{p^1(x)}^1}\mathbf{v}_{p^1}^1)]\sqrt{\mu(\mathbf{v}_{p^1}^1)}\mathbf{v}_{p^1,3}^1 \dd \mathbf{v}_{p^1}^1 \notag\\
&= \int_{\mathbf{v}_{p^1,3}^1>0} \sum_{m,n} \eqref{v_under_v_mn}_{mn,k+1\to 1}(x) f(\eta_{p^1}(\mathbf{x}_{p^1(x)}^1),T^t_{\mathbf{x}_{p^1(x)}^1}\mathbf{v}_{p^1}^1)\partial_{\mathbf{v}_{p^1,n}^1}\big[\sqrt{\mu(\mathbf{v}_{p^1}^1)}\mathbf{v}_{p^1,3}^1 \mathbf{v}_{p^1,m}^1 \big]\dd \mathbf{v}_{p^1}^1\label{nablav in Hd}  .
\end{align}
Here we dropped the $x$ dependence in $\mathbf{v}_{p^1(x)}^1$ since it becomes a dummy variable.

From~\eqref{IBP_v} we have
\begin{equation}\label{Hd: IBVbound}
\eqref{nablav in Hd}=O(1)\Vert \eta\Vert_{C^2}\Vert wf\Vert_\infty.
\end{equation}

Then the contribution of$~\eqref{Hd: nabla bdr}_1$ in~\eqref{C1beta inter:4} can be written as
\begin{align}
 & \frac{w_{\tilde{\theta}}(v)|v|^2}{|\xb(x,v)-\xb(y,v)|^\beta}\bigg[ \frac{M_W(\xb(x,v),v)-M_W(\xb(y,v),v)}{\sqrt{\mu(v)}}\times \eqref{nablav in Hd} \label{M-M}\\
   & +\frac{M_W(\xb(y,v),v)}{\sqrt{\mu(v)}}\int_{\mathbf{v}_{p^1,3}^1>0}  \big[f(\eta_{p^1}(\mathbf{x}_{p^1(x)}^1),T^t_{\mathbf{x}_{p^1(x)}^1}\mathbf{v}_{p^1}^1) -f(\eta_{p^1}(\mathbf{x}_{p^1(y)}^1),T^t_{\mathbf{x}_{p^1(y)}^1}\mathbf{v}_{p^1}^1) \big] \notag\\
  & \times  \sum_{m,n} \eqref{v_under_v_mn}_{mn,k+1\to 1}(x) \partial_{\mathbf{v}_{p^1,n}^1}\big[\sqrt{\mu(\mathbf{v}_{p^1}^1)}\mathbf{v}_{p^1,3}^1 \mathbf{v}_{p^1,m}^1 \big]  \dd \mathbf{v}_{p^1}^1\label{f-f in nablav}\\
 &  +\frac{M_W(\xb(y,v),v)}{\sqrt{\mu(v)}}\int_{\mathbf{v}_{p^1,3}^1>0}      \sum_{m,n}  \partial_{\mathbf{v}_{p^1,n}^1}\big[\sqrt{\mu(\mathbf{v}_{p^1}^1)}\mathbf{v}_{p^1,3}^1 \mathbf{v}_{p^1,m}^1 \big]    \notag  \\
 &         \times f(\eta_{p^1(y)}(\mathbf{x}_{p^1(y)}^1),T^t_{\mathbf{x}_{p^1(y)}^1}\mathbf{v}_{p^1}^1)\Big[\eqref{v_under_v_mn}_{mn,k+1\to 1}(x)-\eqref{v_under_v_mn}_{mn,k+1\to 1}(y) \Big]\dd v^1  \bigg].             \label{mn-mn}
\end{align}

For~\eqref{M-M}, since $\tilde{\theta}\ll 1$, from~\eqref{Hd: IBVbound} we derive that
\begin{align}
   &\eqref{M-M}\lesssim \Vert \eta\Vert_{C^2}\Vert wf\Vert_\infty \frac{w_{\tilde{\theta}}(v)|v|^2[M_W(\xb(x,v),v)-M_W(\xb(y,v),v)]}{\sqrt{\mu(v)}|\xb(x,v)-\xb(y,v)|^\beta}  \lesssim       \Vert \eta\Vert_{C^2}\Vert wf\Vert_\infty \Vert T_W\Vert_{C^1} .\label{g: bound for M-M}
\end{align}

For~\eqref{f-f in nablav}, from~\eqref{min: f} with~\eqref{simplicity} and \eqref{min: xb} we compute
\begin{align*}
   &\frac{f(\eta_{p^1(x)}(\mathbf{x}_{p^1(x)}^1),T^t_{\mathbf{x}_{p^1(x)}^1}\mathbf{v}_{p^1}^1) -f(\eta_{p^1(y)}(\mathbf{x}_{p^1(y)}^1),T^t_{\mathbf{x}_{p^1(y)}^1}\mathbf{v}_{p^1}^1)}{|\xb(x,v)-\xb(y,v)|^\beta}  \\
   & \lesssim \frac{f(\eta_{p^1(x)}(\mathbf{x}_{p^1(x)}^1),T^t_{\mathbf{x}_{p^1(x)}^1}\mathbf{v}_{p^1}^1) -f(\eta_{p^1(y)}(\mathbf{x}_{p^1(y)}^1),T^t_{\mathbf{x}_{p^1(x)}^1}\mathbf{v}_{p^1}^1)}{|\eta_{p^1(x)}(\mathbf{x}_{p^1(x)}^1)-\eta_{p^1(y)}(\mathbf{x}_{p^1(y)}^1)|^\beta}\\
   & +\frac{f(\eta_{p^1(y)}(\mathbf{x}_{p^1(y)}^1),T^t_{\mathbf{x}_{p^1(x)}^1}\mathbf{v}_{p^1}^1) -f(\eta_{p^1(y)}(\mathbf{x}_{p^1(y)}^1),T^t_{\mathbf{x}_{p^1(y)}^1}\mathbf{v}_{p^1}^1)}{|T^t_{\mathbf{x}_{p^1(x)}^1}\mathbf{v}_{p^1}^1-T^t_{\mathbf{x}_{p^1(y)}^1}\mathbf{v}_{p^1}^1|}
\frac{|T^t_{\mathbf{x}_{p^1(x)}^1}\mathbf{v}_{p^1}^1-T^t_{\mathbf{x}_{p^1(y)}^1}\mathbf{v}_{p^1}^1|}{|\eta_{p^1(x)}(\mathbf{x}_{p^1(x)}^1)-\eta_{p^1(y)}(\mathbf{x}_{p^1(y)}^1)|}\\
   &\lesssim     \frac{\Vert \alpha\nabla_x f\Vert_\infty}{\min\{\alpha(\eta_{p^1(x)}(\mathbf{x}_{p^1(x)}^1),T^t_{\mathbf{x}_{p^1(x)}^1}\mathbf{v}_{p^1}^1),\alpha(\eta_{p^1(y)}(\mathbf{x}_{p^1(y)}^1),T^t_{\mathbf{x}_{p^1(y)}^1}\mathbf{v}_{p^1}^1)\}^\beta}     +     \frac{\Vert |v|^2\nabla_vf\Vert_\infty \Vert \eta\Vert_{C^2} |\mathbf{v}_{p^1}^1|}{|\mathbf{v}_{p^1}^1|^2}             ,
\end{align*}
where we have used the definition of $T_{\mathbf{x}_p}$~\eqref{T} and the mean value theorem regarding $\nabla_v f$ in the last line.

Since $\sum_{m,n} \eqref{v_under_v_mn}_{mn,k+1\to 1}(x) \partial_{\mathbf{v}_{p^1,n}^1}\big[\sqrt{\mu(\mathbf{v}_{p^1}^1)}\mathbf{v}_{p^1,3}^1 \mathbf{v}_{p^1,m}^1 \big] \lesssim 1$, we have
\begin{align}
 \eqref{f-f in nablav}  & \lesssim \Vert\alpha\nabla_x f\Vert_\infty \frac{w_{\tilde{\theta}}(v)|v|^2M_W(\xb(y,v),v)}{\sqrt{\mu(v)}}  \notag\\ & \times \int_{\mathbf{v}_{p^1,3}^1>0}\frac{\mu^{1/4}(\mathbf{v}_{p^1}^1)}{\min\{\alpha(\eta_{p^1(x)}(\mathbf{x}_{p^1(x)}^1),T_{\mathbf{x}^1_{p^1(x)}}^t\mathbf{v}_{p^1}^1),\alpha(\eta_{p^1(y)}(\mathbf{x}_{p^1(y)}^1),T_{\mathbf{x}^1_{p^1(y)}}^t\mathbf{v}_{p^1}^1)\}^\beta} \dd \mathbf{v}_{p^1}^1\notag \\
 & + \Vert \eta\Vert_{C^2}\Vert |v|^2 \nabla_v f\Vert_\infty \frac{w_{\tilde{\theta}}(v)|v|^2M_W(\xb(y,v),v)}{\sqrt{\mu(v)}}   \int_{\mathbf{v}_{p^1,3}^1>0} \frac{\mu^{1/4}(\mathbf{v}_{p^1}^1)}{|\mathbf{v}_{p^1}^1|} \dd \mathbf{v}_{p^1}^1   \notag\\
   &\lesssim \big(\Vert\alpha\nabla_x f\Vert_\infty+\Vert |v|^2\nabla_v f\Vert_\infty\big)\frac{|v|^2M_W(\xb(y,v),v)}{\sqrt{\mu(v)}}\notag\\
   &\times \Big[ \int_{\mathbb{R}^3} \big( \frac{\mu^{1/4}(\mathbf{v}_{p^1}^1)}{|n(\xb(x,v))\cdot T_{\mathbf{x}^1_{p^1(x)}}^t\mathbf{v}_{p^1}^1|^\beta}+\frac{\mu^{1/4}(\mathbf{v}_{p^1}^1)}{|n(\xb(y,v))\cdot T_{\mathbf{x}^1_{p^1(y)}}^t\mathbf{v}_{p^1}^1|^\beta}\dd \mathbf{v}_{p^1}^1\big)+\int_{\mathbb{R}^3} \frac{\mu^{1/4}(\mathbf{v}_{p^1}^1)}{|\mathbf{v}_{p^1}^1|} \mathbf{v}_{p^1}^1\Big]\notag\\
   &\lesssim \Vert \alpha\nabla_x f\Vert_\infty+\Vert |v|^2\nabla_v f\Vert_\infty,\label{g: bound for f-f in nablav}
\end{align}
where we have used $\beta<1$ in the last line.

Last we estimate~\eqref{mn-mn}. From~\eqref{v_under_v_mn} and~\eqref{nabla_tbxb} we compute
\begin{align*}
   & \frac{\eqref{v_under_v_mn}_{mn}(x)-\eqref{v_under_v_mn}_{mn}(y)}{|\eta_{p^1(x)}(\mathbf{x}_{p^1(x)}^1)-\eta_{p^1(y)}(\mathbf{x}_{p^1(y)}^1)|^\beta} \\
   &     \lesssim \Big[\frac{\p }{ \p{\mathbf{x}^{1}_{p^{1}(x), j}}  }\left(
  \frac{\p_m \eta_{p^{1}(x),l} (\mathbf{x}^{1}_{p^{1}(x)})}{\sqrt{g_{p^{1}(x),mm}(\mathbf{x}^{1}_{p^{1}(x)})} }\right) \frac{  \frac{\p_n \eta_{p^{1}(x),l} (\mathbf{x}^{1}_{p^{1}(x)})}{\sqrt{g_{p^{1}(x),nn}(\mathbf{x}^{1}_{p^{1}(x)})} }-  \frac{\p_n \eta_{p^{1}(y),l} (\mathbf{x}^{1}_{p^{1}(y)})}{\sqrt{g_{p^{1}(y),nn}(\mathbf{x}^{1}_{p^{1}(y)})} }}{|\eta_{p^1(x)}(\mathbf{x}_{p^1(x)}^1)-\eta_{p^1(y)}(\mathbf{x}_{p^1}^1(y))|^\beta}\\
  &+   \frac{\p_n \eta_{p^{1}(y),l} (\mathbf{x}^{1}_{p^{1}(y)})}{\sqrt{g_{p^{1}(y),nn}(\mathbf{x}^{1}_{p^{1}(y)})} }\frac{\frac{\p }{ \p{\mathbf{x}^{1}_{p^{1}(x), j}}  }\left(
  \frac{\p_m \eta_{p^{1}(x),l} (\mathbf{x}^{1}_{p^{1}(x)})}{\sqrt{g_{p^{1}(x),mm}(\mathbf{x}^{1}_{p^{1}(x)})} }\right)-\frac{\p }{ \p{\mathbf{x}^{1}_{p^{1}(y), j}}  }\left(
  \frac{\p_m \eta_{p^{1}(y),l} (\mathbf{x}^{1}_{p^{1}(y)})}{\sqrt{g_{p^{1}(y),mm}(\mathbf{x}^{1}_{p^{1}(y)})} }\right)}{|\eta_{p^1(x)}(\mathbf{x}_{p^1(x)}^1)-\eta_{p^1(y)}(\mathbf{x}_{p^1(y)}^1)|^\beta}\Big]\lesssim \Vert\eta\Vert_{C^3},
\end{align*}
where we have used $\eta\in C^3$ and mean value theorem in the last line. Thus we conclude
\begin{equation}\label{g: bound for mn-mn}
\eqref{mn-mn}\lesssim \Vert \eta\Vert_{C^3} \Vert wf\Vert_\infty .
\end{equation}

Combining~\eqref{g: bound for M-M},\eqref{g: bound for f-f    in nablav} and \eqref{g: bound for mn-mn}, we conclude that the contribution of the velocity derivative$~\eqref{Hd: nabla bdr}_1$ in~\eqref{Key term in C1beta 4} has an upper bound
\begin{equation}\label{Hd: bound for nablagx-nablagy 1}
|\eqref{Key term in C1beta 4}_{\eqref{Hd: nabla bdr}_1}|\lesssim \Vert\eta\Vert_{C^3}\Vert T_W\Vert_{C^1}\Vert \alpha\nabla_x f\Vert_\infty ,
\end{equation}
where we used~\eqref{simplicity}.

%~\eqref{Hd: nablav} in~\eqref{Hd: nabla bdr}, with~\eqref{Hd: nablav} given by
%\begin{equation}\label{Hd: nablav}
%\sum_{m,n} \eqref{Hd: nablavmn}_{mn} \mathbf{v}_{p^1,m}^1 \partial_{\mathbf{v}^{1}_{p^1,n}}\big[f(\eta_{p^1}(\mathbf{x}_{p^1}^1(x,v)),T^t_{\mathbf{x}_{p^1}^1}\mathbf{v}_{p^1}^1) \big],
%\end{equation}
%\begin{equation}\label{Hd: nablavmn}
%  \eqref{Hd: nablavmn}_{mn}=\eqref{v_under_v_mn}_{k+1\to 1}.
%\end{equation}

\textit{Spatial derivative:}
 we consider the contribution of the spatial derivative$~\eqref{Hd: nabla bdr}_2$ in~\eqref{C1beta inter:4}. We rewrite the $\mathbf{v}_{p^1}^1$-integration using $v^1$ integration and get
\begin{equation}\label{Hd: spatial deri}
  \begin{split}
  \eqref{Hd: nabla bdr}_2 &=  e^{-\nu \tb(x,v)} \frac{M_W(\xb(x,v),v)}{\sqrt{\mu(v)}}  \\
  & \times\int_{n(\xb(x,v))\cdot v^1>0}\underbrace{\partial_{\mathbf{x}_{p^1(x),i}^1}\big[f(\eta_{p^1(x)}(\mathbf{x}_{p^1(x)}^1),v^1) \big] \sqrt{\mu(v^1)}|n(\xb(x,v))\cdot v^1|}_{\eqref{Hd: spatial deri}_*} \dd v^1.
  \end{split}
\end{equation}
Then the contribution of$~\eqref{Hd: nabla bdr}_2$ in~\eqref{C1beta inter:4} can be written as
\begin{align}
&\Bigg[\frac{w_{\tilde{\theta}}(v)|v|^2[M_W(\xb(x,v),v)-M_W(\xb(y,v),v)]}{|\xb(x,v)-\xb(y,v)|^\beta\sqrt{\mu(v)}}\times \eqref{Hd: spatial deri} \label{spatial:M-M}\\
   & +\frac{|v|^2M_W(\xb(y,v),v)}{\sqrt{\mu(v)}|\xb(x,v)-\xb(y,v)|^\beta}\Big(\int_{n(\xb(x,v))\cdot v^1>0} \frac{\partial}{\partial \mathbf{x}_{p^1(x),i}^1} f ( \eta_{p^{1}(x)} ( \mathbf{x}_{p^{1}(x) }^{1})  , v^1) \sqrt{\mu(v^1)}|n(\xb(x,v))\cdot v^1| \dd v^1\notag \\
  &    -\int_{n(\xb(y,v))\cdot v^1>0}\frac{\partial}{\partial \mathbf{x}_{p^1(y),i}^1} f ( \eta_{p^{1}} ( \mathbf{x}_{p^{1}(y) }^{1}  ), v^1)    \sqrt{\mu(v^1)}|n(\xb(y,v))\cdot v^1|        \dd v^1 \Big)\Bigg] \label{spatial:f-f in nablav}.
\end{align}

%\begin{align}
%& \sum_{i=1,2}\nabla_x  \mathbf{x}_{p^1,i}^1(y)\frac{1}{|x-y|^\beta}\Bigg[\frac{M_w(\xb(x,v),v)-M_w(\xb(y,v),v)}{\sqrt{\mu(v)}}\times \eqref{Hd: spatial deri} \label{spatial:M-M}\\
%    &  +\frac{M_w(\xb(y,v),v)}{\sqrt{\mu(v)}}\Big[\int_{|n(\xb(x,v))-n(\xb(y,v))|>\frac{n(\xb(x,v))\cdot v}{|v|}>0} ~\eqref{Hd: spatial deri}_*(x) \dd v^1 \notag\\
%    & +\int_{|n(\xb(x,v))-n(\xb(y,v))|>\frac{n(\xb(y,v))\cdot v}{|v|}>0}~\eqref{Hd: spatial deri}_*(y) \dd v^1 \Big]      \label{spatial:int-int}\\
%   & +\frac{M_w(\xb(y,v),v)}{\sqrt{\mu(v)}}\int_{n(\xb(x.v))\cdot v^1>0,n(\xb(y,v))\cdot v^1>0} \sqrt{\mu(v^1)}|n(\xb(x,v))\cdot v^1| \notag \\
%  &   \times      \underbrace{\big[\frac{\partial}{\partial \mathbf{x}_{p^1,i}^1} f ( \eta_{p^{1}} ( \mathbf{x}_{p^{1} }^{1})(x)  , v^1) -\frac{\partial}{\partial \mathbf{x}_{p^1,i}^1} f ( \eta_{p^{1}} ( \mathbf{x}_{p^{1} }^{1}  )(y), v^1)  \big]}_{\eqref{spatial:f-f in nablav}_*}         \dd v^1 \label{spatial:f-f in nablav}.
%\end{align}

From~\eqref{equivalent} in Lemma \ref{Lemma: equivalent},
\[\eqref{Hd: spatial deri}\lesssim \Vert |v|\nabla_\parallel f\Vert_\infty \int \sqrt{\mu(v^1)} \frac{|n(\xb(x,v))\cdot v^1|}{|v^1|} \dd v^1\lesssim \Vert |v|\nabla_\parallel f\Vert_\infty.     \]

Thus applying~\eqref{min: xb} and~\eqref{min: M_w} we derive that

\begin{equation}\label{g: bound for spatial M-M}
\eqref{spatial:M-M}\lesssim     \Vert \eta\Vert_{C^2}\Vert |v|\nabla_\parallel f\Vert_\infty    .
\end{equation}

For~\eqref{spatial:f-f in nablav} we express $\sum_{i=1,2}\partial_{\mathbf{x}_{p^1(x),i}^1}f(\eta_{p^1(x)}(\mathbf{x}_{p^1(x)}^1),v^1)$ and $\sum_{i=1,2}\partial_{\mathbf{x}_{p^1(y),i}^1}f(\eta_{p^1(y)}(\mathbf{x}_{p^1(y)}^1),v^1)$ \\ using$~\eqref{ffx1_x1_1}-\eqref{ffx1_x1_5}$ with the notation~\eqref{second backward}:
\begin{align}
   &\mathbf{1}_{t^1\geq \min\{\tb^2(x),\tb^2(y)\}} e^{-\nu \tb^2(x)}  \sum_{i=1,2}\partial_{\mathbf{x}_{p^1(x),i}^1}\big[f(\xb(\eta_{p^1(x)}(\mathbf{x}_{p^1(x)}^1),v^1),v^1) \big]   \label{ffx1_x1_1}\\
   &-\mathbf{1}_{t^1\geq \min\{\tb^2(x),\tb^2(y)\}} \nu \sum_{i=1,2}\partial_{\mathbf{x}_{p^1(x),i}^1} \tb^2(x) e^{-\nu \tb^2(x)} f(\xb^2(x),v^1)   \label{ffx1_x1_2}\\
   & +\mathbf{1}_{t^1\leq \min\{\tb^2(x),\tb^2(y)\}} e^{-\nu t^1} \sum_{i=1,2} \partial_{\mathbf{x}_{p^1(x),i}^1}\big[f(\eta_{p^1(x)}(\mathbf{x}_{p^1(x)}^1)-t^1v^1,v^1) \big]     \label{ffx1_x1_3}\\
   & +\mathbf{1}_{t^1\leq \min\{\tb^2(x),\tb^2(y)\}} \int^{t^1}_{0} e^{-\nu(t^1-s^1)}   \sum_{i=1,2} \partial_{\mathbf{x}_{p^1(x),i}^1}\big[h(\eta_{p^1(x)}(\mathbf{x}_{p^1(x)}^1)-(t^1-s^1)v^1,v^1) \big]\dd s^1      \label{ffx1_x1_4}\\
   & +\mathbf{1}_{t^1\geq \min\{\tb^2(x),\tb^2(y)\}} \int^{t^1}_{t^1-\tb^2(x)} e^{-\nu(t^1-s^1)}   \sum_{i=1,2}   \partial_{\mathbf{x}_{p^1(x),i}^1}\big[h(\eta_{p^1(x)}(\mathbf{x}_{p^1(x)}^1)-(t^1-s^1)v^1,v^1) \big]\dd s^1         \label{ffx1_x1_42}\\
   &+ \mathbf{1}_{t^1\geq \min\{\tb^2(x),\tb^2(y)\}}   \sum_{i=1,2}\partial_{\mathbf{x}_{p^1(x),i}^1}\tb^2(x) e^{-\nu \tb^2(x)}h(\xb^2(x),v^1).\label{ffx1_x1_5}
\end{align}

We first estimate the boundary term~\eqref{ffx1_x1_1}. We split~\eqref{ffx1_x1_1} into two cases using \\ $\min\{\alpha(\xb(x,v),v^1),\alpha(\xb(y,v),v^1)\}$. We put the discussion for
$\min\{\alpha(\xb(x,v),v^1),\alpha(\xb(y,v),v^1)\}\leq \e,\text{ or }|v|\geq \e$ together with the estimate of \eqref{ffx1_x1_2}-\eqref{ffx1_x1_5}. Here we discuss the case that \\ $\min\{\alpha(\xb(x,v),v^1),\alpha(\xb(y,v),v^1)\}\geq \e \text{ and }|v|\leq \e^{-1}$.

For this case the difference quotient of $\eqref{ffx1_x1_1}$ reads
\begin{equation}\label{bdr: bdr-bdr}
 \begin{split}
   &\frac{\mathbf{1}_{\{\min\{n(\xb(x,v))\cdot v^1,n(\xb(y,v))\cdot v^1\} \geq \e,|v|\leq \e^{-1}\}}}{|\xb(x,v)-\xb(y,v)|^\beta}\Big( \int_{n(\xb(x,v))\cdot v^1>0}  \eqref{ffx1_x1_1}(x) \sqrt{\mu(v^1)}|n(\xb(x,v))\cdot v^1| \dd v^1 \\
   & -\int_{n(\xb(y,v))\cdot v^1>0} \eqref{ffx1_x1_1}(y) \sqrt{\mu(v^1)}|n(\xb(y,v))\cdot v^1| \dd v^1\Big).
\end{split}
\end{equation}

We perform the change of variable~\eqref{map_v_to_xbtb} and use~\eqref{p_xf_total1_under} to rewrite
\begin{equation}\label{dv to dx}
\begin{split}
   & \int_{n(\xb(x,v))\cdot v^1>0} \frac{\partial}{\partial \mathbf{x}_{p^1,i}^1} [f ( \eta_{p^{2}} ( \mathbf{x}_{p^{2} }^{2})  , v^1)] \sqrt{\mu(v^1)}|n(\xb(x,v))\cdot v^1| \dd v^1  \\
=&
\sum_{p^{2} \in \mathcal{P}}\iint _{|\mathbf{x}_{p^{2}}^{2}|< \delta_1}
\int^{t-\tb(x,v)}_0
e^{- \nu(v^{1}) \tb^1}
\iota_{p^{2}} (
\eta_{p^{2}} (\mathbf{x}_{p^{2}}^{2} )
)\\
& \times
\sum_{j^\prime=1,2}
\frac{\p \mathbf{x}^{2}_{p^{2},j^\prime}}{\p{\mathbf{x}^{1}_{p^{1}(x),j}}} \p_{\mathbf{x}_{p^{2},j^\prime}^{2}} [  f( \eta_{p^{2}} ( \mathbf{x}^{2}_{p^{2} } ),
 {v}^{1}
 )]\\
&
\times\frac{n_{p^{1}(x)} (\mathbf{x}_{p^{1}(x)} ^{1}) \cdot  (\xb(x,v) -
 \eta_{p^{2}} (\mathbf{x}_{p^{2}} ^{2})
 )
}{\tb^{2}}
 \frac{ n_{p^{2}}(\mathbf{x}^{2}_{p^{2}}) \cdot (\xb(x,v)-
 \eta_{p^{2}} (\mathbf{x}_{p^{2}} ^{2})
 ) }{|\tb^{2}|^4}\\
 &  \times  e^{-\frac{|\xb(x,v)-\eta_{p^2}(\mathbf{x}_{p^2}^2)|}{4|\tb^2|}}
 \dd \tb^{2}
\sqrt{g_{p^{2},11}g_{p^{2},22}  }
 \dd \mathbf{x}^{2}_{p^{2},1}\dd \mathbf{x}^{2}_{p^{2},2}.
\end{split}
\end{equation}
Here we dropped the $x$ dependence on $p^2(x)$ since $\mathbf{x}_{p^2}^2$ becomes dummy variable after the change of variable.

In~\eqref{dv to dx} the variables that depend on $x$ are $\tb(x,v),\xb(x,v),\mathbf{x}_{p^1(x)}^1$. Thus we have
\begin{align}
  \eqref{bdr: bdr-bdr} &=\frac{\eqref{dv to dx}(x)-\eqref{dv to dx}(y)}{|\xb(x,v)-\xb(y,v)|^\beta}\notag  \\
   & = \frac{1}{|\xb(x,v)-\xb(y,v)|^\beta}\bigg[     \sum_{p^{2} \in \mathcal{P}}\iint _{|\mathbf{x}_{p^{2}}^{2}|< \delta_1}
\int^{t-\min\{\tb(x,v),\tb(y,v)\}}_{t-\max\{\tb(x,v),\tb(y,v)\}} \cdots   \label{dv to dx 1}\\
&+ \sum_{p^{2} \in \mathcal{P}}\iint _{|\mathbf{x}_{p^{2}}^{2}|< \delta_1}
\int^{t-\max\{\tb(x,v),\tb(y,v)\}}_{0}  \Big[\frac{\p \mathbf{x}^{2}_{p^{2},j^\prime}}{\p{\mathbf{x}^{1  }_{p^{1 }(x),j}}}-\frac{\p \mathbf{x}^{2}_{p^{2},j^\prime}}{\p{\mathbf{x}^{1  }_{p^{1 }(y),j}}}   \Big]\cdots \label{dv to dx 2}\\
&+   \Big[\frac{n_{p^{1}(x)} (\mathbf{x}_{p^{1}(x)} ^{1}) \cdot  (\xb(x,v) -
 \eta_{p^{2}} (\mathbf{x}_{p^{2}} ^{2})
 )
-n_{p^{1}(y)} (\mathbf{x}_{p^{1}(y)}^{1}) \cdot  (\xb(y,v) -
 \eta_{p^{2}} (\mathbf{x}_{p^{2}} ^{2})
 )
}{\tb^2}\Big]\cdots \label{dv to dx 3}\\
&  +\Big[\frac{ n_{p^{2}}(\mathbf{x}^{2}_{p^{2}}) \cdot (\xb(x,v) -
 \eta_{p^{2}} (\mathbf{x}_{p^{2}} ^{2})
 ) }{|\tb^{2}|^4}-\frac{ n_{p^{2}}(\mathbf{x}^{2}_{p^{2}}) \cdot (\xb(y,v) -
 \eta_{p^{2}} (\mathbf{x}_{p^{2}} ^{2})
 ) }{|\tb^{2}|^4} \Big]\cdots    \label{dv to dx 4}\\
 &+ \Big[ e^{-\frac{|\xb(x,v)-\eta_{p^2}(\mathbf{x}_{p^2}^2)|}{4|\tb^2|}}- e^{-\frac{|\xb(y,v)-\eta_{p^2}(\mathbf{x}_{p^2}^2)|}{4|\tb^2|}} \Big] \bigg]\cdots \label{dv to dx 5}.
\end{align}

Since $\min\{\alpha(\xb(x,v),v^1),\alpha(\xb(y,v),v^1)\}\geq \e$, from~\eqref{n geq alpha}, clearly we have $|n(\xb(x,v))\cdot v^1|\gtrsim \e$. Moreover, due to $|v^1|\leq \e^{-1}$, we have a lower bound for $\tb^2$ from~\eqref{tb bounded}:
\[\tb^2\gtrsim \frac{1}{|v^1|}\frac{\min\{|n(\xb(x,v))\cdot v^1|,|n(\xb(y,v))\cdot v^1|\}}{|v^1|}=O(\e^3).\]
From~\eqref{equivalent} in Lemma \ref{Lemma: equivalent} and Lemma \ref{Lemma: min max} we obtain the following estimate:
\begin{align*}
   &\Big| \partial_{\mathbf{x}_{p^2,j'}^2}[f(\eta_{p^2}(\mathbf{x}_{p^2}^2),v^1)]\Big|\leq \Vert |v|\nabla_\parallel f\Vert_\infty<\infty,\quad  \Big|\frac{\partial \mathbf{x}_{p^2,j'}^2}{\partial \mathbf{x}_{p^1,j}^1} \Big|\lesssim 1, \\
   & \Big| \frac{n_{p^{1}(x)} (\mathbf{x}_{p^{1}(x)} ^{1}) \cdot  (\xb(x,v) -
 \eta_{p^{2}} (\mathbf{x}_{p^{2}} ^{2})
 )
}{\tb^{2}}\Big| \lesssim_\Omega O(\e^{-3})    ,\\
   &  \Big|\frac{ n_{p^{2}}(\mathbf{x}^{2}_{p^{2}}) \cdot (\xb(x,v)-
 \eta_{p^{2}} (\mathbf{x}_{p^{2}} ^{2})
 ) }{|\tb^{2}|^4} \Big|\lesssim_\Omega      O(\e^{-12}).
\end{align*}

Now we estimate~\eqref{dv to dx 1}-\eqref{dv to dx 5}. By~\eqref{min: tb} we compute
\begin{align}
   \frac{\eqref{dv to dx 1}}{|\xb(x,v)-\xb(y,v)|^\beta}&\lesssim \frac{O(\e^{-15})}{|\xb(x,v)-\xb(y,v)|^\beta}\iint \int_{t-\max\{\tb(x,v),\tb(y,v)\}}^{t-\min\{\tb(x,v),\tb(y,v)\}} e^{-\nu \tb^2} \notag \\
   &\lesssim O(\e^{-15}) \frac{   \big|e^{-\nu \tb(x,v)}-e^{-\nu \tb(y,v)}\big|}{|\xb(x,v)-\xb(y,v)|^\beta}\notag\\
&=O(\e^{-15}) \frac{   \big|e^{-\nu \tb(x,v)}-e^{-\nu \tb(y,v)}\big|}{|x-y|^\beta}\frac{|x-y|^\beta}{|\xb(x,v)-\xb(y,v)|^\beta} \notag\\
&\lesssim \frac{O(\e^{-15})}{\min\{\alpha(\xb(x,v),v^1),\alpha(\xb(y,v),v^1)\}} \frac{|x-y|^\beta}{|\xb(x,v)-\xb(y,v)|^\beta}\notag\\
&\lesssim  O(\e^{-16})\frac{|x-y|^\beta}{|\xb(x,v)-\xb(y,v)|^\beta}. \label{C1beta bdr cov 1}
\end{align}
The extra term $\frac{|x-y|^\beta}{|\xb(x,v)-\xb(y,v)|^\beta}$ will be cancelled by $\frac{|\xb(x,v)-\xb(y,v)|^\beta}{{|x-y|^\beta}}$ in~\eqref{C1beta inter:4}.

Then we estimate~\eqref{dv to dx 2}. By~\eqref{min: partial xip1 xip2} we have
\begin{align*}
   &\frac{\frac{\p \mathbf{x}^{2}_{p^{2},j^\prime}}{\p{\mathbf{x}^{1  }_{p^{1 }(x),j}}}-\frac{\p \mathbf{x}^{2}_{p^{2},j^\prime}}{\p{\mathbf{x}^{1  }_{p^{1 }(y),j}}}   }{|\xb(x,v)-\xb(y,v)|^\beta} \mathbf{1}_{\min\{\alpha(\xb(x,v),v^1),\alpha(\xb(y,v),v^1)\}\geq \e}  \lesssim \frac{|v^1|^3}{\min\{\alpha(\xb(x,v),v^1),\alpha(\xb(y,v),v^1)\}^3}\lesssim O(\e^{-6}).
\end{align*}
Thus
\begin{align}
 & \frac{\eqref{dv to dx 2}}{|\xb(x,v)-\xb(y,v)|^\beta}\lesssim O(\e^{-15})\int_0^\infty e^{-\nu_0 \tb^2}\frac{\frac{\p \mathbf{x}^{2}_{p^{2},j^\prime}}{\p{\mathbf{x}^{1  }_{p^{1}(x),j}}}-\frac{\p \mathbf{x}^{2}_{p^{2},j^\prime}}{\p{\mathbf{x}^{1  }_{p^{1}(y),j}}}   }{|\xb(x,v)-\xb(y,v)|^\beta}    \lesssim O(\e^{-21}). \label{C1beta bdr cov 2}
\end{align}

Then we estimate~\eqref{dv to dx 3}. By~\eqref{min: nxb} we compute
\begin{align*}
   & \frac{n_{p^{1}(x)} (\mathbf{x}_{p^{1}(x)} ^{1}) \cdot  (\xb(x,v) -
 \eta_{p^{2}} (\mathbf{x}_{p^{2}} ^{2})
 )
-n_{p^{1}(y)} (\mathbf{x}_{p^{1}(y)} ^{1}) \cdot  (\xb(y,v) -
 \eta_{p^{2}} (\mathbf{x}_{p^{2}} ^{2})
 )
}{\tb^2 |\xb(x,v)-\xb(y,v)|^\beta} \\
   &  \lesssim O(\e^{-3})\Big[ \frac{|n_{p^{1}(x)} (\mathbf{x}_{p^{1}(x)}^{1})-n_{p^{1}(y)} (\mathbf{x}_{p^{1}(y)} ^{1})||\xb(x,v)-\eta_{p^2}(\mathbf{x}_{p^2}^2)|}{|\xb(x,v)-\xb(y,v)|^\beta}+\frac{|\xb(x,v)-\xb(y,v)|}{|\xb(x,v)-\xb(y,v)|^\beta}\Big]  \lesssim O(\e^{-3}).
\end{align*}
Thus
\begin{align}
\frac{\eqref{dv to dx 3}}{|\xb(x,v)-\xb(y,v)|^\beta}   &\lesssim O(\e^{-15}). \label{C1beta bdr cov 3}
\end{align}

For~\eqref{dv to dx 4} we compute the difference as
\begin{align*}
   &\frac{ n_{p^{2}}(\mathbf{x}^{2}_{p^{2}}) \cdot (\xb(x,v) -
 \eta_{p^{2}} (\mathbf{x}_{p^{2}} ^{2})
 )- n_{p^{2}}(\mathbf{x}^{2}_{p^{2}}) \cdot (\xb(y,v) -
 \eta_{p^{2}} (\mathbf{x}_{p^{2}} ^{2})
 )  }{|\tb^{2}|^4|\xb(x,v)-\xb(y,v)|^\beta} \\
   & \lesssim O(\e^{-12})\frac{ |\xb(x,v)-\xb(y,v)|  }{|\xb(x,v)-\xb(y,v)|^\beta} \lesssim O(\e^{-12}).
\end{align*}
Thus
\begin{align}
\frac{\eqref{dv to dx 4}}{|\xb(x,v)-\xb(y,v)|^\beta}   & \lesssim  O(\e^{-12}). \label{C1beta bdr cov 4}
\end{align}

Last we estimate~\eqref{dv to dx 5}. By mean value theorem,
\begin{align*}
   & \frac{e^{-\frac{|\xb(x,v)-\eta_{p^2}(\mathbf{x}_{p^2}^2)|}{4|\tb^2|}}- e^{-\frac{|\xb(y,v)-\eta_{p^2}(\mathbf{x}_{p^2}^2)|}{4|\tb^2|}}}{|\xb(x,v)-\xb(y,v)|^\beta} \\
   & \lesssim \frac{e^{-\frac{|\xb(x,v)-\eta_{p^2}(\mathbf{x}_{p^2}^2)|}{4|\tb^2|}}- e^{-\frac{|\xb(y,v)-\eta_{p^2}(\mathbf{x}_{p^2}^2)|}{4|\tb^2|}}}{|\xb(x,v)-\eta_{p^2}(\mathbf{x}_{p^2}^2)-\xb(y,v)+\eta_{p^2}(\mathbf{x}_{p^2}^2)|^\beta/|\tb^2|^\beta}\frac{1}{|\tb^2|^\beta}\lesssim O(\e^{3\beta}).
\end{align*}
Thus
\begin{align}
\frac{\eqref{dv to dx 5}}{|\xb(x,v)-\xb(y,v)|^\beta}   & \lesssim  O(\e^{-15-3\beta}). \label{C1beta bdr cov 5}
\end{align}

Therefore, from~\eqref{simplicity}, we collect~\eqref{C1beta bdr cov 1},\eqref{C1beta bdr cov 2},\eqref{C1beta bdr cov 3},\eqref{C1beta bdr cov 4} and~\eqref{C1beta bdr cov 5} to conclude
\begin{equation}\label{Hd: bound for fx11}
\eqref{bdr: bdr-bdr}\lesssim \big[O(\e^{-21})+O(\e^{-16})\frac{|x-y|^\beta}{|\xb(x,v)-\xb(y,v)|^\beta}\big]\Vert w_{\tilde{\theta}}\alpha\nabla_x f\Vert_\infty^2.
\end{equation}

Then we estimate the rest terms in~\eqref{ffx1_x1_1}-\eqref{ffx1_x1_5}. First we rewrite the contribution of these term in~\eqref{spatial:f-f in nablav} into
\begin{align}
   & \frac{\int_{n(\xb(x,v))\cdot v^1>0} \eqref{ffx1_x1_1}\mathbf{1}_{\cdots}(x)+\cdots+\eqref{ffx1_x1_5}(x)- \int_{n(\xb(y,v))\cdot v^1>0} \eqref{ffx1_x1_1}\mathbf{1}_{\cdots}(y)+\cdots+\eqref{ffx1_x1_5}(y)}{|\xb(x,v)-\xb(y,v)|^\beta}\notag\\
   &    = \frac{\int_{|n(\xb(x,v))-n(\xb(y,v))|\geq \frac{n(\xb(x,v)\cdot v^1)}{|v^1|}>0} \cdots}{|\xb(x,v)-\xb(y,v)|^\beta}-\frac{\int_{|n(\xb(x,v))-n(\xb(y,v))|\geq \frac{n(\xb(y,v)\cdot v^1)}{|v^1|}>0} \cdots        }{|\xb(x,v)-\xb(y,v)|^\beta}             \label{int-int} \\
   &+ \int_{n(\xb(x,v))\cdot v^1>0,n(\xb(y,v))\cdot v^1>0}\frac{|n(\xb(x,v))\cdot v^1|-|n(\xb(y,v))\cdot v^1|}{|\xb(x,v)-\xb(y,v)|^\beta}\sqrt{\mu(v^1)}\big[\eqref{ffx1_x1_1}
   \mathbf{1}_{\cdots}(x)+\cdots+\eqref{ffx1_x1_5}(x)\big]\label{n-n in integral}\\
   & +\int_{n(\xb(x,v))\cdot v^1>0,n(\xb(y,v))\cdot v^1>0}|n(\xb(y,v))\cdot v^1|\sqrt{\mu(v^1)}\notag\\
    &\times \frac{\eqref{ffx1_x1_1}\mathbf{1}_{\cdots}(x)+\cdots+\eqref{ffx1_x1_5}(x)- \eqref{ffx1_x1_1}\mathbf{1}_{\cdots}(y) -\cdots-\eqref{ffx1_x1_5}(y)}{|\xb(x,v)-\xb(y,v)|^\beta}.\label{f-f}
\end{align}

By~\eqref{equivalent} in Lemma \ref{Lemma: equivalent},
\[|\eqref{ffx1_x1_1}+\cdots+\eqref{ffx1_x1_5}|\lesssim \eqref{tang: nabla fxb}+\cdots \eqref{tang: nabla h}\lesssim \eqref{nablaparallel f bound} \lesssim \frac{\Vert |v|\nabla_\parallel f\Vert_\infty+\Vert \alpha\nabla_x f\Vert_\infty}{|v^1|}.\]
For~\eqref{int-int}, from~\eqref{min: nxb} and~\eqref{simplicity} we have
\begin{align*}
   &  \frac{1}{|\xb(x,v)-\xb(y,v)|^\beta}\int_{|n(\xb(x,v))-n(\xb(y,v))|\geq \frac{n(\xb(x,v))\cdot v^1}{|v^1|}>0} |\eqref{ffx1_x1_1}+\cdots+\eqref{ffx1_x1_5}|\\
   & \lesssim\frac{\Vert \eta\Vert_{C^2} \Vert \alpha\nabla_x f\Vert_\infty}{|\xb(x,v)-\xb(y,v)|^\beta}\int_{|n(\xb(x,v))-n(\xb(y,v))|\geq \frac{n(\xb(x,v))\cdot v^1}{|v^1|}>0} \frac{|n(\xb(x,v))\cdot v^1|}{|v|^1}  \sqrt{\mu(v^1)}\\
   &\lesssim \frac{\Vert \eta\Vert_{C^2} \Vert \alpha\nabla_x f\Vert_\infty|n(\xb(x,v))-n(\xb(y,v))|}{|\xb(x,v)-\xb(y,v)|^\beta}\lesssim \Vert \eta\Vert_{C^2} \Vert \alpha\nabla_x f\Vert_\infty.
\end{align*}

Similarly
\[\frac{1}{|\xb(x,v)-\xb(y,v)|^\beta}\int_{|n(\xb(x,v))-n(\xb(y,v))|\geq \frac{n(\xb(y,v))\cdot v^1}{|v^1|}>0}\cdots \lesssim \Vert \eta\Vert_{C^2} \Vert \alpha\nabla_x f\Vert_\infty.\]
Thus
\begin{equation}\label{Hd: bound for int-int}
\eqref{int-int}\lesssim \Vert \eta\Vert_{C^2}\Vert \alpha\nabla_x f\Vert_\infty.
\end{equation}

For~\eqref{n-n    in integral}, applying~\eqref{min: nxb} we have
\begin{align}
 \eqref{n-n    in integral}  &\lesssim \int \frac{|n(\xb(x,v))\cdot v^1|-|n(\xb(y,v))\cdot v^1|}{|\xb(x,v)-\xb(y,v)|^\beta}\sqrt{\mu(v^1)} \cdots  \notag\\
   & \lesssim \Vert \eta\Vert_{C^2}\Vert \alpha\nabla_x f\Vert_\infty. \label{Hd: bound for n-n}
\end{align}

Then we focus~\eqref{f-f}, this estimate is the most delicate one. First of all we bound
\begin{equation}\label{comparable}
|n(\xb(y,v)\cdot v^1)|\leq \underbrace{|n(\xb(x,v))-n(\xb(y,v))||v^1|}_{\eqref{comparable}_1}+\underbrace{\min\{n(\xb(x,v))\cdot v^1,n(\xb(y,v))\cdot v^1\}}_{\eqref{comparable}_2}.
\end{equation}

By~\eqref{min: nxb} the contribution of$~\eqref{comparable}_1$ in~\eqref{f-f} is bounded by
\begin{align}
&\frac{\eqref{f-f}_{\eqref{comparable}_1}}{|\xb(x,v)-\xb(y,v)|^\beta} \notag\\
& \lesssim\int_{n(\xb(x,v))\cdot v^1>0,n(\xb(y,v))\cdot v^1>0}   \frac{|n(\xb(x,v))\cdot v^1|-|n(\xb(y,v))\cdot v^1|}{|\xb(x,v)-\xb(y,v)|^\beta} \sqrt{\mu(v^1)} \cdots \dd v^1 \notag\\
   &\lesssim \Vert \eta \Vert_{C^2}\Vert \alpha\nabla_x f\Vert_\infty \int_{n(\xb(x,v))\cdot v^1>0,n(\xb(y,v))\cdot v^1>0} \sqrt{\mu(v^1)}\lesssim \Vert \eta\Vert_{C^2}\Vert \alpha\nabla_x f\Vert_\infty. \label{Hd: bound for f-f1}
\end{align}

We focus on the contribution of$~\eqref{comparable}_2$ in~\eqref{f-f}. Then without loss generality, we can assume
\begin{equation}\label{wlog}
|n(\xb(y,v))\cdot v^1|=\min\{|n(\xb(x,v))\cdot v^1|,|n(\xb(y,v))\cdot v^1|\}.
\end{equation}
In result we can replace $|n(\xb(y,v))\cdot v^1|$ or $|n(\xb(y,v))\cdot v^1|$ by $\min\{|n(\xb(x,v))\cdot v^1|,|n(\xb(y,v))\cdot v^1|\}$.

Note that from~\eqref{equivalent} in Lemma \ref{Lemma: equivalent},
\[\sum_{i=1,2}\partial_{\mathbf{x}_{p^1(x),i}^1}f(\eta_{p^1(x)}(\mathbf{x}_{p^1(x)}^1),v^1)\thicksim G(\xb(x,v))\nabla_x f(\xb(x,v),v^1),\]
and we have an expression of $G(\xb(x,v))\nabla_x f(\xb(x,v),v^1)-G(\xb(y,v))\nabla_x f(\xb(y,v),v^1)$ from Lemma \ref{Lemma: intermediate estimate}. Thus the contribution of~\eqref{ffx1_x1_1}-\eqref{ffx1_x1_5} in~\eqref{f-f} can be expressed using~\eqref{C1betatang inter:0}-\eqref{C1betatang inter:4}, with replacing $x\to \xb(x,v)$, $y\to \xb(y,v)$, $v\to v^1$, $\mathbf{x}_{p^1,i}^1\to \mathbf{x}_{p^2,i}^i$, $\xb(x,v)\to \eta_{p^2(x)}(\mathbf{x}_{p^2(x)}^2)$, $\xb(y,v)\to \eta_{p^2(y)}(\mathbf{x}_{p^2(y)}^2)$.

From~\eqref{fact 3} we derive that the contribution of~\eqref{C1betatang inter:1} is bounded by
\begin{align}
   & \int_{n(\xb(x,v))\cdot v^1>0,n(\xb(y,v))\cdot v^1>0} \notag\\
   &\frac{|n(\xb(y,v))\cdot v^1|\sqrt{\mu(v^1)}[\Vert w_{\tilde{\theta}}\alpha\nabla_x f\Vert_\infty+o(1)[\nabla_x f(\cdot,v)]_{C^{0,\beta}_{x;2+\beta}}+o(1)[\nabla_\parallel f(\cdot,v)]_{C^{0,\beta}_{x;1+\beta}}]}{|v^1|^2\min\{\frac{\alpha(\xb(x,v),v^1)}{|v^1|},\frac{\alpha(\xb(y,v),v^1)}{|v^1|}\}^{1+\beta}}\notag \\
   & \lesssim \Vert  w_{\tilde{\theta}}\alpha\nabla_x f\Vert_\infty+o(1)[\nabla_x f(\cdot,v)]_{C^{0,\beta}_{x;2+\beta}}+o(1)[\nabla_\parallel f(\cdot,v)]_{C^{0,\beta}_{x;1+\beta}}, \label{Hd: bound for f-f2: 1}
\end{align}
where we have used~\eqref{fact 3}.

Then we estimate the contribution of~\eqref{C1betatang inter:3} and~\eqref{C1betatang inter:4} and~\eqref{C1betatang inter:0}.

We begin with~\eqref{C1betatang inter:4}. Note that we only need to consider the case $\min\{\alpha(\xb(x,v),v^1),\alpha(\xb(y,v),v^1)\}\leq \e, \text{ or }|v^1|\geq \e^{-1} $. We derive
\begin{align}
&\int \mathbf{1}_{\min\{\alpha(\xb(x,v),v^1),\alpha(\xb(y,v),v^1)\}\leq \e, \text{ or }|v^1|\geq \e^{-1}} \sqrt{\mu(v^1)}|n(\xb(y,v)\cdot v^1)|\notag\\
   & \sum_{j=1,2} \big[  e^{-\nu \tb^2(x)}   \frac{|\xb^2(x)-\xb^2(y)|^\beta}{|\xb(x,v)-\xb(y,v)|^\beta}\notag\\
     &\times\frac{\partial_{\mathbf{x}_{p^2(x),j}^2}f(\eta_{p^2(x)}(\mathbf{x}_{p^2(x)}^2),v^1)-\partial_{\mathbf{x}_{p^2(y),j}^2}f(\eta_{p^2(y)}(\mathbf{x}_{p^2(y)}^2),v^1)}{|\eta_{p^2(x)}(\mathbf{x}_{p^2(x)}^2)-\eta_{p^2(y)}(\mathbf{x}_{p^2(y)}^2)|^\beta}  \big]\notag\\
     &\lesssim \int_{\e>n(\xb(x,v))\cdot v^1>0,\e>n(\xb(y,v))\cdot v^1>0,\text{ or }|v^1|\geq \e^{-1}}\sqrt{\mu(v^1)}|n(\xb(x,v))\cdot v^1| \notag \\
   &\times    \frac{[\nabla_{x_\parallel}f(\cdot,v)]_{C^{0,\beta}_{x;1+\beta}}}{|v^1|^2\min\{\frac{\alpha(\xb(x,v),v^1)}{|v^1|},\frac{\alpha(\xb(y,v),v^1)}{|v^1|}\}^{1+\beta}} \notag\\
 &\lesssim O(\e)[\nabla_{x_\parallel}f(\cdot,v)]_{C^{0,\beta}_{x;1+\beta}}  \label{Hd: bound for f-f2: 4},
   \end{align}
where we have applied~\eqref{equivalent} in Lemma \ref{Lemma: equivalent} to the third line, \eqref{min: xb tang} to the second line, \eqref{wlog} and \eqref{fact 3} to the integral in the fourth line.

Then we focus on the contribution of~\eqref{C1beta inter:3}. First we consider $h=K(f)$. Denote
\[x^s=\xb(x,v)-(t^1-s^1)v^1,\quad y^s=\xb(y,v)-(t^1-s^1)v^1,\]
we need to compute
\begin{equation}\label{h-h deri in bdr}
\int \sqrt{\mu(v^1)}|n(\xb(y,v))\cdot v^1|\int^{t^1}_0 \dd s^1 e^{-\nu(v^1)(t^1-s^1)} \int_{\mathbb{R}^3}\dd u \mathbf{k}(v^1,u)\underbrace{\frac{G(x^s)\nabla_x f(x^s,u)-G(y^s)\nabla_x f(y^s,u)}{|\xb(x,v)-\xb(y,v)|^\beta}}_{\eqref{h-h deri in bdr}_*}.
\end{equation}

We use the decomposition~\eqref{px_f_5_split} for the $\dd s^1$ integral. When $t^1-s^1\leq \e,$ we apply~\eqref{integrate alpha beta small} in
Lemma \ref{Lemma: NLN} with $p=1+\beta$ and~\eqref{wlog} to conclude that
\begin{align*}
   \eqref{h-h deri in bdr}\mathbf{1}_{t^1-s^1\leq \e} & \lesssim
  [\nabla_{x_\parallel}f(\cdot,v)]_{C^{0,\beta}_{x;1+\beta}}\int \sqrt{\mu(v^1)}|n(\xb(y,v))\cdot v^1|\dd v^1      \\
   & \times \int^{t^1}_{t^1-\e}\dd s^1 e^{-\nu(v^1)(t^1-s^1)}\int_{\mathbb{R}^3}\dd u \mathbf{k}(v^1,u)\frac{1}{|u|^2\min\{\frac{\alpha(x^s,u)}{|u|},\frac{\alpha(y^s,u)}{|u|}\}^{1+\beta}}\\
   &\lesssim  O(\e)
   [\nabla_{x_\parallel}f(\cdot,v)]_{C^{0,\beta}_{x;1+\beta}} \int \sqrt{\mu(v^1)}\frac{|n(\xb(y,v),v^1)|}{|v^1|^2\min\{\frac{\alpha(\xb(x,v),v^1)}{|v|^1},\frac{\alpha(\xb(y,v),v^1)}{|v^1|}\}^{1+\beta}}\dd v^1\\
   &\lesssim
   O(\e)[\nabla_{x_\parallel}f(\cdot,v)]_{C^{0,\beta}_{x;1+\beta}}.
\end{align*}

When $t^1-s^1\geq \e$. We rewrite
\begin{equation}\label{h-h deri*}
\eqref{h-h deri in bdr}_*= \underbrace{\frac{\Big[G(x^s)-G(y^s)\Big]\nabla_x f(y^s,u)}{|\xb(x,v)-\xb(y,v)|^\beta}}_{\eqref{h-h deri*}_1}+\underbrace{\frac{G(y^s)\Big[\nabla_x f(x^s,u)-\nabla_x f(y^s,u)\Big] }{|\xb(x,v)-\xb(y,v)|^\beta}}_{\eqref{h-h deri*}_2}.
\end{equation}
By~\eqref{min: nxb} we have $\eqref{h-h deri*}_1\lesssim \frac{\Vert \alpha\nabla_x f\Vert_\infty}{\alpha(y^s,u)}$. Thus such contribution in~\eqref{h-h deri in bdr} is bounded by
\begin{align*}
  & \Vert \alpha\nabla_x f\Vert_\infty \int \sqrt{\mu(v)^1}|n(\xb(y,v))\cdot v^1|\dd v^1 \int^{t^1}_{0} \dd s^1 e^{-\nu(v^1)}(t^1-s^1)\int_{\mathbb{R}^3} \dd u \frac{\mathbf{k}(v^1,u)}{\alpha(y^s,u)}\\
   &\lesssim \Vert \alpha\nabla_x f\Vert_\infty\int \sqrt{\mu(v^1)}\frac{|n(\xb(y,v))\cdot v^1|}{\min\{\alpha(\xb(x,v),v^1),\alpha(\xb(y,v),v^1)\}} \dd v^1 \lesssim \Vert \alpha\nabla_x f\Vert_\infty,\end{align*}
where we have used~\eqref{est:nonlocal_wo_e} in Lemma \ref{Lemma: NLN} and~\eqref{comparable}.

Then we focus on the contribution of$~\eqref{h-h deri*}_2$. We exchange $\nabla_x$ for $\nabla_{v^1}$:
\[\nabla_x f(x^s,u)=\nabla_x f(\xb(x,v)-(t^1-s^1)v^1,u)=\frac{\nabla_{v^1} f(\xb(x,v)-(t^1-s^1)v^1,u)}{-(t^1-s^1)}.\]
Then we perform an integration by parts for $\dd v^1$. The $\dd v^1$ integral in$~\eqref{h-h deri in bdr}\mathbf{1}_{t^1-s^1\geq \e}$ becomes
\begin{align}
 & \int \nabla_{v^1}\Big[|n(\xb(y,v))\cdot v^1|\sqrt{\mu(v^1)}\int^{t^1}_{\max\{t^1-\tb^2(x),t^1-\tb^2(y)\}}\frac{e^{-\nu(v^1)(t^1-s^1)}}{-(t^1-s^1)}\mathbf{k}_\varrho(v^1,u)G(y^s)\Big] \dd u   \notag \\
&\times \frac{f(x^s,u)-f(y^s,u)}{|\xb(x,v)-\xb(y,v)|^\beta} \notag\\
&\lesssim \int \int^{t^1}_{\max\{t^1-\tb^2(x),t^1-\tb^2(y)\}}\nabla_{v^1}\big[|n(\xb(y,v))\cdot v^1|\sqrt{\mu(v^1)}e^{-\nu(v^1)(t^1-s^1)}\big]\cdots
 \label{h-h deri*2 1}\\
 &+\int  \int^{t^1}_{\max\{t^1-\tb^2(x),t^1-\tb^2(y)\}}\nabla_{v^1} \mathbf{k}_{\varrho}(v^1,u)\cdots\label{h-h deri*2 3}\\
&+\int \int^{t^1}_{\max\{t^1-\tb^2(x),t^1-\tb^2(y)\}} \nabla_{v^1}G(\xb(y,v)-(t^1-s^1)v^1)\cdots \label{h-h deri*2 3p} \\
&+\int  \nabla_{v^1}\min\{\tb^2(x),\tb^2(y)\} \frac{e^{-\nu(v^1)\min\{\tb^2(x),\tb^2(y)\}}}{\min\{\tb^2(x),\tb^2(y)\}} \cdots                       .\label{h-h deri*2 4}
\end{align}

For~\eqref{h-h deri*2 1}, since $\nabla_{v^1}|n(\xb(y,v)\cdot v^1)\sqrt{\mu(v^1)}e^{-\nu(v^1)(t^1-s^1)}|\lesssim \mu^{1/4}(v^1)e^{-\nu(v^1)(t^1-s^1)/2}$, by \eqref{min: f} with~\eqref{simplicity} and~\eqref{integrate beta<1} we have
\begin{align*}
  \eqref{h-h deri*2 1} &  \lesssim    O(\e^{-1})\Vert \alpha\nabla_x f\Vert_\infty\int \mu^{1/4}(v^1)\int e^{-\nu(v^1)(t^1-s^1)/2}\dd s^1\int_{\mathbb{R}^3}\frac{\mathbf{k}_\varrho(v^1,u)}{\min\{\alpha(x^s,v^1),\alpha(y^s,v^1)\}^\beta}\dd u\\
   & \lesssim O(\e^{-1})\Vert \alpha\nabla_x f\Vert_\infty \int \mu^{1/4}(v^1) \lesssim O(\e^{-1})\Vert \alpha\nabla_x f\Vert_\infty.
\end{align*}

For~\eqref{h-h deri*2 3} from~\eqref{k_varrho}, we have $\nabla_{v^1}\mathbf{k}(v^1,u)\lesssim \frac{\langle v^1\rangle\mathbf{k}(v^1,u)}{|v^1-u|}$. Then by~\eqref{integrate k/v-u} in Lemma \ref{Lemma: NLN} we have
\begin{align*}
  \eqref{h-h deri*2 3} &  \lesssim    O(\e^{-1})\Vert \alpha\nabla_x f\Vert_\infty\int |n(\xb(y,v))\cdot v^1|\sqrt{\mu(v^1)} \langle v^1\rangle\dd v^1\\
  &\times\int e^{-\nu(v^1)(t^1-s^1)}\dd s^1\int_{\mathbb{R}^3}\frac{\mathbf{k}_\varrho(v^1,u)}{|v^1-u||\min\{\alpha(x^s,v^1),\alpha(y^s,v^1)\}^\beta}\dd u\\
   & \lesssim O(\e^{-1})\Vert \alpha\nabla_x f\Vert_\infty \int |n(\xb(y,v))\cdot v^1|\mu^{1/4}(v^1) \frac{1}{\min\{\alpha(\xb(x,v),v^1),\alpha(\xb(y,v),v^1)\}^\beta}\\
   &\lesssim O(\e^{-1})\Vert \alpha\nabla_x f\Vert_\infty.
\end{align*}

For~\eqref{h-h deri*2 3p}, since $|\nabla_{v^1}G(\xb(y,v)-(t^1-s^1)v^1)|\lesssim \Vert \xi\Vert_{C^2}(t^1-s^1) $, and $(t^1-s^1)e^{-\nu(v^1)(t^1-s^1)}\lesssim e^{-\nu(v^1)(t^1-s^1)/2}$, we have
\[\eqref{h-h deri*2 3p} \lesssim O(\e^{-1}) \Vert \alpha\nabla_x f\Vert_\infty  \int \mu^{1/4}(v^1)\dd v^1\lesssim O(\e^{-1})\Vert \xi\Vert_{C^2}\Vert \alpha\nabla_x f\Vert_\infty.  \]

For~\eqref{h-h deri*2 4}, since we consider $t^1-s^1\geq \e$, $\min\{\tb^2(x),\tb^2(y)\}\geq \e$. From~\eqref{nabla_tbxb} we have
\[\nabla_{v^1} \min\{\tb^2(x),\tb^2(y)\} \frac{e^{-\nu(v^1)\min\{\tb^2(x),\tb^2(y)\}}}{\min\{\tb^2(x),\tb^2(y)\}}\lesssim O(\e^{-1}).\]
Denote
\[x^b=\xb(x,v)-\min\{\tb^2(x),\tb^2(y)\}v^1,\quad y^b=\xb(y,v)-\min\{\tb^2(x),\tb^2(y)\}v^1.\]
Using~\eqref{nabla_tbxb} and from~\eqref{integrate beta<1} in Lemma \ref{Lemma: NLN} we have
\begin{align*}
  \eqref{h-h deri*2 4} & \lesssim O(\e^{-1})\int   \frac{|n(\xb(y,v))\cdot v^1|}{|n(\xb(y,v))\cdot v^1|}\sqrt{\mu(v^1)}\int_{\mathbb{R}^3}\frac{\mathbf{k}_\varrho(v^1,u)}{\min\{\alpha(x^b,u),\alpha(y^b,u)\}^\beta}  \\
   &\lesssim \Vert \alpha\nabla_x f\Vert_\infty \int   \sqrt{\mu(v^1)}  \lesssim \Vert \alpha\nabla_x f\Vert_\infty.
\end{align*}

Thus the contribution of$~\eqref{h-h deri*}_2$ in~\eqref{h-h deri in bdr} is bounded by
\begin{equation}\label{contri of ibp in h bdr}
O(\e^{-1})\Vert \alpha\nabla_x f\Vert_\infty.
\end{equation}

Then we obtain
\begin{equation}\label{Hd: bound for f-f2: 3K}
\eqref{h-h deri in bdr}\lesssim O(\e^{-1})\Vert \alpha\nabla_x f\Vert_\infty+O(\e)[\nabla_{x_\parallel}f(\cdot,v)]_{C^{0,\beta}_{x;1+\beta}}.
\end{equation}

Then we consider $h=\Gamma(f,f)$. We use~\eqref{G gamma- G    gamma} in Lemma \ref{Lemma: gamma-gamma} and~\eqref{wlog} and~\eqref{fact 1} to obtain
\begin{align}
    &\int |n(\xb(y,v))\cdot v^1|\sqrt{\mu(v^1)}\int_{\max\{t^x,t^y\}}^{t^1}\dd s^1 e^{-\nu(v^1)(t^1-s^1)}\notag\\
    &\times\frac{ G(\xb(x,v))\nabla_x \Gamma(f,f)(x^s,v^1) -G(\xb(y,v))\nabla_{x} \Gamma(f,f)(y^s,v^1)}{|\xb(x,v)-\xb(y,v)|^\beta} \notag\\
 &\lesssim \big(\Vert\alpha\nabla_x f\Vert_\infty^2+\Vert wf\Vert_\infty[\nabla_{x_\parallel}f(\cdot,v)]_{C^{0,\beta}_{x;1+\beta}}\big) \notag\\
&   \times \int \sqrt{\mu(v^1)}\frac{|n(\xb(y,v))\cdot v^1|}{|v^1|^2\min\{\alpha(\xb(x,v),v^1),\alpha(\xb(y,v),v^1)\}^{1+\beta}}\dd v^1  \notag\\
   & \lesssim  \Vert\alpha\nabla_x f\Vert_\infty^2+o(1)[\nabla_{x_\parallel}f(\cdot,v)]_{C^{0,\beta}_{x;1+\beta}}.\label{Hd: bound for f-f2: 3G}
\end{align}

The last term is~\eqref{C1betatang inter:0}. This estimate is very similar to the contribution of~\eqref{C1betatang inter:3}. Note that $\sqrt{\mu(v^1)}\tilde{\alpha}(\xb(x,v),v^1)\lesssim \mu^{1/4}(v^1)\alpha(\xb(x,v),v^1),$ we need to compute
\begin{equation}\label{h-h in bdr 0}
\int \mu^{1/4}(v^1)|n(\xb(y,v))\cdot v^1|\int^{t^1}_0 \dd s^1 e^{-\nu(v^1)(t^1-s^1)} \int_{\mathbb{R}^3}\dd u \frac{\alpha(\xb(x,v),v^1)}{|v^1|} \mathbf{k}(v^1,u)\frac{\nabla_x f(x^s,u)-\nabla_x f(y^s,u)}{|\xb(x,v)-\xb(y,v)|^\beta}.
\end{equation}

Again we first consider $t^1-s^1\leq \e$. We apply~\eqref{integrate alpha beta small} in Lemma \ref{Lemma: NLN} with $p=2+\beta$ and~\eqref{wlog} to obtain
\begin{align}
\eqref{h-h in bdr 0}\mathbf{1}_{t^1-s^1\leq \e} &\lesssim [\nabla_x f(\cdot ,v)]_{C_{x;2+\beta}^{0,\beta}}\int \mu^{1/4}(v^1)|n(\xb(y,v),v^1)|\frac{\alpha(\xb(x,v),v^1)}{|v^1|}  \notag\\
&\times \int_{t^1-\e}^{t^1}\dd s^1 e^{-\nu(v^1)(t^1-s^1)}\int_{\mathbb{R}^3}\dd u \mathbf{k}(v^1,u)\frac{1}{|u|^2\min\{\frac{\alpha(x^s,u)}{|u|},\frac{\alpha(y^s,u)}{|u|}\}^{2+\beta}}\notag\\
&\lesssim O(\e)[\nabla_x f(\cdot ,v)]_{C_{x;2+\beta}^{0,\beta}} \int \mu^{1/4}(v^1) \frac{|n(\xb(y,v),v^1)| }{|v^1|^2\min\{\frac{\alpha(\xb(x,v),v^1)}{|v^1|},\frac{\alpha(\xb(y,v),v^1)}{|v^1|}\}^{1+\beta}}\notag\\
&\lesssim O(\e)[\nabla_x f(\cdot ,v)]_{C_{x;2+\beta}^{0,\beta}}.\notag
\end{align}

For $t^1-s^1\geq \e$, we apply the same integration by parts technique as in~\eqref{h-h deri*2 1}-\eqref{h-h deri*2 4}. The only difference is we do not have an extra term $G(y^s)$ here. But this term doesn't apply a role in the estimate for~\eqref{h-h deri*2 1},\eqref{h-h  deri*2 3} and~\eqref{h-h deri*2 4}. Thus for this case we have the same upper bound as~\eqref{contri of ibp in h bdr}.

Combining~\eqref{Hd: bound for fx11},\eqref{Hd: bound for int-int},\eqref{Hd: bound for n-n},\eqref{Hd: bound for f-f1},\eqref{Hd: bound for f-f2: 1},\eqref{Hd: bound for f-f2: 3K},\eqref{Hd: bound for f-f2:    4} and~\eqref{Hd: bound for    f-f2: 3G},~\eqref{h-h in bdr 0} we conclude that
\begin{equation}\label{g: bound for f-f}
\eqref{spatial:f-f in nablav}\lesssim o(1)\Big[[\nabla_x f(\cdot ,v)]_{C_{x,2+\beta}^{0,\beta}}+[\nabla_\parallel f(\cdot , v)]_{C_{x,1+\beta}^{0,\beta}}\Big]+\big[(\e^{-21})+O(\e^{-16})\frac{|x-y|^\beta}{|\xb(x,v)-\xb(y,v)|^\beta}\big]\Vert w_{\tilde{\theta}}\alpha\nabla_x f\Vert_\infty^2.
\end{equation}

This, together with~\eqref{g: bound for spatial M-M} and \eqref{Hd: bound for nablagx-nablagy 1}, lead to the conclusion:
\begin{equation}\label{estimate for C1beta4}
\eqref{C1beta inter:4}\lesssim \Vert T_W-T_0\Vert_{C^2}\frac{o(1)\Big[[\nabla_x f(\cdot ,v)]_{C_{x,2+\beta}^{0,\beta}}+[\nabla_\parallel f(\cdot , v)]_{C_{x,1+\beta}^{0,\beta}}\Big]+O(\e^{-21})\Vert w_{\tilde{\theta}}\alpha\nabla_x f\Vert_\infty^2}{w_{\tilde{\theta}}(v)|v|^2\min\{\frac{\alpha(x,v)}{|v|},\frac{\alpha(y,v)}{|v|}\}^{1+\beta}},
\end{equation}
where we have applied~\eqref{min: xb} to $\frac{|\xb(x,v)-\xb(y,v)|^\beta}{|x-y|^\beta}$.

\textbf{Step 2: estimate of~\eqref{C1beta inter:3}.}

Now we estimate the contribution of the collision operator. First we consider $h=\Gamma(f,f)$. Applying~\eqref{int gamma-gamma} in Lemma \ref{Lemma: gamma-gamma} we have
\begin{align}
\eqref{C1beta inter:3}\mathbf{1}_{h=\Gamma}  & \lesssim \frac{o(1)[\nabla_x f(\cdot,v)]_{C_{x,2+\beta}^{0,\beta}}+\Vert w_{\tilde{\theta}}\alpha \nabla_x f\Vert_\infty^2}{w_{\tilde{\theta}}(v)|v|^2\min\{\frac{\alpha(x,v)}{|v|},\frac{\alpha(y,v)}{|v|}\}^{2+\beta}}.
 \label{Hd: bound for h-h deri Gamma}
\end{align}

Now we focus on the estimate for $h=K(f)$, which is
\begin{equation}\label{Est: h-h deri}
 \int_{t-t_m(v)}^t \dd s e^{-\nu(t-s)}\int_{\mathbb{R}^3}\dd u \mathbf{k}(v,u) \underbrace{\frac{\partial_{x_i} f(x-(t-s)v,u)-\partial_{x_i} f(y-(t-s)v,u)}{|x-y|^\beta}}_{\eqref{Est: h-h deri}_*}.
\end{equation}
Since $|x-y|=|x-(t-s)v-[y-(t-s)v]|$, we express$~\eqref{Est: h-h deri}_*$ by~\eqref{C1beta inter:1}-\eqref{C1beta inter:3}. The contribution of~\eqref{C1beta inter:1} in~\eqref{Est: h-h deri} is bound by
\begin{align}
&\frac{\big[o(1)[\nabla_{x_\parallel}f(\cdot,v)]_{C^{0,\beta}_{x;1+\beta}}+\Vert w_{\tilde{\theta}}\alpha\nabla_x f\Vert_\infty\big]}{w_{\tilde{\theta}}(v)}\notag\\
   & \times  \int_{t-t_m(v)}^t \dd s e^{-\nu(t-s)}\int_{\mathbb{R}^3}\dd u \frac{w_{\tilde{\theta}}(v)\mathbf{k}(v,u)}{w_{\tilde{\theta}}(u)|u|^2\min\{\frac{\alpha(x-(t-s)v,u)}{|u|},\frac{\alpha(x-(t-s)v,u)}{|u|}\}^{2+\beta}} \notag\\
   & \lesssim \frac{o(1)[\nabla_{x_\parallel}f(\cdot,v)]_{C^{0,\beta}_{x;1+\beta}}+\Vert w_{\tilde{\theta}}\alpha\nabla_x f\Vert_\infty}{w_{\tilde{\theta}}(v)|v|^2\min\{\frac{\alpha(x,v)}{|v|},\frac{\alpha(y,v)}{|v|}\}^{2+\beta}},\label{k: inter1}
\end{align}
where we have applied Lemma \ref{Lemma: NLN}.

Then we consider the contribution of~\eqref{C1beta inter:4} in~\eqref{Est: h-h deri}.
By~\eqref{estimate for C1beta4} and~\eqref{k_theta} and~\eqref{integrate alpha beta}, such contribution is bounded by
\begin{equation}\label{k: bound for nablagx-nablagy}
\begin{split}
  &\Vert T_W-T_0\Vert_{C^2}\frac{ o(1)\Big[[\nabla_x f(\cdot ,v)]_{C_{x,2+\beta}^{0,\beta}}+[\nabla_\parallel f(\cdot , v)]_{C_{x,1+\beta}^{0,\beta}}\Big]+O(\e^{-21})\Vert \alpha\nabla_x f\Vert_\infty^2}{w_{\tilde{\theta}}(v)}\\
    &\times \int^t_0 \dd s e^{-\nu(t-s)}\int_{\mathbb{R}^3}\dd u \frac{ w_{\tilde{\theta}}(v) \mathbf{k}(v,u)}{w_{\tilde{\theta}}(u)|u|^2\min\{\frac{\alpha(x-(t-s)v,u)}{|u|},\frac{\alpha(y-(t-s)v,u)}{|u|}\}^{1+\beta}}\\
   &\lesssim \Vert T_W-T_0\Vert_{C^2}\frac{o(1)\Big[[\nabla_x f(\cdot ,v)]_{C_{x,2+\beta}^{0,\beta}}+[\nabla_\parallel f(\cdot , v)]_{C_{x,`+\beta}^{0,\beta}}\Big]+O(\e^{-21})\Vert \alpha\nabla_x f\Vert_\infty^2}{w_{\tilde{\theta}}(v)|v|^2\min\{\frac{\alpha(x,v)}{|v|},\frac{\alpha(y,v)}{|v|}\}^{1+\beta}}.
\end{split}
\end{equation}

Then we focus the contribution of the double collision operator, the~\eqref{C1beta inter:3} in~\eqref{Est: h-h deri}. We first estimate $h=\Gamma(f,f)$. By Lemma \ref{Lemma: Gf-Gf}, such contribution in~\eqref{Est: h-h deri} is bounded by
\begin{align}
  &\frac{1}{w_{\tilde{\theta}}(v)}\int^t_{t-t_m(v)}\dd s e^{-\nu(t-s)}\int_{\mathbb{R}^3}\dd u  w_{\tilde{\theta}}(v)\mathbf{k}(v,u)\frac{o(1)[\nabla_x f(\cdot,v)]_{C_{x,2+\beta}^{0,\beta}}+\Vert w_{\tilde{\theta}}\alpha\nabla_x f \Vert_\infty^2}{w_{\tilde{\theta}}(u)|u|^2\min\{\frac{\alpha(x,u)}{|u|},\frac{\alpha(y,u)}{|u|}\}^{2+\beta}} \notag\\
  & \lesssim \frac{o(1)[\nabla_x f(\cdot,v)]_{C_{x,2+\beta}^{0,\beta}}+\Vert w_{\tilde{\theta}}\alpha\nabla_x f \Vert_\infty^2}{w_{\tilde{\theta}}(v)|v|^2\min\{\frac{\alpha(x,v)}{|v|},\frac{\alpha(y,v)}{|v|}\}^{2+\beta}},\label{k: gamma-gamma}
\end{align}
where we have used Lemma \ref{Lemma: NLN} and Lemma \ref{Lemma: k tilde}.

Then we estimate $h=K(f)$, which is the most delicate one. We denote $t^s_m(u)=\min\{\tb(x-(t-s)v,u),\tb(y-s(t-s)v,u)\}$. We need to compute
\begin{equation}\label{k: h-h deri}
\begin{split}
    &  \int_{t-t_m(v)}^t \dd s e^{-\nu(t-s)}\int_{\mathbb{R}^3} \dd u \mathbf{k}(v,u)\int^s_{s-t^s_m(u)} \dd s' e^{-\nu(s-s')} \int_{\mathbb{R}^3}\dd u' \\
     & \times\mathbf{k}(u,u')\frac{\nabla_x f(x-(t-s)v-(s-s')u,u')-\nabla_x f(y-(t-s)v-(s-s')u,u')}{|x-y|^\beta}.
\end{split}
\end{equation}
We first decompose the $s'$ integration as
\begin{equation}\label{decompose s'}
\underbrace{\int^s_{s-\e}\dd s'}_{\eqref{decompose s'}_1}+\underbrace{\int_0^{s-\e} \dd s'}_{\eqref{decompose s'}_2}.
\end{equation}

Applying~\eqref{integrate alpha beta small} in Lemma \ref{Lemma: NLN} with $p=2+\beta$ we conclude that the contribution of$~\eqref{decompose s'}_1$ in~\eqref{k: h-h deri} is bounded by
  \begin{align*}
   &\frac{[\nabla_x f(\cdot,v)]_{C_{x,2+\beta}^{0,\beta}}}{ w_{\tilde{\theta}}(v)} \int_{t_m(s)}^t \dd s e^{-\nu(v)(t-s)}\int_{\mathbb{R}^3} \dd u \frac{\mathbf{k}(v,u)w_{\tilde{\theta}}(v)}{w_{\tilde{\theta}}(u)}  \int_{s-\e}^s \dd s' e^{-\nu(u)(s-s')}\\
   &\times \int_{\mathbb{R}^3}\dd u' \frac{\mathbf{k}(u,u')w_{\tilde{\theta}}(u)}{w_{\tilde{\theta}}(u')|u'|^2\min\{\frac{\alpha(x-(t-s)v-(s-s')u,u')}{|u'|},\frac{\alpha(y-(t-s)v-(s-s')u,u')}{|u'|}\}^{2+\beta}} \\
    &\lesssim  \frac{O(\e)[\nabla_x f(\cdot ,v)]_{C_{x,2+\beta}^{0,\beta}}}{w_{\tilde{\theta}}(v)|v|^2\min\{\frac{\alpha(x,v)}{|v|},\frac{\alpha(y,v)}{|v|}\}^{2+\beta}}.
\end{align*}

Then we consider contribution of$~\eqref{decompose s'}_2$. For simplicity we denote
\begin{equation}\label{x'}
x''=x-(t-s)v-(s-s')u,\quad y''=y-(t-s)v-(s-s')u,\quad x''-y''=x-y.
\end{equation}
We exchange $\nabla_x$ for $\nabla_u$:
\[\nabla_x f(x-(t-s)v-(s-s')u,u')-\nabla_x f(y-(t-s)v-(s-s')u,u')=\nabla_u [f(x'',u')-f(y'',u')]\frac{-1}{s-s'}.\]
Since $s-s'\geq \e$ the contribution of$~\eqref{decompose s'}_2$ in~\eqref{k: h-h deri} is
\begin{equation}\label{Hd:kk du}
\begin{split}
   &  \int_{t-t_m(v)}^t \dd s e^{-\nu(v)(t-s)}\int_{\mathbb{R}^3} \dd u \mathbf{k}(v,u)  \int_{s-t_m^s(u)}^{s-\e} e^{-\nu(u)(s-s')}\dd s' \mathbf{1}_{s-s'\geq \e}\\
   & \times \int_{\mathbb{R}^3}\dd u' \mathbf{k}(u,u')\frac{\nabla_u [f(x'',u')-f(y'',u')]}{|x-y|^\beta}\frac{-1}{s-s'}.
\end{split}
\end{equation}
Then we integrate by part for $\dd u$ to have
\begin{align}
  \eqref{Hd:kk du} &=\int_{t-t_m(v)}^t \dd s e^{-\nu(v)(t-s)}\int_{\mathbb{R}^3} \dd u   \mathbf{1}_{s-s'\geq \e}\notag  \\
   & \times \Big[ \nabla_u [\mathbf{k}(v,u)\mathbf{k}(u,u')]\int_{s-t_m^s(u)}^{s-\e}\frac{e^{-\nu(u)(s-s')}}{s-s'}\dd s' \int_{\mathbb{R}^3}\dd u'    \frac{f(x'',u')-f(y'',u')}{|x-y|^\beta}    \label{Hd: IBP1}\\
   &+        \mathbf{k}(v,u)\int_{s-t_m^s(u)}^{s-\e}\frac{\nabla_u e^{-\nu(u)(s-s')}}{s-s'}\dd s' \int_{\mathbb{R}^3}\dd u'   \mathbf{k}(u,u') \frac{f(x'',u')-f(y'',u')}{|x-y|^\beta} \label{Hd: IBP2} \\
  & + \mathbf{k}(v,u) \nabla_u t_m^s(u) \frac{e^{-\nu t_m^s(u)}}{t_m^s(u)} \int_{\mathbb{R}^3}\dd u' \mathbf{k}(u,u') \frac{f(x^b,u')-f(y^b,u')}{|x-y|^\beta} \Big]
  \label{Hd: IBP3}.
\end{align}
Here we denoted
\begin{equation}\label{x^b}
x^b=x-(t-s)v-t_m^s(u)u,\quad y^b=y-(t-s)v-t_m^s(u) u.
\end{equation}

First we estimate~\eqref{Hd: IBP1}. We begin with $\nabla_u \mathbf{k}(u,u')$. Since $w^{-1}_{\tilde{\theta}}(u')\langle u'\rangle |u'|^2\lesssim 1$, from~\eqref{min: f} with~\eqref{simplicity} and~\eqref{k_varrho} we have
\begin{align}
 & \eqref{Hd: IBP1}\notag\\
  &\lesssim O(\e^{-1})\frac{\Vert w_{\tilde{\theta}}\alpha\nabla_x f\Vert_\infty}{w_{\tilde{\theta}}(v)}  \int_{t-t_m(v)}^t \dd se^{-\nu(v)(t-s)}\int_{\mathbb{R}^3}\dd u  \frac{\mathbf{k}(v,u) w_{\tilde{\theta}}(v)}{ w_{\tilde{\theta}}(u)}   \notag\\
   & \times     \int_{s-t_m^s(u)}^{s-\e}e^{-\nu(u)(s-s')}\dd s' \int_{\mathbb{R}^3}\dd u' \langle u'\rangle |u'|^2 w^{-1}_{\tilde{\theta}}(u')   \frac{ \mathbf{k}(u,u') w_{\tilde{\theta}}(u)}{|u-u'| w_{\tilde{\theta}}(u')}    \frac{1 }{|u'|^2\min\{\alpha(x'',u'),\alpha(y'',u')\}^{\beta}}                      \notag\\
   &  \lesssim \frac{O(\e^{-1})\Vert w_{\tilde{\theta}}\alpha\nabla_x f\Vert_\infty}{w_{\tilde{\theta}}(v)}\int_{t-t_m(v)}^t \dd se^{-\nu(v)(t-s)}\int_{\mathbb{R}^3} \frac{\dd u  \mathbf{k}_{\tilde{\varrho}}(v,u)}{|u|^2\min\{\alpha(x-(t-s)v,u),\alpha(y-(t-s)v,u)\}^{\beta}}           \notag  \\
   &\lesssim   \frac{O(\e^{-1}) \Vert w_{\tilde{\theta}}\alpha\nabla_x f\Vert_\infty}{w_{\tilde{\theta}}(v)|v|^2}\lesssim \frac{O(\e^{-1}) \Vert w_{\tilde{\theta}}\alpha\nabla_x f\Vert_\infty}{w_{\tilde{\theta}}(v)|v|^2 \min\{\frac{\alpha(x,v)}{|v|},\frac{\alpha(y,v)}{|v|}\}^{2+\beta}},
   \label{Hd: bound for IBP1}
\end{align}
where we have used~\eqref{integrate k/v-u} and~\eqref{k_theta} in the third line,~\eqref{integrate beta<1} in the last line.

The term with $\nabla_u \mathbf{k}(v,u)$ can be similarly bounded by
\begin{align}
&\frac{O(\e^{-1}) \Vert w_{\tilde{\theta}}\alpha\nabla_x f\Vert_\infty}{w_{\tilde{\theta}}(v)|v|^2 \min\{\frac{\alpha(x,v)}{|v|},\frac{\alpha(y,v)}{|v|}\}^{2+\beta}} .
   \label{Hd: bound for IBP11}
\end{align}

Then we estimate~\eqref{Hd: IBP2}. From~\eqref{min: f} with~\eqref{simplicity} we have
\begin{align}
  \eqref{Hd: IBP2} & \lesssim   \frac{\Vert w_{\tilde{\theta}}\alpha\nabla_x f\Vert_\infty}{w_{\tilde{\theta}}(v)} \int_0^t \dd s e^{-\nu(v)(t-s)}\int_{\mathbb{R}^3}\dd u \frac{w_{\tilde{\theta}}(u)\mathbf{k}(v,u)}{w_{\tilde{\theta}}(v)}\int_0^{s-\e} e^{-\nu(u)(s-s')}\dd s' \notag\\
   & \times \int_{\mathbb{R}^3}\dd u'  |u'|^2w^{-1}_{\tilde{\theta}}(u')   \frac{w_{\tilde{\theta}}(u)\mathbf{k}(u,u')}{w_{\tilde{\theta}}(u')|u'|^2\min\{\alpha(x'',u'),\alpha(y'',u')\}^\beta}\notag\\
   &\lesssim \frac{ \Vert w_{\tilde{\theta}}\alpha\nabla_x f\Vert_\infty}{w_{\tilde{\theta}}(v)} \int_0^t \dd s e^{-\nu(v)(t-s)}\int_{\mathbb{R}^3}\dd u \frac{1}{|u|^2} \mathbf{k}_{\tilde{\varrho}}(v,u) \lesssim \frac{\Vert w_{\tilde{\theta}}\alpha\nabla_x f\Vert_\infty}{w_{\tilde{\theta}}(v)|v|^2 \min\{\frac{\alpha(x,v)}{|v|},\frac{\alpha(y,v)}{|v|}\}^{2+\beta}},
   \label{Hd: bound for IBP2}
\end{align}
where we have used Lemma \ref{Lemma: k tilde} in the second line and~\eqref{integrate beta<1} in Lemma \ref{Lemma: NLN} in the last line.

Last we estimate~\eqref{Hd: IBP3}. Since we are considering $s-s'\geq \e$, $\tb(x-(t-s)v,u)\geq \e$. From~\eqref{min: f} with~\eqref{simplicity} and~\eqref{nabla_tbxb}, we have
\begin{align}
   \eqref{Hd: IBP3}& \lesssim \frac{\Vert w_{\tilde{\theta}}\alpha\nabla_x f\Vert_\infty}{w_{\tilde{\theta}}(v)}\int_{t-t_m(v)}^t \dd s e^{-\nu(v)(t-s)}\int_{\mathbb{R}^3} \frac{w_{\tilde{\theta}}(u)\mathbf{k}(v,u)}{w_{\tilde{\theta}}(v)} \dd u \frac{e^{-\nu(u)t_m^s(u) }}{t_m^s(u)}\notag \\
   & \times \nabla_u t_m^s(u) \int_{\mathbb{R}^3}\dd u'   |u'| w^{-1}_{\tilde{\theta}}(u') \frac{w_{\tilde{\theta}}(u)\mathbf{k}(u,u')}{|u'|w_{\tilde{\theta}}(u')\min\{\alpha(x^b,u'),\alpha(y^b,u')\}^\beta} \notag\\
   & \lesssim O(\e^{-1})\frac{\Vert w_{\tilde{\theta}}\alpha \nabla_x f\Vert_\infty}{w_{\tilde{\theta}}(v)} \int_{t-t_m(v)}^t \dd s e^{-\nu(v)(t-s)}\int_{\mathbb{R}^3} \frac{\mathbf{k}_{\tilde{\varrho}}(v,u)}{|u|^2\min\{\frac{\alpha(x-(t-s)v,u)}{|u|},\frac{\alpha(y-(t-s)v,u)}{|u|}\}} \dd u \notag\\
   &\lesssim \frac{O(\e^{-1})\Vert w_{\tilde{\theta}}\alpha\nabla_x f\Vert_\infty}{w_{\tilde{\theta}}(v)|v|^2\min\{\frac{\alpha(x,v)}{|v|},\frac{\alpha(y,v)}{|v|}\}}\lesssim \frac{O(\e^{-1})\Vert w_{\tilde{\theta}}  \alpha\nabla_x f\Vert_\infty}{w_{\tilde{\theta}}(v)|v|^2\min\{\frac{\alpha(x,v)}{|v|},\frac{\alpha(y,v)}{|v|}\}^{2+\beta}},\label{Hd: bound for IBP3}
\end{align}
where we have used Lemma \ref{Lemma: k tilde} in the third line and~\eqref{integrate beta<1} in Lemma \ref{Lemma: NLN} in the last line.

Then combining~\eqref{Hd: bound for IBP1},\eqref{Hd: bound for IBP11},\eqref{Hd: bound for IBP2} and~\eqref{Hd: bound for IBP3} we conclude
\begin{equation}\label{k: bound for h-h deri}
  \eqref{k: h-h deri}\lesssim    \frac{o(1)[\nabla_x f(\cdot ,v)]_{C_{x,2+\beta}^{0,\beta}}+O(\e^{-1})\Vert w_{\tilde{\theta}}\alpha\nabla_x f\Vert_\infty}{w_{\tilde{\theta}}(v)|v|^2\min\{\frac{\alpha(x,v)}{|v|},\frac{\alpha(y,v)}{|v|}\}^{2+\beta}}.
\end{equation}

Combining~\eqref{k: bound for h-h deri},~\eqref{k:   gamma-gamma},~\eqref{k: bound for nablagx-nablagy} ,~\eqref{k:    inter1} and~\eqref{Hd: bound for h-h deri Gamma} we conclude that
\begin{equation}\label{estimate of C1beta 3}
\eqref{C1beta inter:3}\lesssim  \Vert T_W-T_0\Vert_{C^2}\frac{o(1)\Big[[\nabla_x f(\cdot ,v)]_{C_{x,2+\beta}^{0,\beta}}+[\nabla_\parallel f(\cdot , v)]_{C_{x,1+\beta}^{0,\beta}}\Big]+O(\e^{-21})\Vert w_{\tilde{\theta}}\alpha\nabla_x f\Vert_\infty^2}{w_{\tilde{\theta}}(v)|v|^2\min\{\frac{\alpha(x,v)}{|v|},\frac{\alpha(y,v)}{|v|}\}^{2+\beta}}.
\end{equation}

Finally from~\eqref{C1beta inter:1}-\eqref{C1beta inter:3} and the estimate~\eqref{estimate of C1beta 3},\eqref{estimate for C1beta4}, we conclude the proof of~\eqref{Bound of C1beta}.

\textbf{Step 3: proof of~\eqref{Bound of C1beta tangential}.}

Now we prove~\eqref{Bound of C1beta tangential}. From Lemma \ref{Lemma: intermediate estimate},~\eqref{C1beta inter:4} is already bounded from~\eqref{estimate for C1beta4}.

For~\eqref{C1betatang inter:0}, since $w_{\tilde{\theta}}^{-1}(v)\tilde{\alpha}(x,v)\lesssim w_{\tilde{\theta}/2}^{-1}(v)\alpha(x,v)$, by~\eqref{estimate of C1beta 3} we conclude
\begin{equation}\label{tang: bound for C1betang: inter:0}
\eqref{C1betatang inter:0}\lesssim \frac{\tilde{\alpha}(x,v)}{|v|}\times \eqref{estimate of C1beta 3}= \frac{o(1)\Big[[\nabla_x f(\cdot ,v)]_{C_{x,2+\beta}^{0,\beta}}+[\nabla_\parallel f(\cdot , v)]_{C_{x,1+\beta}^{0,\beta}}\Big]+O(\e^{-21})\Vert w_{\tilde{\theta}}\alpha\nabla_x f\Vert_\infty^2}{w_{\tilde{\theta}/2}(v)|v|^2\min\{\frac{\alpha(x,v)}{|v|},\frac{\alpha(y,v)}{|v|}\}^{1+\beta}}.
\end{equation}

Then we only need to estimate~\eqref{C1betatang inter:3}. First we consider $h=\Gamma(f,f)$. Such contribution is directly bounded using~\eqref{G gamma- G gamma} in Lemma \ref{Lemma: gamma-gamma}, thus
\begin{equation}\label{estimate of C1betatang 3}
\eqref{C1betatang inter:3}_{h=\Gamma}\lesssim \frac{o(1)[\nabla_\parallel f(\cdot , v)]_{C_{x,1+\beta}^{0,\beta}}+\Vert w_{\tilde{\theta}}\alpha\nabla_x f\Vert_\infty^2}{w_{\tilde{\theta}/2}(v)|v|^2\min\{\frac{\alpha(x,v)}{|v|},\frac{\alpha(y,v)}{|v|}\}^{1+\beta}}.
\end{equation}

Then we consider $h=K(f)$, which reads
\begin{align}
   &\int^t_{t-t_m(v)} \dd s e^{-\nu(t-s)} \int_{\mathbb{R}^3} \dd u \mathbf{k}(v,u)     \notag     \\
   &\times \frac{G(x-(t-s)v)\nabla_x f(x-(t-s)v,u)-G(y-(t-s)v)\nabla_x f(y-(t-s)v,u)}{|x-y|^\beta} .  \label{k in Gf-Gf}
\end{align}

We express~\eqref{k in Gf-Gf} by~\eqref{C1betatang inter:0}-\eqref{C1betatang inter:4} along $u$.

Note that
\[\eqref{C1betatang inter:0}\lesssim \eqref{tang: bound for C1betang: inter:0},\quad \eqref{C1betatang inter:4} \lesssim \eqref{estimate for C1beta4}, \]
we conclude that the contribution of~\eqref{C1betatang inter:0},\eqref{C1betatang inter:1} and~\eqref{C1betatang inter:4} in~\eqref{k in Gf-Gf} are bounded by
\begin{align}
& \frac{1}{w_{\tilde{\theta}/2}(v)}\int^t_{t-t_m(v)} \dd s e^{-\nu(t-s)} \int_{\mathbb{R}^3} \dd u w_{\tilde{\theta}/2}(v)\mathbf{k}(v,u) \notag\\
&\times \frac{o(1)\Big[[\nabla_x f(\cdot ,v)]_{C_{x,2+\beta}^{0,\beta}}+[\nabla_\parallel f(\cdot , v)]_{C_{x,1+\beta}^{0,\beta}}\Big]+O(\e^{-21})\Vert w_{\tilde{\theta}}\alpha\nabla_x f\Vert_\infty^2}{w_{\tilde{\theta}/2}(u)|u|^2\min\{\frac{\alpha(x-(t-s)v,u)}{|u|},\frac{\alpha(y-(t-s)v,u)}{|u|}\}^{1+\beta}} \notag\\
& \lesssim \frac{o(1)\Big[[\nabla_x f(\cdot ,v)]_{C_{x,2+\beta}^{0,\beta}}+[\nabla_\parallel f(\cdot , v)]_{C_{x,1+\beta}^{0,\beta}}\Big]+O(\e^{-21})\Vert w_{\tilde{\theta}}\alpha\nabla_x f\Vert_\infty^2}{w_{\tilde{\theta}/2}(v)|v|^2\min\{\frac{\alpha(x,v)}{|v|},\frac{\alpha(y,v)}{|v|}\}^{1+\beta}}, \label{bound for 014}
\end{align}
where we have used~\eqref{integrate alpha beta} in Lemma \ref{Lemma: NLN} with $p=1+\beta$.

Then we focus on the contribution of the double collision operator~\eqref{C1betatang inter:3}. By Lemma \ref{Lemma: gamma-gamma} the contribution of $h=\Gamma$ is bounded by
\begin{align}
   &\frac{1}{w_{\tilde{\theta}/2}(v)}\int^t_{t-t_m(v)} \dd s e^{-\nu(t-s)} \int_{\mathbb{R}^3} \dd u w_{\tilde{\theta}/2}(v)\mathbf{k}(v,u)   \frac{o(1)[\nabla_\parallel f(\cdot ,v)]_{C_{x,1+\beta}^{0,\beta}}+\Vert w_{\tilde{\theta}}\alpha\nabla_x f\Vert_\infty^2}{w_{\tilde{\theta}/2}(u)|u|^2\min\{\frac{\alpha(x-(t-s)v,u)}{|u|},\frac{\alpha(y-(t-s)v,u)}{|u|}\}^{1+\beta}} \notag  \\
   & \lesssim \frac{o(1)[\nabla_\parallel f(\cdot ,v)]_{C_{x,1+\beta}^{0,\beta}}+\Vert w_{\tilde{\theta}}\alpha\nabla_x f\Vert_\infty^2}{w_{\tilde{\theta}/2}(v)|v|^2\min\{\frac{\alpha(x,v)}{|v|},\frac{\alpha(y,v)}{|v|}\}^{1+\beta}}.\label{Gf-Gf: k:gamma}
\end{align}

Last we focus on the contribution of $h=K(f)$. Recall the notation $x'',y''$ in~\eqref{x'}. We need to compute
\begin{align}
&  \int_{t-t_m(v)}^t \dd s e^{-\nu(t-s)}\int_{\mathbb{R}^3} \dd u \mathbf{k}(v,u)\int^s_{s-t_m^s(u)} \dd s' e^{-\nu(s-s')} \int_{\mathbb{R}^3}\dd u' \mathbf{k}(u,u')\notag\\
& \times \underbrace{\frac{G(x'')\nabla_x f(x'',u')-G(y'')\nabla_x f(y'',u')}{|x-y|^\beta}}_{\eqref{tang: k: h-h deri 2}_*}. \label{tang: k: h-h deri 2}
\end{align}
We apply the decomposition~\eqref{decompose s'} for $\dd s'$.

When $s-s'\leq \e$, by~\eqref{integrate alpha beta small} in Lemma \ref{Lemma: NLN} with $p=1+\beta$ we have
\begin{equation}\label{tang:k: bound for h-h deri 1}
\begin{split}
\eqref{tang: k: h-h deri 2}\mathbf{1}_{s-s'\leq \e}   & \frac{1}{w_{\tilde{\theta}/2}(v)}\lesssim \int_{t-t_m(v)}^t \dd s e^{-\nu(t-s)}\int_{\mathbb{R}^3} \dd u \frac{w_{\tilde{\theta}/2}(v)\mathbf{k}(v,u)}{w_{\tilde{\theta}/2}(u)}\int^s_{s-\e} \dd s' e^{-\nu(s-s')} \\
    & \times  \int_{\mathbb{R}^3}\dd u' w_{\tilde{\theta}/2}(u)\mathbf{k}(u,u')      \frac{[\nabla_\parallel f(\cdot ,v)]_{C^{0,\beta}_{x,1+\beta}}}{w_{\tilde{\theta}/2}(u')|u'|^2\min\{\alpha(x'',u'),\alpha(y'',u')\}^{1+\beta}}     \\
    &\lesssim O(\e)\frac{[\nabla_\parallel f(\cdot ,v)]_{C^{0,\beta}_{x,1+\beta}}}{w_{\tilde{\theta}/2}(v)|v|^2\min\{\frac{\alpha(x,v)}{|v|},\frac{\alpha(y,v)}{|v|}\}^{1+\beta}}.
\end{split}
\end{equation}

When $s-s'\geq \e$, we rewrite
\begin{align}
  \eqref{tang: k: h-h deri 2}_*  &= \frac{[G(x'')-G(y'')]\nabla_x f(x'',u')}{|x-y|^\beta}  \label{tang: h-h 1}\\
   & +\frac{G(y'')[\nabla_x f(x'',u')-\nabla_x f(y'',u')]}{|x-y|^\beta}.\label{tang: h-h 2}
\end{align}

For~\eqref{tang: h-h 1}, since $|u'|w^{-1}_{\tilde{\theta}}(u')\lesssim w^{-1}_{\tilde{\theta}/2}(u')$, we apply~\eqref{min: nxb} to conclude
\begin{align}
  \eqref{tang: h-h 1} & \lesssim \frac{\Vert w_{\tilde{\theta}}\alpha\nabla_x f\Vert_\infty}{w_{\tilde{\theta}/2}(v)}\int_{t-t_m(v)}^t \dd s e^{-\nu(t-s)}\int_{\mathbb{R}^3} \dd u \frac{w_{\tilde{\theta}/2}(v)\mathbf{k}(v,u)}{w_{\tilde{\theta}/2}(u)}\notag \\
  &\times \int^s_{s-t_m^s(u)} \dd s' e^{-\nu(s-s')} \int_{\mathbb{R}^3}\dd u' |u'|\frac{ w_{\tilde{\theta}/2}(u)\mathbf{k}(u,u')}{w_{\tilde{\theta}/2}(u')|u'|\alpha(x'',u')}\notag \\
   & \lesssim  \frac{\Vert w_{\tilde{\theta}}\alpha\nabla_x f \Vert_\infty}{w_{\tilde{\theta}/2}(v)|v|^2\min\{\frac{\alpha(x,v)}{|v|},\frac{\alpha(y,v)}{|v|}\}^{1+\beta}},
   \label{bound for tang h-h 1}
\end{align}
where we have applied Lemma~\ref{Lemma: k tilde} and~\eqref{est:nonlocal_wo_e} in Lemma \ref{Lemma: NLN} to the last inequality.

For~\eqref{tang: h-h 2}, we exchange $\nabla_x f(x'',u')=\frac{\nabla_{u} f(x'',u')}{-(s-s')}$ and perform an integration by parts to $\dd u$. Since $|G(y'')|\lesssim 1$, the contribution of~\eqref{tang: h-h 2} in~\eqref{tang: k: h-h deri 2} is bounded by $\eqref{Hd: IBP1},\eqref{Hd: IBP2},\eqref{Hd: IBP3}$ and with an extra term that corresponds to the derivative of $G(y'')$:
\begin{align}
 &\int_{t-t_m(v)}^t \dd s e^{-\nu(v)(t-s)}\int_{\mathbb{R}^3} \dd u   \mathbf{1}_{s-s'\geq \e}\notag  \\
   &\times        \mathbf{k}(v,u)\int_{s-t_m^s(u)}^{s-\e}\nabla_u G(y'')  e^{-\nu(u)(s-s')}\dd s' \int_{\mathbb{R}^3}\dd u'   \mathbf{k}(u,u') \frac{f(x'',u')-f(y'',u')}{|x-y|^\beta} \notag\\
   & \lesssim  \frac{ \Vert w_{\tilde{\theta}}\alpha\nabla_x f\Vert_\infty}{w_{\tilde{\theta}/2}(v)} \int_0^t \dd s e^{-\nu(v)(t-s)}\int_{\mathbb{R}^3}\dd u \frac{w_{\tilde{\theta}/2}(u)\mathbf{k}(v,u)}{w_{\tilde{\theta}/2}(v)}\int_0^{s-\e} e^{-\nu(u)(s-s')}(s-s')\Vert \xi\Vert_{C^2} \dd s' \notag\\
   &\times \int_{\mathbb{R}^3}\dd u' w_{\tilde{\theta}/2}(u)\mathbf{k}(u,u')        \frac{|u'|^2}{w_{\tilde{\theta}}(u')|u'|^2\min\{\alpha(x'',u'),\alpha(y'',u')\}^\beta}\notag\\
    &\lesssim \frac{ \Vert w_{\tilde{\theta}}\alpha\nabla_x f\Vert_\infty}{w_{\tilde{\theta}/2}(v)} \int_0^t \dd s e^{-\nu(v)(t-s)}\int_{\mathbb{R}^3}\dd u  \frac{ \mathbf{k}_{\tilde{\varrho}}(v,u)}{|u|^2}\int_{s-t^s_m(u)}^{s-\e} e^{-\nu(u)(s-s')/2}\dd s'\notag\\
    &\lesssim \frac{\Vert w_{\tilde{\theta}}\alpha\nabla_x f\Vert_\infty}{w_{\tilde{\theta}/2}(v)|v|^2 \min\{\frac{\alpha(x,v)}{|v|},\frac{\alpha(y,v)}{|v|}\}^{1+\beta}}. \label{IBP G}
\end{align}
Here we have used~\eqref{min: f} with~\eqref{simplicity} in the fourth line and Lemma \ref{Lemma: k tilde} in the fifth line.

Thus the contribution of~\eqref{tang: h-h 2} in~\eqref{tang: k: h-h deri 2} is bounded by
\begin{align}
&   \eqref{Hd: IBP1}+\eqref{Hd: IBP2}+\eqref{Hd: IBP3}+\eqref{IBP G} \notag  \\
   & \lesssim \eqref{Hd: bound for IBP1}+\eqref{Hd: bound for IBP2}+\eqref{Hd: bound for IBP3}+\eqref{IBP G}\lesssim \frac{O(\e^{-1})\Vert w_{\tilde{\theta}}\alpha\nabla_x f \Vert_\infty}{w_{\tilde{\theta}/2}(v)|v|^2\min\{\frac{\alpha(x,v)}{|v|},\frac{\alpha(y,v)}{|v|}\}^{1+\beta}} .\label{tang:k: bound for h-h deri 2}
\end{align}

This, together with~\eqref{bound for tang h-h 1} and~\eqref{tang:k: bound for h-h deri 1}, lead to the conclusion:
\begin{equation}\label{tang:k: bound for h-h deri}
\eqref{tang: k: h-h deri 2}\lesssim \frac{o(1)\Big[[\nabla_x f(\cdot ,v)]_{C_{x,2+\beta}^{0,\beta}}+[\nabla_\parallel f(\cdot , v)]_{C_{x,1+\beta}^{0,\beta}}\Big]+O(\e^{-21})\Vert w_{\tilde{\theta}}\alpha\nabla_x f\Vert_\infty^2}{w_{\tilde{\theta}/2}(v)|v|^2\min\{\frac{\alpha(x,v)}{|v|},\frac{\alpha(y,v)}{|v|}\}^{1+\beta}}.
\end{equation}

Finally collecting~\eqref{tang:k: bound for h-h deri},\eqref{Gf-Gf:    k:gamma},\eqref{bound for 014},\eqref{estimate of C1betatang 3},\eqref{tang: bound for C1betang: inter:0} and Lemma \ref{Lemma: intermediate estimate} we conclude the proof of~\eqref{Bound of C1beta tangential}.

\end{proof}

\bigskip

\noindent\textbf{Acknowledgements}: The authors thank Yan Guo for his interest. C.K. thanks Ikun Chen for earlier discussions on the same subject and a kind hospitality of National Taiwan University during his visit in 2014. This research is supported in part by National Science Foundation under Grant No. $1501031$, $1750488$, and $1900923$, and the Wisconsin Alumni Research Foundation and UW-Madison Data Science Initiative.


\begin{thebibliography}{9}

\bibitem{yunbai2} Cao, Y.: Regularity of Boltzmann equation with external fields in convex domains of diffuse reflection, \textit{SIAM J. Math. Anal.} 51 (2019), no. 4, 3195--3275

\bibitem{yunbai1} Cao, Y.: Rarefied gas dynamics with external fields under specular reflection boundary condition, \textit{preprint}

\bibitem{CKL}Cao, Y.; Kim, C.; Lee, D.: Global strong solutions of the Vlasov-Poisson-Boltzmann system in bounded domains, \textit{Arch. Rational Mech. Anal.} 233 (3), 1027--1130 (2019)

\bibitem{CIP} Cercignani,C.; Illner,R.; Pulvirenti,M.: \textit{The mathematical theory of dilute gases}. Applied Mathematical Sciences, 106. Springer, New York, 1994

\bibitem{CKQ} Chen, H.; Kim, C.; Li, Q.: Local Well-posedness of Vlasov-Possion-Boltzmann system with generalized diffuse boundary condition, \textit{J. Stat. Phys.} 179, 535--631 (2020)

\bibitem{Chen} Chen, H.: Cercignani-Lampis boundary in the Boltzmann theory, \textit{Kinet. Relat. Models}, 13(3), 549--597 (2020)

\bibitem{Ikun} Chen, I.; Hsia, C.; Kawagoe, D.: Regularity for diffuse reflection boundary problem to the stationary linearized Boltzmann equation in a convex domain, \textit{Ann. Inst. H. Poincare Anal. Non Lineaire} 36 (2019), no. 3, 745--782



\bibitem{CELMM} Carlen, E.A., Esposito, R., Lebowitz, J, Marra, R., Mouhot, C.: Approach to the steady state in kinetic models with thermal reservoirs at different temperatures, \textit{J Stat Phys} 172, 522--543 (2018).

\bibitem{CLM} Carlen, E.A., Lebowitz, J.L., Mouhot, C.: Exponential approach to, and properties of, a non-equilibrium steady state in a dilute gas. \textit{Braz. J. Probab. Stat.} 29, 372--386 (2015)


\bibitem{EGKM} Esposito, R.; Guo, Y.; Kim, C.; Marra, R.: Non-isothermal boundary in the Boltzmann theory and Fourier law. \textit{Commun. Math. Phys,} 323(1), pp.177--239 (2013)
\bibitem{EGKM A} Esposito, R.; Guo, Y.; Kim, C.; Marra, R.: Stationary solutions to the Boltzmann equation in the Hydrodynamic limit. \textit{Annals of PDE,} 4(1), p.1. (2018)

\bibitem{EM} Esposito, R.; Marra, R.: Stationary Non equilibrium States in Kinetic Theory. \textit{J. Stat. Phys.} (2020).

\bibitem{R} Glassey, R.: \textit{The Cauchy problem in kinetic theory.} Vol. 52. SIAM, 1996.


\bibitem{GKTTBV} Guo, Y.; Kim, C.; Tonon, D.; Trescases, A.: BV-regularity of the Boltzmann equation in non-convex domains, \textit{Arch. Rational Mech. Anal.}, 220, 1045-1093 (2016)

\bibitem{GKTT} Guo, Y.; Kim, C.; Tonon, D.; Trescases, A.: Regularity of the Boltzmann Equation in Convex Domains, \textit{Invent. Math}, 207, 115--290 (2017)

\bibitem{G} Guiraud, J.P.: Probleme aux limites interieur pour l'equation de Boltzmann en regime stationnaire, faiblement non lineaire. \textit{J. de M\'{e}c.} 11(2), 443--490 (1972)

\bibitem{JK} Jang, J.; Kim, C.: Incompressible Euler limit from Boltzmann equation with Diffuse Boundary Condition for Analytic data, \textit{submitted,  arXiv:2005.12192 }

\bibitem{Kim} Kim, C.: Formation and Propagation of Discontinuity for Boltzmann Equation in Non-Convex Domains, \textit{Commun. Math. Phys}, 308, 641--701 (2011)

\bibitem{KL} Kim, C.; Lee D.: The Boltzmann equation with specular boundary condition in convex domains, \textit{Comm. Pure Appl. Math.,} 71, no. 3, 411--504 (2018)

\bibitem{KL_holder} Kim, C.; Lee D.: \textit{in preparation}


\bibitem{Maxwell} Maxwell, J. C.: On the dynamical theory of gases. Philos. Trans. R. Soc. Lond. 157, 49--88 (1866)

\bibitem{Sone} Sone,Y.: \textit{Molecular gas dynamics.Theory, techniques, and applications. Modeling and Simulation in Science, Engineering and Technology.} Birkh\"{a}user Boston, Inc., Boston, MA (2007)


\end{thebibliography}
\end{document}